\newtheorem*{condition*}{Condition}
\begin{document}

\title{Finite-Time behaviors of M/M/$n$ Queue: Mixing Time, Mean Queue Length and Tail Bounds}
\author{Hoang Nguyen \textsuperscript{1}\thanks{\noindent\textsuperscript{1} H. Milton Stewart School of Industrial \& Systems Engineering, Georgia
Institute of Technology, Atlanta, GA, 30332, USA }\,
Sushil Varma \textsuperscript{2}\thanks{\noindent\textsuperscript{2} INRIA, DI/ENS, PSL Research University, Paris, France }\,
Siva Theja Maguluri \textsuperscript{1}}
\date{}

\maketitle

\begin{abstract}
Service systems like data centers and ride-hailing are popularly modeled as queueing systems in the literature. Such systems are primarily studied in the steady state due to their analytical tractability. However, almost all applications in real life do not operate in a steady state, so there is a clear discrepancy in translating theoretical queueing results to practical applications. To this end, we provide a finite-time convergence for Erlang-C systems (also known as $M/M/n$ queues), providing a stepping stone towards understanding the transient behavior of more general queueing systems. We obtain a bound on the Chi-square distance between the finite time queue length distribution and the stationary distribution for a finite number of servers. We then use these bounds to study the behavior in the many-server heavy-traffic asymptotic regimes. The Erlang-C model exhibits a phase transition at the so-called Halfin-Whitt regime. We show that our mixing rate matches the limiting behavior in the Super-Halfin-Whitt regime, and matches up to a constant factor in the Sub-Halfin-Whitt regime.

To prove such a result, we employ the Lyapunov-Poincar\'e approach, where we first carefully design a Lyapunov function to obtain a negative drift outside a finite set. Within the finite set, we develop different strategies depending on the properties of the finite set to get a handle on the mixing behavior via a local Poincar\'e inequality. A key aspect of our methodological contribution is in obtaining tight guarantees in these two regions, which when combined give us tight mixing time bounds. We believe that this approach is of independent interest for studying mixing in reversible countable-state Markov chains more generally.    
\end{abstract}


\section{Introduction}
Queueing theory has proved to be a successful tool for studying several service systems like analyzing data centers \cite{siva-cloud-scheduling-unknown-duration,siva-heavy-traffic-cloud-computing,siva-stochastic-scheduling-cloud}, wireless networks \cite{quang-non-stationary-wireless, quang-time-varying-wireless, quang-distributed-wireless-sdn}, and ride-hailing and EV systems \cite{sushilvarma2021strategicridehailing, sushil-sub-hw-power-of-d, sushilvarma2023electric}. A popular approach is to study such systems in the steady-state owing to the tractability of analysis. However, these systems rarely operate in a steady state and so, the theoretical analysis does not directly translate into real-life performance guarantees. Furthermore, a typical goal in the performance analysis of service systems is to understand queue length/waiting time behavior. However, it is not possible to obtain the exact queue length distribution except in special cases such as the $M/M/1$ queue \cite{philippe-robert-stochastic-networks-and-queues, Leguesdron1993-mm1-transient-anaysis, Morse1955-waiting-lines}. So typically, one uses the steady-state behavior as a proxy for the finite-time behavior. However, this approximation is useful only when the system mixes quickly, i.e. approaches the steady-state behavior quickly. These discrepancies motivate us to analyze the rate of convergence to the steady state. Such a result allows one to better understand the applicability of the steady-state results. As the transient analysis of queueing systems is known to be challenging, we restrict our attention to one of the simplest systems, the Erlang-C model (also known as the $M/M/n$ queue).



Moreover, sometimes it is not even possible to obtain the closed form steady-state distribution of the system in many queuing systems, and therefore one resorts to asymptotic analysis such as heavy traffic analysis \cite{Transform_Lange_2020}. Recent works have characterized the rate of convergence in these regimes, thus obtaining a handle on the pre-asymptotic queue length behavior \cite{prakirt-confluence-large-deviation, braverman2017steins}. Since these approximation errors are larger when one is further away from the asymptotic regime, and therefore one has a better handle on the stationary distribution when one is closer to the asymptotic regime.

However, prior works suggest that the queuing systems mix slower as one gets closer to the asymptotic regimes, for example, in the Heavy Traffic regime \cite{Pang_2007_martingale_proof_survey, philippe-robert-stochastic-networks-and-queues, Halfin-Whitt-Gamarnik_2013}. And so, we have a dilemma where the system has a larger steady-state approximation error if it is further away from the asymptotic regime, but mixes slower when it is closer to the asymptotic regime. And so, it is important to characterize the goodness of the steady-state analysis and asymptotic analysis to the finite-time queue length behavior. Therefore, the goal of this paper is to develop a theoretical framework to analyze the mixing time of reversible Markov chains, which entails Erlang-C systems. Having such a framework would allow us to obtain queue length bounds in finite-time and pre-asymptotic regimes (i.e. with a finite number of servers) for the $M/M/n$ queue and finite-time analysis for other various applications such as learning policies for queues \cite{vankempen2024learningpayoffsroutingskillbased} or Markov chain perturbations \cite{mitrophanov-perturbation, Rudolf2015PerturbationTheoryWasserstein}.

\subsection{Contributions}
\label{ssec:contributions}
In this work, we consider the Erlang-C system with $n$ servers whose service time is exponentially distributed with mean $1$ and the arrival rate $\lambda_n = n - n^{1-\alpha}$ for some many-server heavy traffic parameter $\alpha > 0$. It is known that the Erlang-C system exhibits a phase transition at $\alpha = 1/2$, and the regimes $\alpha \in (0,1/2)$ and $\alpha \in (1/2,\infty)$ are called the Sub-Halfin-Whitt and the Super-Halfin-Whitt regime respectively, each with a distinct behavior. Therefore, our results and our analysis are different for each regime. That being said, our contributions can be summarized as follows.

\textbf{Tight mixing of $M/M/n$}: We obtain tight convergence to stationarity results at time $t$ in terms of Chi-square distance for finite time and a finite number of servers. In particular, let $\pi_{n,t}$ and $\nu_n$ be the queue length distribution of the $M/M/n$ system at time $t$ and at the steady-state (i.e. $t \rightarrow \infty$) respectively, we obtain the convergence results in the form of
\begin{align*}
    \chi\br{\pi_{n,t}, \nu_n} \leq e^{-\square t} \chi\br{\pi_{n,0}, \nu_n} 
\end{align*}
where the $\square$ term is called the mixing rate of the system and $\chi$ is the square root of the Chi-square distance, which would be defined in Subsection \ref{ssec: chi-square-distance}. Our results are true for all time $t$ and for all $n$ (number of servers) \textcolor{black}{and such finite-time results can be obtained from any initial queue length distribution}. Furthermore, our results are tight in the sense that as $n$ goes to infinity, our mixing rate matches the mixing rate of $M/M/1$ and $M/M/\infty$, depending on the regime. In particular, for $\alpha \in (1/2,\infty)$, the system behavior resembles an $M/M/1$ system with service rate $n$. On the other hand, for $\alpha \in (0,1/2)$, the system will have an idle server with a high probability, and this resembles the behavior of an $M/M/\infty$ system.

We describe our finite-time convergence results for each regime in detail as follows.
\begin{itemize}
    \item \textbf{Super-Halfin-Whitt regime ($\alpha \in (1/2,\infty)$)}: In this regime, we show the convergence rate to stationarity to be $e^{- C_n (\sqrt{n}-\sqrt{\lambda_n})^2 t}$ where $C_n$ is a positive parameter such that $\lim_{n \rightarrow \infty} C_n = 1$ for $\alpha \in (1/2,1)$ and $C_n = 1$ for $\alpha \in [1,\infty)$. 
    Our results are generalizations of the finite-time behavior of $M/M/1$ since they match the limiting behavior of $M/M/n$ in this regime, which is expected to resemble the behavior of $M/M/1$ with a service rate $n$ (whose mixing rate is known to be $(\sqrt{n}-\sqrt{\lambda_n})^2$ \cite{Morse1955-waiting-lines, Van_Doorn1985-decay-bounds, philippe-robert-stochastic-networks-and-queues, Halfin-Whitt-Gamarnik_2013}).
    
    \item \textbf{Sub-Halfin-Whitt regime ($\alpha \in (0,1/2)$)}: We show that the mixing rate of $M/M/n$ queue in the Sub-Halfin-Whitt regime is $D_n$, where $D_n$ is a positive parameter satisfying $\lim_{n \rightarrow \infty} D_n = 1/25$, which matches the asymptotic behavior up to a constant factor. Moreover, if we have $\lambda_n \in \Z^+$ then we show that one can match the asymptotic behavior at the limit, i.e. the mixing rate approaches $1$ as in the $M/M/\infty$ system \cite{Chafa_2006_entropic_inequalities_infty_queue}. We conjecture that the condition $\lambda_n \in \Z$ is not required, and getting a mixing rate approaching $1$ without this condition will be a future work.
    
    \item \textbf{Mean Field regime $\br{\lim_{n \rightarrow \infty} \frac{\lambda_n}{n} = c \in [0,1)}$}: Finally, in this regime, we show the mixing rate of the system approaches $1-c$ as $n$ goes to infinity. When $c = 0$, we have the mixing rate approaches $1$, which matches the mixing rate of $M/M/\infty$ at the asymptotic. Notably, this result does not rely on the arrival rate being an integer to match the mixing rate of $M/M/\infty$ at the asymptotic.
\end{itemize}

These results are formally stated in Theorem \ref{thm: unifying-theorem}. To the best of our knowledge, our work is the first to establish Chi-square convergence results for queueing systems in contrast to most previous works that used TV distance and Wasserstein distance which would not be sufficient to establish finite-time statistics. On the other hand, the obtained Chi-square distance will allow us to obtain finite-time behavior characterizations of $M/M/n$ systems (i.e. the Erlang-C system), such as:
\begin{itemize}
    \item \textbf{Mean queue length}: we obtain an upper bound of the distance between the finite-time mean queue length and the steady-state mean queue length. The rate of which the finite-time mean queue length converges to the steady-state mean queue length is dictated by the mixing rate of the system. We formally state this result in Corollary \ref{corollary: mean-queue-length-mmn}.
    \item \textbf{Finite-time concentration bound of the number of customers}: we obtain a finite-time concentration bound for the queue length. Our tail bound results match the steady-state and asymptotic behavior in \cite{prakirt-confluence-large-deviation, braverman2017steins}. The queue length concentration bound results are formally stated in Corollary \ref{corollary: tail-bounds}.
    \item \textbf{Probability of having an idle server}: We obtain a bound on the probability of having an idle server at time $t$ (i.e. the queue length is less than $n$). Moreover, we also recover that as $n$ goes to infinity, this probability goes to $0$ in the Super-Halfin-Whitt regime and goes to a value close to $1$ in the Sub-Halfin-Whitt regime. The full result is stated in Corollary \ref{corollary: finite-time-idle-server-probability}.
\end{itemize}
In summary, our finite-time behavior characterizations match the steady-state and asymptotic results previously established in \cite{prakirt-confluence-large-deviation, braverman2017steins} and provide a holistic insight for Erlang-C systems.

\subsection{Our approach: The Lyapunov-Poincar\'e method}
\label{ssec: our-approach}
In contrast to finite Markov chains, continuous state space Markov chains and Markov diffusions where there is a vast body of mixing literature for these settings \cite{Levin2017-mixing-book, prasad-book, Durmus2014QuantitativeBoundsLangevin, qu-glynn2023wasserstein-contractive-drift, qin2021geometric-wasserstein, Bakry2013AnalysisAGMarkovDiffusionbook}, there are only a few toolkits to analyze countable state space Markov chains due to a lack chain rule and many methods in the finite state space are no longer applicable due to the infiniteness of the state space. To this end, we aim to develop toolkits to analyze such systems using the Lyapunov drift, which exists when the system is positive recurrent.

Previously, it is well-known that one can obtain exponential ergodicity for a stochastic system if the system admits some Lyapunov drift outside of some finite set \cite{meyn1994computable, meyn-tweedie-exponential-ergodicity-1995, rosenthal1995minorization, Lund1996-computable-rate-stochastically-ordered, Baxendale_convergence_2005, douc-moulines2018markovchains, Meyn-markov-chain-book, taghvaei-lyapunov-poincare, andrieu2022poincaresurvey}. While one can easily get a handle on the mixing outside of the said set using the negative drift, it is much harder to handle the behavior inside the finite set. The special case when the finite set is a singleton (which is the case in an $M/M/1$ queue) is well understood \cite{Lund1996-computable-rate-stochastically-ordered}. When the finite set is not a singleton, one typically uses a "stitching theorem" to combine knowledge of the drift outside the finite set and local mixing behavior inside the finite set to obtain a mixing time bound for the entire system. 
Prior work has explored the use of minorization \cite{meyn-tweedie-exponential-ergodicity-1995, rosenthal1995minorization, Baxendale_convergence_2005, taghvaei-lyapunov-poincare}, contraction \cite{Durmus2014QuantitativeBoundsLangevin, Hairer2011-asymptotic-coupling} within the finite set or obtain the local Poincar\'e constant by using the radius of the finite set \cite{Bakry-simple-proof-lyapunov-poincare-2008, raginsky2017nonconvexsgld, divol-niles-weed2024optimal-transport-estimation}, and then a corresponding stitching theorem to obtain mixing rates. However, these approaches either do not lead to 
a tight mixing rate inside the finite set (especially for high-dimensional settings \cite{qianqin2020minorizationlimitations}) or are hard to obtain in practice. To this end, we propose the Lyapunov-Poincar\'e method, where we combine the Lyapunov drift outside of a finite set with a local Poincar\'e inequality inside the finite set to establish 
mixing results. 
Our work extends the previously established Lyapunov drift framework \cite{meyn1994computable, meyn-tweedie-exponential-ergodicity-1995, rosenthal1995minorization, Lund1996-computable-rate-stochastically-ordered, Baxendale_convergence_2005, douc-moulines2018markovchains, Meyn-markov-chain-book, raginsky2017nonconvexsgld, taghvaei-lyapunov-poincare, andrieu2022poincaresurvey} to allow a more fine-grained mixing analysis, which eventually enables us to obtain tight mixing bounds. In particular, we develop new stitching theorems so that depending on the properties of the finite set, we can better combine the drift properties outside the finite set with the local mixing behavior inside the finite set (which is characterized in terms of local Poincar\'e inequalities) so as to obtain more fine-grained mixing analysis.





    

\textbf{Handling of the finite set}: 
Our work takes novel approaches to obtain tight mixing bounds inside the finite set. 
To obtain tight mixing bounds inside the finite set, we propose two novel finite set techniques: 1) the canonical path method and 2) the truncation method. These two methods correspond to two different regimes, which allow us to obtain mixing time bounds that match the asymptotic behaviors. These methods can be summarized as follows:

\begin{itemize}
    \item \textbf{Local canonical path method}: The canonical path method, previously developed in \cite{Jerrum-permanent}, is a powerful technique used to analyze the mixing time of finite discrete Markov chains by bounding how "congested" the state space is with respect to moving from one state to another. In particular, the canonical path method estimates the mixing time of a finite discrete Markov chain with the congestion ratio, which scales with the stationary distribution and the sum of the path length of paths passing through a particular edge. As such, the canonical path method can be viewed as a conductance-typed approach to obtain mixing. Notably, the canonical path method obtains a tight mixing bound when the Markov chain is a symmetric random walk on a path graph \cite{UBCmixingbook, Levin2017-mixing-book}, which is also the behavior of the Erlang-C system in the finite set when an appropriate Lyapunov function is chosen. By applying the local canonical path method, we obtain a tight weighted local Poincare inequality of the Erlang-C system inside the finite set, which allows us to use our stitching theorems to obtain the mixing rate bound for the entire system.  We formally state our local canonical path lemma in Lemma \ref{lemma: canonical-path}.
    
    
    


    \item \textbf{Truncation}: On the other hand, the aforementioned local canonical path method is only viable when the distribution inside the finite set is roughly uniform.
    For other finite sets whose distributions are not necessarily uniform, we propose the truncation method to obtain local mixing results inside such sets. In essence, if we can find another system with known Poincar\'e constant, which behaves identically inside the finite set, then we can use it to get a handle on the local Poincar\'e constant of the original system. 
    With this method, we obtain tight mixing time bounds up to a constant factor for $M/M/n$ queues in the Mean Field regime, i.e. when $\lim_{n \rightarrow \infty} \frac{\lambda_n}{n} = c \in [0,1)$. For $c = 0$ (the so-called Light Traffic regime), we have the mixing rate approaches to $1$, which matches the limiting behavior of the system.
\end{itemize}

In summary, our results and approach are summarized in Table \ref{tab:mixing_time} here.

\begin{table}[!ht]
    \footnotesize
    \centering
    \begin{tabular}{|c|c|c|c|c|c|c|c|c|c|}
        \hline
        \multirow{3}{*}{Regime} & $M/M/\infty$ & \multicolumn{2}{c|}{Mean Field} & Sub-HW & \multicolumn{2}{c|}{Halfin-Whitt} & \multicolumn{2}{c|}{Super-HW} & $M/M/1$ \\\cline{2-10}
        & & \multicolumn{2}{c|}{$\alpha \approx 0$} & $\alpha \in (0,1/2)$ & \multicolumn{2}{c|}{$\alpha = 1/2$} & $\alpha \in (1/2,1)$ & $\alpha \in [1,\infty]$ & 
        \\
        & & \multicolumn{2}{c|}{$\lambda_n/n \rightarrow c$} & & \multicolumn{2}{c|}{} & & & \\\hline
        V(q) & Linear & \multicolumn{2}{c|}{Constant + Exp} & Linear + Exp & \multicolumn{4}{c|}{Two-sided Exp} & Exp
        \\\hline
        Finite set & Singleton & \multicolumn{2}{c|}{Truncation method} & \multicolumn{4}{c|}{Canonical path method} & \multicolumn{2}{c|}{Singleton} 
        \\\hline
        Mixing rate & 1 & \multicolumn{2}{c|}{$L_n$} & $D_n$ & \multicolumn{2}{c|}{$\Omega(1)$} & $C_n (\sqrt{n}-\sqrt{\lambda_n})^2$ & \multicolumn{2}{c|}{$(\sqrt{n}-\sqrt{\lambda_n})^2$} \\
        $e^{-\square t}$ & & \multicolumn{2}{c|}{$L_n \rightarrow 1-c$} & $D_n \rightarrow \frac{1}{25}$ & \multicolumn{2}{c|}{} & $C_n \rightarrow 1$ & \multicolumn{2}{c|}{}
        \\\hline
    \end{tabular}
    \caption{Overview of mixing time analysis using the Lyapunov-Poincar\'e method for $M/M/n$ for $\lambda_n = n-n^{1-\alpha}, \mu = 1$ where $\alpha$ is a parameter and $V(q)$ is the Lyapunov function. The number of waiting jobs is denoted as $\sqbr{q-n}^+ = \max\{q-n,0\}$, and the asymptotic behavior is the queue length distribution behavior as $n \rightarrow \infty$. When $\alpha \approx \infty$, this is called the Classical Heavy Traffic regime. For the Mean Field regime, we write $\alpha \approx 0$ as an intuitive explanation, whereas the actual condition is $\lim_{n \rightarrow \infty} \frac{\lambda_n}{n} = c \in [0,1)$.}
    \label{tab:mixing_time}
\end{table}

\subsection{Related works}

Unlike many mixing time works which focus on the discrete-time finite state space Markov chains \cite{Jerrum-permanent, lovasz-mixing-rate-conductance-1990, UBCmixingbook, Levin2017-mixing-book, Wilson_method_2004_lozenge_tiling}, analyzing the mixing behavior of queuing systems is a non-trivial endeavor as it involves handling an infinite, countable state space. Indeed, while there have been prior works on the mixing time of $M/M/n$ queues \cite{philippe-robert-stochastic-networks-and-queues, Halfin-Whitt-Gamarnik_2013, zeifman_lognorm, transient-HW-diffusion-Leeuwaarden} and other queuing systems \cite{Lorek_Szekli_2015, Mao2015-open-Jackson-spectral-gap, itai-exponential-ergodicity, Braverman2020-steady-state-jsq, Braverman2023-jsq-convergence-diffusion-limit, Luczak_supermarket_2006, rutten2023meanfieldspatial}, each of them can either only obtain tight finite-time bounds that match the asymptotic behavior only for a specific regime or setting, or can only show exponential ergodicity without an explicit rate \cite{itai-exponential-ergodicity, Braverman2020-steady-state-jsq, Braverman2023-jsq-convergence-diffusion-limit}, geometric convergence to a ball \cite{Luczak_supermarket_2006} or subgeometric convergence \cite{rutten2023meanfieldspatial}. In particular, we shall provide a recap of mixing results for each of these regimes and settings.

\textbf{$M/M/1$ queue:} The paper \cite{Morse1955-waiting-lines} is the first to characterize the transient behavior of an $M/M/1$ queue. Following this work, \cite{Van_Doorn1985-decay-bounds} uses the spectral representation of birth-and-death processes to obtain the rate of convergence. On the other hand, \cite{philippe-robert-stochastic-networks-and-queues} uses a coupling argument to establish the distance to stationarity for the $M/M/1$ queue where the author upper bounds the distance to stationarity with the probability that the two processes will coalesce within time $T$, which can be further upper bounded by the probability of hitting $0$ within time $T$. Unfortunately, both of these techniques are difficult to generalize to the $M/M/n$ queue given its more complicated dynamics.

\textbf{$M/M/\infty$ queue:} The explicit characterization of the transient distribution of the queue length is known for the $M/M/\infty$ queue.
Such a result is obtained by considering a two-dimensional Poisson point process representing the arrival time and the service time. A different approach is to bound the Entropy of the system \cite{Chafa_2006_entropic_inequalities_infty_queue}. In this work, the author uses a diffusion approximation of the $M/M/\infty$ queuing system and uses the binomial-Poisson entropic inequalities to establish the rate of convergence.

Now, we will look at the mixing results for specific $M/M/n$ regimes and queueing settings.

\textbf{$M/M/n$ queue:} In \cite{zeifman_lognorm}, the author uses the log-norm to characterize the convergence rate of the birth-and-death process, which applies to the $M/M/n$ setting. This method obtains the tight mixing rate of $(\sqrt{n\mu} - \sqrt{\lambda})^2$ for $\alpha \geq 1$ and a sub-optimal mixing rate of  $\mu - \frac{\lambda}{n-1} \ll (\sqrt{n\mu} - \sqrt{\lambda})^2$ for $\alpha \in (1/2, 1)$. 
The log-norm approach is also applied to analyze queueing models with queue-length-dependent admission control \cite{Natvig1975-discouraged}, however, with a sub-optimal mixing rate 
\cite{Van_Doorn1981-discouraged-transient}. This suggests that while simple, the log-norm approach is not strong enough to obtain sharp convergence rates for a wide variety of settings.

In the Halfin-Whitt regime, \cite{Halfin-Whitt-Gamarnik_2013} uses the spectral representation of birth-and-death processes, previously established by \cite{Karlin1957-KM-representation}, to obtain the rate of convergence of the $M/M/n$ queue in the Halfin-Whitt regime, i.e. $\alpha = 1/2$. Most notably, the authors were able to characterize the phase transition of the mixing rate. 
\textcolor{black}{In particular, by letting the arrival rate $\lambda_n = n-B\sqrt{n}$ and $\mu = 1$, the mixing rate transitions from $(\sqrt{n\mu} - \sqrt{\lambda})^2$ to $\approx 1$ as $B$ increases from $0 \rightarrow \infty$, establishing a phase transition from $M/M/1$-like behavior to $M/M/\infty$-like behavior even in the Halfin-Whitt regime. In their work, \cite{Halfin-Whitt-Gamarnik_2013} establishes a bound on $|P_{ij}(t)-\pi_j(\infty)|$ for $|i-j| = O(\sqrt{n})$ and for a sufficiently large $n$, which only provides guarantee for sufficiently close states, but do not state 
finite-time moment bounds and tail bounds. 
These limitations motivate us to consider finite time bounds in the Chi-square metric in our work.}

\textcolor{black}{
However, these prior results using the spectral representation can be used to obtain finite-time Chi-square bounds. While no prior work has explicitly stated such results, these results can be inferred by putting together several works in the literature. 
We obtain this and state a formal result in Appendix \ref{ssec: other-discussions-with-prev-works}. We also present a discussion contrasting this work with our approach, as well as an in-depth comparison of our work with previous works on the $M/M/n$ systems. In particular, we would like to highlight that while our approach is applicable for any reversible Markov chain, the orthogonal polynomial approach is limited to birth-death chains.}



\textbf{Jackson networks:} For Jackson networks, \cite{Lorek_Szekli_2015} shows exponential ergodicity, but their bound using Cheeger's inequality is not tight in terms of the traffic parameter, and the dependence of the mixing time on the spectral gap of the communication graph is not discussed. The spectral gap of the open Jackson network is established in \cite{Mao2015-open-Jackson-spectral-gap} using the Markov chain decomposition method.

\textbf{Load-balancing systems:} In addition to $M/M/n$ and Jackson networks, mixing results have been previously established for some load-balancing algorithms. The paper \cite{rutten2023meanfieldspatial} obtained sub-geometric convergence for the spatial load-balancing system and \cite{yuanzhe2025jsqmixing} obtained $O(1/t)$ convergence rate for the join-the-shortest-queue system with $2$ servers. On the other hand, \cite{Luczak_supermarket_2006, luczak-supermarket-equilibrium} obtained exponential convergence to a ball in the sub-Halfin-Whitt regime for the join-the-shortest-queue and power-of-$d$ policies.

\textbf{Birth and death processes}: Besides queuing systems, birth and death processes (which entail $M/M/n$ queues) are common countable state space Markov chains. There are several works analyzing the spectral gap of birth and death processes \cite{Karlin1957-KM-representation, Van_Doorn1985-decay-bounds, van-doorn-representation-of-zeros, Van_Doorn1981-discouraged-transient, Van_Doorn2009-mmnn, Van_Doorn2011-mmnnr}. Additionally, it is known that these systems admit a system of orthogonal polynomials with respect to some spectral measure \cite{dominguez-book-orthogonal-polynomial}, which can be used to study exponential ergodicity \cite{Karlin1957-KM-representation, Halfin-Whitt-Gamarnik_2013}, $\ell_2$ convergence \cite{mufa-chen-l2-convergence}, or hitting time properties \cite{Van_Doorn2017-hitting-time-orthogonal-polynomials}. For non-homogeneous birth-death processes, \cite{zeifman-nonhomogeneous-1995} provides an analysis on the convergence of these processes and applies it to finite Markov chains. However, most of these works exploit the specific structure of birth and death processes (such as the spectral representation) which severely limits its applicability. In contrast, we aim to develop a more general framework to analyze the mixing behavior of reversible Markov chains.

\subsubsection{Lyapunov drift method}
\label{sssec: lyapunov-drift-literature}

It is well-known that an irreducible Markov chain is positive recurrent if and only if it has a negative drift outside of a bounded set \cite{Foster-Lyapunov}, which is famously known as the Foster-Lyapunov Theorem. Since then, several works have used the drift assumption to establish the convergence rate of the Markov chain. A long line of literature \cite{douc-moulines2018markovchains, Meyn-markov-chain-book, meyn1994computable, meyn-tweedie-exponential-ergodicity-1995, rosenthal1995minorization, Lund1996-computable-rate-stochastically-ordered, Baxendale_convergence_2005, taghvaei-lyapunov-poincare, Hairer2011-asymptotic-coupling, Durmus2014QuantitativeBoundsLangevin} establish geometric ergodicity for general Markov chains whenever we have the Lyapunov drift condition outside of some set and some control inside of that set (i.e. the set is a singleton or the set admits a minorization or a contraction condition). Recently, \cite{anderson2020drifthittingtime} proposes a drift and hitting time approach instead of using the minorization assumption, and in a similar spirit, \cite{cattiaux2010poincarehittingtime} uses the exponential integrability of the hitting time to obtain Poincar\'e inequalities. However, these approaches do not allow us to obtain tight convergence rates for our case.

The Lyapunov drift method is known to be very effective in establishing steady-state queueing results  \cite{eryilmaz2013asymptotically, Transform_Lange_2020, prakirt-confluence-large-deviation} and beyond that, convergence and steady-state results in Stochastic Approximation and Optimization \cite{zaiwei-constant-stepsize,nonlinear-sa,zaiwei-envelope,hoang-exponentially-stable-sa,hoang-nonlinear-sa}. As Lyapunov drift has been well-studied in the queuing literature, we naturally want to leverage this drift to obtain mixing results. Previously, \cite{itai-exponential-ergodicity} obtained exponential convergence results for Markovian queues but the focus is not on obtaining tight mixing rates. For JSQ, \cite{Braverman2020-steady-state-jsq, Braverman2023-jsq-convergence-diffusion-limit} uses the Lyapunov drift mixing framework by \cite{meyn-tweedie-exponential-ergodicity-1995}. All of these works, however, only simply show exponential ergodicity and do not get a characterization on the mixing rate. This suggests that to get a tight mixing rate bound, a more fine-grained application of the Lyapunov drift method is required.


In addition to the negative drift approach and the convergence in $\ell_2$ distance or TV distance, some other works have opted for convergence in Wasserstein space instead \cite{qu-glynn2023wasserstein-contractive-drift, qin2021geometric-wasserstein, durmus2015subgeometricratesconvergencewasserstein, durmus2023uniformmminorization} since many systems such as the constant step size SGD may not converge in TV distance but does converge in Wasserstein distance \cite{qu-glynn2023wasserstein-contractive-drift}. In this metric, \cite{qu-glynn2023wasserstein-contractive-drift} proposes the notion of contractive drift rather than the Foster-Lyapunov drift to obtain the finite-time convergence of various systems such as the single server queue and the constant step size SGD. Notably, this notion allows us to bypass the bounded set problem, where the drift and minorization approach is known to have limitations \cite{qianqin2020minorizationlimitations}. On the other hand, the approach of using a contraction condition to handle the bounded set (also known as drift and contraction) often leads to a tradeoff in terms of the drift rate and the bounded set size \cite{qu-glynn2023wasserstein-contractive-drift}. Instead of using this approach, our work builds on the Lyapunov drift framework to obtain Chi-square convergence, and we argue that our work is another approach to handling the bounded set issue that is typically faced when using drift arguments to obtain mixing results.

We summarize our results and techniques in Table \ref{tab:mixing_time}. Our work provides mixing time analysis in the Mean Field regime (where the arrival rate $\lambda_n$ is chosen such that $\lim_{n \rightarrow \infty} \frac{\lambda_n}{n} = c$), the Sub-Halfin-Whitt regime (i.e. $\alpha \in (0,1/2)$) and the Super-Halfin-Whitt regime (i.e. $\alpha \in (1/2,\infty)$). Additionally, note that our mixing results also match the asymptotic and the steady-state behavior of the system, that is the system behaves like $M/M/1$ for $\alpha \in (1/2,\infty)$ and $M/M/\infty$ for $\alpha \in (0,1/2)$.




\subsection{Notations}
\label{sssec:notations}
To help the readers better understand our work, we define some notations used in our work as follows. For a countable set $\cS$, denote the space of functions $f: \cS \rightarrow \R$ that are square-integrable by $\ell_{2,\pi}$. Now, define the inner product
\begin{equation*}
\lr{f}{g}_{\pi} := \sum_{x \in \cS} \pi(x)f(x)g(x)
\end{equation*}
for $f, g \in \ell_{2,\pi}$.
From the definition of the inner product, we have $\|f\|^2_{2,\pi} := \lr{f}{f}_{\pi}$. The distribution $\pi$ is said to be reversible for the operator $\cL: \ell_{2, \pi} \rightarrow \ell_{2, \pi}$ if 
$\cL$ is self-adjoint on $\ell_{2,\pi}$, i.e., 
\begin{equation*}
\lr{f}{\cL g}_{\pi}= \lr{\cL f}{g}_{\pi},\quad \forall f,g \in \ell_{2,\pi}.
\end{equation*}
In addition, for a well-behaved function $f$, we denote $\pi(f) = \E_\pi(f), \Var_\pi(f) = \E_\pi(f^2) - \E_\pi(f)^2$, $\mathbf{1}$ as the vector of $1$s. For a set $K$, we denote $\pi(K) = \int_K d\pi$ and $1_K$ as the indicator function for the set $K$ where $1_K(x) = 1$ if $x \in K$ and $1_K(x) = 0$ otherwise. All logarithms in this paper are natural logarithms. Finally, we denote $\sqbr{x}^+ = \max\{0,x\}$ and we interpret $\frac{0}{0}$ as $0$. 

\section{Model and Main Results}
\label{sec:model-main-results}
\subsection{Model}
Consider the $M/M/n$ queueing system, also known as the Erlang-C system, described as follows. Customers arrive at a central queue following a Poisson process with mean $\lambda$. There are $n$ homogeneous servers that serve the waiting customers in a first-come-first-serve (FCFS) manner. The service time of each customer is independently and exponentially distributed with mean $1/\mu$. Without the loss of generality, we assume $\mu = 1$ unless specified otherwise. The number of customers $\{q(t): t \geq 0\}$ in the queue evolves as a continuous time Markov chain (CTMC) with state space $\Z_{\geq 0}$. This CTMC is a birth-and-death process with the birth rate $\lambda$ and the death rate $\min\{n, q\}$ in the state $q(t)=q$. We denote $\cL$ as the infinitesimal generator of the CTMC and $P_t$ as the corresponding Markov semigroup where $P_t = e^{t \cL}$.
For a given $n$, we denote the stationary distribution of $M/M/n$ queue by $\nu_n$ and we omit the subscript of $n$, whenever it is clear from the context.

In addition, we define $M/M/\infty$ as the birth-and-death process with the birth rate $\lambda$ and the death rate $q$ in the state $q(t) = q$. This system behaves as if there are an infinite number of servers and whenever a customer arrives, it is immediately allocated an idle server to get served.


\subsection{Chi-square distance}
\label{ssec: chi-square-distance}
To measure the distance to stationarity, we use a notion of distance called the Chi-square distance, which is formally defined as follows:
\begin{defi}
\label{def: chi-square distance}
Given two distributions $P, Q$ whose supports are the subset of the countable set $\cS$ and $P$ is absolutely continuous w.r.t $Q$ 
, the Chi-square distance of $P$ with respect to $Q$ is given by
\begin{align}
    \chi^2 \br{P,Q} = \sum_{x \in \cS} \frac{(P(x)-Q(x))^2}{Q(x)}.
\end{align}
\end{defi}
As $\E_Q \sqbr{\frac{P}{Q}} = 1$, the Chi-squared distance $\chi^2(P,Q)$ can also be rewritten as
\begin{align}
    \chi^2(P,Q) = \sum_{x \in \cS} \frac{(P(x)-Q(x))^2}{Q(x)} = \sum_{x \in \cS} Q(x) \br{\frac{P(x)^2}{Q(x)^2}-1} = \Var_Q\br{\frac{P}{Q}}.
\end{align}
The reason that we are interested in the Chi-square distance is that transient mean queue length and tail bounds can be derived from the Chi-square distance to stationarity. Indeed, we have the following Theorem on the variational form of the Chi-square distance.


\begin{prop}
    \label{prop: variational-representation-chi-square}
    Given two distributions $P, Q$ with countable supports and let $\cG_Q$ be a collection of functions $g: \R \rightarrow \R$ such that $\E_Q\sqbr{g(X)^2} < \infty\}$, we have
    \begin{align}
        \label{eqn: variational-representation}
        \chi^2(P,Q) = \sup_{g \in \cG_Q} \frac{\br{\E_P[g(X)]-\E_Q[g(X)]}^2}{\Var_Q[g(X)]}
    \end{align}
\end{prop}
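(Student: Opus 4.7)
The plan is to establish the equality by proving the two inequalities separately: the upper bound (that the ratio never exceeds $\chi^2(P,Q)$ for any admissible $g$) via the Cauchy--Schwarz inequality, and the matching lower bound by exhibiting the Radon--Nikodym derivative as an explicit maximizer.

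First, I would observe that both the numerator $\br{\E_P[g(X)] - \E_Q[g(X)]}^2$ and the denominator $\Var_Q[g(X)]$ in the variational ratio are invariant under the shift $g \mapsto g + c$ for any constant $c \in \R$. Hence, without loss of generality, I would assume $\E_Q[g(X)] = 0$, which simplifies $\Var_Q[g(X)] = \E_Q[g(X)^2]$. Then for the upper bound, I would rewrite
\begin{align*}
\E_P[g(X)] - \E_Q[g(X)] = \sum_{x \in \cS} (P(x) - Q(x))\, g(x) = \sum_{x \in \cS} \frac{P(x) - Q(x)}{\sqrt{Q(x)}} \cdot \sqrt{Q(x)}\, g(x),
\end{align*}
and apply the Cauchy--Schwarz inequality to these two factors to obtain $\br{\E_P[g(X)] - \E_Q[g(X)]}^2 \leq \chi^2(P,Q) \cdot \E_Q[g(X)^2] = \chi^2(P,Q) \cdot \Var_Q[g(X)]$. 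Taking the supremum over $g \in \cG_Q$ then yields the desired upper bound.

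For the matching lower bound, I would take the Radon--Nikodym derivative $g^*(x) := P(x)/Q(x)$ as the candidate extremizer. Since $P$ is absolutely continuous with respect to $Q$, this ratio is well defined $Q$-almost everywhere (using the paper's convention that $0/0 = 0$). A direct computation from Definition \ref{def: chi-square distance} yields $\E_Q[g^*(X)] = 1$, $\E_P[g^*(X)] = 1 + \chi^2(P,Q)$, and $\Var_Q[g^*(X)] = \chi^2(P,Q)$, so that the variational ratio evaluated at $g^*$ equals $\chi^2(P,Q)$ exactly.

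The only subtle point, and the main obstacle if any, is ensuring $g^* \in \cG_Q$, i.e.\ that $\E_Q[g^*(X)^2] < \infty$. This holds if and only if $\chi^2(P,Q) < \infty$, in which case the supremum is attained at $g^*$. In the degenerate case $\chi^2(P,Q) = \infty$, I would approximate $g^*$ by its truncations $g^*_N(x) := \min\{g^*(x), N\} \mathbf{1}_{\{x \in K_N\}}$ with $K_N \uparrow \cS$ an exhausting sequence of finite sets, and pass $N \to \infty$ to conclude that the supremum is also $+\infty$; this handles the infinite case and confirms both sides agree. Combining these two directions gives the claimed variational identity.
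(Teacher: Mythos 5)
Your proof is correct, but it takes a genuinely different route from the paper's. The paper derives the identity top-down from the general $f$-divergence variational representation (Theorem 7.26 of Polyanskiy--Wu): it specializes to $f(x)=(x-1)^2$, computes the conjugate $f^*_{\mathrm{ext}}(y)=y+y^2/4$, performs the change of variable $h \mapsto h/2-1$, sets $h=\lambda g$, and optimizes over $\lambda \in \R$. Your argument is bottom-up and self-contained: the upper bound $(\E_P[g]-\E_Q[g])^2 \le \chi^2(P,Q)\,\Var_Q(g)$ is a one-line Cauchy--Schwarz after centering $g$, and the lower bound is exhibited by the explicit extremizer $g^*=P/Q$, which is arguably the more revealing proof since it names the optimal test function directly (it is exactly the likelihood ratio, which is also why the quantity $\pi_t/\nu$ appears throughout the paper's Poincar\'e-type arguments). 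What the paper's approach buys is that the same template proves variational representations for any $f$-divergence at once; what yours buys is elementariness and no reliance on conjugate duality. One small caution in the $\chi^2(P,Q)=\infty$ case: with the truncation $g_N^*=\min\{g^*,N\}\mathbf 1_{K_N}$ you get an $\infty/\infty$ ratio whose divergence requires a short extra estimate; it is cleaner to take instead $g_N := (g^*-1)\mathbf 1_{\{g^*\le N\}}$, for which a direct computation gives $\E_P[g_N]-\E_Q[g_N]=\sum_{g^*\le N}(P-Q)^2/Q =: A_N$ and $\Var_Q(g_N)\le A_N$, so the ratio is at least $A_N \uparrow \infty$. With that substitution the argument is airtight in all cases.
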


The proof of Proposition \ref{prop: variational-representation-chi-square} is well-known and is presented in Appendix \ref{ssec: variational-representation-proof}. Additionally, Proposition 7.15 in \cite{Polyanskiy_Wu_2024} states that the Chi-square distance can be used to give an upper bound on the TV distance as well. From Proposition \ref{prop: variational-representation-chi-square}, we can substitute the function $g$ of our interests to obtain specific bounds. In particular, we obtain the following bound on the distance of the moments of the distributions by substituting $g(x) = x^k$.

\begin{corollary}
    \label{corollary: moment-bound}
    Given two distributions $P, Q$ with countable supports. In addition, let $k \in \Z^+$ and assume that $\E_Q[X^{2k}] < \infty$, then
    \begin{align}
        \left| \E_P \sqbr{X^k} - \E_Q \sqbr{X^k} \right| \leq \chi(P,Q) \sqrt{\E_Q\sqbr{X^{2k}}-\E_Q\sqbr{X^k}^2}.
    \end{align}
\end{corollary}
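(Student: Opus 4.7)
The plan is to derive this as a one-line specialization of Proposition \ref{prop: variational-representation-chi-square} by making the specific choice $g(x)=x^k$. The finite $2k$-th moment assumption is exactly what is needed to certify that this choice lies in the admissible class $\cG_Q$, so no regularity issue arises.

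Concretely, I would first observe that since $\E_Q[X^{2k}]<\infty$, the function $g(x)=x^k$ satisfies $\E_Q[g(X)^2]=\E_Q[X^{2k}]<\infty$, and hence $g\in\cG_Q$. Consequently, $g$ is an admissible test function in the supremum \eqref{eqn: variational-representation}, which immediately yields
\begin{equation*}
    \chi^2(P,Q)\;\ge\;\frac{\bigl(\E_P[X^k]-\E_Q[X^k]\bigr)^2}{\Var_Q[X^k]}.
\end{equation*}
Rearranging and expanding $\Var_Q[X^k]=\E_Q[X^{2k}]-\E_Q[X^k]^2$ gives
\begin{equation*}
    \bigl(\E_P[X^k]-\E_Q[X^k]\bigr)^2\;\le\;\chi^2(P,Q)\,\bigl(\E_Q[X^{2k}]-\E_Q[X^k]^2\bigr),
\end{equation*}
and taking square roots of both sides (noting $\chi(P,Q)\ge 0$ and that $\Var_Q[X^k]\ge 0$) produces the stated bound.

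There is no real obstacle here: the content of the corollary is contained entirely in Proposition \ref{prop: variational-representation-chi-square}, and the only thing to verify is the admissibility of the test function, which is ensured by the hypothesis $\E_Q[X^{2k}]<\infty$. One minor edge case worth a line is the degenerate situation $\Var_Q[X^k]=0$, in which case both sides of the inequality are zero (the right-hand side trivially, and the left-hand side because $X^k$ is $Q$-almost surely constant, forcing $\E_P[X^k]=\E_Q[X^k]$ whenever $P\ll Q$, which is implicit in the definition of $\chi^2(P,Q)$); the convention $0/0=0$ stated in Subsection \ref{sssec:notations} handles this cleanly within the variational formula.
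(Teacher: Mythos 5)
Your proof is correct and follows exactly the approach the paper uses: substituting $g(x)=x^k$ into the variational representation of Proposition~\ref{prop: variational-representation-chi-square}, with the moment hypothesis guaranteeing admissibility. The extra remark on the degenerate $\Var_Q[X^k]=0$ case is a reasonable addition but does not change the substance.
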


We can substitute $g(X) = e^{\theta X}$ for an appropriately chosen $\theta \in \mathbb{R}$ to establish a bound on the difference of the Moment Generating Functions (MGF) as stated in the following corollary.

\begin{corollary}
\label{corollary: mgf-bound}
Given two distributions $P, Q$ with countable supports and $\theta \in \R^+$ such that $\E_Q[e^{2\theta X}] < \infty$, then 
\begin{align}
    \left| \E_P \sqbr{e^{\theta X}} - \E_Q \sqbr{e^{\theta X}} \right| \leq \chi\br{P, Q} \sqrt{\E_Q\sqbr{e^{2\theta X}}-\E_Q\sqbr{e^{\theta X}}^2}.
\end{align}
\end{corollary}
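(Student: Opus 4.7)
The plan is to obtain this as a direct specialization of the variational representation in Proposition \ref{prop: variational-representation-chi-square} by choosing the test function $g(x) = e^{\theta x}$. Concretely, I would first verify admissibility: the hypothesis $\E_Q[e^{2\theta X}] < \infty$ says exactly that $\E_Q[g(X)^2] < \infty$, so $g \in \cG_Q$. As an immediate consequence, $\E_Q[e^{\theta X}]$ is also finite (e.g.\ by Cauchy--Schwarz against the constant function $1$), so $\Var_Q[e^{\theta X}] = \E_Q[e^{2\theta X}] - \E_Q[e^{\theta X}]^2$ is well-defined and non-negative.

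Next, I would plug $g$ into the variational formula of Proposition \ref{prop: variational-representation-chi-square} to obtain the lower bound
\begin{equation*}
    \chi^2(P,Q) \;\geq\; \frac{\br{\E_P[e^{\theta X}] - \E_Q[e^{\theta X}]}^2}{\Var_Q[e^{\theta X}]},
\end{equation*}
multiply through by $\Var_Q[e^{\theta X}]$, and take square roots to deduce
\begin{equation*}
    \left| \E_P[e^{\theta X}] - \E_Q[e^{\theta X}] \right| \;\leq\; \chi(P,Q)\sqrt{\E_Q[e^{2\theta X}] - \E_Q[e^{\theta X}]^2},
\end{equation*}
which is exactly the claimed bound. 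The only minor edge case to address is when $\Var_Q[e^{\theta X}] = 0$: then $e^{\theta X}$ is $Q$-almost surely a constant, and since absolute continuity of $P$ with respect to $Q$ (inherited from the setup of Proposition \ref{prop: variational-representation-chi-square}) forces $e^{\theta X}$ to equal the same constant $P$-a.s., both sides of the claimed inequality vanish and the bound holds trivially (recalling the convention $0/0 = 0$ from the notation section).

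There is essentially no substantial obstacle here: the result is a cosmetic specialization of Proposition \ref{prop: variational-representation-chi-square} in the same spirit as Corollary \ref{corollary: moment-bound}, with the choice $g(x) = x^k$ replaced by $g(x) = e^{\theta x}$. The only point worth flagging carefully is the integrability hypothesis, which is the minimal condition needed for the Cauchy--Schwarz-type inequality underlying the variational form to apply with this particular $g$; the assumption $\E_Q[e^{2\theta X}] < \infty$ is precisely what the proposition requires and directly makes the square root on the right-hand side finite.
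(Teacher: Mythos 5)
Your proof is correct and matches the paper's intended approach exactly: the corollary follows by substituting $g(x) = e^{\theta x}$ into the variational representation of Proposition \ref{prop: variational-representation-chi-square}, using $\E_Q[e^{2\theta X}] < \infty$ to ensure admissibility. Your handling of the degenerate case $\Var_Q[e^{\theta X}] = 0$ is a welcome extra bit of care, but otherwise there is no divergence from the paper's route.
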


The MGF bounds are particularly useful for obtaining exponential tail bounds. With the proper scaling, many classical queueing systems such as $M/M/n$ and JSQ possess an MGF \cite{Transform_Lange_2020, prakirt-confluence-large-deviation}, and thus one can derive exponential tail bounds for such systems.

These results will be the bread and butter to obtain the finite-time statistics of our queueing systems of interest. Indeed, let $\pi_{n,t}$ be the queue length distribution at time $t$ and $\nu_n$ be the stationary distribution of the $M/M/n$ system, if we can obtain a bound on $\chi(\pi_{n,t}, \nu_n)$ then we can apply these results to obtain finite-time statistics such as mean queue length bounds and tail bounds. To this end, we develop machinery to establish the distance to stationarity bounds for $\pi_{n,t}$, in turn, resulting in finite-time statistics.

\subsection{Main Results}
\label{ssec: main-results}
Consider an $M/M/n$ queue with arrival rate $\lambda_n = n-n^{1-\alpha}$ for some $\alpha \in \R^+$ and service rate $\mu=1$ (unless stated otherwise). Such a parametrization of $\lambda_n$ is popularly known as the many-server-heavy-traffic regime and queueing systems are generally analyzed for $t, n \rightarrow \infty$. On the contrary, here we focus on a finite $t \geq 0$ and a finite $n \geq 1$. We obtain a bound on $\chi(\pi_{n, t}, \nu_n)$ of the form $e^{-\square t}$, thus, establishing the convergence of the finite-time queue length distribution $\pi_{n, t}$ to the stationary distribution $\nu_n$ as $t \rightarrow \infty$. Moreover, we provide a tight bound on the mixing rate, denoted by $\square$. This result is established in the following theorem.



\begin{theorem}
\label{thm: unifying-theorem}
    Let $\pi_{n,t}$ be the queue length distribution at time $t \geq 0$ of the continuous-time $M/M/n$ system defined above and let the stationary distribution be $\nu_n$. \\
    For $\alpha \geq 1, n \geq 2$, we have that:
    \begin{align}
        \label{eqn: super-nds-mixing-bound}
        \chi(\pi_{n,t},\nu_n) \leq e^{-(\sqrt{n}-\sqrt{\lambda_n})^2 t} \chi(\pi_{n,0},\nu_n) \, \forall t \geq 0.
    \end{align}
    For $\alpha \in (1/2,1), n \geq 2$, we have that:
    \begin{align}
        \label{eqn: super-halfin-whitt-mixing-bound}
        \chi(\pi_{n,t},\nu_n) \leq e^{-C_n (\sqrt{n}-\sqrt{\lambda_n})^2 t} \chi(\pi_{n,0},\nu_n) \, \forall t \geq 0,
    \end{align}    
    for some $C_n > 0$ such that $\lim_{n \rightarrow \infty} C_n = 1$. \\
    For $\alpha \in (0,1/2), n \geq 2$, we have that:
    \begin{align}
        \label{eqn: sub-halfin-whitt-mixing-bound}
        \chi(\pi_{n,t},\nu_n) \leq e^{-D_n t} \chi(\pi_{n,0},\nu_n) \, \forall t \geq 0,
    \end{align}
    for some $D_n > 0$ such that $\lim_{n \rightarrow \infty} D_n = 1/25$. \\
    Finally, for $\alpha = 1/2$ and $n \geq 110$, we have 
    \begin{align}
        \label{eqn: halfin-whitt-mixing-bound}
        \chi(\pi_{n,t},\nu_n) \leq e^{-H_n t} \chi(\pi_{n,0},\nu_n) \, \forall t \geq 0
    \end{align}
    for some $H_n > \frac{1}{10861}$ and $\lim_{n \rightarrow \infty} H_n = \frac{1}{1781}$.
\end{theorem}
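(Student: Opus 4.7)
The plan is to reduce the $\chi^2$ convergence statement to a Poincar\'e-type spectral gap estimate for the generator $\cL$ and then establish that spectral gap via a Lyapunov-Poincar\'e decomposition. Since the $M/M/n$ chain is reversible with respect to $\nu_n$, writing $f_t := d\pi_{n,t}/d\nu_n$ one obtains $\frac{d}{dt}\|f_t-1\|_{2,\nu_n}^2 = -2\mathcal{E}(f_t,f_t)$ for the Dirichlet form $\mathcal{E}(f,f) = -\lr{f}{\cL f}_{\nu_n}$, and $\|f_t-1\|_{2,\nu_n}^2 = \chi^2(\pi_{n,t},\nu_n)$. Hence a Poincar\'e inequality $\Var_{\nu_n}(f)\leq \frac{1}{\gamma}\mathcal{E}(f,f)$ yields $\chi(\pi_{n,t},\nu_n)\leq e^{-\gamma t}\chi(\pi_{n,0},\nu_n)$, so it suffices to produce a sharp lower bound on the spectral gap $\gamma$ in each of the four regimes.

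My first step would be to construct a Lyapunov function of exponential form $V(q) = \rho_n^{-q/2}$ with $\rho_n = \lambda_n/n$ for $q \geq n$, flattened on $\{0,1,\dots,n-1\}$. A direct computation on the birth-death generator gives, for $q \geq n$,
\[
\cL V(q) = \lambda_n\br{\rho_n^{-1/2}-1}V(q) + n\br{\rho_n^{1/2}-1}V(q) = -\br{\sqrt{n}-\sqrt{\lambda_n}}^2 V(q),
\]
so outside the finite set $K = \{0,\dots,n-1\}$ we get the natural drift rate $\gamma_{\mathrm{drift}} = (\sqrt{n}-\sqrt{\lambda_n})^2$, while inside $K$ the drift is bounded, yielding a Foster-type inequality $\cL V \leq -\gamma_{\mathrm{drift}} V + b \mathbf{1}_K$. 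The Lyapunov-Poincar\'e stitching alluded to in Subsection~\ref{ssec: our-approach} then reduces the global Poincar\'e inequality to (i) this drift estimate and (ii) a local Poincar\'e inequality for $\nu_n$ restricted to $K$.

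The heart of the argument is the local piece, and the two regime-dependent methods mentioned in Subsection~\ref{ssec: our-approach} come in here. For $\alpha \in (1/2,\infty)$ the conditional law $\nu_n|_K$ is sharply peaked near $q = n-1$, so I would apply the truncation method: the $M/M/n$ generator on $K$ coincides with the $M/M/\infty$ generator on the same states, and a Dirichlet-form comparison transfers the known spectral gap $1$ of $M/M/\infty$ to a local Poincar\'e constant of order $1$. Combined with the drift rate $(\sqrt{n}-\sqrt{\lambda_n})^2 \to 0$, this yields the rates in \eqref{eqn: super-nds-mixing-bound} and \eqref{eqn: super-halfin-whitt-mixing-bound}, with the prefactor $C_n \to 1$ obtained by letting the residual slack in the stitching vanish with $n$. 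For $\alpha \in (0,1/2)$ the drift rate $(\sqrt{n}-\sqrt{\lambda_n})^2 \to \infty$ swamps any reasonable local constant, so the local Poincar\'e on $K$ is the bottleneck. There $\nu_n|_K$ is essentially a truncated Poisson, roughly flat on a window of width $O(\sqrt{\lambda_n})$ around its mean, and I would invoke Lemma~\ref{lemma: canonical-path} with explicit monotone paths and a congestion-ratio calculation to obtain a dimension-free local constant; tracking the congestion bound produces the limit $D_n \to 1/25$. The Halfin-Whitt rate \eqref{eqn: halfin-whitt-mixing-bound} then follows by specializing the Sub-Halfin-Whitt argument to $\alpha = 1/2$ and keeping all intermediate constants explicit.

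The main obstacle I anticipate is balancing the constants in the stitching step so that the prefactor on the drift rate does not degrade. A generic Lyapunov-Poincar\'e bound would only give $\gamma_{\mathrm{drift}}$ divided by a large constant, whereas the theorem requires $C_n \to 1$ in the Super-Halfin-Whitt regime and the sharp $1/25$ limit in the Sub-Halfin-Whitt regime. Achieving this tightness will likely require tuning the Lyapunov function and the set $K$ so that the constant $b$ in $\cL V \leq -\gamma V + b\mathbf{1}_K$ can be made arbitrarily small relative to $\gamma$, and pairing each choice with the right local Poincar\'e technique. I expect to prove a tailored stitching lemma for each regime rather than a single monolithic statement, so that the drift and local losses can be traded off as finely as the target constants demand.
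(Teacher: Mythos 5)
Your high-level reduction is correct and matches the paper: reversibility of the $M/M/n$ chain turns Chi-square convergence into a Poincar\'e inequality, which is then attacked by a Foster--Lyapunov drift outside a finite set $K$ stitched to a local Poincar\'e inequality on $K$. But both regime-specific constructions you propose are off in ways that would prevent the target rates.

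\textbf{Super-Halfin-Whitt, $\alpha \in (1/2,1)$.} You propose the one-sided exponential $V(q)=\rho_n^{-[q-n]^+/2}$ flattened on $\{0,\dots,n-1\}$, together with the truncation/\emph{comparison-to-}$M/M/\infty$ argument on $K=\{0,\dots,n-1\}$. This is exactly what the paper does in the \emph{Mean Field} regime (Proposition~\ref{prop: light-traffic-mixing}, Lemma~\ref{lemma: drift-lemma-3-light-traffic}, Lemma~\ref{lemma: truncated-mmn}), and the reason it cannot give $C_n\to1$ here is quantitative. With this $V$, the positive part of the drift at $q=n$ is $b(n)\approx\gamma+\lambda_n(e^\theta-1)\approx n^{1-\alpha}$, while the local Poincar\'e constant delivered by truncation is $\Theta(1)$. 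Stitching then yields
\[
C_P \;\approx\; \frac{1+n^{1-\alpha}\cdot 1}{(\sqrt n - \sqrt{\lambda_n})^2} \;\approx\; n^{\alpha},
\]
so the mixing rate is $\Theta(n^{-\alpha})$, off from the target $(\sqrt n - \sqrt{\lambda_n})^2\approx n^{1-2\alpha}$ by a factor $n^{1-\alpha}\to\infty$; your prefactor $C_n$ would tend to $0$, not $1$. The paper avoids this by using the V-shaped Lyapunov $V(q)=e^{\theta[q-(n-1)]^++\theta[(n-1)-q]^+}$, which produces a \emph{much smaller} set $K=\{\lceil 2\lambda_n\rceil - n,\dots,n-1\}$ of width $\Theta(n^{1-\alpha})$ on which $\nu_K$ is \emph{roughly uniform} (Lemma~\ref{lemma: roughly-uniform}). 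It is \emph{this} regime where the local canonical path method (Lemma~\ref{lemma: canonical-path}) shines: the congestion ratio on the short interval is $\Theta(n^{1-2\alpha})\to 0$, which, combined with a $b$-weighted (not plain) local Poincar\'e (Lemma~\ref{lemma: weighted-poincare-super-HW}, Theorem~\ref{theorem: lyapunov-poincare}), makes the stitching error vanish and yields $C_n\to1$. In other words, you have the two finite-set techniques paired with the wrong regimes.

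\textbf{Sub-Halfin-Whitt, $\alpha\in(0,1/2)$.} If you keep the exponential $V$, the drift rate $(\sqrt n - \sqrt{\lambda_n})^2\approx n^{1-2\alpha}/4\to\infty$, which cannot be the mixing rate: the spectral gap of $M/M/n$ in this regime is $\Theta(1)$, bounded above by that of $M/M/\infty$. Running the same stitching arithmetic still lands at $\Theta(n^{-\alpha})\to 0$. The missing idea is the choice of Lyapunov function: the paper takes $V(q)=|q-\lambda_n|$ for $q\le n$ (Lemma~\ref{lemma: drift-lemma-2}), the discrete analogue of the first Hermite polynomial, which satisfies $\cL V = -V$ \emph{exactly} at every $q\notin K$ with $\gamma_n\to1$, and which shrinks $K$ to a \emph{singleton} (if $\lambda_n\in\mathbb Z$) or a four-point set. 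On that constant-size set the transition rates are $\Theta(\lambda_n)$, so $B C_L=O(1)$, and the stitching gives $C_P\to 25$, hence $D_n\to 1/25$. No canonical path over a $\Theta(\sqrt{\lambda_n})$-wide window is needed or used.

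\textbf{Halfin-Whitt, $\alpha=1/2$.} The paper obtains \eqref{eqn: halfin-whitt-mixing-bound} by specializing the \emph{Super}-HW machinery (canonical path on a width-$\sqrt n$ set) with explicit constants, not by taking a limit of the Sub-HW argument as you suggest.

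To repair the proposal you would need to (i) switch to the V-shaped Lyapunov and the canonical path method for $\alpha\in(1/2,1)$, exploiting the $b$-weighted stitching in Theorem~\ref{theorem: lyapunov-poincare} (rather than the crude $B=\max_q b(q)$ version) so that $C_n\to1$, and (ii) switch to $V(q)=|q-\lambda_n|$ for $\alpha\in(0,1/2)$ so that $\gamma_n\to1$ and $K$ is $O(1)$-sized, which is precisely what makes the $1/25$ limit attainable.
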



The proof of this theorem and the exact expressions of $C_n,D_n,H_n$ are described in Appendix \ref{sec:unifying-theorem-proof}, and its proof sketch is summarized in Section \ref{sec: proof-sketch}. Note that we assume $n \geq 2$ so that $\lambda_n > 0$ (for the convergence rate of $M/M/1$, please refer to Proposition \ref{thm: mm1-mixing}), and assume $n \geq 110$ for $\alpha = 1/2$ only to obtain an explicit mixing rate for this regime and to get reasonable constants in the proof. Obtaining an explicit bound for all $n \geq 1$ is possible at the cost of worse universal constants in our mixing rate bounds. \textcolor{black}{Now, denote $\tau_{mix}(\varepsilon) = \min_t \chi(\pi_{n,t}, \nu_n) \leq \varepsilon$, we obtain the following corollary on the mixing time of the system from the bound on the Chi-square distance.}
\textcolor{black}{
\begin{corollary}
    \label{corollary: mixing-time-bound}
    (Mixing time bounds for $M/M/n$ systems) Let $\pi_{n,t}$ be the queue length distribution at time $t$ of the continuous-time $M/M/n$ system defined above and let the stationary distribution be $\nu_n$. For $\alpha \geq 1$, we have that:
    \begin{align}
        \label{eqn: super-nds-mixing-time-bound}
        \tau_{mix}(\varepsilon) \leq 4 n^{2\alpha-1} \log\br{\frac{\chi(\pi_{n,0}, \nu_n)}{\varepsilon}}. 
    \end{align}
    For $\alpha \in (1/2,1)$, we have that:
    \begin{align}
        \label{eqn: super-halfin-whitt-mixing-time-bound}
        \tau_{mix}(\varepsilon) \leq 4 C_n^{-1} n^{2\alpha-1} \log\br{\frac{\chi(\pi_{n,0}, \nu_n)}{\varepsilon}}
    \end{align}    
    for some $C_n > 0$ such that $\lim_{n \rightarrow \infty} C_n = 1$.\\
    For $\alpha \in (0,1/2)$, we have that:
    \begin{align}
        \label{eqn: sub-halfin-whitt-mixing-time-bound}
        \tau_{mix}(\varepsilon) \leq D_n^{-1} \log\br{\frac{\chi(\pi_{n,0}, \nu_n)}{\varepsilon}}
    \end{align}
    for some $D_n > 0$ such that $\lim_{n \rightarrow \infty} D_n = 1/25$.\\
    Finally, for $\alpha = 1/2$ and $n \geq 110$, we have 
    \begin{align}
        \label{eqn: halfin-whitt-mixing-time-bound}
        \tau_{mix}(\varepsilon) \leq H_n^{-1} \log\br{\frac{\chi(\pi_{n,0}, \nu_n)}{\varepsilon}} 
    \end{align}
    for some $H_n > \frac{1}{10861}$ and $\lim_{n \rightarrow \infty} H_n = \frac{1}{1781}$.
\end{corollary}}

\textcolor{black}{Corollary \ref{corollary: mixing-time-bound} tells us that the $\varepsilon$-mixing time $\tau_{mix}(\varepsilon)$ has logarithmic dependence on the initial distance and $\varepsilon^{-1}$. Additionally, when the traffic is light (i.e. $\alpha \in (0,1/2)$), it takes $O(1)$ time to mix as $D_n, H_n = \Omega(1)$. However, when the system has heavy traffic (i.e. $\alpha > 1/2$), $n^{2\alpha-1}$ would grow to infinity as $n$ grows, and so, it would take a long time to mix in this regime. We defer the proof of Corollary \ref{corollary: mixing-time-bound} to Appendix \ref{sec: proof-of-tv-corollary}.} Now, it is well-known that there is a phase transition at $\alpha = 1/2$ \cite{HalfinWhitt1981HeavyTrafficLF}, and the behavior in the regimes $\alpha \in (0,1/2)$ and $\alpha \in (1/2,\infty)$ is similar to $M/M/\infty$ and $M/M/1$ respectively. Our Theorem \ref{thm: unifying-theorem} captures such a phase transition, as evident by the mixing-time bounds for the $M/M/1$ and $M/M/\infty$ queues stated below:




\begin{prop}
    \label{thm: mm1-mixing}
    Let $\pi_{t, 1}$ be the queue length distribution at time $t \in \R^+$ of the continuous-time $M/M/1$ system with the arrival rate $\lambda$ and service rate $\mu$ such that $0 < \lambda < \mu$ and let the stationary distribution be $\nu_1$, we have:
    \begin{align*}
        \chi(\pi_{t,1},\nu_1) \leq e^{- (\sqrt{\mu}-\sqrt{\lambda})^2 t} \chi(\pi_{0,1},\nu_1) \, \forall t \geq 0.
    \end{align*}
\end{prop}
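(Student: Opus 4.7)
The plan is to reduce the proposition to a sharp Poincar\'e inequality for the $M/M/1$ birth-and-death chain with constant $(\sqrt{\mu}-\sqrt{\lambda})^2$, and then to combine that inequality with the standard energy identity for the Chi-square distance under reversible dynamics. The $M/M/1$ queue is reversible with geometric stationary law $\nu_1(k)=(1-\rho)\rho^k$ for $\rho=\lambda/\mu$, so the generator $\cL$ is self-adjoint on $\ell_{2,\nu_1}$; this is the structural feature that makes the whole $\ell_2$-theory available.

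First, I would set $f_t:=d\pi_{t,1}/d\nu_1$, so that $\chi^2(\pi_{t,1},\nu_1)=\Var_{\nu_1}(f_t)=\|f_t-\mathbf 1\|_{2,\nu_1}^2$. Kolmogorov's forward equation yields $\partial_t f_t=\cL f_t$, and because $\cL\mathbf 1=0$ and $\pi_{t,1}$ remains a probability measure, differentiating under the integral gives
\[
\frac{d}{dt}\,\chi^2(\pi_{t,1},\nu_1)=2\lr{f_t-\mathbf 1}{\cL(f_t-\mathbf 1)}_{\nu_1}=-2\cE(f_t-\mathbf 1,f_t-\mathbf 1),
\]
where $\cE(g,g):=-\lr{g}{\cL g}_{\nu_1}\ge 0$ is the Dirichlet form. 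Detailed balance reduces this to the one-edge sum $\cE(g,g)=\lambda\sum_{k\ge 0}\nu_1(k)(g(k+1)-g(k))^2$. This step is routine and the only care needed is justifying term-by-term differentiation, which follows from the geometric tails of $\nu_1$ and the moment assumption implicit in $\chi(\pi_{0,1},\nu_1)<\infty$.

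The main obstacle is the second ingredient, namely the sharp Poincar\'e inequality $\cE(g,g)\ge(\sqrt{\mu}-\sqrt{\lambda})^2\,\Var_{\nu_1}(g)$ for every $g\in\ell_{2,\nu_1}$ with $\nu_1(g)=0$. The cleanest route I would take is Chen's variational formula (equivalently, the weighted Hardy inequality on $\Z_{\ge 0}$), which expresses the spectral gap of a birth-and-death chain as the reciprocal of a two-sided supremum $\sup_{k\ge 0}\bigl(\sum_{i\le k}\tfrac{1}{\nu_1(i)\lambda}\bigr)\bigl(\sum_{j>k}\nu_1(j)\bigr)$; for the geometric law this quantity is a closed-form geometric-series calculation that evaluates exactly to $(\sqrt{\mu}-\sqrt{\lambda})^{-2}$. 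An equivalent but less elementary alternative is to invoke the Karlin-McGregor spectral representation of $M/M/1$, which shows the spectrum of $-\cL$ is the interval $[(\sqrt{\mu}-\sqrt{\lambda})^2,(\sqrt{\mu}+\sqrt{\lambda})^2]$ and reads off the gap directly. I would prefer the Hardy route as it is self-contained and sidesteps spectral theory of unbounded operators on countable state spaces. With the inequality in hand, substituting $g=f_t-\mathbf 1$ into the energy identity gives $\tfrac{d}{dt}\chi^2(\pi_{t,1},\nu_1)\le-2(\sqrt{\mu}-\sqrt{\lambda})^2\chi^2(\pi_{t,1},\nu_1)$; Gr\"onwall and a square root then yield the claimed bound.
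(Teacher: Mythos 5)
Your reduction of the proposition to a sharp Poincar\'e inequality plus the energy identity is sound and matches the paper's structure (that identity is exactly Proposition~\ref{prop: poincare-equals-mixing}). The gap lies in the step you yourself flag as the main obstacle: your preferred route via the weighted Hardy inequality does \emph{not} deliver the sharp constant, and the computation you sketch does not evaluate to $(\sqrt{\mu}-\sqrt{\lambda})^{-2}$. For $\nu_1(k)=(1-\rho)\rho^k$, $\rho=\lambda/\mu$, one finds
\begin{align*}
\sup_{n\ge 0}\left(\sum_{j>n}\nu_1(j)\right)\left(\sum_{j\le n}\frac{1}{\nu_1(j)\lambda}\right)
 \;=\; \sup_{n\ge 0}\ \rho^{n+1}\cdot\frac{\rho^{-n}-\rho}{(1-\rho)^2\lambda}
 \;=\; \frac{\rho}{(1-\rho)^2\lambda}
 \;=\; \frac{\mu}{(\mu-\lambda)^2},
\end{align*}
which is \emph{not} $(\sqrt{\mu}-\sqrt{\lambda})^{-2}$; indeed $\frac{(\mu-\lambda)^2}{\mu} = \frac{(\sqrt{\mu}-\sqrt{\lambda})^2(\sqrt{\mu}+\sqrt{\lambda})^2}{\mu}$, which is $(\sqrt{\mu}-\sqrt{\lambda})^2$ inflated by a factor between $1$ and $4$. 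The Hardy criterion only controls the spectral gap up to a factor of $4$, so the resulting Poincar\'e constant would be $\frac{4\mu}{(\mu-\lambda)^2}$, strictly worse than the claimed sharp constant (the discrepancy is order one in heavy traffic). Chen's \emph{dual} variational formula does yield the exact gap, but it is not the single Hardy supremum you wrote down: it requires optimizing over auxiliary test sequences, so it is not the self-contained geometric-series computation you describe. Your alternate route via Karlin--McGregor would give the exact spectrum $\{0\}\cup[(\sqrt{\mu}-\sqrt{\lambda})^2,(\sqrt{\mu}+\sqrt{\lambda})^2]$ and hence the sharp gap, but you explicitly set it aside.

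The paper's route is different and worth contrasting: it obtains the sharp Poincar\'e constant from a Foster--Lyapunov-type drift with $V(q)=e^{\theta q}$, $\theta=\log\sqrt{\mu/\lambda}$, for which $\cL V(q)=-(\sqrt{\mu}-\sqrt{\lambda})^2 V(q)$ for all $q\ge 1$, leaving only the singleton $\{0\}$ as the exceptional set. Proposition~\ref{prop: lyapunov-poincare-singleton} then turns this singleton drift directly into the Poincar\'e constant $C_P=(\sqrt{\mu}-\sqrt{\lambda})^{-2}$, with no loss of constant and no spectral theory. This is more elementary than Karlin--McGregor, is genuinely sharp (unlike the Hardy route), and is the same engine the paper re-uses for the $M/M/n$ proof. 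To salvage your proposal you would need to either replace the Hardy supremum by Chen's dual variational formula with a suitable test sequence (essentially reinventing the exponential Lyapunov function), or carry out the Karlin--McGregor spectral argument in full.
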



\begin{prop}
    \label{thm: mminf-mixing} 
    Let $\pi_{t, \infty}$ be the queue length distribution at time $t$ of the continuous-time $M/M/\infty$ system with the arrival rate $\lambda$ and the service rate $\mu$ and let the stationary distribution be $\nu_\infty$, we have if $\lambda/\mu \in \Z^+$ then:
    \begin{align*}
        \chi(\pi_{t, \infty},\nu_\infty) \leq e^{-\mu t} \chi(\pi_{0, \infty},\nu_\infty) \, \forall t \geq 0.
    \end{align*}
\end{prop}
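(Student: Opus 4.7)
My plan is to reduce the claim to a Poincar\'e inequality for the $M/M/\infty$ generator and then invoke the classical Poisson--Poincar\'e inequality. Let $\rho := \lambda/\mu$, so that $\nu_\infty$ is $\mathrm{Poisson}(\rho)$; since $\nu_\infty$ has full support on $\Z_{\geq 0}$, the Radon--Nikodym derivative $h_t := d\pi_{t,\infty}/d\nu_\infty$ is well-defined for every $t \geq 0$. Reversibility of $\cL$ with respect to $\nu_\infty$ gives $\partial_t h_t = \cL h_t$ in $\ell_{2,\nu_\infty}$, and since $\E_{\nu_\infty}[h_t]=1$ is conserved, the standard computation
\begin{equation*}
\frac{d}{dt}\chi^2(\pi_{t,\infty},\nu_\infty) \;=\; \frac{d}{dt}\Var_{\nu_\infty}(h_t) \;=\; 2\langle h_t,\cL h_t\rangle_{\nu_\infty} \;=\; -2\,\mathcal{E}(h_t,h_t)
\end{equation*}
holds, where $\mathcal{E}(f,f) := -\langle f,\cL f\rangle_{\nu_\infty}$ is the Dirichlet form. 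Thus if I can establish the Poincar\'e inequality $\mu\,\Var_{\nu_\infty}(f) \leq \mathcal{E}(f,f)$ for all $f \in \ell_{2,\nu_\infty}$, Gr\"onwall's lemma yields $\chi^2(\pi_{t,\infty},\nu_\infty) \leq e^{-2\mu t}\chi^2(\pi_{0,\infty},\nu_\infty)$, which is exactly the stated bound after taking a square root.

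Next I would compute the Dirichlet form explicitly. For the birth--death chain with birth rate $\lambda$ and death rate $q\mu$ in state $q$, one has
\begin{equation*}
\mathcal{E}(f,f) = \frac{1}{2}\sum_{q\geq 0}\nu_\infty(q)\bigl[\lambda(f(q+1)-f(q))^2 + q\mu(f(q)-f(q-1))^2\bigr].
\end{equation*}
Using the detailed balance relation $\nu_\infty(q)\lambda = \nu_\infty(q+1)(q+1)\mu$ to shift indices in the second sum, the two halves collapse into one another and a short calculation gives $\mathcal{E}(f,f) = \lambda\,\E_{\nu_\infty}\bigl[(f(X+1)-f(X))^2\bigr]$ with $X \sim \nu_\infty$. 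The required Poincar\'e inequality therefore becomes $\Var_{\nu_\infty}(f) \leq \rho\,\E_{\nu_\infty}[(f(X+1)-f(X))^2]$, which is precisely the sharp Poisson--Poincar\'e inequality with constant $\rho$.

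The remaining work is to justify this classical Poisson--Poincar\'e inequality. Two routes are standard: \textbf{(a)} a Chen--Stein-type argument using the Poisson Stein operator $f \mapsto \rho f(X+1) - X f(X)$ together with a discrete integration-by-parts identity; or \textbf{(b)} spectral decomposition against the Charlier polynomials $\{C_k(\cdot;\rho)\}_{k\geq 0}$, which form an orthogonal basis of $\ell_{2,\nu_\infty}$ satisfying $-\cL\, C_k = k\mu\, C_k$, so that the spectral gap of $-\cL$ is exactly $\mu$ and tightness of the rate is built in. I would take route \textbf{(b)}, since it simultaneously proves the Poincar\'e inequality and certifies sharpness.

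The main (minor) obstacle is verifying completeness of the Charlier basis in $\ell_{2,\nu_\infty}$, which reduces to Poisson$(\rho)$ being moment-determined; this is classical. Finally, I note that the hypothesis $\lambda \in \Z^+$ does not seem essential to the approach outlined above: it likely reflects a more elementary proof strategy that decomposes the $M/M/\infty$ system into $\lambda$ independent $M/M/\infty(1,\mu)$ subsystems and tensorizes their Poincar\'e inequalities, exploiting the integer-rate Poisson thinning.
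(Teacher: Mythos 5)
Your proposal is correct, but it takes a genuinely different route from the paper's. The paper proves this via its Lyapunov--Poincar\'e machinery: it takes $V(q)=|q-\lambda|$, computes $\cL V(q)=-V(q)$ for all $q\neq\lambda$ and $\cL V(\lambda)=2\lambda$, and then applies Proposition~\ref{prop: lyapunov-poincare-singleton} (the singleton stitching result, which does not require $V\geq 1$) to read off the Poincar\'e constant $1/\gamma=1/\mu$ directly. The hypothesis $\lambda\in\Z^{+}$ appears because $|q-\lambda|$ has negative drift outside the set $\{\lambda\}$, and that set is a \emph{singleton} of $\Z_{\geq 0}$ only when $\lambda$ is an integer --- not because of a tensorization over $\lambda$ independent subsystems as you speculated (though that intuition is in a similar spirit). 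You instead bypass the Lyapunov framework entirely: you compute the Dirichlet form, collapse it via detailed balance to $\lambda\,\E_{\nu_\infty}[(f(X+1)-f(X))^2]$, and invoke the sharp Poisson--Poincar\'e inequality (justified via the Charlier eigenbasis, where the spectral gap of $-\cL$ is exactly $\mu$). Your route is the classical one, needs no integrality of $\lambda$, and in fact the paper itself uses exactly the Poisson--Poincar\'e inequality as Equation~\eqref{eqn: poisson-poincare} inside Lemma~\ref{lemma: truncated-mmn}. What the paper's approach buys, at the cost of the integrality restriction, is pedagogical unity: it deliberately reproves this known result within the same Lyapunov--Poincar\'e framework as the $M/M/1$ and $M/M/n$ results precisely so that Proposition~\ref{thm: mminf-mixing} serves as a warm-up for the Sub-Halfin-Whitt drift analysis in Lemma~\ref{lemma: drift-lemma-2}.
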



While the above results for $M/M/1$ and $M/M/\infty$ are previously established in \cite{Chafa_2006_entropic_inequalities_infty_queue, Morse1955-waiting-lines, philippe-robert-stochastic-networks-and-queues}, it is worth noting that our Lyapunov-Poincar\'e methodology provides a unified approach to obtain mixing results for $M/M/1$, $M/M/n$, and $M/M/\infty$ queues. The proof of Proposition \ref{thm: mm1-mixing}, presented in Appendix \ref{sssec: mixing-super-HW-proof-final-step}, is instructive to understand the case of $\alpha \in (1/2, \infty]$ of Theorem~\ref{thm: unifying-theorem}. Similarly, the proof of Proposition \ref{thm: mminf-mixing}, while it is only applicable for $\lambda \in \Z^+$, in Appendix \ref{ssec: mminf-proof} is helpful to understand the case of $\alpha \in (0, 1/2)$. Observe that the mixing rate of the $M/M/n$ queue for $\alpha \in (1/2, \infty]$ and the $M/M/1$ queue is $(\sqrt{n}-\sqrt{\lambda_n})^2 \approx n^{1-2\alpha} \rightarrow 0$ as $n \rightarrow \infty$. On the other hand, the mixing rate of the $M/M/n$ queue for $\alpha \in (0, 1/2)$ and the $M/M/\infty$ queue is $\Theta(1)$ as $n \rightarrow \infty$. Thus, our theorem characterizes the phase transition of the mixing rate from $0$ to $\Theta(1)$ at $\alpha=1/2$. Finally, consider the case of $\alpha=1/2$, similar to the case of $\alpha \in (0, 1/2)$, our mixing rate is $\Theta(1)$ but is off by a constant. For this case, a precise characterization of the mixing rate in the limit as $t \rightarrow \infty$, and  $n \rightarrow \infty$ was provided in \cite{Halfin-Whitt-Gamarnik_2013}, and obtaining a finite time behavior for a finite-sized system that exactly matches these limiting results 
is an interesting future direction.





While we establish a $\Theta(1)$ bound on the mixing rate for $\alpha \in (0, 1/2)$, it is off by a constant compared to the mixing rate of $M/M/\infty$ queue, indicating that our mixing rate characterization is loose in this regime. Nonetheless, we believe our characterization of the mixing rate is state-of-the-art. For example, the result of \cite{zeifman_lognorm} implies a mixing rate of $1 - \frac{\lambda_n}{n-1} = O\br{n^{-\alpha}} \rightarrow 0$ as $n \rightarrow \infty$. Moreover, by imposing additional technical assumptions, we are able to improve our results to obtain tight mixing rates in this regime. To this end, we present two results assuming that either $\lambda_n$ is very small (light traffic, i.e. $\lambda_n/n \rightarrow 0$) or $\lambda_n \in \Z^+$, where the latter is an artifact of our methodology as discussed in Section \ref{sec: lyapunov-poincare-method}.

\begin{prop}
    \label{prop: light-traffic-mixing}
    Let $\pi_{n,t}, \nu_n$ be the queue length distribution at time $t$ and the steady state distribution of the continuous-time $M/M/n$ system with unit service rate respectively and let $\lambda_n$ be a sequence of arrival rate such that $\lim_{n \rightarrow \infty} \frac{\lambda_n}{n} = c$ where $c \in [0,1)$. We have
    \begin{align*}
        \chi(\pi_{n,t},\nu_n) \leq e^{-L_n t} \chi(\pi_{n,0},\nu_n) \, \forall t \geq 0
    \end{align*}
    for some positive parameter $L_n$ such that $\lim_{n \rightarrow \infty} L_n = 1-c$.
\end{prop}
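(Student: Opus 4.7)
The plan is to realize the claim as a Poincar\'e inequality for the generator $\mathcal{L}$ of $M/M/n$ with respect to $\nu_n$ with constant $L_n$; reversibility of the birth--death chain then yields the desired $\chi$-distance contraction $\chi(\pi_{n,t},\nu_n) \le e^{-L_n t}\chi(\pi_{n,0},\nu_n)$ for all $t \ge 0$. To obtain this Poincar\'e inequality I would apply the Lyapunov--Poincar\'e stitching theorem outlined in Section \ref{ssec: our-approach}, which fuses (i) a Lyapunov drift outside a finite set $K_n$ with (ii) a local Poincar\'e inequality inside $K_n$. Since the target rate $1-\sqrt{c}$ lies strictly below $1$, the drift will be the bottleneck, so the task splits into making the drift rate asymptotically tight while keeping the local Poincar\'e contribution bounded away from zero.

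For the drift, I would take the exponential Lyapunov function $V(q) = \gamma_n^q$ with $\gamma_n$ chosen just below $\sqrt{n/\lambda_n}$ (so that $V \in \ell_{2,\nu_n}$), for which the birth--death generator gives
\[
\frac{\mathcal{L}V(q)}{V(q)} = \lambda_n(\gamma_n - 1) + \min\{n,q\}(\gamma_n^{-1} - 1).
\]
This expression equals $-(\sqrt{n}-\sqrt{\lambda_n})^2$ for $q \ge n$ and is affine and strictly decreasing in $q$ on $\{0,\ldots,n\}$. Writing $c_n = \lambda_n/n$ and setting the drift $\le -\eta_n$ with $\eta_n := 1-\sqrt{c_n}$ yields a threshold of the form $K_{0,n} = \sqrt{n\lambda_n} + O(1)$. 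Because $c_n \to c < 1$, for $n$ large enough the finite set $K_n := \{0,1,\ldots,K_{0,n}\}$ lies strictly inside $\{0,\ldots,n-1\}$, the region where $M/M/n$ and $M/M/\infty$ have identical transition rates.

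For the local Poincar\'e inequality on $K_n$ I would use the truncation method. Since $K_n \subseteq \{0,\ldots,n-1\}$, the restricted $M/M/n$ chain agrees with the restricted $M/M/\infty$ chain, and the conditional law $\nu_n(\cdot \mid K_n)$ equals the truncated Poisson$(\lambda_n)$. The Poincar\'e constant of $M/M/\infty$ is $\mu = 1$ (the content underlying Proposition \ref{thm: mminf-mixing}, which holds for arbitrary $\lambda > 0$ in the sense of the Dirichlet form on Poisson birth--death chains). Transferring this inequality through the truncation lemma yields a local Poincar\'e constant $\rho_{K_n} \to 1$ as $n \to \infty$, with the loss controlled by $\nu_n(K_n^c)$, which decays to $0$ by Poisson concentration since $\sqrt{n\lambda_n}$ is much larger than $\lambda_n + \sqrt{\lambda_n}$.

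Feeding $\eta_n$, $\rho_{K_n}$, and the boundary ratios $V(K_{0,n}+1)/V(K_{0,n}) = \gamma_n$ into the stitching theorem then outputs $L_n$, which should satisfy $L_n \to \min\{\eta_n,\rho_{K_n}\} = \eta_n \to 1-\sqrt{c}$ for $c > 0$ and $L_n \to 1$ for $c = 0$. The main obstacle I expect is balancing the various loss factors in the stitching theorem tightly enough that the limit is exactly $1-\sqrt{c}$ rather than some loose constant multiple; the most delicate point is the case $c = 0$, where $\gamma_n \to \infty$ makes the boundary ratio grow but simultaneously the Poisson mass $\nu_n(K_n)$ tends to $1$ exponentially fast. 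Handling this would require matching the divergence of $\gamma_n$ against the exponential tail of the Poisson distribution on $K_n^c$, absorbing boundary terms into $o(1)$ corrections and preserving the asymptotic rate of $1$.
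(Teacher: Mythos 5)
Your high-level structure is correct (Poincar\'e inequality $\Rightarrow$ $\chi$-contraction, then Lyapunov drift $+$ local Poincar\'e $+$ stitching), but there is a genuine gap in the choice of Lyapunov function and in what the stitching theorem actually delivers.

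The crucial issue is your choice $V(q)=\gamma_n^q$ for all $q$. This is the pure-exponential, $M/M/1$-style Lyapunov function, and it makes $V$ astronomically large inside the bulk. In the stitching theorem the Poincar\'e constant has the form $C_P = (1+B C_L)/\gamma$ (or, more generally, $(1+ (\sum_q b(q)\nu_K(q)) C_b)/\gamma$); it is not $\min\{\eta_n,\rho_{K_n}\}$. Here $b(q)\ge(\mathcal{L}V(q)/V(q)+\gamma)V(q)$ for $q$ in the finite set, and with your $V$ this is of order $(\mu-q)\,\gamma_n^q$ with $\mu=\sqrt{n\lambda_n}$. Weighting by $\nu_n(q)\propto\lambda_n^q/q!$, one gets
\begin{align*}
\sum_{q\in K_n} b(q)\nu_n(q) \;\asymp\; \sum_{q\le\mu}(\mu-q)\frac{\mu^q}{q!} \;\asymp\; e^{\mu}\sqrt{\mu},
\end{align*}
so after normalizing by $\nu_n(K_n)\asymp e^{\lambda_n}\nu_n(0)$ the relevant factor is of order $e^{\mu-\lambda_n}=e^{\sqrt{\lambda_n}(\sqrt{n}-\sqrt{\lambda_n})}$, which grows exponentially in $n$ for every $c\in[0,1)$. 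No reasonable local Poincar\'e constant can cancel this, so the stitching outputs a Poincar\'e constant that diverges rather than converging to $1/(1-\sqrt{c})$.

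The paper's fix is precisely to \emph{cap the Lyapunov function at $1$ on the bulk}: it uses $V(q)=e^{\theta(q-n)}$ for $q\ge n$ and $V(q)=1$ for $q<n$, with $\theta=\log\sqrt{n/\lambda_n}$. This keeps the finite set large ($K=\{0,\dots,n\}$) but makes the $b$-term only of size $B=(\sqrt{n}-\sqrt{\lambda_n})^2+\lambda_n(e^\theta-1)=\Theta(n)$, which is now comparable to the drift rate $\gamma=(\sqrt{n}-\sqrt{\lambda_n})^2=\Theta(n)$. Your intuition that ``the drift will be the bottleneck'' is also off: with the correct $V$, the drift is an $\Theta(n)$ rate, not an $O(1)$ rate, and it is the $b$-term (i.e. the local mixing cost, correctly measured via the truncated Poisson Poincar\'e constant $C_L=1$) that balances against it to yield $C_P=(1+BC_L)/\gamma \to 1/(1-\sqrt{c})$. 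Your local-Poincar\'e-via-truncation idea and your identification of the Poisson Poincar\'e constant as the right local input are both on target; it is the Lyapunov function and the resulting form of the stitching balance that need to change.
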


Since $\lambda_n \in (0,n)$, there exists a unique positive number $\alpha_n$ such that $\lambda_n = n-n^{1-\alpha_n}$, and so the assumption $\lim_{n \rightarrow \infty} \lambda_n/n = c$ means that $\lim_{n \rightarrow \infty} \alpha_n \log n = -\log(1-c)$. When $c = 0$, we have the mixing rate of the system matches the limiting behavior at the asymptotic (that is, mixing rate approaches $1$ as $n \rightarrow \infty$). On the other hand, for $c \in (0,1)$, Proposition \ref{prop: light-traffic-mixing} shows that the system admits a mixing rate that is bounded below by a constant that is independent of $n$. Moreover, when $c < \frac{24}{25}$, we get a better constant than the $\frac{1}{25}$ constant in the $\alpha \in (0,1/2)$ regime. One can find the proof of Proposition \ref{prop: light-traffic-mixing} in Appendix \ref{ssec: light-traffic-mixing-proof} and the discussion on this result in comparison with previous works in Appendix \ref{ssec: other-discussions-with-prev-works}. 
However, since we know that the phase transition happens at the Halfin-Whitt regime, this suggests that we could do better by obtaining an analysis that matches the limiting behavior for all $\alpha \in (0,1/2)$, possibly with some additional conditions. And so, we consider the case when the arrival rate is an integer, i.e. there is a sequence $\{\lambda_k\}_{k \geq \Z^+}$ such that $\lambda_k \in \Z \, \forall k \in \Z^+$. In this case, we can show that as $\lambda_k \rightarrow \infty$, we have the mixing rate approaches $1$ as follows.

\begin{prop}
    \label{prop: sub-halfin-whitt-mixing-integral-lambda}
    Let $\pi_{n,t}, \nu_n$ be the queue length distribution at time $t$ and the steady state distribution of the continuous-time $M/M/n$ system with unit service rate respectively and let $\{\lambda_n\}$ be a sequence of integer arrival rates ($\lambda_n \in \Z$) such that $\lambda_n \geq 0, n-\lambda_n \geq 1, \frac{\log\br{n-\lambda_n}}{\log n} \in (1/2,1)$ and
    \begin{align}
        \lim_{n \rightarrow \infty} \frac{\log\br{n-\lambda_n}}{\log n} = 1-\alpha
    \end{align}
    where $\alpha < 1/2$. Then, we have
    \begin{align}
        \label{eqn: sub-halfin-whitt-mixing-bound-integral}
        \chi(\pi_{n,t},\nu_n) \leq e^{-\overline{D}_n t} \chi(\pi_{n,0},\nu_n) \, \forall t \geq 0
    \end{align}
    such that $\overline{D}_n > 0$ and $\lim_{n \rightarrow \infty} \overline{D}_n = 1$. 
\end{prop}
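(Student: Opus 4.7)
The plan is to refine the argument used for the $\alpha \in (0,1/2)$ case of Theorem~\ref{thm: unifying-theorem} by employing $M/M/\infty$ as the comparison system inside the finite set via the truncation method, exploiting the crucial observation that the $M/M/n$ and $M/M/\infty$ generators coincide on states $\{0,1,\ldots,n\}$. Because $1-\alpha > 1/2$, we have $n - \lambda_n = \Theta(n^{1-\alpha})$ which grows asymptotically faster than $\sqrt{\lambda_n} = \Theta(\sqrt{n})$, giving ample room below the boundary at $n$ to concentrate essentially all of the Poisson$(\lambda_n)$ stationary mass that $M/M/\infty$ would see.

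First, I would construct a Lyapunov function $V_n$ yielding a negative drift outside a finite set $K_n \subset \{0,1,\ldots,n\}$ with drift rate approaching $1$ as $n \to \infty$. A natural candidate is based on $(q-\lambda_n)^2$ or a tilted exponential $e^{\theta_n(q-\lambda_n)^+}$ with $\theta_n \to 0$ slowly; the relevant computation is
\begin{align*}
\cL V_n(q) = \lambda_n V_n(q+1) + \min\{n,q\} V_n(q-1) - (\lambda_n + \min\{n,q\}) V_n(q),
\end{align*}
and one needs to verify that $\cL V_n(q) \leq -(1-o(1)) V_n(q)$ for $q \notin K_n$. The set $K_n$ should be an interval around $\lambda_n$ of width $o(n-\lambda_n)$ yet $\omega(\sqrt{\lambda_n})$, which is feasible since $n^{1-\alpha} \gg \sqrt{n}$. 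This placement ensures both that the drift calculation succeeds and that $K_n \subseteq \{0,1,\ldots,n\}$.

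Second, I would invoke the truncation method to obtain a tight local Poincar\'e inequality on $K_n$. Since the generators of $M/M/n$ and $M/M/\infty$ agree on $K_n$, the Dirichlet forms restricted to $K_n$ coincide up to the normalization factors of the two restricted stationary measures, both of which are $1-o(1)$ because $K_n$ captures nearly all the Poisson$(\lambda_n)$ mass. By Proposition~\ref{thm: mminf-mixing}, the global Poincar\'e constant of $M/M/\infty$ with integer $\lambda_n$ is $1$, and the restriction to $K_n$ inherits a local Poincar\'e constant approaching $1$ (this is where the assumption $\lambda_n \in \Z^+$ is essential, as it is propagated directly from Proposition~\ref{thm: mminf-mixing}). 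I would then apply the stitching theorem from the paper's Lyapunov-Poincar\'e framework to combine the drift rate with the local Poincar\'e constant, both tending to $1$, to obtain $\overline{D}_n \to 1$.

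The main obstacle is the simultaneous calibration of $K_n$: the width must be small enough (relative to $n - \lambda_n$) for $K_n$ to fit inside $\{0,1,\ldots,n\}$ so truncation by $M/M/\infty$ is valid, yet large enough (relative to $\sqrt{\lambda_n}$) to (i) make the drift constant $1-o(1)$ outside $K_n$, (ii) make the local Poincar\'e constant inside $K_n$ approach the global $M/M/\infty$ constant of $1$, and (iii) ensure the stitching loss between the two regions vanishes. Quantitative control on each of these three pieces against a single parameter, the width of $K_n$, is the delicate part; the hypothesis $\lim_{n \to \infty} \log(n-\lambda_n)/\log n = 1-\alpha$ with $\alpha < 1/2$ is exactly what leaves enough slack in the $\sqrt{\lambda_n} \ll |K_n| \ll n-\lambda_n$ window to satisfy all three constraints.
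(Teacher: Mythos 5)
Your proposal takes a genuinely different route from the paper, and unfortunately the route cannot reach the claimed conclusion. The paper's proof (Appendix~\ref{sssec: sub-hw-integral-mixing-proof}, via Lemma~\ref{lemma: drift-lemma-2-integral}) exploits the integrality of $\lambda_n$ in a way your plan misses: it takes $V(q) = |q-\lambda_n|$ for $q \le n$, which \emph{vanishes exactly} at the integer state $\lambda_n$, so the drift $\cL V \le -\gamma_n V + b\,1_{\{\lambda_n\}}$ holds with $K=\{\lambda_n\}$ a \emph{singleton}. One then invokes Proposition~\ref{prop: lyapunov-poincare-singleton}, which allows $V(x^*)=0$ and yields $C_P = 1/\gamma_n$ with \emph{no stitching term at all} --- the $b$ term and any local Poincar\'e constant are completely irrelevant. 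The role of $\lambda_n \in \Z$ is precisely to make the exceptional set a single state where $V$ can be taken to be zero. You instead take $V \ge 1$ on a nontrivial interval $K_n$ and stitch via the truncation method; this necessarily routes through Theorem~\ref{theorem: lyapunov-poincare} or Corollary~\ref{corollary: lyapunov-poincare-constant-b}, where $C_P = (1 + (\sum_q b(q)\nu_K(q))\,C_b)/\gamma_n$.

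The gap is that with your setup the stitching term does not vanish. The truncation method (Lemma~\ref{lemma: truncated poincare}, Lemma~\ref{lemma: truncated-mmn}) gives $C_b \approx 1$, which is as small as it can be; so you need $\sum_q b(q)\nu_K(q) \to 0$. But for any Lyapunov function with $V \ge 1$ on $\cS$ you must have $b(q) \ge \gamma_n V(q) \ge \gamma_n \approx 1$ at every $q \in K_n$ where $\cL V(q) \ge 0$, and near $\lambda_n$ the drift is positive on a region carrying a constant fraction of the stationary mass; hence $\sum_q b(q)\nu_K(q) = \Omega(1)$. Concretely, with $V(q) = 1 + (q-\lambda_n)^2$ one gets $b(\lambda_n) = \Theta(\lambda_n)$ and $\sum_q b(q)\nu_K(q) = \Theta(\lambda_n)$; with the tilted exponential $e^{\theta_n(q-\lambda_n)^+}$ the best tuning ($\theta_n \sim \lambda_n^{-1/2}$) still leaves $\sum_q b(q)\nu_K(q) = \Theta(1)$. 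Either way the resulting mixing rate is bounded away from $1$ --- which is why the paper's \emph{non}-integer Sub-HW result (Theorem~\ref{thm: unifying-theorem}, $\alpha\in(0,1/2)$) only gives $D_n \to 1/25$ via the canonical-path estimate, not $D_n \to 1$. Any approach that keeps $V \ge 1$ and uses the general stitching theorem inherits this loss; you need the $V(x^*)=0$ singleton mechanism to eliminate the stitching error, and that is exactly where the integer hypothesis enters. Finally, your remark that $\lambda_n \in \Z^+$ is ``propagated directly from Proposition~\ref{thm: mminf-mixing}'' to the local Poincar\'e constant is a red herring: the Poisson Poincar\'e inequality $\Var_\nu(f) \le \langle f, -\cL_\infty f\rangle_\nu$ used in Lemma~\ref{lemma: truncated-mmn} holds for all $\lambda > 0$, integer or not; it is only the drift/singleton argument that needs $\lambda \in \Z$.
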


The proof of Proposition \ref{prop: sub-halfin-whitt-mixing-integral-lambda} is presented in Appendix \ref{sssec: sub-hw-integral-mixing-proof}. With additional technical assumptions on $\lambda$, our mixing rate now matches with that of the $M/M/\infty$ queue asymptotically for $\alpha \in (0, 1/2)$. Now, when both of these conditions are not satisfied, we can still show mixing but the mixing rate is only tight up to a constant, as shown in Theorem \ref{thm: unifying-theorem}. Nevertheless, we believe that these two conditions are rather artificial and we conjecture that these conditions can be lifted to get a mixing rate approaching $1$, matching the mixing rate of the $M/M/\infty$ at the asymptotics.

\subsubsection{Finite-time statistics}
\label{sssec: finite-time statistics}
From the established mixing bounds in Theorem \ref{thm: unifying-theorem}, we obtain finite-time statistics like the mean and tail. We start with the mean queue length below.

\begin{corollary}
\label{corollary: mean-queue-length-mmn}
Let $\pi_{n,t}$ be the queue length distribution at time $t$ of the continuous-time $M/M/n$ system with the arrival rate $\lambda_n = n - n^{1-\alpha}$ and a service rate $1$ whose stationary distribution be $\nu_n$. For $\alpha \geq 1$, we have that
\begin{align}
    \left| \E_{\pi_{n,t}}[q] - \E_{\nu_n}[q] \right| \leq e^{-(\sqrt{n}-\sqrt{\lambda_n})^2 t} \sqrt{2}\br{n+n^\alpha} \chi(\pi_{n,0}, \nu_n).
\end{align}

For $\alpha \in (1/2, 1)$, we have
\begin{align}
    \left| \E_{\pi_{n,t}}[q] - \E_{\nu_n}[q] \right| \leq e^{- C_n (\sqrt{n}-\sqrt{\lambda_n})^2 t} \sqrt{2}\br{n+n^\alpha} \chi(\pi_{n,0}, \nu_n)
\end{align}
for some $C_n > 0$ such that $\lim_{n \rightarrow \infty} C_n = 1$.

For $\alpha \in (0,1/2)$, we have
\begin{align}
    \left| \E_{\pi_{n,t}}[q] - \E_{\nu_n}[q] \right| \leq e^{- D_n t} \sqrt{2}\br{n+n^\alpha} \chi(\pi_{n,0}, \nu_n)
\end{align}
for some $D_n > 0$ such that $\lim_{n \rightarrow \infty} D_n = 1/25$.

\textcolor{black}{And finally, if $\lambda_n$ is a sequence of arrival rates such that $\lim_{n \rightarrow \infty} \frac{\lambda_n}{n} = c$ for $c \in [0,1)$, then we have
\begin{align}
    \left| \E_{\pi_{n,t}}[q] - \E_{\nu_n}[q] \right| \leq e^{- L_n t} n \chi(\pi_{n,0}, \nu_n)
\end{align}
for some $L_n > 0$ such that $\lim_{n \rightarrow \infty} L_n = 1-c$.}
\end{corollary}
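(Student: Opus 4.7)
The plan is to apply Corollary \ref{corollary: moment-bound} with $k=1$, which yields
\begin{align*}
\left| \E_{\pi_{n,t}}[q] - \E_{\nu_n}[q] \right| \leq \chi(\pi_{n,t}, \nu_n)\,\sqrt{\Var_{\nu_n}(q)}.
\end{align*}
The exponential decay factor is then immediate by plugging in the corresponding mixing bound from Theorem \ref{thm: unifying-theorem} (for the three regimes $\alpha \geq 1$, $\alpha \in (1/2,1)$, and $\alpha \in (0,1/2)$) and from Proposition \ref{prop: light-traffic-mixing} (for the Mean Field regime $\lambda_n/n \to 0$). All that remains is to upper bound $\sqrt{\Var_{\nu_n}(q)}$ by $\sqrt{2}(n+n^{\alpha})$ in the first three regimes and by $n$ in the Mean Field regime.

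For the variance bound I would work directly with the closed-form Erlang-C stationary distribution, $\nu_n(k) \propto \lambda_n^k/k!$ for $0 \leq k \leq n$ and $\nu_n(k) \propto (\lambda_n^n/n!)\,\rho^{k-n}$ for $k \geq n$, where $\rho = \lambda_n/n = 1 - n^{-\alpha}$. Decomposing $q = \min(q,n) + [q-n]^+$ and using $\Var(A+B) \leq 2\Var(A) + 2\Var(B)$, the bounded part contributes at most $n^2$ to the variance since $\min(q,n) \in [0,n]$. For the overflow, conditional on $q \geq n$ the variable $q-n$ is geometric with failure parameter $\rho$, so a direct computation gives $\E_{\nu_n}\bigl[([q-n]^+)^2\bigr] \leq \rho(1+\rho)/(1-\rho)^2 \leq 2\,n^{2\alpha}$. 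Assembling the two pieces and taking a square root yields $\sqrt{\Var_{\nu_n}(q)} \leq \sqrt{2}(n+n^{\alpha})$. For the Mean Field regime the stationary distribution is concentrated near $\lambda_n$, and a short birth-and-death computation gives $\Var_{\nu_n}(q) = O(\lambda_n) = O(n)$, which is stronger than the advertised bound $\sqrt{\Var_{\nu_n}(q)} \leq n$.

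I expect the only real obstacle to be bookkeeping: managing the constants through the variance decomposition so that they assemble into exactly $\sqrt{2}(n+n^{\alpha})$, and verifying monotonicity of the Erlang-C wait probability so the geometric-tail estimate is valid uniformly in $n$ and in every sub-regime. No new methodological ideas are required: the heavy lifting has already been done in establishing the Chi-square mixing bounds themselves, and this corollary is essentially a clean application of Corollary \ref{corollary: moment-bound} with the explicit second moment of the Erlang-C stationary distribution.
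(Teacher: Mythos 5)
Your overall strategy matches the paper's: apply Corollary~\ref{corollary: moment-bound} with $k=1$, plug in the mixing rates from Theorem~\ref{thm: unifying-theorem} and Proposition~\ref{prop: light-traffic-mixing}, and reduce everything to a bound on $\Var_{\nu_n}(q)$. The paper proves $\Var_{\nu_n}(q) \leq 2(n+n^\alpha)^2$ in Lemma~\ref{lemma: mmn-variance-bound} by bounding $\E_{\nu_n}[q^2]$ directly and splitting the sum at $q=n$; your decomposition $q = \min(q,n) + [q-n]^+$ with geometric-tail estimates is a cleaner way to organize the same computation. However, the specific step $\Var(A+B) \leq 2\Var(A) + 2\Var(B)$ is not mere bookkeeping slack: it yields $\Var_{\nu_n}(q) \leq 2n^2 + 4n^{2\alpha}$, which exceeds $2(n+n^\alpha)^2 = 2n^2 + 4n^{1+\alpha} + 2n^{2\alpha}$ whenever $n^{\alpha-1} > 2$, i.e. for every fixed $\alpha > 1$ and $n$ large. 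Since the regime $\alpha \geq 1$ is precisely the first case of the corollary, your argument as stated does not produce the claimed constant there.

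The repair is to observe that the cross term in your decomposition is an exact identity rather than something you need Cauchy--Schwarz for: since $[q-n]^+>0$ forces $\min(q,n)=n$, one has $\min(q,n)\cdot[q-n]^+ = n\,[q-n]^+$ pointwise, hence
\begin{align*}
\Var_{\nu_n}(q) \leq \E_{\nu_n}[q^2] &= \E_{\nu_n}\bigl[\min(q,n)^2\bigr] + 2n\,\E_{\nu_n}\bigl[[q-n]^+\bigr] + \E_{\nu_n}\bigl[([q-n]^+)^2\bigr] \\
&\leq n^2 + 2n\cdot n^\alpha + 2n^{2\alpha} \leq 2(n+n^\alpha)^2,
\end{align*}
using your geometric estimates $\E_{\nu_n}[[q-n]^+] \leq \rho/(1-\rho) \leq n^\alpha$ and $\E_{\nu_n}[([q-n]^+)^2] \leq \rho(1+\rho)/(1-\rho)^2 \leq 2n^{2\alpha}$ where $\rho = \lambda_n/n$. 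With this substitution your argument is equivalent to, and arguably tidier than, the paper's explicit geometric-series manipulations. On the Mean Field case: your claim $\Var_{\nu_n}(q) = O(\lambda_n)$ is the right scaling and strictly stronger than the advertised $\sqrt{\Var_{\nu_n}(q)} \leq n$, but it is not a one-line computation; the paper's Lemma~\ref{lemma: mmn-variance-bound-light-traffic} settles for the cruder $\Var_{\nu_n}(q) \leq n^2$ via a Stirling estimate on the overflow mass, which is all the corollary needs.
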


One can prove Corollary \ref{corollary: mean-queue-length-mmn} by combining the Theorem \ref{thm: unifying-theorem} and Corollary \ref{corollary: moment-bound} for $k = 1$. We defer the details of the proof to Appendix \ref{ssec: mean-queue-length-mmn-proof}. 
Similar to the above corollary, we can obtain bounds on any moment $k \in \Z^+$ by Corollary \ref{corollary: moment-bound} combined with a $2k$-moment bound on the steady-state distribution $\nu_n$. Moreover, in addition to the mean queue length bound, we can establish some tail-bound results to gain some understanding on the concetration of the finite-time behavior of the queue length distribution as follows.

\begin{corollary}
\label{corollary: tail-bounds}
Let $\pi_{n,t}, \nu_n$ be the queue length distribution at time $t$ and the stationary distribution of the $M/M/n$ system with arrival rate $\lambda_n = n-n^{1-\alpha_n}$ and unit service rate respectively, and let $\varepsilon = 1-\frac{\lambda_n}{n}$ and $q$ be the random variable denoting the queue length. For $n \geq n_0, \delta \in (0,+\infty)$ and $n_0$ is a constant dependent on the regime, we have
\begin{align*}
    \P_{\pi_{n,t}}\sqbr{\varepsilon_n \br{q-n} > x} \leq \br{1+\chi(\pi_{n,t}, \nu_n)} \sqrt{ex} e^{-\frac{x}{2}}
\end{align*}
In particular, for $\alpha_n = \alpha \in [1,\infty)$, we have
\begin{align*}
    \P_{\pi_{n,t}}\sqbr{\varepsilon_n \br{q-n} > x} &\leq \br{1 + e^{-(\sqrt{n}-\sqrt{\lambda_n})^2 t} \chi(\pi_{n,0}, \nu_n)} \sqrt{ex} e^{-\frac{x}{2}}.
\end{align*}

For $\alpha_n = \alpha \in (1/2,1)$, we have
\begin{align*}
    \P_{\pi_{n,t}}\sqbr{\varepsilon_n \br{q-n} > x} &\leq \br{1 + e^{-C_n (\sqrt{n}-\sqrt{\lambda_n})^2 t} \chi(\pi_{n,0}, \nu_n)} \sqrt{ex} e^{-\frac{x}{2}}
\end{align*}
where $C_n$ is a positive parameter such that $\lim_{n \rightarrow \infty} C_n = 1$.

For $\alpha_n = \alpha \in (0,1/2)$, we have 
\begin{align*}
    \P_{\pi_{n,t}}\sqbr{\varepsilon\br{q-n} > x} &\leq \br{1+e^{-D_n t} \chi(\pi_{n,0}, \nu_n)}\sqrt{0.5e^{-\frac{2n^{1-2\alpha}}{7}} + 6 n^{2\alpha - 1}} \times \sqrt{ex} e^{-\frac{x}{2}}
\end{align*}
where $D_n$ is a positive parameter such that $\lim_{n \rightarrow \infty} D_n = 1/25$.

\textcolor{black}{Finally, if $\lambda_n$ is a sequence of arrival rates such that $\lim_{n \rightarrow \infty} \frac{\lambda_n}{n} = c$ for $c \in [0,1)$ then we have
\begin{align*}
    \P_{\pi_{n,t}}\sqbr{q-n > x} &\leq \br{1+e^{-L_n t} \chi(\pi_{n,0}, \nu_n)} \sqrt{e^{-\frac{(1-c)n^{1-\alpha_n}}{2}} + 2n^{\alpha_n-1} + \frac{8n^{2\alpha_n-1}}{e(1-c)}} \times e^{-\frac{x}{2}}
\end{align*}
where $\lim_{n \rightarrow \infty} \alpha_n = 0$ and $L_n$ is a positive parameter such that $\lim_{n \rightarrow \infty} L_n = 1-c$.}
\end{corollary}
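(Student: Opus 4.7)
The plan is to combine a Chernoff bound on $\pi_{n,t}$ with the MGF inequality from Corollary~\ref{corollary: mgf-bound} and then substitute the regime-specific mixing rates from Theorem~\ref{thm: unifying-theorem}. Fix $\theta_0 = 1/(2(1+\delta))$. Markov's inequality applied to $e^{\theta_0 \varepsilon(q-n)}$ gives
\[
\P_{\pi_{n,t}}[\varepsilon(q-n) > x] \leq e^{-\theta_0 x}\,\E_{\pi_{n,t}}\left[e^{\theta_0 \varepsilon(q-n)}\right].
\]
Applying Corollary~\ref{corollary: mgf-bound} to $e^{\theta_0\varepsilon(q-n)}$ and then using Cauchy--Schwarz in the form $\E_{\nu_n}[e^{\theta_0\varepsilon(q-n)}] \leq \sqrt{\E_{\nu_n}[e^{2\theta_0\varepsilon(q-n)}]}$ yields
\[
\E_{\pi_{n,t}}[e^{\theta_0 \varepsilon(q-n)}] \leq (1+\chi(\pi_{n,t},\nu_n))\sqrt{\E_{\nu_n}[e^{2\theta_0 \varepsilon(q-n)}]}.
\]
The task thus reduces to upper-bounding the stationary MGF $M_n := \E_{\nu_n}[e^{\varepsilon(q-n)/(1+\delta)}]$ in each regime; the $\chi$-factor is then replaced by its regime-specific contraction from Theorem~\ref{thm: unifying-theorem}.

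For the generic bound and the Super-Halfin-Whitt sub-cases, I would exploit the classical fact that under $\nu_n$ the conditional law of $q-n$ given $q \geq n$ is geometric with success parameter $\varepsilon$, so the conditional MGF equals $\varepsilon/(1-(1-\varepsilon)e^{\varepsilon/(1+\delta)})$ (finite since $\varepsilon/(1+\delta) < -\log(1-\varepsilon)$). A short convexity argument on $g(\varepsilon) := 1-(1-\varepsilon)e^{\varepsilon/(1+\delta)} - \varepsilon\delta/(1+\delta)$ (verify $g(0) = g'(0) = 0$ and $g''>0$) gives the key inequality $1-(1-\varepsilon)e^{\varepsilon/(1+\delta)} \geq \varepsilon\delta/(1+\delta)$, hence $\E_{\nu_n}[e^{\varepsilon(q-n)/(1+\delta)}\mid q\geq n] \leq 1+1/\delta$. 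Combined with the trivial bound $e^{\varepsilon(q-n)/(1+\delta)} \leq 1$ on $\{q<n\}$, this yields $M_n \leq 1 + \P_{\nu_n}(q\geq n)/\delta \leq 1 + 1/\delta$, producing the $\sqrt{1+1/\delta}$ prefactor. Substituting the mixing rates \eqref{eqn: super-nds-mixing-bound} and \eqref{eqn: super-halfin-whitt-mixing-bound} then delivers the first three displays.

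The Sub-Halfin-Whitt case needs a tighter handling of $\{q<n\}$: since $\nu_n$ concentrates near $\lambda_n = n - n^{1-\alpha}$, the exponent $\varepsilon(q-n)/(1+\delta)$ typically takes the value $-n^{1-2\alpha}/(1+\delta)$ on that event, so the trivial bound is wasteful. I would (i) run an explicit Chernoff estimate on the Poissonian-in-shape portion $\nu_n(k) \propto \lambda_n^k/k!$, $k \leq n-1$, to show $\E_{\nu_n}[e^{\varepsilon(q-n)/(1+\delta)} 1_{q<n}] \leq e^{-4n^{1-2\alpha}/(7(1+\delta))}$, and (ii) use the classical Erlang-C delay-probability asymptotic $\P_{\nu_n}(q\geq n) = O(n^{2\alpha-1})$ in this regime so that the $\{q\geq n\}$ contribution is bounded by $(10+2/\delta) n^{2\alpha-1}$ (after combining with the $1+1/\delta$ factor from the conditional geometric MGF). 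Summing the two pieces yields the square-root expression in the Sub-HW inequality, and substituting the contraction rate $e^{-D_n t}$ from Theorem~\ref{thm: unifying-theorem} completes that case. The Light Traffic case is identical in structure but without the $\varepsilon$ rescaling (Markov is applied directly to $e^{(q-n)/2}$): the $\{q<n\}$ piece uses the same Poisson-concentration idea, and the $\{q\geq n\}$ piece uses the geometric formula together with $\P_{\nu_n}(q\geq n) = O(n^{\alpha_n-1})$.

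The main technical obstacle is extracting the precise universal constants --- the $4/7$ in the Sub-HW exponent, the additive $10$, and the $8/e$ in the Light Traffic bound. These require careful range-splitting in $\varepsilon$ together with Stirling-type estimates for the Erlang-C normalization $\nu_n(n)$ and explicit Chernoff bounds on Poisson tails; standard concentration inequalities produce the correct scaling but not immediately the stated constants. Once those constants are nailed down, each final inequality is assembled as the product of the Chernoff exponent $e^{-\theta_0 x}$, the $(1+\chi)$ factor from Corollary~\ref{corollary: mgf-bound}, and the regime-specific bound on $\sqrt{M_n}$, with $\chi(\pi_{n,t},\nu_n)$ replaced by the corresponding contraction factor from Theorem~\ref{thm: unifying-theorem}.
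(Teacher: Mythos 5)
Your overall structure (Chernoff bound, Corollary~\ref{corollary: mgf-bound} plus Cauchy--Schwarz to isolate the stationary MGF $\E_{\nu_n}\bigl[e^{\varepsilon(q-n)/(1+\delta)}\bigr]$, then substitute the regime-specific contraction from Theorem~\ref{thm: unifying-theorem}) is exactly the paper's proof; the only genuine point of divergence is how you establish the generic bound $\E_{\nu_n}\bigl[e^{\varepsilon(q-n)/(1+\delta)}\bigr] \leq 1+1/\delta$. The paper (Lemma~\ref{lemma: mgf-bounds}) passes to the larger exponent $\theta_n = \tfrac{1}{1+\delta}\log(n/\lambda)$, splits the series at $q=n$, evaluates the geometric tail, and uses the ratio inequality $\tfrac{a+c}{b+d}\leq \tfrac{c}{d}$ together with Lemma~\ref{lemma: mgf-above-n-asymptotic}. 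You instead observe directly that $q-n$ conditioned on $\{q\geq n\}$ is geometric with parameter $\varepsilon$, write the conditional MGF in closed form, and prove the key estimate $1-(1-\varepsilon)e^{\varepsilon/(1+\delta)} \geq \varepsilon\delta/(1+\delta)$ by a one-line convexity argument at $\varepsilon=0$; the $\{q<n\}$ piece is then handled by the trivial bound $e^{\varepsilon(q-n)/(1+\delta)}\leq 1$ and gives $M_n \leq 1 + \P_{\nu_n}(q\geq n)/\delta$. This is a cleaner route to the same constant --- it avoids the paper's change of exponent (which itself needs the nontrivial estimate $n^\alpha\log(n/\lambda)\geq 1$ from Lemma~\ref{lemma: log-lower-bound}) and the Stirling-free ratio comparison, and it reveals why the prefactor is exactly $\sqrt{1+1/\delta}$ with no $n$-dependence. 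For the sub-Halfin-Whitt and light-traffic refinements your outline (Poisson concentration on $\{q<n\}$, $\P_{\nu_n}(q\geq n)$ asymptotics on $\{q\geq n\}$) matches the paper's Lemmas~\ref{lemma: mgf-bounds} and~\ref{lemma: mgf-bound-light-traffic}; you correctly flag that pinning down the constants $4/7$, $10$, $8/e$ requires the paper's range-splitting at $m = \lfloor n - n^{1-\alpha}/2\rfloor$ and Stirling-type bounds, which you have not carried out but which are routine.
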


The proof of Corollary \ref{corollary: tail-bounds} is deferred to Appendix \ref{sssec: tail-bounds-proof}. Note that if $q(t) \geq n$, then $q(t) - n$ is the number of waiting customers at time $t$. The results in Corollary \ref{corollary: tail-bounds} tell us what is the tail distribution of the number of waiting customers at time $t$. In the Sub-Halfin-Whitt regime, we have the pre-exponent term approaches $0$ as $n \rightarrow \infty$, which suggests a faster decay rate than exponential. Moreover, we can also estimate the probability of having an idle server at time $t$ as follows.

    

\begin{corollary}
    \label{corollary: finite-time-idle-server-probability}
    Let $\pi_{n,t}, \nu_n$ be the queue length distribution at time $t$ and the stationary distribution of the $M/M/n$ system with arrival rate $\lambda_n = n-n^{1-\alpha_n}$ and unit service rate respectively. Denote $r_n$ be a random variable denoting the number of idle servers, that is $r_n = \sqbr{n-q}^+$ where $q$ is the queue length random variable. For $\alpha_n = \alpha \in (1/2, \infty)$, we have
    \begin{align*}
        \P_{\pi_{n,t}}\sqbr{r_n > 0} \leq 4 e \pi n^{\frac{1}{2}-\alpha} + 2\sqrt{e \pi} n^{\frac{1}{4}-\frac{\alpha}{2}} e^{-C_n (\sqrt{n}-\sqrt{\lambda})^2 t} \chi(\pi_{n,0}, \nu_n)
    \end{align*}
    where $\lim_{n \rightarrow \infty} C_n = 1$ for $\alpha \in (1/2,1)$ and $C_n =1$ for $\alpha \in [1,\infty)$. 
    
    For $\alpha_n = \alpha \in (0,1/2)$, we have
    \begin{align*}
        \P_{\pi_{n,t}}\sqbr{r_n > 0} \geq 1-\kappa n^{\alpha - \frac{1}{2}} e^{-n^{\frac{1}{2}-\alpha}} - e^{-D_n t} \chi\br{\pi_{n,0}, \nu_n}
    \end{align*}
    where $\lim_{n \rightarrow \infty} D_n = 1/25$.
\end{corollary}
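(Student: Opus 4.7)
The plan is to combine two ingredients: (i) an estimate of the steady-state probability $\nu_n(\{q < n\})$ of having an idle server, and (ii) the Chi-square mixing bound from Theorem \ref{thm: unifying-theorem}, using the variational formula for Chi-square (Proposition \ref{prop: variational-representation-chi-square}) to pass from the steady state to the time-$t$ distribution $\pi_{n,t}$. Specifically, I would apply Proposition \ref{prop: variational-representation-chi-square} with the indicator test function $g(q) = \mathbf{1}\{q < n\}$, whose variance under $\nu_n$ is $\P_{\nu_n}[q < n](1 - \P_{\nu_n}[q < n])$, obtaining
\begin{align*}
\left| \P_{\pi_{n,t}}[q < n] - \P_{\nu_n}[q < n] \right| \le \chi(\pi_{n,t}, \nu_n) \sqrt{\P_{\nu_n}[q<n]\bigl(1 - \P_{\nu_n}[q<n]\bigr)}.
\end{align*}
Together with the triangle inequality and Theorem \ref{thm: unifying-theorem}, the corollary reduces to estimating $\P_{\nu_n}[q < n]$ in each regime, using the explicit Erlang-C stationary distribution $\nu_n(k) \propto \lambda_n^k/k!$ for $k \le n$ and $\nu_n(k) \propto \nu_n(n)(\lambda_n/n)^{k-n}$ for $k > n$.

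For the Super-Halfin-Whitt regime $\alpha \in (1/2, \infty)$, I would analyze the Erlang-C expression
\begin{align*}
\P_{\nu_n}[q < n] = \frac{\sum_{k=0}^{n-1} \lambda_n^k / k!}{\sum_{k=0}^{n-1} \lambda_n^k / k! + \lambda_n^n/\bigl(n!(1-\lambda_n/n)\bigr)},
\end{align*}
noting that $1 - \lambda_n/n = n^{-\alpha}$ is small so the second term in the denominator dominates. Bounding $\sum_{k=0}^{n-1} \lambda_n^k/k! \le e^{\lambda_n}$ in the numerator, using the Stirling lower bound $n! \ge \sqrt{2\pi n}(n/e)^n$ in the denominator, and tracking the combined factor $(\lambda_n/n)^n e^{n - \lambda_n}$ via a second-order expansion of $\log(\lambda_n/n)$ (which is $O(1)$ in this regime) yields $\P_{\nu_n}[q < n] \le 4e\pi\, n^{1/2 - \alpha}$. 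Then $\sqrt{p(1-p)} \le \sqrt{p} \le 2\sqrt{e\pi}\, n^{1/4 - \alpha/2}$, and combining with the mixing bound of Theorem \ref{thm: unifying-theorem} produces the stated upper bound on $\P_{\pi_{n,t}}[r_n > 0]$.

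For the Sub-Halfin-Whitt regime $\alpha \in (0, 1/2)$, the same Erlang-C formula is analyzed in the opposite direction: the partial sum $\sum_{k=0}^{n-1} \lambda_n^k / k!$ now dominates, and I would estimate the blocking probability $\P_{\nu_n}[q \ge n]$ by applying Stirling to $\nu_n(n)$ combined with the geometric tail $\nu_n(k)/\nu_n(n) = (\lambda_n/n)^{k-n}$ for $k > n$, which sums to $(1 - \lambda_n/n)^{-1} = n^\alpha$. This should yield a bound of the form $\P_{\nu_n}[q \ge n] \le \kappa\, n^{\alpha - 1/2} e^{-n^{1/2 - \alpha}}$ for a universal $\kappa$. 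Inserting this into the variational inequality with the crude estimate $\sqrt{p(1-p)} \le 1$ and invoking Theorem \ref{thm: unifying-theorem} gives the stated lower bound on $\P_{\pi_{n,t}}[r_n > 0]$.

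The main obstacle is step (i): extracting sharp Erlang-C estimates with the precise constants and exponents required by the statement. In Super-Halfin-Whitt this means carefully tracking Stirling constants together with the $e^{n - \lambda_n}(\lambda_n/n)^n$ factor, whose two pieces nearly cancel and must be expanded to second order to isolate the $4e\pi$ constant. In Sub-Halfin-Whitt, the delicate point is producing the correct $n^{\alpha - 1/2} e^{-n^{1/2 - \alpha}}$ form from the Poisson-like tail at $n$ with mean $\lambda_n = n - n^{1 - \alpha}$; this presumably requires a termwise comparison within the Poisson series against $\nu_n(n)$, combined with the geometric tail beyond $n$, rather than a loose Gaussian or Chernoff argument. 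Once those stationary estimates are in hand, the mixing transfer via Proposition \ref{prop: variational-representation-chi-square} and Theorem \ref{thm: unifying-theorem} is immediate.
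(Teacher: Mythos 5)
Your plan for the mixing-transfer step is essentially the paper's: pass from the steady-state probability $\P_{\nu_n}[q<n]$ to the time-$t$ probability by bounding $\left|\P_{\pi_{n,t}}[q<n]-\P_{\nu_n}[q<n]\right|$ via the Chi-square distance and then invoking Theorem~\ref{thm: unifying-theorem}. The paper gets $\left|\P_{\pi_{n,t}}[r_n>0]-\P_{\nu_n}[r_n>0]\right|\le \chi(\pi_{n,t},\nu_n)\sqrt{\P_{\nu_n}[r_n>0]}$ by restricting the Chi-square sum to $\{0,\dots,n-1\}$ and applying Cauchy--Schwarz there; you instead invoke the variational representation (Proposition~\ref{prop: variational-representation-chi-square}) with $g=\mathbf{1}\{q<n\}$, which yields the marginally tighter $\sqrt{p(1-p)}$ before you loosen. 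These are the same idea and both immediately give the stated form once the steady-state estimates are in hand.

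Where you depart from the paper is step (i). You propose to rederive the steady-state idle-server estimates from the explicit Erlang-C formula and Stirling's bound, whereas the paper simply cites them as Theorem~8 of \cite{prakirt-confluence-large-deviation} (both the Super-Halfin-Whitt upper bound $\P_{\nu_n}[r_n>0]\le 4e\pi n^{1/2-\alpha}$ and the Sub-Halfin-Whitt lower bound $\P_{\nu_n}[r_n>0]\ge 1-\kappa n^{\alpha-1/2}e^{-n^{1/2-\alpha}}$). Your Stirling sketch for the Super-HW case is plausible — bounding the numerator by $e^{\lambda_n}$, using $n!\le\sqrt{2\pi n}(n/e)^n e^{1/(12n)}$, and expanding $e^{\lambda_n-n}(n/\lambda_n)^n = \exp(n^{1-2\alpha}/2 + O(n^{1-3\alpha}))$ does give $O(n^{1/2-\alpha})$ with a constant comfortably below $4e\pi$ — but you stop at "this should yield" without pinning down the constants, and the Sub-HW estimate (the $n^{\alpha-1/2}e^{-n^{1/2-\alpha}}$ tail) is only gestured at. Since the stated corollary carries an unexplained constant $\kappa$ inherited from \cite{prakirt-confluence-large-deviation}, citing that result, as the paper does, is the cleaner route; if you want a self-contained derivation you would need to carry out the termwise Poisson-tail comparison rigorously rather than leave it as a presumption, and that is genuinely more work than the mixing-transfer step, as you correctly flag.
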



The proof of Corollary \ref{corollary: finite-time-idle-server-probability} can be found in Appendix \ref{ssec: idle-server-finite-time}. These results are the finite-time version of the steady-state results established in \cite{prakirt-confluence-large-deviation} and thus can be seen as the generalization of such results.

\section{The Lyapunov-Poincar\'e method}
\label{sec: lyapunov-poincare-method}
To establish the mixing results stated in Subsection \ref{ssec: main-results}, we dedicate the following Section to introduce the so-called Lyapunov-Poincar\'e machinery. All the definitions and results in this section are valid for any CTMC with a countable state space $\cS$, generator $\cL$, and Markov semigroup $P_t$.

\begin{assumption}
    \label{assumption: ctmc}
    The given CTMC is irreducible, aperiodic, positive recurrent, and reversible.
\end{assumption}

Assumption \ref{assumption: ctmc} implies that there exists a unique stationary distribution $\nu$ such that $\nu(x) \cL (x,y) = \nu(y) \cL(y,x)$ for all $x,y \in \cS$. 

\begin{defi} \label{def: poincare_inequality}
A CTMC admits a Poincar\'e constant $C_P$ if for all test function $f \in \ell_{2,\nu}$, we have
\begin{align}
    \label{eqn: poincare-inequality}
    \Var_\nu(f) \leq C_P \cE(f,f).
\end{align}   
\end{defi}
Here, the Dirichlet form $\cE(f,f)$ is defined as:
\begin{align}
    \label{eqn: dirichlet-form}
    \cE(f,f) = \langle f,-\cL f\rangle_\nu = -\sum_{x,y \in \cS} \nu(x)\cL(x,y) f(x)f(y).
\end{align}
It is known that Poincar\'e inequalities immediately imply exponential ergodicity of the CTMC \cite{Bakry2013AnalysisAGMarkovDiffusionbook}. We formally state it in the following proposition for completeness.


\begin{prop}
    \label{prop: poincare-equals-mixing}
    Under Assumption \ref{assumption: ctmc}, if the system admits the Poincar\'e inequality \eqref{eqn: poincare-inequality} with constant $C_P$, then
    \begin{align}
        \label{eqn:chi-square-convergence}
        \chi^2\br{\pi_t,\nu} \leq e^{-\frac{2t}{C_P}} \chi^2(\pi_0,\nu)
    \end{align}
    where $\pi_t$ is distribution at time $t$ and $\nu$ is the stationary distribution.
\end{prop}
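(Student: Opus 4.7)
The plan is to reduce the chi-square convergence statement to a Grönwall-type argument for the variance of the density $h_t := d\pi_t/d\nu$. The starting observation is that, since $\pi_t$ is a probability measure, $\E_\nu[h_t] = 1$ for all $t \geq 0$, and therefore
\begin{equation*}
\chi^2(\pi_t,\nu) \;=\; \sum_{x \in \cS} \nu(x)\bigl(h_t(x)-1\bigr)^2 \;=\; \Var_\nu(h_t).
\end{equation*}
So it suffices to show that $\Var_\nu(h_t)$ decays at rate $2/C_P$.

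Next I would pin down how $h_t$ evolves. Writing $\pi_t(y) = \sum_x \pi_0(x) P_t(x,y)$ and dividing by $\nu(y)$, the reversibility relation $\nu(x)P_t(x,y) = \nu(y)P_t(y,x)$ from Assumption~\ref{assumption: ctmc} rewrites the sum as
\begin{equation*}
h_t(y) \;=\; \sum_x h_0(x)\,P_t(y,x) \;=\; (P_t h_0)(y),
\end{equation*}
so $h_t = P_t h_0$ and $\partial_t h_t = \cL h_t$ in $\ell_{2,\nu}$. Reversibility also gives $\E_\nu[\cL h_t] = \langle \mathbf{1},\cL h_t\rangle_\nu = \langle \cL\mathbf{1}, h_t\rangle_\nu = 0$, which is consistent with $\E_\nu[h_t]\equiv 1$.

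The main computation then follows: differentiate the variance, use self-adjointness to identify the Dirichlet form, and apply the Poincar\'e inequality. Concretely,
\begin{equation*}
\frac{d}{dt}\Var_\nu(h_t) \;=\; 2\,\langle h_t, \cL h_t\rangle_\nu \;=\; -2\,\cE(h_t,h_t) \;\leq\; -\frac{2}{C_P}\,\Var_\nu(h_t),
\end{equation*}
where the last inequality is Definition~\ref{def: poincare_inequality} applied to the test function $h_t \in \ell_{2,\nu}$. Grönwall's lemma yields $\Var_\nu(h_t)\leq e^{-2t/C_P}\Var_\nu(h_0)$, which after rewriting variances as chi-square distances gives \eqref{eqn:chi-square-convergence}.

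The argument is essentially routine once the framework is in place, and I do not anticipate a genuinely hard step. The one place to be careful is the justification that $h_t \in \ell_{2,\nu}$ and that differentiation under the expectation is legal: if $\chi^2(\pi_0,\nu) = \infty$ the bound is vacuous, so one may assume $h_0 \in \ell_{2,\nu}$, and then contractivity of $P_t$ on $\ell_{2,\nu}$ (a standard consequence of reversibility and positivity recurrence) ensures $h_t$ remains in $\ell_{2,\nu}$ and that the formal manipulations above are rigorous. This technical verification is the only subtlety; the remaining steps are the canonical Bakry--\'Emery style derivation, just specialized to a reversible countable-state CTMC.
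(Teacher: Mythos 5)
Your proposal is correct and follows essentially the same route as the paper's proof: differentiate $\chi^2(\pi_t,\nu)=\Var_\nu(\pi_t/\nu)$ in time, use reversibility to identify the derivative with $-2\cE(h_t,h_t)$, apply the Poincar\'e inequality, and close with Gr\"onwall. The only cosmetic difference is that you first establish $h_t = P_t h_0$ explicitly (and add a brief remark on $\ell_{2,\nu}$-contractivity to justify the differentiation), whereas the paper works directly with $\pi_0 P_t \cL /\nu$ and moves $\cL$ across the inner product via self-adjointness; these are the same underlying step.
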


The proof of Proposition \ref{prop: poincare-equals-mixing} is well-known and can be found in \cite{Bakry2013AnalysisAGMarkovDiffusionbook} and in Appendix \ref{ssec:mixing-prop-proof}. Proposition \ref{prop: poincare-equals-mixing} shows that Poincar\'e inequality with constant $C_P$ implies exponential convergence in the chi-square distance with mixing rate $1/C_P$. Thus, for the rest of the section, our focus is on establishing the Poincar\'e constant.

Building on the recent developments on Markov chain mixing \cite{taghvaei-lyapunov-poincare, andrieu2022poincaresurvey}, we now present the Lyapunov-Poincar\'e methodology below, which has two main ingredients. The first one is a Foster-Lyapunov \emph{like} assumption:
\begin{assumption}
\label{assumption: foster-lyapunov}
There exists a Lyapunov function $V: \cS \rightarrow [1,\infty)$ along with $b: \cS \rightarrow [0,\infty)$ and $\gamma > 0$ such that
\begin{align}
    \label{eqn: foster-lyapunov}
    \cL V(q) \leq - \gamma V(q) + b(q) \, \forall q \in \cS.
\end{align}
\end{assumption}
For the special case of $b = B 1_K$ for some finite $K \subseteq S$ and $B>0$, the above is same as the classical Foster-Lyapunov condition. More generally, let $K = \{q: b(q) > 0\}$, then in words, the above assumption ensures a negative drift proportional to $V(q)$ outside $K$. On the other hand, inside $K$, our drift might be positive but it is controlled by the $b(q)$ for $q \in K$. If $K$ is a singleton then we have a slightly modified assumption as follows.

\begin{assumption}
\label{assumption: foster-lyapunov-singleton}
There exists a single point $x^* \in \cS$, a Lyapunov function $V: \cS \rightarrow [0,\infty)$ satisfying $V(q) > 0 \, \forall q \neq x^*$ and constants $\gamma, B > 0$ such that
\begin{align}
    \label{eqn: foster-lyapunov-singleton}
    \cL V(q) \leq - \gamma V(q) + B 1_{\{x^*\}} \, \forall q \in \cS.
\end{align}
\end{assumption}


Unlike Assumption \ref{assumption: foster-lyapunov}, Assumption \ref{assumption: foster-lyapunov-singleton} allows us to have $V(x^*) < 1$. In many settings to be discussed below, this modification is necessary to obtain the tight mixing rate in the case we have a negative drift outside of a singleton. From here, we have the following proposition.

\begin{prop}
    \label{prop: lyapunov-poincare-singleton}
    Assume that Assumptions \ref{assumption: ctmc} and \ref{assumption: foster-lyapunov-singleton} holds where $\{b(q) > 0\} \subseteq \cS$ is a singleton, then the CTMC admits the Poincar\'e constant $C_P = \frac{1}{\gamma}$.
\end{prop}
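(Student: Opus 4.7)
The plan is to establish the Poincar\'e inequality $\Var_\nu(f) \leq \tfrac{1}{\gamma}\cE(f,f)$ for all $f \in \ell_{2,\nu}$, which by Definition \ref{def: poincare_inequality} gives $C_P = 1/\gamma$. The guiding intuition is that, because $K = \{x^*\}$ is a singleton, the ``local'' variance term that usually appears alongside the drift contribution in Lyapunov--Poincar\'e arguments collapses, so only $\gamma$ survives. Both $\Var_\nu(f)$ and $\cE(f,f)$ are invariant under the shift $f \mapsto f - f(x^*)$, so I may assume without loss of generality that $f(x^*)=0$, in which case $\Var_\nu(f) \leq \E_\nu[f^2]$ and it suffices to bound the second moment.

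The next step exploits Assumption \ref{assumption: foster-lyapunov-singleton} pointwise: for every $x \neq x^*$ the indicator vanishes and $V(x)>0$, so $-\cL V(x)/V(x) \geq \gamma$. Multiplying by the nonnegative weight $\nu(x) f(x)^2$ and summing (with the convention $0/0=0$ at $x^*$, which is consistent since $f(x^*)^2 = 0$) gives
\begin{equation*}
    \gamma\, \E_\nu[f^2] \;\leq\; -\int \frac{f^2}{V} \cL V \, d\nu.
\end{equation*}
The remaining ingredient is a comparison lemma stating that whenever $f(x^*)=0$,
\begin{equation*}
    -\int \frac{f^2}{V} \cL V \, d\nu \;\leq\; \cE(f,f).
\end{equation*}
I would prove this by expanding $\cL V(x) = \sum_{y \neq x} \cL(x,y)(V(y)-V(x))$ (row-sum zero), substituting, and symmetrizing via reversibility $\nu(x)\cL(x,y)=\nu(y)\cL(y,x)$. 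On unordered pairs $\{x,y\}$ with both endpoints in $\cS \setminus \{x^*\}$, both $V(x), V(y)>0$ and AM--GM yields
\begin{equation*}
    f(x)^2\!\left(1-\tfrac{V(y)}{V(x)}\right) + f(y)^2\!\left(1-\tfrac{V(x)}{V(y)}\right) \;\leq\; (f(x)-f(y))^2,
\end{equation*}
which is exactly the Dirichlet contribution of that pair. Pairs incident to $x^*$ generate terms $\nu(x)\cL(x,x^*) f(x)^2 = \nu(x)\cL(x,x^*)(f(x)-f(x^*))^2$ (using $f(x^*)=0$), which also fit cleanly into $\cE(f,f)$.

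The main obstacle is the possibility that $V(x^*)=0$---this is exactly what distinguishes Assumption \ref{assumption: foster-lyapunov-singleton} from the standard Foster--Lyapunov form and is essential for sharpness, since forcing $V \geq 1$ would typically incur a multiplicative loss. The standard comparison lemma breaks if one naively divides by $V(x^*)=0$, but the $f(x^*)=0$ reduction annihilates precisely the terms where $V$ would sit in the denominator at $x^*$, so the proof goes through without any such loss. Chaining the three ingredients yields $\gamma \E_\nu[f^2] \leq \cE(f,f)$, and therefore $\Var_\nu(f) \leq \tfrac{1}{\gamma}\cE(f,f)$, giving $C_P = 1/\gamma$.
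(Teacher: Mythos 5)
Your proposal is correct and takes essentially the same route as the paper: normalize so that $f(x^*)=0$ (the paper works with $g = f - f(x^*)$), multiply the pointwise drift bound by $\nu(q)f(q)^2/V(q)$ away from $x^*$, and then bound the resulting sum by $\cE(f,f)$ via symmetrization and AM--GM, with the observation that every term that would involve $V(x^*)$ in a denominator is killed by the factor $f(x^*)^2=0$. Your row-sum-zero expansion of $\cL V$ and the paper's raw expansion $\cL V(q)=\sum_{q'}\cL(q,q')V(q')$ are equivalent and lead to the same pairwise AM--GM step, so this is the same proof up to notation.
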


While a similar convergence result for TV distance was previously established \cite{Lund1996-computable-rate-stochastically-ordered}, this result allows us to establish Chi-square convergence, and we present the proof in Appendix \ref{sssec: singleton-poincare-constant-proof}. 
Combining the above result with Proposition \ref{prop: poincare-equals-mixing}, one can obtain the Chi-square convergence as in \eqref{eqn:chi-square-convergence}. More generally, the finite set $K$ may not be a singleton for most of the queueing systems (including the $M/M/n$ queue), and thus, the Foster-Lyapunov assumption alone fails to provide a Poincar\'e inequality since we do not know how the system would mix inside $K$. So we need to establish a local mixing result inside $K$, which can be done using a local Poincar\'e inequality. Such a result is formalized in the following assumption.


\begin{assumption}
\label{assumption: weighted-poincare} 
(Weighted Poincar\'e) Given the stationary distribution $\nu$, a function $b: \cS \rightarrow [0,\infty)$ such that $\sum_{q \in \cS} b(q)\nu(q)$ is finite and let $\tau(q) = \frac{b(q) \nu(q)}{\sum_{q \in \cS} b(q)\nu(q)} \forall q \in \cS$ be a $(b,\nu)$-weighted distribution. Denote $K = \{q \in \cS: b(q) > 0\}$, a CTMC is said to admit a $b$-weighted Poincar\'e inequality if there exists a non-negative constant $C_b$ such that
\begin{align}
    \label{eqn: weighted-poincare}
    \Var_\tau(f) = \norm{f - \E_\tau[f]}_{2,\tau}^2 \leq \frac{C_b}{\nu(K)} \langle f, -\cL f \rangle_\nu \, \forall f \in \ell_{2, \nu}.
\end{align}
\end{assumption}



Now, we combine Assumption \ref{assumption: weighted-poincare} along with the drift assumption (Assumption \ref{assumption: foster-lyapunov}) to obtain the Poincar\'e constant for the entire system, which is stated in the following Stitching Theorem.

\begin{theorem}
    (Stitching Theorem)
    \label{theorem: lyapunov-poincare}
    Denote $K = \{q \in \cS: b(q) > 0\}$ and $\nu_K(q) = \frac{\nu(q)}{\nu(K)} \, \forall q \in K$.
    Under Assumptions \ref{assumption: ctmc}, \ref{assumption: foster-lyapunov} and \ref{assumption: weighted-poincare}, the following inequality holds
    \begin{align}
        \Var_\nu(f) \leq \frac{1 + \br{\sum_{q \in \cS} b(q) \nu_K(q)} C_b}{\gamma} \langle f, -\cL f \rangle_\nu \, \forall f \in \ell_{2,\nu},
    \end{align}
    where $C_b$ is the weighted local Poincar\'e constant in Assumption \ref{assumption: weighted-poincare}, $\nu$ is the stationary distribution of the CTMC and $b: \cS \rightarrow [0,\infty)$ is the positive drift term in Assumption \ref{assumption: foster-lyapunov}.
\end{theorem}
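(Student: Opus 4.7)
The strategy is to treat the drift inequality $\gamma V \leq -\cL V + b$ as a pointwise ``weighted Poincar\'e with defect,'' so that integrating against a suitable test function converts $\Var_\nu(f)$ into the Dirichlet form $\cE(f,f) = \langle f, -\cL f\rangle_\nu$ plus an error supported on $K$; the local weighted Poincar\'e inequality of Assumption \ref{assumption: weighted-poincare} is then used to absorb that error. A crucial preparatory step is to replace $f$ by $\tilde f := f - \E_\tau[f]$, where $\tau(q) = b(q)\nu(q)/\sum_r b(r)\nu(r)$. Since the Dirichlet form and the $\nu$-variance are invariant under additive constants and $\Var_\nu(f) = \Var_\nu(\tilde f) \leq \E_\nu[\tilde f^2]$, we may work with $\tilde f$; the payoff is the identity $\E_\tau[\tilde f] = 0$, which later turns the defect term into a genuine variance under $\tau$ rather than merely an $L^2(\tau)$-norm.

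Next, I multiply the pointwise drift inequality by $\tilde f(q)^2/V(q) \geq 0$ and integrate against $\nu$. Using $V \geq 1$ to pass $b/V \leq b$ on the positive-drift term, this gives
\begin{align*}
\gamma \, \Var_\nu(f) \;\leq\; \gamma \, \E_\nu[\tilde f^2] \;\leq\; \E_\nu\!\left[\tilde f^2 \frac{-\cL V}{V}\right] + \E_\nu[b \tilde f^2].
\end{align*}
The crux of the argument is the reversible-chain inequality $\E_\nu[\tilde f^2(-\cL V)/V] \leq \cE(\tilde f,\tilde f) = \cE(f,f)$. To establish it I would expand $-\cL V(x) = \sum_{y \neq x} \cL(x,y)(V(x)-V(y))$, symmetrize the resulting double sum using reversibility $\nu(x)\cL(x,y) = \nu(y)\cL(y,x)$, and invoke the AM--GM inequality $\tilde f(x)^2 V(y)/V(x) + \tilde f(y)^2 V(x)/V(y) \geq 2\tilde f(x)\tilde f(y)$ on each unordered pair $\{x,y\}$; after rearrangement the upper bound collapses to $\tfrac{1}{2}\sum_{x,y}\nu(x)\cL(x,y)(\tilde f(x)-\tilde f(y))^2 = \cE(f,f)$.

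Finally, since $\E_\tau[\tilde f] = 0$ by construction, the defect term simplifies to
\begin{align*}
\E_\nu[b \tilde f^2] = \left(\sum_{q \in \cS} b(q)\nu(q)\right)\E_\tau[\tilde f^2] = \left(\sum_{q \in \cS} b(q)\nu(q)\right) \Var_\tau(f),
\end{align*}
and the weighted Poincar\'e inequality $\Var_\tau(f) \leq (C_b/\nu(K))\,\cE(f,f)$ applies. Noting that $\sum_q b(q)\nu(q)/\nu(K) = \sum_q b(q)\nu_K(q)$ and dividing through by $\gamma$ yields the claimed bound. I expect the main obstacle to be the reversible-chain ``Dirichlet-form dominance'' step $\E_\nu[\tilde f^2(-\cL V)/V] \leq \cE(f,f)$: getting the symmetrization and the pairwise AM--GM inequality right is the real technical content, whereas everything else is algebraic reassembly contingent on choosing the centering $\tilde f = f - \E_\tau[f]$, rather than the naive $f - \nu(f)$, so that the defect becomes a true $\tau$-variance amenable to Assumption \ref{assumption: weighted-poincare}.
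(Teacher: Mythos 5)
Your proposal is correct and follows essentially the same path as the paper's proof. The centering $m = \E_\tau[f]$, multiplying the drift inequality by $(f(q)-m)^2/V(q)$ and using $V\geq 1$, and the Dirichlet-form dominance $\langle (f-m\mathbf{1})^2/V, -\cL V\rangle_\nu \leq \langle f, -\cL f\rangle_\nu$ via reversibility and a pairwise AM--GM are exactly what the paper does through Lemma~\ref{lemma: claim-1}, which establishes the same bound by starting from non-negativity of $\sum_{x,y}V(x)V(y)\left(\frac{g(y)}{V(y)}-\frac{g(x)}{V(x)}\right)^2\nu(x)\cL(x,y)$ — an equivalent algebraic route to the same estimate.
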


The Stitching Theorem (Theorem \ref{theorem: lyapunov-poincare}) is applicable for irreducible, aperiodic, positive recurrent and reversible CTMCs. 
Our Stitching Theorem is a generalization of Theorem 1 in \cite{taghvaei-lyapunov-poincare} for countable state space CTMCs and this generalization is crucial in getting the tight mixing rate in the Super-Halfin-Whitt regime all the way to $\alpha > 1/2$ instead of $\alpha > 2/3$. We defer this discussion to \eqref{eqn: stitching-error-super-hw} and the proof of Theorem \ref{theorem: lyapunov-poincare} to Appendix \ref{sssec: stitching-theorem-proof}. For the special case of $b = B 1_K$ for some finite $K \subseteq S$ and $B>0$, Assumption \ref{assumption: weighted-poincare} gives the local Poincar\'e inequality
\begin{align}
    \label{eqn: local-poincare}
    \Var_{\nu_K}(f) \leq \frac{C_L}{\nu(K)} \langle f,-\cL f\rangle_{\nu},
\end{align}
where $\nu_K(x) = \frac{\nu(x)}{\sum_{x \in K} \nu(x)} \forall x \in K$. And so, this allows us to recover the result of \cite{taghvaei-lyapunov-poincare} as stated below.

\begin{corollary}
    \label{corollary: lyapunov-poincare-constant-b}
    Under Assumptions \ref{assumption: ctmc} and \ref{assumption: foster-lyapunov}, assume that $b = B1_K$ where $B > 0$ and $K \subseteq S$ is some finite set. Also, assume that the system satisfies Assumption \ref{assumption: weighted-poincare} with $b$ and constant $C_L > 0$.
    Then, the CTMC admits the Poincar\'e constant $\frac{1+B C_L}{\gamma}$.
\end{corollary}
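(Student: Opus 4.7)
The plan is to derive this corollary as a direct specialization of the Stitching Theorem (Theorem~\ref{theorem: lyapunov-poincare}) to the indicator drift $b = B\mathbf{1}_K$, with essentially no new work beyond bookkeeping. The main task is to verify that the weighted Poincaré assumption used in the Stitching Theorem collapses, under this choice of $b$, to the ordinary local Poincaré inequality stated in equation~\eqref{eqn: local-poincare} in the corollary's hypothesis.

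First, I would compute the $(b,\nu)$-weighted distribution $\tau$ appearing in Assumption~\ref{assumption: weighted-poincare}. Since $b(q) = B\mathbf{1}_K(q)$, we have $\sum_{q \in \cS} b(q)\nu(q) = B\nu(K)$, so
\begin{equation*}
\tau(q) = \frac{B \mathbf{1}_K(q)\nu(q)}{B\nu(K)} = \frac{\nu(q)\mathbf{1}_K(q)}{\nu(K)} = \nu_K(q).
\end{equation*}
Hence in this case Assumption~\ref{assumption: weighted-poincare} is exactly the local Poincaré inequality $\Var_{\nu_K}(f) \leq \tfrac{C_L}{\nu(K)}\langle f, -\cL f\rangle_\nu$ assumed in the corollary, with $C_b = C_L$.

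Next I would evaluate the coefficient $\sum_{q \in \cS} b(q)\nu_K(q)$ that appears in the conclusion of Theorem~\ref{theorem: lyapunov-poincare}. Because $b$ vanishes off $K$ and $\nu_K$ is a probability measure on $K$,
\begin{equation*}
\sum_{q \in \cS} b(q)\nu_K(q) = B \sum_{q \in K}\nu_K(q) = B.
\end{equation*}
Plugging this and $C_b = C_L$ into the Stitching Theorem yields
\begin{equation*}
\Var_\nu(f) \leq \frac{1 + B\,C_L}{\gamma}\langle f, -\cL f\rangle_\nu \quad \forall f \in \ell_{2,\nu},
\end{equation*}
which by Definition~\ref{def: poincare_inequality} says the CTMC admits the Poincaré constant $(1 + B C_L)/\gamma$, as required.

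Since each step is a direct substitution, I do not anticipate any technical obstacle; the only subtle point to flag is that the hypothesis on the weighted Poincaré inequality in the corollary's statement is being applied with the specific weight $b = B\mathbf{1}_K$, so one should make it explicit that this weight makes $\tau$ coincide with the conditional measure $\nu_K$ and that $C_L$ plays the role of $C_b$. The remainder is mechanical.
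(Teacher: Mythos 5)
Your proof is correct and follows essentially the same route as the paper: identify $\tau = \nu_K$ and hence $C_b = C_L$ for $b = B\mathbf{1}_K$, compute $\sum_{q\in\cS} b(q)\nu_K(q) = B$, and substitute into the conclusion of Theorem~\ref{theorem: lyapunov-poincare}. The paper's proof re-derives the bound by reusing the intermediate inequality~\eqref{eqn: rearranged-stitching-equation-lyapunov-poincare-proof} from the Stitching Theorem's proof rather than quoting its statement, but this is a purely cosmetic difference from your direct substitution.
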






A version of Corollary \ref{corollary: lyapunov-poincare-constant-b} is known in \cite{taghvaei-lyapunov-poincare, rosenthal1995minorization} for reversible Markov chains and \cite{Bakry-simple-proof-lyapunov-poincare-2008, raginsky2017nonconvexsgld} for Markov processes. The readers can find the proof of Corollary \ref{corollary: lyapunov-poincare-constant-b} in Appendix \ref{sssec: proof-of-constant-b-corollary}, which is by applying the result of Theorem \ref{theorem: lyapunov-poincare} for $b = B 1_K$. The constant $C_L$ is called the local Poincar\'e constant w.r.t. the finite set $K$ of the CTMC. Intuitively, the CTMC mixes fast outside the finite set $K$ due to the negative drift, in addition to the rapid mixing within $K$ ensured by the ``local'' Poincar\'e inequality. In many applications, establishing a tight characterization of the local mixing behavior is the key to obtaining the tight mixing rate, and previously, the local Poincar\'e constant is predominantly obtained using the minorization condition \cite{rosenthal1995minorization, meyn-tweedie-exponential-ergodicity-1995, taghvaei-lyapunov-poincare}. However, it is known that the approach using minorization condition often yields sub-optimal bounds, and the minorization condition itself is usually hard to obtain as well \cite{qianqin2020minorizationlimitations, anderson2020drifthittingtime}. Moreover, whenever the bounded set is not a singleton, there will be a "stitching" error when combining the mixing behavior inside the finite set with the mixing behavior outside the finite set together. Therefore, our Stitching Theorem (Theorem \ref{theorem: lyapunov-poincare}) is a necessary generalization to achieve a tight characterization of the local mixing behavior inside the finite set, whereas a naive application of Corollary \ref{corollary: lyapunov-poincare-constant-b} would not yield such a result. For a more detailed explanation, we refer the readers to the discussion around Equation \eqref{eqn: stitching-error-super-hw}.

Comparing Corollary \ref{corollary: lyapunov-poincare-constant-b} with Proposition \ref{prop: lyapunov-poincare-singleton}, one can see that when we try to "stitch" negative drift outside of the finite set $K$ with the local mixing condition behavior inside the finite set, we incur a "stitching error" term $B C_L$ which hurts our Poincar\'e constant. Ideally, to get a tight mixing bound, we would like to make this stitching error term to be as small as possible or vanishes to $0$ as $n$ goes to infinity. And so, it is crucial to choose the right Lyapunov function so that we get that right negative drift rate $\gamma$ while minimizing $B$, as well as developing methods to get tight local mixing constant $C_L$. We shall elaborate more on these steps in the following Section \ref{sec: proof-sketch}. 

\section{Proof sketch of mixing results}
\label{sec: proof-sketch}
Recall that the $M/M/n$ queue experiences a phase transition at the Halfin-Whitt regime, i.e. $\alpha = \frac{1}{2}$ \cite{HalfinWhitt1981HeavyTrafficLF}. And so, to prove our main results, we will take different approaches and construct different Lyapunov functions for each of these regimes. The intuitions behind our approaches and the reasons why these regimes require two different approaches will become apparent as we go through the subsections below.

\begin{figure}[ht!]
    \centering
    \begin{subfigure}[t]{0.45\textwidth}
        \centering
        \includegraphics[height=1.8in]{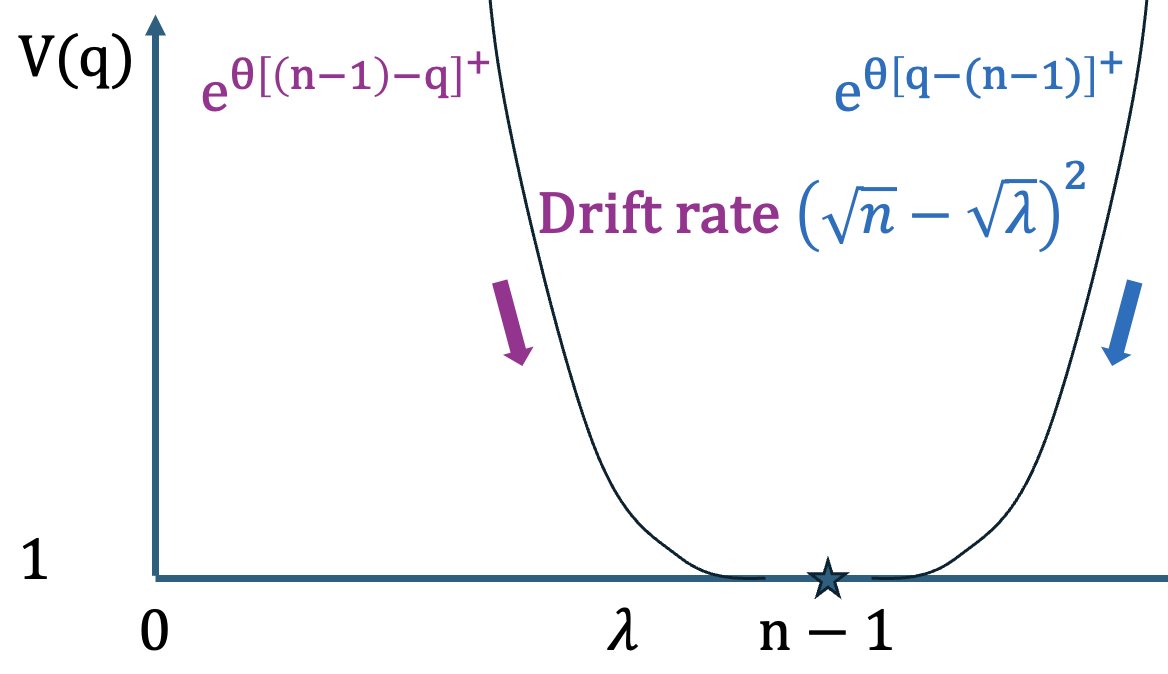}
        \caption{}
        \label{subfig: super-hw-lyapunov}
    \end{subfigure}%
    \hfill
    \begin{subfigure}[t]{0.45\textwidth}
        \centering
        \includegraphics[height=1.8in]{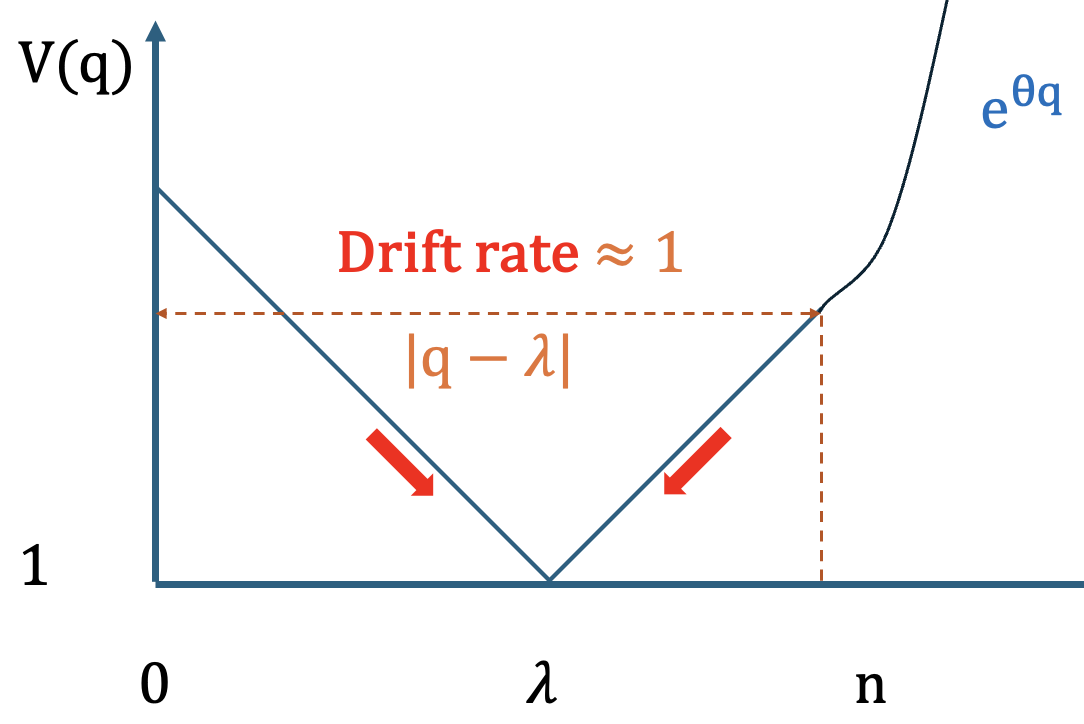}
        \caption{}
        \label{subfig: sub-hw-lyapunov}
    \end{subfigure}
    \hfill
    \begin{subfigure}[t]{0.45\textwidth}
        \centering
        \includegraphics[height=1.8in]{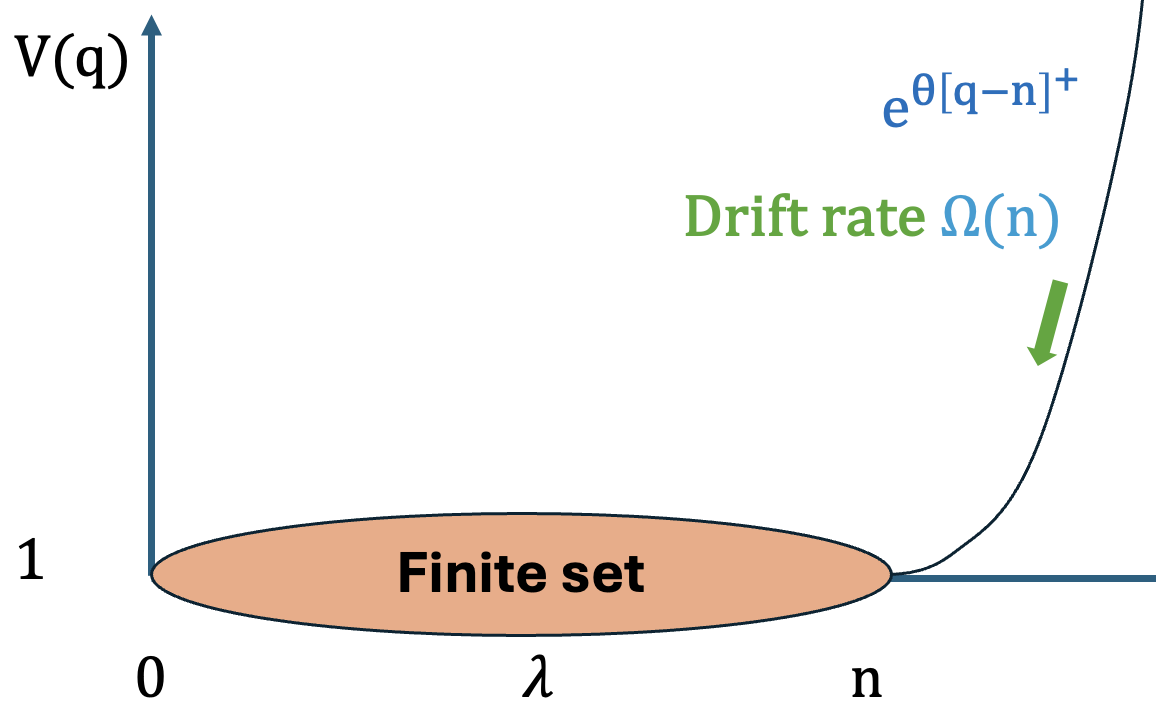}
        \caption{}
        \label{subfig: light-traffic-lyapunov}
    \end{subfigure}
    \caption{Visualization of chosen Lyapunov functions for the Super-Halfin-Whitt regime, Sub-Halfin-Whitt regime and the Mean Field regime. Figure \ref{subfig: super-hw-lyapunov} corresponds to the piecewise exponential Lyapunov function in the Super-Halfin-Whitt regime. Figure \ref{subfig: sub-hw-lyapunov} corresponds to the linear and exponential Lyapunov function in the Sub-Halfin-Whitt regime. Figure \ref{subfig: light-traffic-lyapunov} corresponds to the constant and exponential Lyapunov function in the Mean-Field regime.}
\end{figure}

\subsection{Regime 1: Super-Halfin-Whitt Regime}
\label{ssec:lyapunov-canonical-path-approach}


Let $K = \{q \in S: b(q) > 0\}$, recall that the Lyapunov-Poincar\'e machinery requires the following: 
\begin{itemize}
    \item \textbf{Step 1}: Obtain the correct Lyapunov drift outside of some finite set $K$, which 
    ensures rapid mixing outside $K$.
    \item \textbf{Step 2}: Obtain local mixing guarantees inside the finite set $K$ by exploiting the properties of $K$ and $b$.
\end{itemize}

In \textbf{Step 1}, we wish to choose an appropriate Lyapunov function to correctly capture the drift. To gain some intuition, let's start with the $M/M/1$ queue. The $M/M/1$ queue has the negative drift $\lambda-\mu = -\varepsilon$ everywhere except when the queue is empty. And thus, to obtain exponential convergence, we choose the Lyapunov function $V(q) = e^{\theta q}$. Intuitively, the fluid analog of the $M/M/1$ queue would  then satisfy $\dot{V} \approx - \varepsilon \theta V$ providing the negative drift as required. More precisely, we obtain

\begin{align}
    \label{eqn: mm1-correct-drift-equation}
    \cL V \leq -\br{\sqrt{n}-\sqrt{\lambda}}^2 V + b 1_{\{0\}}
\end{align}
where $b$ is some universal constant. The rate of $\br{\sqrt{n}-\sqrt{\lambda}}^2$ in the above equation matches previously established mixing results \cite{Van_Doorn1985-decay-bounds, philippe-robert-stochastic-networks-and-queues} for the $M/M/1$ queue suggesting that $V$ is indeed the correct Lyapunov function. As seen in \eqref{eqn: mm1-correct-drift-equation}, we have a negative drift everywhere except the singleton $\{0\}$, so we simply apply Proposition \ref{prop: lyapunov-poincare-singleton} to obtain the Poincar\'e constant, which gives us the desired mixing rate.

For the $M/M/n$ queue, the dynamic is more complicated since 
there is another upward drift to $\lambda$ when $q < \lambda$ in addition to the downward drift when $q > \lambda$. So, we appropriately adjust our Lyapunov function and define it as follows:
\begin{align}
    \label{eqn: mmn-lyapunov}
    V(q) = e^{\theta \sqbr{q-(n-1)}^+ + \theta \sqbr{(n-1)-q}^+}
\end{align}
with appropriately chosen $\theta \geq 0$ to accurately capture the negative drift above $\lambda$ and the positive drift below $\lambda$. While $\lambda$ is the center of the drift, we consider the distance of the queue length to $n-1$ in the exponent because of algebraic convenience in the proof. From this choice of Lyapunov function, we obtain the following drift lemma for the Super-Halfin-Whitt regime.
\begin{lemma}
    \label{lemma: drift-lemma-super-HW}
    Let $V$ be defined as in \eqref{eqn: mmn-lyapunov} and $\cL$ be the generator of the $M/M/n$ queue in the super Halfin-Whitt regime (i.e. $\alpha > 1/2$) with arrival rate $\lambda_n = n-n^{1-\alpha}$. Then, there exists $\gamma_n \geq \br{\sqrt{n}-\sqrt{\lambda_n}}^2$ and a function $b: \Z_{\geq 0} \rightarrow \R_{\geq 0}$ such that:
    \begin{align}
        \label{eqn: drift-equation}
        \cL V(q) \leq -\gamma_n V(q) + b(q)1_K \, \forall q \in \Z_{\geq 0}
    \end{align}
    where $K = \{[\lfloor 2\lambda_n \rfloor - n]^+,...,n-1\}$.
    Furthermore, we have $b(q) \leq L \, \forall q \in K$ for some constant $L > 0$ and $2 \leq n \leq 7$ and for $n > 7$, we have
    \begin{align}
        \label{eqn: b-term-super-hw-drift}
        b(q) =  
        \begin{cases}
            \frac{n^{1-2\alpha}}{1-n^{-\alpha}} e^{\frac{n^{1-2\alpha}}{1-n^{-\alpha}}} \,\, &\forall q \in K-\{n-1\} \\
            n^{1-\alpha}\bigg(1 + \frac{1}{4(n^\alpha-1)}\bigg) &\text{if } q = n-1 \\
             0 \,\, &\forall q \not \in K.
        \end{cases}
    \end{align} 
\end{lemma}
The above lemma completes Step 1 of our methodology by establishing a negative drift with rate $\gamma_n$ outside the finite set $K$, the detail proof of this Lemma is deferred to Subsection \ref{ssec: proof-detail-drift-lemma-super-hw}.  Unlike \cite{taghvaei-lyapunov-poincare}, the function $b$ in our drift condition \eqref{eqn: drift-equation} is not a constant function, which allows a more refined analysis and precipitates the use of our Stitching Theorem \ref{theorem: lyapunov-poincare}. To understand why the chosen Lyapunov function works for the  $M/M/n$ queue, consider the following ODE that is a fluid analog of $M/M/n$ queue: $\dot{q} = \lambda_n - \min \{q, n\}$. When $q \geq \lambda_n +n^{1-\alpha}$ and $\theta_1 \approx n^{-\alpha}$, we have $\dot{q} = \lambda_n-n = -n^{1-\alpha}$ which gives $\dot{V} \leq -(\sqrt{n}-\sqrt{\lambda_n})^2 V = -\Theta(n^{1-2\alpha}) V$, which is the desired negative drift. Similarly, when $q \leq \lambda_n - n^{1-\alpha}$, we also have $\dot{V} \leq -(\sqrt{n}-\sqrt{\lambda_n})^2 V$. This allows us to establish a negative drift with rate $-(\sqrt{n}-\sqrt{\lambda_n})^2$ outside of a set $K$ with cardinality at most $\lceil 2n^{1-\alpha} - 1 \rceil$.

Observe that for $\alpha \geq 1$ or $n = 1$, $K$ is a singleton as $2n^{1-\alpha}-1 \leq 1$. In such a case, we simply apply Proposition $\ref{prop: lyapunov-poincare-singleton}$ to obtain the desired mixing rate $(\sqrt{n}-\sqrt{\lambda_n})^2$ without incurring any stitching error. For $\alpha \in (1/2,1)$, however, $K$ is no longer a singleton. And so, we come to \textbf{Step 2}, where we wish to show that the system mixes quickly in the finite set $K$ by showing that the system admits a local Poincar\'e inequality inside the finite set. To this end, we apply the local canonical path method (described in Lemma \ref{lemma: canonical-path}) to establish the local Poincar\'e constant $C_L = \Theta(n^{1-2\alpha})$ in Lemma \ref{lemma: weighted-poincare-super-HW}, which means that $C_L$ goes to zero as $n$ goes to infinity and allows us to show rapid mixing in $K$. 

However, note that from \eqref{eqn: b-term-super-hw-drift}, if we choose $B = \max_{q \in \Z_{\geq 0}} b(q)$ and then naively apply Theorem 1 in \cite{taghvaei-lyapunov-poincare} (also Corollary \ref{corollary: lyapunov-poincare-constant-b} in our work), we get $B = b(n-1) = O(n^{1-\alpha})$. Combining with the fact that $C_L = \Theta(n^{1-2\alpha})$, we would get the Poincar\'e constant
\begin{align}
    \label{eqn: stitching-error-super-hw}
    C_P = \frac{1+O\br{n^{2-3\alpha}}}{(\sqrt{n}-\sqrt{\lambda_n})^2}
\end{align}
which matches the limiting behavior as $n \rightarrow \infty$ only when $\alpha \in (2/3,\infty)$ rather than for the entire Super-Halfin-Whitt regime $\alpha \in (1/2,\infty)$. Thus, naively applying previously proposed "stitching theorems" would not yield the desired bound. To address this gap, we need a more refined local mixing analysis, by treating the $b$ term carefully as follows.

\begin{lemma}
    \label{lemma: weighted-poincare-super-HW}
    (Weighted Poincar\'e for $M/M/n$ in the Super-Halfin-Whitt regime) Under Assumptions \ref{assumption: ctmc}, \ref{assumption: foster-lyapunov}, let $b$ be given by \eqref{eqn: b-term-super-hw-drift}, 
    and $K = \{\lfloor 2\lambda_n \rfloor - n,...,n-1\}$. In addition, denote $\nu_n$ as the stationary distribution, we have Assumption \ref{assumption: weighted-poincare} holds with constant $C_b = \Delta_1 n^{2-4\alpha} + \Delta_2 n^{3-6\alpha}$ 
    holds for $\alpha \in [1/2,1)$ and for all test function $f \in \ell_{2, \nu_n}$ and for some positive constants $\Delta_1, \Delta_2$.
\end{lemma}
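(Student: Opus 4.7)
The plan is to apply the local canonical path method (Lemma~\ref{lemma: canonical-path}) to the birth-death chain restricted to $K = \{\lceil 2\lambda_n \rceil - n,\ldots,n-1\}$. The key structural fact is that the $b$-weighted distribution $\tau(q) = b(q)\nu_n(q)/\sum_{q' \in K} b(q')\nu_n(q')$ is \emph{highly unbalanced}: because $b(n-1) = 3n^{1-\alpha}$ dwarfs $b(q) = 2e^2 n^{1-2\alpha}$ on $K\setminus\{n-1\}$, and $\nu_n$ is nearly flat across $K$ (see Step~1), $\tau$ will place mass $1 - O(n^{1-2\alpha})$ on the single state $n-1$ and only $O(n^{-\alpha})$ per state on the bulk. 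The two scales $\Delta_1 n^{2-4\alpha}$ and $\Delta_2 n^{3-6\alpha}$ in $C_b$ will correspond respectively to canonical paths that terminate at the spike $n-1$ and to canonical paths confined to $K\setminus\{n-1\}$.

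\emph{Step 1 (pointwise bounds on $\nu_n$ and $\tau$).} Starting from $\nu_n(q) = \nu_n(0)\lambda_n^q/q!$ for $q\leq n$ and telescoping the ratios $\nu_n(q+1)/\nu_n(q) = \lambda_n/(q+1)$, I would show that $\nu_n(q) = \Theta(1/\sqrt{n})$ uniformly on $K$. The intuition is that $|q - \lambda_n| \leq n^{1-\alpha}$ on $K$ while $\sqrt{\lambda_n} = \Theta(\sqrt n)$, so the Gaussian exponent $(q-\lambda_n)^2/(2\lambda_n) = O(n^{1-2\alpha}) = o(1)$ under $\alpha > 1/2$, making $\nu_n$ essentially flat on $K$. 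Hence $\nu_n(K) = \Theta(n^{1/2-\alpha})$, and the normalizer $M := \sum_{q\in K} b(q)\nu_n(q)$ satisfies $M = \Theta(n^{1/2-\alpha})$, dominated by the $q=n-1$ term. This yields $\tau(n-1) = 1 - O(n^{1-2\alpha})$ and $\tau(q) = \Theta(n^{-\alpha})$ for $q \in K\setminus\{n-1\}$.

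\emph{Step 2 (canonical paths and congestion).} For each pair $x\neq y$ in $K$, take $\gamma_{xy}$ to be the direct birth-death path, of length $|y-x| \leq |K| = \Theta(n^{1-\alpha})$. For an edge $e=(q,q+1)$ in $K$, the flow is $Q(e) = \nu_n(q)\lambda_n = \Theta(\sqrt n)$, and the congestion $\rho(e) = \frac{1}{Q(e)}\sum_{x\leq q,\,y\geq q+1}|y-x|\tau(x)\tau(y)$ splits into a spoke contribution from pairs with $y=n-1$ and a bulk contribution from pairs with $x,y \in K\setminus\{n-1\}$. The spoke is bounded by $\tau(n-1)|K|\tau(K\setminus\{n-1\})/Q(e) = O(n^{1-\alpha}\cdot n^{1-2\alpha}/\sqrt n) = O(n^{3/2-3\alpha})$, and the bulk by $|K|\tau(K\setminus\{n-1\})^2/Q(e) = O(n^{1-\alpha}\cdot n^{2-4\alpha}/\sqrt n) = O(n^{5/2-5\alpha})$. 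The canonical path lemma then yields $\Var_\tau(f) \leq \max_e\rho(e)\cdot\cE(f,f)$; matching against Assumption~\ref{assumption: weighted-poincare} gives
\[
C_b = \nu_n(K)\max_e\rho(e) = \Theta(n^{1/2-\alpha})\cdot\bigl(O(n^{3/2-3\alpha}) + O(n^{5/2-5\alpha})\bigr) = O(n^{2-4\alpha}) + O(n^{3-6\alpha}),
\]
as claimed.

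\emph{Main obstacle.} The bulk of the technical work sits in Step~1: extracting \emph{explicit} two-sided bounds on $\nu_n(q)$ over $K$ with constants uniform for all $n\geq 65$. The Gaussian heuristic is straightforward but must be made rigorous at both ends of $K$: near $q=\lceil 2\lambda_n\rceil - n$ the Stirling remainder is most sensitive, and near $q=n-1$ the chain transitions out of its pure Poisson branch, so the proof will likely proceed by separately telescoping the ratios $\nu_n(q+1)/\nu_n(q)=\lambda_n/(q+1)$ on the two halves $q \leq \lfloor\lambda_n\rfloor$ and $q \geq \lceil\lambda_n\rceil$ with explicit upper and lower bounds on each partial product. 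Once these constants are in place, the spike-vs.-bulk decomposition of $\tau$ forced by the heterogeneous form of $b$ directly produces the two-term shape $\Delta_1 n^{2-4\alpha} + \Delta_2 n^{3-6\alpha}$ of $C_b$, and a naive single-term stitching (as in Corollary~\ref{corollary: lyapunov-poincare-constant-b}) would only recover the weaker $n^{2-3\alpha}$ scaling discussed after \eqref{eqn: stitching-error-super-hw}.
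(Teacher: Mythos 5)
Your plan is essentially the paper's proof: establish near-uniformity of $\nu_K$ on $K$ via Stirling (paper's Lemma~\ref{lemma: roughly-uniform} gives exactly the $\Theta(1/\sqrt{n})$ pointwise bound you sketch), then control the $b$-weighted variance via the local canonical path method, splitting the contribution into pairs that touch the spike at $n-1$ (producing $n^{2-4\alpha}$) and pairs confined to the bulk (producing $n^{3-6\alpha}$). The paper carries out this split at the level of the variance ($T_1 + T_2$) and bounds each term separately — $T_1$ by a direct Cauchy–Schwarz along paths terminating at $n-1$, $T_2$ by the congestion formula of Lemma~\ref{lemma: canonical-path} — whereas you phrase it as a single congestion computation with spoke and bulk contributions per edge, and you also work with the flow $Q(e)=\nu_n(q)\lambda_n=\Theta(\sqrt n)$ rather than the $K$-restricted $Q_K(e)=\Theta(n^\alpha)$; these are bookkeeping choices that cancel against the $\nu_n(K)$ factor and give the same orders.

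Two small inaccuracies in your ``main obstacle'' paragraph, neither load-bearing: since $K\subseteq\{0,\ldots,n-1\}$, the chain never leaves the Poisson branch $\nu_n(q)\propto\lambda_n^q/q!$ inside $K$, so there is no branch transition to worry about near $q=n-1$; and for $n\geq 65$ the left endpoint $\lceil 2\lambda_n\rceil-n\geq 3n/4$ is large, so the Stirling remainder $e^{\pm 1/(12x)}$ is essentially negligible — the delicate term is instead the ratio $(1+t)^x e^{-tx}$ with $t=(\lambda_n-x)/x=O(n^{-\alpha})$, which the paper lower-bounds via $1+t\geq e^t(1-t^2)$ rather than the telescoping you suggest.
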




Since $\alpha > 1/2$, we have $C_b \rightarrow 0$ as $n \rightarrow \infty$, which allows our finite-time mixing bound to match the limiting behavior at the asymptotic. The proof and the precise characterization of the constant $C_b$ is reserved in Appendix \ref{sssec: proof-of-weighted-poincare-super-HW}, which is done by a clever application of the local canonical path method. In the final step, we combine the results in Lemma \ref{lemma: drift-lemma-super-HW} and Lemma \ref{lemma: weighted-poincare-super-HW} and apply Theorem \ref{theorem: lyapunov-poincare} to obtain the results in Equation \eqref{eqn: super-nds-mixing-bound} and Equation \eqref{eqn: super-halfin-whitt-mixing-bound} in Theorem \ref{thm: unifying-theorem} and Proposition \ref{thm: mm1-mixing} as follows.

Finally, we bound the $\sum_{q \in K} b(q) \nu_K(q)$ term in Theorem \ref{theorem: lyapunov-poincare}, which is no more than a constant.
\begin{claim}
    \label{claim: tau-bound-super-hw} Let $K = \{\lceil 2\lambda_n\rceil - n,...,n-1\}$, we have $\sum_{q \in K} \tau(q) \leq g_3(n)$. Here, we have $g_3(n) \leq 3.48 \, \forall n \geq n_0$ and $\lim_{n \rightarrow \infty} g_3(n) = 1/2$ for $\alpha > 1/2$ and $\lim_{n \rightarrow \infty} g_3(n) = e/2 + 1$ for $\alpha = 1/2$.
\end{claim}

With Lemma \ref{lemma: drift-lemma-super-HW}, Lemma \ref{lemma: weighted-poincare-super-HW} and Claim \ref{claim: tau-bound-super-hw} (whose proof is in Appendix \ref{sssec: tau-bound-claim-proof}), we obtain the mixing rate of $M/M/n$ in the Super-Halfin-Whitt regime and $M/M/1$ as follows.

\begin{proof}[\textit{Proof of Theorem \ref{thm: unifying-theorem} for $\alpha \in (1/2,1)$ and Proposition \ref{thm: mm1-mixing}:}]
    We will consider two separate cases: the case where $\alpha \geq 1$ or $n = 1$ and the case where $\alpha \in (1/2,1)$.

    \textbf{Case 1 ($\alpha \geq 1$ or $n = 1$)}: From Proposition \ref{prop: lyapunov-poincare-singleton}, Lemma \ref{lemma: drift-lemma-super-HW} and observe that in this case, we have $K = \{n-1\}$, we have that the system admits the Poincar\'e constant $C_P = \frac{1}{(\sqrt{n}-\sqrt{\lambda_n})^2}$. And so, from Proposition \ref{prop: poincare-equals-mixing}, we have
    \begin{align}
        \chi(\pi_{n,t},\nu_n) \leq e^{- (\sqrt{n}-\sqrt{\lambda_n})^2 t} \chi(\pi_{n,0},\nu_n).
    \end{align}

    \textbf{Case 2 ($\alpha \in (1/2,1)$)}: From Lemma \ref{lemma: drift-lemma-super-HW}, Lemma \ref{lemma: weighted-poincare-super-HW}, Claim \ref{claim: tau-bound-super-hw} and Theorem \ref{theorem: lyapunov-poincare}, we obtain the Poincar\'e constant to be:
    \begin{align*}
        C_P &= \frac{1+ \br{\sum_{q \in K} b(q) \nu_K(q)} C_b}{(\sqrt{n}-\sqrt{\lambda_n})^2} \leq \frac{1 + 3.48 \times \br{384 n^{2-4\alpha} + 395.93 n^{3-6\alpha}}}{(\sqrt{n}-\sqrt{\lambda_n})^2}
    \end{align*}
    Let $C_n = \frac{1}{1 + 3.48 \times \br{384 n^{2-4\alpha} + 395.93 n^{3-6\alpha}}}$, we have that $\lim_{n \rightarrow \infty} C_n = 1$ since $\alpha > 1/2$. From Proposition \ref{prop: poincare-equals-mixing}, we have
    \begin{align}
        \chi(\pi_{n,t},\nu_n) \leq e^{-C_n (\sqrt{n}-\sqrt{\lambda_n})^2 t} \chi(\pi_{n,0},\nu_n)
    \end{align}
    where $\lim_{n \rightarrow \infty} C_n = 1$.
\end{proof}

\textbf{Remark}: Previously, several works have attempted different methods to establish the local Poincar\'e inequality. Previous works \cite{taghvaei-lyapunov-poincare, rosenthal1995minorization} rely on the minorization assumption to establish some form of local mixing inside $K$, i.e. a local Poincar\'e inequality as in \eqref{eqn: local-poincare}. However, such an approach does not directly work in our setting since the assumption requires every state in the state space to be within reach of any other state, which clearly does not hold for a birth and death process. Such an assumption can be satisfied if one looks at an $m-$step transition matrix for a large enough $m>0$. However, such an approach usually results in a poor minorization constant. In addition, a few other works have obtained local Poincar\'e inequalities using the diameter of the finite set, such as \cite{raginsky2017nonconvexsgld,divol-niles-weed2024optimal-transport-estimation}, but generally, the local Poincar\'e constant bounds in these works are loose. Thus, as our goal is to obtain a tight characterization of the mixing rate, we take a different approach and propose a local version of the canonical path method that exploits the birth and death structure of the transition matrix. In fact, the non-local version has been used to bound the mixing time of random walks on different graph topologies \cite{UBCmixingbook}. 

In summary, the mixing rate $(\sqrt{n}-\sqrt{\lambda_n})^2$ in the regime $\alpha \geq 1$ is tight and matches the result in \cite{zeifman_lognorm}. In addition, the mixing rate for the regime $\alpha \in (1/2,1)$ is asymptotically tight, i.e., it matches with $(\sqrt{n}-\sqrt{\lambda_n})^2$ for large $n$. For $\alpha = 1/2$, 
the Poincar\'e constant inside the finite set $K$ does not vanish. We show that it is bounded above by a constant implying that the mixing rate is bounded below by a universal constant.

\subsection{Regime 2: Sub-Halfin-Whitt Regime}
\label{ssec:lyapunov-coupling-approach}
As we saw before in Proposition~\ref{thm: mminf-mixing}, the $M/M/n$ queue with $\alpha \in (0, 1/2)$ behaves similarly to the $M/M/\infty$ queue and we use this connection to motivate our Lyapunov function. Thus,, we first discuss the proof strategy for the $M/M/\infty$ queue, followed by that of the $M/M/n$ queue in the Sub-Halfin-Whitt regime.

\subsubsection{$M/M/\infty$ system}
\label{sssec:mminf}
For an $M/M/\infty$ system with arrival rate $\lambda$ and service rate $\mu$, it is known that the system is a discrete space diffusion analogue of the Ornstein–Uhlenbeck (OU) process \cite{Chafa_2006_entropic_inequalities_infty_queue}. For the latter process, it is also known that it admits the Gaussian Poincar\'e inequality with constant $1$, which implies unit mixing rate for the OU process and also suggests that the mixing rate for $M/M/\infty$ is $\mu$. To construct an appropriate Lyapunov function, one can look at the eigenfunction that corresponds to the correct mixing rate, which is also the smallest positive real number such that the spectral measure has a jump. For $M/M/\infty$, this eigenfunction is the first non-trivial Charlier polynomial $x-\frac{\lambda}{\mu}$ \cite{dominguez-book-orthogonal-polynomial}. Hence, with an appropriate centering, we choose $V(q) = \left|q - \frac{\lambda}{\mu}\right|$ as the Lyapunov function so that it captures the correct eigenvalue equation $\cL V(q) = -\mu V(q)$ for all $q$ outside of some finite set $K$ and satisfies the non-negativity condition for Lyapunov functions in Assumption \ref{assumption: foster-lyapunov}.


\subsubsection{$M/M/n$ system in the Sub-Halfin-Whitt regime}
\label{sssec:mmn-sub-HW}
Observe that for an $M/M/n$ queue, the system behaves like an $M/M/\infty$ queue when $q < n$. Similarly, for $q > n$, the system behaves like an $M/M/1$ queue. 
And so, we stitch an exponential function for $q>n$ along with $|q-\lambda_n|$ for $q<n$ to obtain the following drift lemma. 


\begin{lemma}
    \label{lemma: drift-lemma-2}
    Let $\{\lambda_n\}_{n \in \Z^+}$ be the sequence of arrival rates of the corresponding $M/M/n$ system such that $\lambda_n \in (0,n) \forall n \in \Z^+$ and let $\alpha_n = 1-\frac{\log (n-\lambda_n)}{\log n}$. Set $V(q) = \zeta e^{\theta(q-\lambda_n)} \forall q > n$. If $\lambda_n \in \Z_{\geq 0}$, set $V(q) = |q-\lambda_n| \forall q \leq n$. Otherwise, set
    \begin{align}
    \label{eqn: sub-HW-lyapunov-choice-below-n-non-integer-lambda}
    V(q) = 
        \begin{cases}
            |q-\lambda_n| \, &\forall q \leq n, q \not \in \{\lfloor \lambda_n \rfloor, \lceil \lambda_n \rceil\} \\
            |\lfloor \lambda_n \rfloor - \lambda_n - 1| &\textit{if } q = \lfloor \lambda_n \rfloor \\
            |\lceil \lambda_n \rceil - \lambda_n + 1| &\textit{if } q = \lceil \lambda_n \rceil
        \end{cases} 
    \end{align}
     where $\zeta$ is chosen such that $|n-\lambda_n| = \zeta e^{\theta(n-\lambda_n)}$ and $\theta > 0$ is a parameter to be chosen. Then, for $\alpha_n \in (0,1/2) \, \forall n \in \Z^+$ and $\lim_{n \rightarrow \infty} \alpha_n = \alpha \in (0,1/2)$ for some constant $\alpha$, we have there exists $\gamma_n, b$ with $\gamma_n \rightarrow 1$, such that
    \begin{align}
        \cL V \leq -\gamma_n V + b 1_K
    \end{align}
    where $b \leq 2\lambda_n$ and $K = \{\lambda_n\}$ if $\lambda_n \in \Z_{\geq 0}$ and $b \leq \lceil \lambda_n \rceil + 2$ and $K = \{\lfloor \lambda_n \rfloor - 1, \lfloor \lambda_n \rfloor, \lceil \lambda_n \rceil, \lceil \lambda_n \rceil + 1\}$ otherwise. 
\end{lemma}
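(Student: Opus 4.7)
The plan is to split $\Z_{\geq 0}$ into three regions matching the piecewise definition of $V$: the exponential tail $q > n$, the interior $q \leq n$ away from $\lambda_n$, and the finite set $K$ around $\lambda_n$. In each region we compute the drift $\cL V(q) = \lambda_n(V(q+1) - V(q)) + \min\{n,q\}(V(q-1) - V(q))$ directly. For $q > n$, since $\min\{n, q\} = n$ and $V$ is a pure exponential, the factor $V(q)$ pulls out to give
\[
\cL V(q) = V(q)\bigl[\lambda_n(e^\theta - 1) + n(e^{-\theta} - 1)\bigr].
\]
Choosing $\theta$ of order $1/(n - \lambda_n) = n^{\alpha-1} \to 0$ and Taylor expanding shows the bracketed quantity is bounded above by $-\gamma_n$ with $\gamma_n = 1 - o(1)$: the first-order term is $-(n-\lambda_n)\theta \approx -1$, while the quadratic correction is of order $n\theta^2 = n^{2\alpha - 1} \to 0$ for $\alpha < 1/2$.

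For $q \leq n$ with integer $\lambda_n \in \Z_{\geq 0}$, the analysis is essentially exact. When $\lambda_n < q \leq n$ we have $V(q) = q - \lambda_n$ and $\min\{n, q\} = q$, so $\cL V(q) = \lambda_n - q = -V(q)$; a symmetric computation for $q < \lambda_n$ also gives $\cL V(q) = -V(q)$ identically. At the singleton $q = \lambda_n$ the Lyapunov function vanishes while $\cL V(\lambda_n) = \lambda_n + \lambda_n = 2\lambda_n$, contributing the positive drift term $2\lambda_n 1_{\{\lambda_n\}}$ and yielding $K = \{\lambda_n\}$. The stitch point $q = n$ requires a one-line check using the continuity condition $\zeta e^{\theta(n-\lambda_n)} = n - \lambda_n$, from which $\cL V(n)/V(n) = \lambda_n(e^\theta - 1) - n/(n - \lambda_n)$, which is $\leq -\gamma_n$ for the same $\theta$.

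In the non-integer case, the modification of $V$ at $\lfloor \lambda_n \rfloor$ and $\lceil \lambda_n \rceil$ (which shifts both upward by $1$) is designed to prevent $V$ from being pathologically small near $\lambda_n$, which would cause $\cL V / V$ to blow up at those states. Writing $\delta := \lambda_n - \lfloor \lambda_n \rfloor \in (0,1)$, for all $q \leq n$ that are not adjacent to the modified states, $V$ is unchanged on $\{q-1, q, q+1\}$ and the identity $\cL V(q) = -V(q)$ continues to hold. For each of the four states in $K = \{\lfloor \lambda_n \rfloor - 1, \lfloor \lambda_n \rfloor, \lceil \lambda_n \rceil, \lceil \lambda_n \rceil + 1\}$, a short explicit calculation in terms of $\delta$ shows that the residual positive drift $\cL V(q) + V(q)$ is at most $2\lambda_n$, giving the required form $\cL V \leq -\gamma_n V + b 1_K$ with $b \leq 2\lambda_n$.

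The main obstacle is twofold. First, the choice of $\theta$ must simultaneously yield a drift constant $\gamma_n \to 1$ on $q > n$ (matching the exact constant $1$ arising naturally from the linear region) and be consistent with the drift inequality at the stitch point $q = n$, where $V$ switches from linear to exponential; a mismatch here would either cost a constant factor in $\gamma_n$ or force a weaker Lyapunov function. Second, in the non-integer case one must handle four separate boundary computations rather than one, and verify that the local bump does not degrade $\gamma_n$ on the complement of $K$ nor push the residual $b$-term above $2\lambda_n$ for any $\delta \in (0,1)$; the asymmetry in how the modification affects $\lfloor \lambda_n \rfloor$ versus $\lfloor \lambda_n \rfloor - 1$ (where only the forward difference changes) must be tracked carefully to get a clean uniform bound.
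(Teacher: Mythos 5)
Your proposal is correct and follows essentially the same approach as the paper: the same region-by-region drift computation, the same observation that the linear Lyapunov function $|q-\lambda_n|$ gives $\cL V = -V$ exactly for $q < n$ away from $\lambda_n$, the same scaling $\theta \sim n^{\alpha-1}$ (the paper fixes $\theta = \log(1+n^{\alpha-1})$, which makes the exponential-tail drift and the stitch point $q=n$ compute to $-V(n)$ exactly), and the same bookkeeping for the four-element set $K$ in the non-integer case leading to $b \leq \lceil\lambda_n\rceil + 2 \leq 2\lambda_n$.
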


The proof of this Lemma is deferred to Appendix \ref{sssec:drift-lemma-2-proof}. Using the above lemma, the finite set is a singleton when $\lambda_n \in \Z_{ \geq 0}$, so we directly use Proposition~\ref{prop: lyapunov-poincare-singleton} to infer the Poincar\'e constant, which in turn gives us the mixing rate. Observe that as $\gamma_n \rightarrow 1$ in the above lemma, we asymptotically match the mixing rate of the $M/M/\infty$ queue. However, when $\lambda_n \notin \Z_{\geq 0}$, the finite set ceases to be a singleton. Nonetheless, $|K|=4$ is independent of $n$, and so we obtain a $\Omega(1/n)$ local Poincar\'e constant using the canonical path method since there are $\Theta(n)$ transitions per time unit. Although this is not enough to overcome the stitching error to get a mixing rate that approaches $1$, combining this local Poincar\'e constant with our drift lemma above using the Stitching Theorem (Theorem \ref{theorem: lyapunov-poincare}) is sufficient to obtain an $\Omega(1)$ mixing rate. 
We defer the full proof to Appendix \ref{ssec:sub-HW-proof}. 

\subsection{Regime 3: Mean Field regime}
\label{ssec:lyapunov-truncation-approach}
In this subsection, we outline the proof of Proposition~\ref{prop: light-traffic-mixing}, which establishes a tight mixing bound for $\lambda_n \notin \Z_{\geq 0}$, but with an additional assumption that  $\lambda_n \ll n$. As the result in the previous subsection did not yield a tight mixing rate for $\lambda_n \notin \Z_{\geq 0}$, we propose the following alternative Lyapunov function: $V(q) = e^{\theta\sqbr{q-n}^+}$ with $\theta \approx \varepsilon$. This choice does not align with the intuition to engineer $V(q) \approx |q-\lambda_n|$ for $q<n$ to mimic the $M/M/\infty$ behavior. Indeed, analyzing the drift of $V(q) = e^{\theta\sqbr{q-n}^+}$ results in a finite set $K = \{0, 1, \hdots, n\}$, which is much larger than what we previously had. However, $V(q)$ allows us to exploit the strong drift for $q>n$. Indeed, we establish the drift inequality \eqref{eqn: foster-lyapunov} with $\gamma=\Theta(n^{1-2\alpha}) \rightarrow \infty$ as $n \rightarrow \infty$. We then exploit the connection to the $M/M/\infty$ queue to obtain a local Poincar\'e inequality corresponding to the set $K = \{0, 1, \hdots, n\}$ using the following lemma:

\begin{lemma}
    \label{lemma: truncated poincare}
    Let $K = \{m,...,M\}$ be a connected subset of the state space $\cS \subset \Z$ of the birth and death process, $\nu$ be the stationary distribution of the CTMC and $\nu_K(x) = \nu(x)/\sum_{x \in K} \nu(x) \, \forall x \in \cS$. Furthermore, let $\cL$ be the generator of the birth and death process and assume that $\Var_\nu(f) \leq C_P \langle f,-\cL f\rangle \nu \, \forall f \in \ell_{2,\nu}$, we have
    \begin{align}
        \label{eqn: poincare-inequality-entire-chain}
        \Var_{\nu_K}(f) \leq C_P \sum_{x:x,x+1 \in K} Q_K(x,x+1) \br{f(x)-f(x+1)}^2 \, \forall f \in \ell_{2,\nu_K}
    \end{align}
    where $Q_K(x,y) = \nu_K(x) \cL(x,y) \, \forall x,y \in \cS$.
\end{lemma}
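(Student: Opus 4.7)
The plan is to use a function-extension trick: given any $f \in \ell_{2, \nu_K}$, extend it to a function $\tilde f$ on all of $\cS$, apply the global Poincar\'e inequality to $\tilde f$, and then relate both the Dirichlet form and the variance of $\tilde f$ back to the corresponding quantities for $f$ on $K$. The extension that makes everything cleanly telescope is the \emph{flat} extension: set $\tilde f(x) = f(m)$ for $x < m$, $\tilde f(x) = f(x)$ for $x \in K$, and $\tilde f(x) = f(M)$ for $x > M$. Because $K$ is a connected subset of $\Z$ and $\cL$ is a birth-and-death generator (so only nearest-neighbor transitions have nonzero rate), differences $\tilde f(x) - \tilde f(x+1)$ vanish for every edge with at least one endpoint outside $K$.

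First, I would expand the Dirichlet form using reversibility: for any $g$,
\begin{equation*}
\langle g, -\cL g\rangle_\nu = \sum_{x} \nu(x)\,\cL(x, x+1)\,(g(x)-g(x+1))^2.
\end{equation*}
Plugging in $\tilde f$, every edge $(x, x+1)$ that is not entirely inside $K$ contributes $0$, so
\begin{equation*}
\langle \tilde f, -\cL \tilde f\rangle_\nu = \sum_{x: x, x+1 \in K} \nu(x)\,\cL(x, x+1)\,(f(x)-f(x+1))^2 = \nu(K)\sum_{x: x, x+1 \in K} Q_K(x, x+1)\,(f(x)-f(x+1))^2,
\end{equation*}
where I used $\nu(x) = \nu(K)\,\nu_K(x)$ for $x \in K$.

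Next, I would lower bound $\Var_\nu(\tilde f)$ by $\nu(K)\,\Var_{\nu_K}(f)$ using the variational characterization of variance, $\Var_\nu(\tilde f) = \min_{c \in \R} \E_\nu[(\tilde f - c)^2]$. For any $c$,
\begin{equation*}
\E_\nu[(\tilde f - c)^2] \geq \sum_{x \in K} \nu(x)\,(f(x) - c)^2 \geq \min_{c' \in \R} \sum_{x \in K} \nu(x)\,(f(x) - c')^2 = \nu(K)\,\Var_{\nu_K}(f).
\end{equation*}
Taking the minimum over $c$ on the left gives $\Var_\nu(\tilde f) \geq \nu(K)\,\Var_{\nu_K}(f)$.

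Finally, chaining the global Poincar\'e inequality $\Var_\nu(\tilde f) \leq C_P \langle \tilde f, -\cL \tilde f\rangle_\nu$ with the two identities above and dividing by $\nu(K)$ yields exactly \eqref{eqn: poincare-inequality-entire-chain}. I do not anticipate a real obstacle here; the only subtlety is making sure the extension is constant enough to wipe out the boundary edges in the Dirichlet form while still leaving $\Var_\nu(\tilde f)$ large enough on $K$, which is precisely what the flat extension plus the variational formula for the variance accomplishes. The birth-and-death assumption is used only to discard non-nearest-neighbor edges; the argument extends verbatim to any reversible chain provided one replaces ``connected interval'' with a set whose boundary has vanishing cross-transitions under the extension.
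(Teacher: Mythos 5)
Your proposal is correct and uses essentially the same strategy as the paper: extend $f$ flatly to $\tilde f$ on all of $\cS$, lower bound $\Var_\nu(\tilde f) \geq \nu(K)\Var_{\nu_K}(f)$, apply the global Poincar\'e inequality to $\tilde f$, and observe that the birth-and-death structure makes all Dirichlet-form contributions from edges with an endpoint outside $K$ vanish. The only cosmetic difference is that you spell out the variational characterization of variance where the paper just says ``by direct algebraic manipulation.''
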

In words, the above lemma allows us to obtain local mixing results if we know the mixing behavior of another system that subsumes the dynamic inside the finite set. The $M/M/\infty$ queue indeed subsumes the dynamics inside the finite set $K \in \{0, 1, \hdots, n\}$. The above lemma then allows us to establish the local Poincar\'e constant of $\approx 1$. We call this approach the ``truncation argument'' to obtain a local Poincar\'e inequality. We then apply Theorem \ref{theorem: lyapunov-poincare} and use the fact $\lambda_n \ll n$ to conclude that the Poincar\'e constant for the $M/M/n$ queue is $\approx 1$ which completes the proof. The proof details of Proposition \ref{prop: light-traffic-mixing} and Lemma \ref{lemma: truncated poincare} are deferred to Appendix \ref{ssec: light-traffic-mixing-proof} and \ref{sssec: light-traffic-local-mixing} respectively.

\section{Proof details of key technical lemmas}
\label{sec: proof-details-super-hw}
To help readers better understand how our Lyapunov-Poincar\'e machinery work, we shall provide a detailed proof for the Super-Halfin-Whitt with its key technical lemmas as follows. Recall in our proof sketch in Subsection \ref{ssec:lyapunov-canonical-path-approach} that we need to establish Lyapunov drift lemma (namely Lemma \ref{lemma: drift-lemma-super-HW}) and obtain the local mixing bound (Lemma \ref{lemma: weighted-poincare-super-HW}), which consists of the following main ingredients:
\begin{enumerate}
    \item Show that the stationary distribution in $K$ is roughly uniform.
    \item Prove the local canonical path method.
    \item Apply the canonical path method to obtain the weighted local mixing bound.
\end{enumerate}
Similarly for the Sub-Halfin-Whitt regime, one can apply a similar recipe to obtain the mixing bound for the system in that regime with an appropriate Lyapunov function.

\subsection{Proof of the Lyapunov drift (Lemma \ref{lemma: drift-lemma-super-HW})}
\label{ssec: proof-detail-drift-lemma-super-hw}
\begin{proof}
Recall that $V(q) = e^{\theta[q-(n-1)]^+ + \theta [(n-1)-q]^+}$, where we fix $\theta = \log \sqrt{\frac{n}{\lambda_n}}$. Now, when $q \geq n$, we have $V(q) = e^{\theta (q-(n-1))}$. We then have
\begin{align}
    \cL V(q) &= \br{\lambda_n e^{\theta} + \frac{n}{e^{\theta}} - (\lambda_n + n)}V(q) = \br{e^{\theta}-1}\br{\lambda_n - \frac{n}{e^{\theta}}} V(q) = -(\sqrt{n}-\sqrt{\lambda_n})^2 V(q).  \label{eqn: drift-from-above}
\end{align}
The choice of $\theta$ is evident in the third equality as it minimizes $\br{e^{\theta}-1}\br{\lambda_n - \frac{n}{e^{\theta}}}$. For $q < n$, we have $V(q) = e^{\theta (n-1-q)}$ and now consider two cases: $n = 1 \leq n \leq 7$, and $n \geq 8$.

\textbf{Case 1 $(1 \leq n \leq 7)$}: In this case, we have $K = \{0, 1,...,n-1\}$. From \eqref{eqn: drift-from-above}, we have $\cL V \leq -(\sqrt{n}-\sqrt{\lambda_n})^2 V \,\, \forall x \not \in K$. If $x \in K$ then $q \leq n-1$ and since $n \leq 7$ we have that there is only a finite number of pairs $(n,q)$. Thus, we have
\begin{align*}
    &\cL V(q) \leq -\br{\sqrt{n}-\sqrt{\lambda_n}}^2 V(q) + L 1_K
\end{align*}
for some constant $L$, where it suffices to consider the maximum value of $\cL V(q)$ taken over all choice of $q \in K$ and $1 \leq n \leq 7$. Hence, the case $1 \leq n \leq 7$ is done.

\textbf{Case 2 $(n > 7)$}: We have $K = \{\lfloor 2\lambda_n \rfloor - n, \hdots , n\}$. This gives 
$V(q) = e^{\theta(n-1-q)}$ for $q \leq n-1$. So, for $q<n-1$, we have
\begin{align*}
    \cL V(q) &= \br{e^\theta-1}\br{q - \frac{\lambda_n}{e^\theta}} V(q) = \br{e^\theta-1}\bigg(q - (2\lambda_n - n) + \underbrace{2\lambda_n - n - \frac{\lambda_n}{e^\theta}}_{\leq \lambda_n - \sqrt{\lambda_n n}}\bigg) V(q) \\
    &\leq -\br{\sqrt{n}-\sqrt{\lambda_n}}^2 V(q) + \br{e^\theta-1}\br{q - (2\lambda_n - n)} V(q)
\end{align*}

For $2\lambda_n - n \leq q \leq n-2$ and $\alpha > 1/2$, we have $V(q) = e^{\theta(n-1-q)} \leq e^{2 \theta n^{1-\alpha}}$. Furthermore, we have
\begin{align}
    \label{eqn: theta-bound}
    \theta \leq e^\theta-1 =  \sqrt{\frac{n}{\lambda_n}}-1 = \frac{\frac{n-\lambda_n}{\lambda_n}}{\sqrt{\frac{n}{\lambda_n}}+1} \leq \frac{n^{1-\alpha}}{\sqrt{n\lambda_n} + \lambda_n} = \frac{n^{-\alpha}}{\sqrt{1-n^{-\alpha}} + 1-n^{-\alpha}} \leq \frac{1}{2(n^\alpha - 1)}.
\end{align}
Thus, we have $V(q) \leq e^{\frac{n^{1-\alpha}}{n^\alpha-1}}$ from \eqref{eqn: theta-bound}. And so for $q \in K-\{n-1\}$
\begin{align*} \underbrace{\br{e^\theta-1}}_{O(n^{-\alpha})}\underbrace{\br{q - (2\lambda_n - n)}}_{O(n^{1-\alpha})} \underbrace{V(q)}_{O(1)} \leq \frac{1}{2(n^\alpha-1)} \times 2n^{1-\alpha} \times e^{\frac{n^{1-\alpha}}{n^\alpha-1}} \leq \frac{n^{1-2\alpha}}{1-n^{-\alpha}} e^{\frac{n^{1-2\alpha}}{1-n^{-\alpha}}} = b(q).
\end{align*}
And so, we have
\begin{align}
    \cL V(q) \leq -\br{\sqrt{n}-\sqrt{\lambda_n}}^2 V(q) + \frac{n^{1-2\alpha}}{1-n^{-\alpha}} e^{\frac{n^{1-2\alpha}}{1-n^{-\alpha}}}.
\end{align}
For $q = n-1$, we have
\begin{align*}
    \cL V(n-1) &= (\lambda_n + n-1)\br{e^\theta - 1} \leq 2\lambda_n \times \frac{1}{2(n^\alpha-1)} = n^{1-\alpha} \\
    &= -\br{\sqrt{n}-\sqrt{\lambda_n}}^2 \underbrace{V(n-1)}_{= 1} + \bigg(n^{1-\alpha} + \br{\sqrt{n}-\sqrt{\lambda_n}}^2\bigg) \\
    &\leq -\br{\sqrt{n}-\sqrt{\lambda_n}}^2 V(n-1) + n^{1-\alpha}\bigg(1 + \frac{1}{4(n^\alpha-1)}\bigg)
\end{align*}

Hence we have for $q \leq n$:
\begin{align*}
    \cL V(q) \leq -\gamma_n V(q) + b(q) 1_K.
\end{align*}
with $\gamma_n = \br{\sqrt{n}-\sqrt{\lambda_n}}^2$ and $b$ as defined in \eqref{eqn: b-term-super-hw-drift}, which completes the proof.
\end{proof}

\subsection{Proof of Lemma \ref{lemma: weighted-poincare-super-HW}}
In this subsection, we will establish the $b$-weighted Poincar\'e results where $b$ is the positive drift term in Lemma \ref{lemma: drift-lemma-super-HW}. 

\subsubsection{The stationary distribution inside $K$ is roughly uniform}
\label{sssec: roughly-uniform-lemma-proof-detail-main-text}
To get a control on the finite set behavior, we first need to study the steady state distribution inside the finite set, that is to show that $\nu_K$ is a roughly uniform distribution whose support is the set $K$. This lemma lays the groundwork for us to apply the local canonical path method.
\begin{lemma}
    \label{lemma: roughly-uniform}
    Let $\lambda_n = n-n^{1-\alpha}, K = \{\lfloor 2\lambda_n \rfloor - n,...,n-1\}$ and $\nu_K(x) = \frac{\nu(x)}{\sum_{x \in K} \nu(x)} \forall x \in K$ and $\nu_K(x) = 0$ otherwise. For $\alpha \in (1/2,1)$ and $n \geq 110$, we have $L_K(n) \times n^{\alpha-1} \leq \nu_K(x) \leq U_K(n) \times n^{\alpha-1}$ where $\lim_{n \rightarrow \infty} L_K(n) = \lim_{n \rightarrow \infty} U_K(n) = 1/2$.`
\end{lemma}

The main idea of the proof is to apply Stirling's approximation and appropriate bounds. We defer the full statement and the full proof of the lemma to Appendix \ref{sssec: roughly-uniform-lemma-proof}.

\subsubsection{Local canonical path lemma}
\label{sssec: proof-detail-local-canonical-path-lemma}
To this end, we first need to develop a toolkit to analyze the local mixing property of the birth-and-death process in a bounded state space.

\begin{lemma}
    \label{lemma: canonical-path}
    (Local canonical path method) For any birth and death process with the generator $\cL$, a subset $K$ of the state space $\Z_{\geq 0}$ and the stationary distribution $\nu$, we have the following inequality holds:
    \begin{align}
        \Var_{\nu_K}(f) \leq C_L \sum_{k: k,k+1 \in K} Q_K(k,k+1) (f(k+1)-f(k))^2
    \end{align}
    where the constant $C_L$ is defined as:
    \begin{align}
        \label{eqn: CL-definition}
        C_L = \max_{k: k,k+1 \in K} \frac{\sum_{x,y \in K: x \leq k \leq y-1} |x-y| \nu_K(x) \nu_K(y)}{Q_K(k,k+1)} \, \forall f \in \ell_{2,\nu_K}
    \end{align}
    where $\nu_K(x) = \frac{\nu(x)}{\sum_{x \in K}\nu(x)} \, \forall x \in K$ and $\nu_K(x) = 0$ otherwise and $Q_K(u,v) = \nu_K(u) \cL(u,v) = \nu_K(v) \cL(v,u) \forall u,v \in \Z_{\geq 0}$.
\end{lemma}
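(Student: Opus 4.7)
My plan is to apply the classical canonical path argument of~\cite{Jerrum-permanent}, localised to the sub-chain on $K$, and to exploit the fact that for a birth-and-death process the only available nearest-neighbour path between $x$ and $y$ is along the integers. The starting point is the standard variance identity
\begin{equation*}
\Var_{\nu_K}(f) \;=\; \tfrac{1}{2}\sum_{x,y \in K}\nu_K(x)\nu_K(y)(f(x)-f(y))^2 \;=\; \sum_{\substack{x,y \in K \\ x < y}} \nu_K(x)\nu_K(y)(f(y)-f(x))^2.
\end{equation*}
For each such pair I choose the canonical path $x \to x+1 \to \cdots \to y$, which lies entirely in $K$ in all applications of interest since $K$ is an interval (see Lemma~\ref{lemma: drift-lemma-super-HW}). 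Telescoping and applying Cauchy--Schwarz with unit weights gives
\begin{equation*}
(f(y)-f(x))^2 \;=\; \Bigl(\sum_{k=x}^{y-1}(f(k+1)-f(k))\Bigr)^2 \;\le\; (y-x)\sum_{k=x}^{y-1}(f(k+1)-f(k))^2.
\end{equation*}

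The next step is to swap the order of summation, replacing the sum over pairs $(x,y)$ by a sum over edges $(k,k+1)$: for each such edge, the pairs whose canonical path uses it are exactly those with $x \le k \le y-1$. This produces
\begin{equation*}
\Var_{\nu_K}(f) \;\le\; \sum_{k:\,k,k+1 \in K} (f(k+1)-f(k))^2 \sum_{\substack{x,y \in K \\ x \le k \le y-1}} |x-y|\,\nu_K(x)\nu_K(y).
\end{equation*}
Multiplying and dividing each summand on the right by $Q_K(k,k+1)$ and pulling the maximum of the resulting ratio outside the sum yields exactly the claim, with the constant $C_L$ defined in~\eqref{eqn: CL-definition}.

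The argument is essentially routine; the two points that require a little attention are (i) that the local canonical paths lie in $K$, which is immediate because $K$ is an interval and $x, x+1, \dots, y$ is the unique nearest-neighbour trajectory between its endpoints, and (ii) that the detailed balance $\nu(k)\cL(k,k+1) = \nu(k+1)\cL(k+1,k)$ inherited by $\nu_K$ gives $Q_K(k,k+1) = Q_K(k+1,k)$, so the quantity appearing on the right is genuinely the restricted Dirichlet form. I do not anticipate a serious obstacle here; the main conceptual input, already present in the classical argument, is the identification of the edge congestion $C_L$ as the correct constant for this path decomposition, and the only work specific to our setting is verifying that $K$ is interval-shaped so that nearest-neighbour paths between $K$-endpoints do not exit $K$.
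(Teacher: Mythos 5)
Your proposal is correct and follows essentially the same route as the paper's proof: the variance decomposition over pairs, a Cauchy--Schwarz along the nearest-neighbour path from $x$ to $y$, an interchange of summation from pairs to edges, and a max over edges to extract the congestion constant $C_L$ from the restricted Dirichlet form. The only minor addition is your explicit remark that the path stays inside $K$ because $K$ is an interval; the paper leaves this implicit, but the mathematical content is identical.
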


Here, $C_L$ can also be understood as the congestion ratio, which is lower bounded by the inverse mixing rate \cite{Levin2017-mixing-book}. When $\nu_K$ is roughly uniform and the transitions are roughly symmetric, the constant $C_L$ is proportion to the sum of the path length of the path passes through an edge $e = (k,k+1)$ that maximizes the constant. As the name suggests, we prove this lemma by analyzing the path structure as follows.

\proof{\textit{Proof of Lemma \ref{lemma: canonical-path}}:}
    We emulate the proof for the canonical path method as in \cite{UBCmixingbook} to handle the continuous-time Markov chain. Denote $\gamma_{x,y}$ as one of the paths connecting $x,y$, note that $\cL(x,y) > 0$ if and only if $|x-y| = 1$, thus all path in $K$ is a sequence of adjacent states. For any test function $f \in \ell_{2, \nu_K}$ and let $Q_K(x,y) = \nu_K(x)\cL(x,y) \forall x \neq y$, we have:
    \begin{align*}
        \Var_{\nu_K}(f) &= \sum_{x < y \in K} \nu_K(x)\nu_K(y) \br{f(x)-f(y)}^2\\
        &\overset{(a)}{\leq} \sum_{x < y \in K} |x-y| \nu_K(x)\nu_K(y) \br{\sum_{x \leq k \leq y-1} (f(k)-f(k+1))^2} \\
        &= \sum_{x < y \in K} |x-y| \nu_K(x)\nu_K(y) \br{\sum_{x \leq k \leq y-1} \frac{(f(k)-f(k+1))^2 Q_K(k,k+1)}{Q_K(k,k+1)}} \\
        &\overset{(b)}{\leq} \sum_{k: k,k+1 \in K} \br{\frac{ \sum_{x,y \in K: x \leq k \leq y-1} |x-y| \nu_K(x) \nu_K(y)}{Q_K(k,k+1)}} \br{f(k)-f(k+1)}^2 Q_K(k,k+1).
    \end{align*}
    Here, (a) follows from the Cauchy-Schwarz inequality and (b) follows from the interchange of sums. Hence proved.
$\square$ \endproof

\subsubsection{Proof of Lemma~\ref{lemma: weighted-poincare-super-HW}}
\label{sssec: proof-of-weighted-poincare-super-HW}
Finally, observe that $f \in \ell_{2, \nu_n}$ trivially implies $f \in \ell_{2, \nu_K}$ and $f \in \ell_{2, \nu_K}$ is equivalent to $f \in \ell_{2, \tau}$ for $\tau(q) = b(q)\nu_K(q) \, \forall q \in \cS$, given that $b$ is sufficiently well-behaved. This essentially tells us that if a test function $f$ is valid for the global distribution then it also works for the weighted distribution and the local distribution.

\proof{\textit{Proof of Lemma~\ref{lemma: weighted-poincare-super-HW}}:}
    Let $\tau$ be a measure such that $\tau(q) = b(q)\nu_K(q) \, \forall q \in \Z_{\geq 0}$, where $b$ is defined as in \eqref{eqn: b-term-super-hw-drift}. Note that from Lemma \ref{lemma: drift-lemma-super-HW} and Lemma \ref{lemma: roughly-uniform} and the fact that $n \geq n_0$, we have:
    \begin{align}
        \label{eqn: tau-upper-bound-1}
        \tau(q) &\leq \br{1 + \frac{1}{4(n^\alpha-1)}} \frac{e^{\frac{1}{12\lfloor n - 2n^{1-\alpha}\rfloor}} \sqrt{1-2n^{-\alpha}-n^{-1}}}{2 \br{1-\frac{1}{(n^\alpha-2)^2}}^n} \text{ for } q = n-1, \\
        \label{eqn: tau-upper-bound-2}
        \tau(q) &\leq \frac{e^{\frac{n^{1-2\alpha}}{1-n^{1-\alpha}}}}{1-n^{-\alpha}} \frac{e^{\frac{1}{12\lfloor n - 2n^{1-\alpha}\rfloor}} \sqrt{1-2n^{-\alpha}-n^{-1}}}{2 \br{1-\frac{1}{(n^\alpha-2)^2}}^n} n^{-\alpha} \text{ for } q \in K-\{n-1\}.
    \end{align}
    In addition, we also have 
    \begin{align}
        \label{eqn: tau-lower-bound-1}
        \tau(q) &\geq \br{1 + \frac{1}{4(n^\alpha-1)}} \frac{\br{1-\frac{1}{(n^\alpha-2)^2}}^n \sqrt{1-2n^{-\alpha}-n^{-1}}}{e^{\frac{1}{12\lfloor n - 2n^{1-\alpha}\rfloor}} \br{2+n^{\alpha-1}}} \text{ for } q = n-1, \\
        \label{eqn: tau-lower-bound-2}
        \tau(q) &\geq \frac{e^{\frac{n^{1-2\alpha}}{1-n^{-\alpha}}}}{1-n^{-\alpha}} \frac{\br{1-\frac{1}{(n^\alpha-2)^2}}^n \sqrt{1-2n^{-\alpha}-n^{-1}}}{e^{\frac{1}{12\lfloor n - 2n^{1-\alpha}\rfloor}} \br{2+n^{\alpha-1}}} n^{-\alpha} \text{ for } q \in K - \{n-1\}.
    \end{align}
    From Lemma \ref{lemma: drift-lemma-super-HW} and Lemma \ref{lemma: roughly-uniform}, we have
    \begin{align*}
        \Var_\tau(f) &= \frac{1}{2}\sum_{x,y \in K} \tau (x) \tau(y) (f(x)-f(y))^2 \\
        &= \underbrace{\sum_{x \in K} \tau(n-1) \tau(x) \br{f(n-1)-f(x)}^2}_{T_1} + \underbrace{\frac{1}{2}\sum_{x,y \in K-\{n-1\}} \tau(x) \tau(y) (f(x)-f(y))^2}_{T_2}.
    \end{align*}
    The terms $T_1, T_2$ can be bounded by the following claims.
    \begin{claim} 
    \label{claim: t1}
    Let $g_1: \N \rightarrow \R^+$ be a function to be defined later, we have $T_1 \leq g_1(n) \times n^{2-4\alpha} \sum_{k \in K'} \br{f(k)-f(k+1)}^2 Q_K(k,k+1)$. Here, we have $g_1(n) \leq 384 \forall n \geq n_0$ and
    \begin{align}
        \lim_{n \rightarrow \infty} g_1(n) &= 2 \text{ for } \alpha > 1/2, \\
        \lim_{n \rightarrow \infty} g_1(n) &= 2e^3 \text{ for } \alpha = 1/2.
    \end{align}
    \end{claim}
    Next, using the local canonical path method, we have the following claim: 
    \begin{claim}
    \label{claim: t2}
    Let $g_2: \N \rightarrow \R^+$ be a function to be defined later, we have $T_2 \leq g_2(n) \times n^{3-6\alpha} \sum_{k \in K'} \br{f(k)-f(k+1)}^2 Q_K(k,k+1)$. Here, we have $g_2(n) \leq 395.93 \, \forall n \geq n_0$ and
    \begin{align}
        \lim_{n \rightarrow \infty} g_2(n) &= 0 \text{ for } \alpha > 1/2, \\
        \lim_{n \rightarrow \infty} g_2(n) &= e^5 \text{ for } \alpha = 1/2.
    \end{align}
    \end{claim}
    The full proofs of these claims can be found in Appendix \ref{sssec: proof-of-claims}. Combining the claims gives us
    \begin{align*}
        \Var_\tau(f) = T_1+T_2 
        &\leq \br{g_1(n) n^{2-4\alpha} + g_2(n) n^{3-6\alpha}} \sum_{k \in K'} \br{f(k)-f(k+1)}^2 Q_K(k,k+1) \\
        &= \frac{g_1(n) n^{2-4\alpha} + g_2(n) n^{3-6\alpha}}{\nu_n(K)} \sum_{k \in K'} \br{f(k)-f(k+1)}^2 \nu_n(k) \cL(k,k+1) \\
        &\leq \frac{g_1(n) n^{2-4\alpha} + g_2(n) n^{3-6\alpha}}{\nu_n(K)} \sum_{k = 0}^\infty \br{f(k)-f(k+1)}^2 \nu_n(k) \cL(k,k+1) \\
        &= \frac{g_1(n) n^{2-4\alpha} + g_2(n) n^{3-6\alpha}}{\nu_n(K)} \langle f,-\cL f \rangle_{\nu_n}
    \end{align*}
    holds for all $f \in \ell_{2, \nu_n}$. Hence proved.
$\square$ \endproof

\section{Conclusion and Future Work}
\label{sec: conclusion}
In this paper, we establish bounds on the Chi-squared distance between the finite-time and the steady-state queue length distribution for the $M/M/n$ queue. We demonstrate that the mixing rate is tight (possibly up to a constant) in the many-server-heavy-traffic regimes. We also obtain bounds on the mean, moments, and tail of the queue length for a finite time and finite $n$. To prove these results, we build a Lyapunov-Poincar\'e framework and also propose two different ways to obtain the local Poincar\'e inequality which are of independent interest. In particular, our proposed local canonical path method and the truncation method have the potential to provide stronger bounds than the classical drift and minorization method \cite{meyn-tweedie-exponential-ergodicity-1995, rosenthal1995minorization}.

While we focus on analyzing a birth and death chain in this paper, note that our Lyapunov - Poincar\'e methodology is flexible enough to be applied to any reversible CTMC.  Generalizing our results to other reversible Markov chains would pose the main challenge of constructing a suitable Lyapunov function resulting in suitable negative drift outside a suitable finite set. Nonetheless, the results in Section \ref{sec: lyapunov-poincare-method} are versatile, and one can hope to employ this methodology to obtain mixing results beyond birth and death chains. Beyond queueing theory, our finite set approaches can be further applied to other areas such as Learning, Generative AI and Sampling \cite{raginsky2017nonconvexsgld, vempala-wibisono2022rapid-isoperimetry-ula, Durmus2014QuantitativeBoundsLangevin, augustchen2024langevindynamicsunifiedlyapunovperspective}.

\section{Acknowledgement}
This work was partially supported by NSF grants EPCN-2144316 and CMMI-2140534 and the Georgia Tech ARC-ACO Fellowship. The authors thank Professor David Goldberg for insightful discussions and for suggesting relevant references. The authors also kindly thank the anonymous reviewers for their helpful reviews and feedback.

\bibliographystyle{plain} 
\bibliography{refs} 

\begin{thebibliography}{10}

\bibitem{anderson2020drifthittingtime}
Robert~M. Anderson, Haosui Duanmu, Aaron Smith, and Jun Yang.
\newblock Drift, minorization, and hitting times, 2020.

\bibitem{andrieu2022poincaresurvey}
Christophe Andrieu, Anthony Lee, Sam Power, and Andi~Q. Wang.
\newblock Poincare inequalities for markov chains: a meeting with cheeger, lyapunov and metropolis, 2022.

\bibitem{Bakry2013AnalysisAGMarkovDiffusionbook}
D.~Bakry, I.~Gentil, and M.~Ledoux.
\newblock {\em Analysis and Geometry of Markov Diffusion Operators}.
\newblock Grundlehren der mathematischen Wissenschaften. Springer International Publishing, 2013.

\bibitem{bakry2008lyapunov-poincare}
Dominique Bakry, Patrick Cattiaux, and Arnaud Guillin.
\newblock Rate of convergence for ergodic continuous markov processes: Lyapunov versus poincaré.
\newblock {\em Journal of Functional Analysis}, 254(3):727--759, 2008.

\bibitem{Baxendale_convergence_2005}
Peter~H. Baxendale.
\newblock Renewal theory and computable convergence rates for geometrically ergodic markov chains.
\newblock {\em The Annals of Applied Probability}, 15(1B):700--738, February 2005.

\bibitem{UBCmixingbook}
N~Berestycki.
\newblock {\em Mixing Times of Markov Chains: Techniques and Examples: A Crossroad between Probability, Analysis and Geometry}.
\newblock Oxford University Press, 2020.

\bibitem{Braverman2020-steady-state-jsq}
Anton Braverman.
\newblock Steady-state analysis of the join-the-shortest-queue model in the {Halfin--Whitt} regime.
\newblock {\em Math. Oper. Res.}, 45(3):1069--1103, August 2020.

\bibitem{Braverman2023-jsq-convergence-diffusion-limit}
Anton Braverman.
\newblock The join-the-shortest-queue system in the {Halfin-Whitt} regime: Rates of convergence to the diffusion limit.
\newblock {\em Stoch. Syst.}, 13(1):1--39, March 2023.

\bibitem{braverman2017steins}
Anton Braverman, J.~G. Dai, and Jiekun Feng.
\newblock Stein's method for steady-state diffusion approximations: an introduction through the erlang-a and erlang-c models, 2017.

\bibitem{luczak-supermarket-equilibrium}
Graham Brightwell, Marianne Fairthorne, and Malwina~J Luczak.
\newblock The supermarket model with bounded queue lengths in equilibrium.
\newblock {\em J. Stat. Phys.}, 173(3-4):1149--1194, November 2018.

\bibitem{cattiaux2010poincarehittingtime}
Patrick Cattiaux, Arnaud Guillin, and Pierre-André Zitt.
\newblock Poincar\'e inequalities and hitting times, 2010.

\bibitem{Chafa_2006_entropic_inequalities_infty_queue}
Djalil Chafaï.
\newblock Binomial-poisson entropic inequalities and the m/m/$\infty$ queue.
\newblock {\em {ESAIM}: Probability and Statistics}, 10:317--339, sep 2006.

\bibitem{augustchen2024langevindynamicsunifiedlyapunovperspective}
August~Y. Chen, Ayush Sekhari, and Karthik Sridharan.
\newblock Langevin dynamics: A unified perspective on optimization via lyapunov potentials, 2024.

\bibitem{mufa-chen-l2-convergence}
Mu-Fa Chen.
\newblock Equivalence of exponential ergodicity and l2-exponential convergence for markov chains.
\newblock {\em Stochastic Processes and their Applications}, 87(2):281--297, 2000.

\bibitem{zaiwei-envelope}
Zaiwei Chen, Siva~Theja Maguluri, Sanjay Shakkottai, and Karthikeyan Shanmugam.
\newblock Finite-sample analysis of stochastic approximation using smooth convex envelopes, 2020.

\bibitem{zaiwei-constant-stepsize}
Zaiwei Chen, Shancong Mou, and Siva~Theja Maguluri.
\newblock Stationary behavior of constant stepsize sgd type algorithms: An asymptotic characterization, 2021.

\bibitem{nonlinear-sa}
Zaiwei Chen, Sheng Zhang, Thinh~T. Doan, John-Paul Clarke, and Siva~Theja Maguluri.
\newblock Finite-sample analysis of nonlinear stochastic approximation with applications in reinforcement learning, 2019.

\bibitem{chewi2020mirror-exponential-ergodicity}
Sinho Chewi, Thibaut~Le Gouic, Chen Lu, Tyler Maunu, Philippe Rigollet, and Austin~J. Stromme.
\newblock Exponential ergodicity of mirror-langevin diffusions, 2020.

\bibitem{dominguez-book-orthogonal-polynomial}
Manuel~Domínguez de~la Iglesia.
\newblock {\em Orthogonal Polynomials in the Spectral Analysis of Markov Processes: Birth-Death Models and Diffusion}.
\newblock Encyclopedia of Mathematics and its Applications. Cambridge University Press, 2021.

\bibitem{divol-niles-weed2024optimal-transport-estimation}
Vincent Divol, Jonathan Niles-Weed, and Aram-Alexandre Pooladian.
\newblock Optimal transport map estimation in general function spaces, 2024.

\bibitem{Bakry-simple-proof-lyapunov-poincare-2008}
Bakry Dominique, Barthe Franck, Cattiaux Patrick, and Guillin Arnaud.
\newblock A simple proof of the poincaré inequality for a large class of probability measures.
\newblock {\em Electronic Communications in Probability}, 13:60--66, 2008.

\bibitem{douc-moulines2018markovchains}
R.~Douc, E.~Moulines, P.~Priouret, and P.~Soulier.
\newblock {\em Markov Chains}.
\newblock Springer Series in Operations Research and Financial Engineering. Springer International Publishing, 2018.

\bibitem{meyn-tweedie-exponential-ergodicity-1995}
D~Down, S~P Meyn, and R~L Tweedie.
\newblock Exponential and uniform ergodicity of markov processes.
\newblock {\em Ann. Probab.}, 23(4):1671--1691, October 1995.

\bibitem{durmus2023uniformmminorization}
Alain Durmus, Aurélien Enfroy, Éric Moulines, and Gabriel Stoltz.
\newblock Uniform minorization condition and convergence bounds for discretizations of kinetic langevin dynamics, 2023.

\bibitem{durmus2015subgeometricratesconvergencewasserstein}
Alain Durmus, Gersende Fort, and Eric Moulines.
\newblock Subgeometric rates of convergence in wasserstein distance for markov chains, 2015.

\bibitem{Durmus2014QuantitativeBoundsLangevin}
Alain Durmus and {\'E}ric Moulines.
\newblock Quantitative bounds of convergence for geometrically ergodic markov chain in the wasserstein distance with application to the metropolis adjusted langevin algorithm.
\newblock {\em Statistics and Computing}, 25:5 -- 19, 2014.

\bibitem{eryilmaz2013asymptotically}
Atilla Eryilmaz and R~Srikant.
\newblock Asymptotically tight steady-state queue length bounds implied by drift conditions.
\newblock {\em Queueing Syst.}, 72(3-4):311--359, December 2012.

\bibitem{Foster-Lyapunov}
F.~G. Foster.
\newblock On the stochastic matrices associated with certain queuing processes.
\newblock {\em Annals of Mathematical Statistics}, 24:355--360, 1953.

\bibitem{Halfin-Whitt-Gamarnik_2013}
David Gamarnik and David~A. Goldberg.
\newblock On the rate of convergence to stationarity of the {M}/{M}/n queue in the {Halfin}{\textendash}{Whitt} regime.
\newblock {\em The Annals of Applied Probability}, 23(5):1879 -- 1912, oct 2013.

\bibitem{itai-exponential-ergodicity}
Itai Gurvich.
\newblock Diffusion models and steady-state approximations for exponentially ergodic markovian queues.
\newblock {\em The Annals of Applied Probability}, 24(6):2527--2559, 2014.

\bibitem{Hairer2011-asymptotic-coupling}
M~Hairer, J~C Mattingly, and M~Scheutzow.
\newblock Asymptotic coupling and a general form of harris' theorem with applications to stochastic delay equations.
\newblock {\em Probab. Theory Relat. Fields}, 149(1-2):223--259, February 2011.

\bibitem{HalfinWhitt1981HeavyTrafficLF}
Shlomo Halfin and Ward Whitt.
\newblock Heavy-traffic {Limits} for {Queues} with {Many} {Exponential} {Servers}.
\newblock {\em Oper. Res.}, 29:567--588, 1981.

\bibitem{Transform_Lange_2020}
Daniela Hurtado-Lange and Siva~Theja Maguluri.
\newblock Transform methods for heavy-traffic analysis.
\newblock {\em Stochastic Systems}, 10(4):275--309, dec 2020.

\bibitem{Jerrum-permanent}
Mark Jerrum and Alistair Sinclair.
\newblock Approximating the permanent.
\newblock {\em SIAM Journal on Computing}, 18(6):1149--1178, 1989.

\bibitem{prakirt-confluence-large-deviation}
Prakirt~Raj Jhunjhunwala, Daniela Hurtado-Lange, and Siva~Theja Maguluri.
\newblock Exponential tail bounds on queues: A confluence of non-asymptotic heavy traffic and large deviations, 2023.

\bibitem{Karlin1957-KM-representation}
S~Karlin and J~L McGregor.
\newblock The differential equations of birth-and-death processes, and the stieltjes moment problem.
\newblock {\em Trans. Am. Math. Soc.}, 85(2):489, July 1957.

\bibitem{Klaassen1985OnAI}
Chris A.~J. Klaassen.
\newblock On an inequality of chernoff.
\newblock {\em Annals of Probability}, 13:966--974, 1985.

\bibitem{kontoyiannis-entropy-law-of-small-number}
I.~Kontoyiannis, P.~Harremoes, and O.~Johnson.
\newblock Entropy and the law of small numbers.
\newblock {\em IEEE Transactions on Information Theory}, 51(2):466--472, 2005.

\bibitem{Leguesdron1993-mm1-transient-anaysis}
P~Leguesdron, J~Pellaumail, G~Rubino, and B~Sericola.
\newblock Transient analysis of the {M/M/1} queue.
\newblock {\em Adv. Appl. Probab.}, 25(3):702--713, September 1993.

\bibitem{Levin2017-mixing-book}
David~A Levin and Yuval Peres.
\newblock {\em Markov chains and mixing times}.
\newblock American Mathematical Society, Providence, RI, 2 edition, October 2017.

\bibitem{Lorek_Szekli_2015}
Pawel Lorek and Ryszard Szekli.
\newblock Computable bounds on the spectral gap for unreliable jackson networks.
\newblock {\em Advances in Applied Probability}, 47(2):402–424, 2015.

\bibitem{lovasz-mixing-rate-conductance-1990}
L.~Lovasz and M.~Simonovits.
\newblock The mixing rate of markov chains, an isoperimetric inequality, and computing the volume.
\newblock In {\em Proceedings [1990] 31st Annual Symposium on Foundations of Computer Science}, volume~1, pages 346--354 vol. 1, 1990.

\bibitem{Luczak_supermarket_2006}
Malwina~J. Luczak and Colin McDiarmid.
\newblock On the maximum queue length in the supermarket model.
\newblock {\em The Annals of Probability}, 34(2):493–--527, mar 2006.

\bibitem{Lund1996-computable-rate-stochastically-ordered}
Robert~B Lund, Sean~P Meyn, and Richard~L Tweedie.
\newblock Computable exponential convergence rates for stochastically ordered markov processes.
\newblock {\em Ann. Appl. Probab.}, 6(1):218--237, February 1996.

\bibitem{yuanzhe2025jsqmixing}
Yuanzhe Ma and Siva~Theja Maguluri.
\newblock Convergence rate analysis of the join-the-shortest-queue system, 2025.

\bibitem{siva-cloud-scheduling-unknown-duration}
Siva~Theja Maguluri and R.~Srikant.
\newblock Scheduling jobs with unknown duration in clouds.
\newblock {\em IEEE/ACM Transactions on Networking}, 22(6):1938--1951, 2014.

\bibitem{siva-stochastic-scheduling-cloud}
Siva~Theja Maguluri, R.~Srikant, and Lei Ying.
\newblock Stochastic models of load balancing and scheduling in cloud computing clusters.
\newblock In {\em 2012 Proceedings IEEE INFOCOM}, pages 702--710. IEEE, 2012.

\bibitem{siva-heavy-traffic-cloud-computing}
Siva~Theja Maguluri, R.~Srikant, and Lei Ying.
\newblock Heavy traffic optimal resource allocation algorithms for cloud computing clusters.
\newblock {\em Performance Evaluation}, 81:20--39, 2014.

\bibitem{Mao2015-open-Jackson-spectral-gap}
Yong~Hua Mao and Liang~Hui Xia.
\newblock Spectral gap for open jackson networks.
\newblock {\em Acta Math. Sin. Engl. Ser.}, 31(12):1879--1894, December 2015.

\bibitem{Meyn-markov-chain-book}
Sean Meyn and Richard~L Tweedie.
\newblock {\em Cambridge mathematical library: Markov chains and stochastic stability}.
\newblock Cambridge University Press, Cambridge, England, 2 edition, August 2012.

\bibitem{meyn1994computable}
Sean~P Meyn and R~L Tweedie.
\newblock Computable bounds for geometric convergence rates of markov chains.
\newblock {\em Ann. Appl. Probab.}, 4(4):981--1011, November 1994.

\bibitem{mitrophanov-perturbation}
A.~Yu. Mitrophanov.
\newblock Stability and exponential convergence of continuous-time markov chains.
\newblock {\em Journal of Applied Probability}, 40(4):970--979, 2003.

\bibitem{prasad-book}
Ravi Montenegro and Prasad Tetali.
\newblock Mathematical aspects of mixing times in markov chains.
\newblock {\em Found. Trends Theor. Comput. Sci.}, 1(3):237–354, July 2006.

\bibitem{Morse1955-waiting-lines}
Philip~M Morse.
\newblock Stochastic properties of waiting lines.
\newblock {\em J. Oper. Res. Soc. Am.}, 3(3):255--261, August 1955.

\bibitem{Natvig1975-discouraged}
Bent Natvig.
\newblock On a queuing model where potential customers are discouraged by queue length.
\newblock {\em Scand. Stat. Theory Appl.}, 2(1):34--42, 1975.

\bibitem{hoang-exponentially-stable-sa}
Hoang Nguyen and Siva~Theja Maguluri.
\newblock Stochastic approximation for nonlinear discrete stochastic control: Finite-sample bounds for exponentially stable systems.
\newblock In {\em 2023 62nd IEEE Conference on Decision and Control (CDC)}, pages 5812--5817. IEEE, 2023.

\bibitem{hoang-nonlinear-sa}
Hoang~Huy Nguyen and Siva~Theja Maguluri.
\newblock Stochastic {Approximation} for {Nonlinear} {Discrete} {Stochastic} {Control}: {Finite}-{Sample} {Bounds}, 2024.

\bibitem{quang-non-stationary-wireless}
Quang~Minh Nguyen and Eytan Modiano.
\newblock Learning to schedule in non-stationary wireless networks with unknown statistics.
\newblock In {\em Proceedings of the Twenty-Fourth International Symposium on Theory, Algorithmic Foundations, and Protocol Design for Mobile Networks and Mobile Computing}, MobiHoc '23, page 181–190, New York, NY, USA, 2023. Association for Computing Machinery.

\bibitem{quang-time-varying-wireless}
Quang~Minh Nguyen and Eytan~H. Modiano.
\newblock Linear-time scheduling for time-varying wireless networks via randomization.
\newblock In {\em 2024 60th Annual Allerton Conference on Communication, Control, and Computing}, pages 1--8. IEEE, 2024.

\bibitem{quang-distributed-wireless-sdn}
Quang~Minh Nguyen and Eytan~H. Modiano.
\newblock Optimal control for distributed wireless sdn.
\newblock In {\em 2024 IFIP Networking Conference (IFIP Networking)}, pages 502--508. IEEE, 2024.

\bibitem{Pang_2007_martingale_proof_survey}
Guodong Pang, Rishi Talreja, and Ward Whitt.
\newblock Martingale proofs of many-server heavy-traffic limits for markovian queues.
\newblock {\em Probability Surveys}, 4(none):193--267, jan 2007.

\bibitem{Polyanskiy_Wu_2024}
Yury Polyanskiy and Yihong Wu.
\newblock {\em Information Theory: From Coding to Learning}.
\newblock Cambridge University Press, Cambridge, 2024.

\bibitem{qianqin2020minorizationlimitations}
Qian Qin and James~P. Hobert.
\newblock On the limitations of single-step drift and minorization in markov chain convergence analysis, 2020.

\bibitem{qin2021geometric-wasserstein}
Qian Qin and James~P. Hobert.
\newblock Geometric convergence bounds for markov chains in wasserstein distance based on generalized drift and contraction conditions, 2021.

\bibitem{qu-glynn2023wasserstein-contractive-drift}
Yanlin Qu, Jose Blanchet, and Peter Glynn.
\newblock Computable bounds on convergence of markov chains in wasserstein distance, 2023.

\bibitem{raginsky2017nonconvexsgld}
Maxim Raginsky, Alexander Rakhlin, and Matus Telgarsky.
\newblock Non-convex learning via stochastic gradient langevin dynamics: a nonasymptotic analysis.
\newblock In Satyen Kale and Ohad Shamir, editors, {\em Proceedings of the 2017 Conference on Learning Theory}, volume~65 of {\em Proceedings of Machine Learning Research}, pages 1674--1703. PMLR, 07--10 Jul 2017.

\bibitem{philippe-robert-stochastic-networks-and-queues}
Philippe Robert.
\newblock {\em Stochastic Networks and Queues}.
\newblock Springer, New York, NY, 2003.

\bibitem{rosenthal1995minorization}
Jeffrey~S Rosenthal.
\newblock Minorization conditions and convergence rates for markov chain monte carlo.
\newblock {\em J. Am. Stat. Assoc.}, 90(430):558, June 1995.

\bibitem{Rudolf2015PerturbationTheoryWasserstein}
Daniel Rudolf and Nikolaus Schweizer.
\newblock Perturbation theory for markov chains via wasserstein distance.
\newblock {\em Bernoulli}, 2015.

\bibitem{rutten2023meanfieldspatial}
Daan Rutten and Debankur Mukherjee.
\newblock Mean-field analysis for load balancing on spatial graphs, 2023.

\bibitem{taghvaei-lyapunov-poincare}
Amirhossein Taghvaei and Prashant~G. Mehta.
\newblock On the {Lyapunov} {Foster} {Criterion} and {Poincaré} {Inequality} for {Reversible} {Markov} {Chains}.
\newblock {\em IEEE Transactions on Automatic Control}, 67(5):2605--2609, 2022.

\bibitem{Van_Doorn1981-discouraged-transient}
Erik~A Van~Doorn.
\newblock The transient state probabilities for a queueing model where potential customers are discouraged by queue length.
\newblock {\em J. Appl. Probab.}, 18(2):499--506, June 1981.

\bibitem{Van_Doorn1985-decay-bounds}
Erik~A Van~Doorn.
\newblock Conditions for exponential ergodicity and bounds for the decay parameter of a birth-death process.
\newblock {\em Adv. Appl. Probab.}, 17(3):514--530, September 1985.

\bibitem{van-doorn-representation-of-zeros}
Erik~A {van Doorn}.
\newblock Representations and bounds for zeros of orthogonal polynomials and eigenvalues of sign-symmetric tri-diagonal matrices.
\newblock {\em Journal of Approximation Theory}, 51(3):254--266, 1987.

\bibitem{Van_Doorn2011-mmnnr}
Erik~A van Doorn.
\newblock Rate of convergence to stationarity of the system {M/M/N/N+R}.
\newblock {\em TOP}, 19(2):336--350, December 2011.

\bibitem{Van_Doorn2017-hitting-time-orthogonal-polynomials}
Erik~A van Doorn.
\newblock An orthogonal-polynomial approach to first-hitting times of birth--death processes.
\newblock {\em J. Theor. Probab.}, 30(2):594--607, June 2017.

\bibitem{Van_Doorn2009-mmnn}
Erik~A van Doorn and Alexander~I Zeifman.
\newblock On the speed of convergence to stationarity of the erlang loss system.
\newblock {\em Queueing Syst.}, 63(1-4):241--252, December 2009.

\bibitem{ramon-book}
Ramon van Handel.
\newblock {\em Probability in High Dimensions}.
\newblock 2016.

\bibitem{vankempen2024learningpayoffsroutingskillbased}
Sanne van Kempen, Jaron Sanders, Fiona Sloothaak, and Maarten~G. Wolf.
\newblock Learning payoffs while routing in skill-based queues, 2024.

\bibitem{transient-HW-diffusion-Leeuwaarden}
Johan~S.H. {van Leeuwaarden} and Charles Knessl.
\newblock Transient behavior of the halfin–whitt diffusion.
\newblock {\em Stochastic Processes and their Applications}, 121(7):1524--1545, 2011.

\bibitem{sushilvarma2021strategicridehailing}
Sushil~Mahavir Varma, Francisco Castro, and Siva~Theja Maguluri.
\newblock Near optimal control in ride hailing platforms with strategic servers, 2021.

\bibitem{sushilvarma2023electric}
Sushil~Mahavir Varma, Francisco Castro, and Siva~Theja Maguluri.
\newblock Electric vehicle fleet and charging infrastructure planning, 2023.

\bibitem{sushil-sub-hw-power-of-d}
Sushil~Mahavir Varma, Francisco Castro, and Siva~Theja Maguluri.
\newblock Power-of-d choices load balancing in the sub-halfin whitt regime.
\newblock In {\em Abstract Proceedings of the 2023 ACM SIGMETRICS International Conference on Measurement and Modeling of Computer Systems}, SIGMETRICS '23, page 95–96, New York, NY, USA, 2023. Association for Computing Machinery.

\bibitem{vempala-wibisono2022rapid-isoperimetry-ula}
Santosh~S. Vempala and Andre Wibisono.
\newblock Rapid convergence of the unadjusted langevin algorithm: Isoperimetry suffices, 2022.

\bibitem{Wilson_method_2004_lozenge_tiling}
David~Bruce Wilson.
\newblock Mixing times of lozenge tiling and card shuffling markov chains.
\newblock {\em The Annals of Applied Probability}, 14(1):274--325, feb 2004.

\bibitem{zeifman_lognorm}
A.~I. Zeifman.
\newblock Some estimates of the rate of convergence for birth and death processes.
\newblock {\em Journal of Applied Probability}, 28(2):268--277, 1991.

\bibitem{zeifman-nonhomogeneous-1995}
A.I. Zeifman.
\newblock Upper and lower bounds on the rate of convergence for nonhomogeneous birth and death processes.
\newblock {\em Stochastic Processes and their Applications}, 59(1):157--173, 1995.

\end{thebibliography}

\newpage
\section{Proofs of foundational results}
\label{sec: foundational-results-proof}
In the following Section, we will provide the missing proofs for the key results in the main text.



\subsection{Proof of Proposition \ref{prop: poincare-equals-mixing}}
\label{ssec:mixing-prop-proof}

\proof
Note that $\chi^2(\pi_t, \nu) = \Var_\nu \br{\frac{\pi_t}{\nu}} = \Var_\nu\br{P_t \frac{\pi_0}{\nu}}$, we have
\begin{align*}
    \frac{d}{dt} \chi^2(\pi_t,\nu)  = \frac{d}{dt} \Var_\nu \br{P_t \frac{\pi_0}{\nu}} = \frac{d}{dt} \norm{P_t \frac{\pi_0}{\nu}-1}_{2,\nu}^2 = 2\left\langle \frac{d}{dt} P_t \frac{\pi_0}{\nu}, P_t \frac{\pi_0}{\nu} - 1 \right\rangle_\nu.
\end{align*}
Since we have $\frac{d}{dt} P_t \frac{\pi_0}{\nu} = \lim_{\delta \rightarrow 0} \br{\frac{P_{t+\delta}-P_t}{\delta}} \frac{\pi_0}{\nu} = \lim_{\delta \rightarrow 0} \br{\frac{P_\delta-1}{\delta}} P_t \frac{\pi_0}{\nu} = \cL P_t \frac{\pi_0}{\nu}$ (due to the fact that $P_{t+s} = P_t P_s$ for any $t,s$), this gives us
\begin{align*}
    \frac{d}{dt} \chi^2(\pi_t,\nu)
    &= 2\left\langle \cL P_t \frac{\pi_0}{\nu}, P_t \frac{\pi_0}{\nu} \right\rangle_\nu = 2\left\langle P_t \frac{\pi_0}{\nu}, \cL P_t \frac{\pi_0}{\nu} \right\rangle_\nu \text{ from reversibility via the adjoint property,}\\
    &= - 2\cE\br{P_t \frac{\pi_0}{\nu}, P_t \frac{\pi_0}{\nu}} \\
    &\leq - \frac{2}{C_P} \Var_\nu \br{P_t \frac{\pi_0}{\nu}} = - \frac{2}{C_P} \Var_\nu \br{\frac{\pi_t}{\nu}} \text{ from the Poincar\'e inequality} \\
    &= -\frac{2}{C_P} \chi^2(\pi_t,\nu).
\end{align*}
where the inequality comes from the Poincar\'e inequality \eqref{eqn: poincare-inequality}. From Gronwall's inequality, we have
\begin{align*}
    \chi^2\br{\pi_t,\nu} \leq e^{-\frac{2t}{C_P}} \chi^2(\pi_0,\nu).
\end{align*}
Hence proved.
\endproof

\textbf{Remark on the convergence result}: To the best of our knowledge, we believe that we are the first to establish a Chi-square convergence result for countable state-space CTMCs. Our approach to handle the Markov semigroup and the Poincar\'e inequality follows similarly from the arguments in \cite{ramon-book,Bakry2013AnalysisAGMarkovDiffusionbook, bakry2008lyapunov-poincare}. In addition, \cite{chewi2020mirror-exponential-ergodicity} previously established Chi-square convergence from the Poincar\'e inequality in the context of Langevin dynamics. We combine these ideas and reestablish the convergence result in the context of CTMCs with a countable state space.

\textbf{Comparison with TV distance convergence}: Aside from Chi-square convergence, we also note that several works obtained TV distance convergence for queueing systems and countable state space Markov chains \cite{philippe-robert-stochastic-networks-and-queues, Baxendale_convergence_2005, Luczak_supermarket_2006, luczak-supermarket-equilibrium}. Most notably, the load balancing results \cite{Luczak_supermarket_2006, luczak-supermarket-equilibrium} does not give convergence to $0$ as $t \rightarrow \infty$ as we have to take into account potential bad initial states. To handle these bad initial states, one has to either analyze the system conditioning on the fact that the system stays in the good states (as in \cite{Luczak_supermarket_2006, luczak-supermarket-equilibrium}) or somehow incorporate the bad initial states in the initial distance. For the latter, this would mean that the pre-exponent term will be very bad since the TV distance is upper bounded by $1$. On the other hand, our Chi-square convergence result can capture all possible initial states using the initial Chi-square distance, which allows us to obtain convergence as $t \rightarrow \infty$ even for very bad initial states. 

Additionally, we can also obtain convergence bounds in terms of TV distance for the Erlang-C system from our Chi-square convergence bounds in Theorem \ref{thm: unifying-theorem} by using the inequality
\begin{align}
    \label{eqn: tv-chi-square-bound}
    d_{TV}(\pi_{n,t}, \nu_n) \leq \chi^2(\pi_{n,t}, \nu_n).
\end{align}

\subsection{Proof of Proposition \ref{prop: lyapunov-poincare-singleton} and Theorem \ref{theorem: lyapunov-poincare}}
\label{ssec: poincare-implies-mixing-results-proof}
To help explain the intuitions of the proof, we will first present the proof for Theorem \ref{theorem: lyapunov-poincare} and then prove its singleton variant.

\subsubsection{Proof of Theorem \ref{theorem: lyapunov-poincare}}
\label{sssec: stitching-theorem-proof}

\begin{lemma}
\label{lemma: claim-1}
For any continuous-time reversible Markov chain with the state space $\cS$, the generator $\cL$ and the stationary distribution $\pi$, the following inequality holds for any test function $f \in \ell_{2, \pi}, m \in \R$ and the Lyapunov function $V$ such that $V(q) > 0 \, \forall q \in \cS$:
\begin{align}
    \label{eqn: dirichlet-form-bound-from-lyapunov}
    \langle (f-m\mathbf{1})^2/V, -\cL V \rangle_\pi &\leq \langle f, -\cL f \rangle_\pi
\end{align}   
\end{lemma}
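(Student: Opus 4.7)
The plan is to reduce the claimed inequality to a pointwise algebraic inequality for each transition $(x,y)$ by exploiting reversibility to write both sides in a symmetrized ``Dirichlet-type'' form. First, I would note that $\cL \mathbf{1} = 0$ (rows of the generator sum to zero), and this together with reversibility gives $\langle \mathbf{1}, -\cL f\rangle_\pi = \langle -\cL \mathbf{1}, f\rangle_\pi = 0$. Hence setting $g = f - m\mathbf{1}$ yields $\langle g,-\cL g\rangle_\pi = \langle f,-\cL f\rangle_\pi$, so one may assume without loss of generality that $m=0$; the inequality to prove reduces to
\begin{equation*}
\langle g^2/V,\,-\cL V\rangle_\pi \;\le\; \langle g,\,-\cL g\rangle_\pi.
\end{equation*}

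Next I would rewrite each side as a sum over ordered pairs $x\neq y$ with weight $\pi(x)\cL(x,y)$. For the right-hand side this is the standard Dirichlet form identity $\langle g,-\cL g\rangle_\pi = \tfrac12\sum_{x\neq y}\pi(x)\cL(x,y)(g(x)-g(y))^2$, obtained by inserting $\cL(x,x)=-\sum_{y\neq x}\cL(x,y)$ and symmetrizing via $\pi(x)\cL(x,y)=\pi(y)\cL(y,x)$. For the left-hand side, a completely analogous computation, again inserting $\cL(x,x)=-\sum_{y\neq x}\cL(x,y)$ and symmetrizing via reversibility, yields
\begin{equation*}
\langle g^2/V,-\cL V\rangle_\pi \;=\; \tfrac12\sum_{x\neq y}\pi(x)\cL(x,y)\,\frac{(V(x)-V(y))\bigl(g(x)^2 V(y)-g(y)^2 V(x)\bigr)}{V(x)V(y)}.
\end{equation*}

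Since $\pi(x)\cL(x,y)\ge 0$ for $x\neq y$, it suffices to establish the inequality termwise: for all $a,b>0$ and $u,v\in\R$,
\begin{equation*}
(a-b)\bigl(u^2 b - v^2 a\bigr)\;\le\; ab\,(u-v)^2,
\end{equation*}
with $a=V(x)$, $b=V(y)$, $u=g(x)$, $v=g(y)$. Expanding both sides and cancelling the common $ab(u^2+v^2)$ term reduces this to $a^2v^2 + b^2u^2 \ge 2ab\,uv$, i.e.\ $(av-bu)^2\ge 0$, which is trivial. Summing over $x\neq y$ then delivers the lemma.

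I do not expect a significant obstacle here. The only points requiring a little care are (i) justifying that the reindexing and symmetrization are legitimate when $\cS$ is countably infinite (the standing assumption $f\in\ell_{2,\pi}$ together with $V>0$ and the non-negativity in the termwise bound lets one invoke Fubini/dominated convergence on the non-negative sides), and (ii) making sure the $\cL(x,x)$ diagonal terms are handled correctly when converting $\langle g^2/V,-\cL V\rangle_\pi$ into a sum over ordered off-diagonal pairs.
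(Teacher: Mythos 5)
Your proposal is correct and uses essentially the same ingredients as the paper's proof: reduce to $g=f-m\mathbf{1}$ via $\cL\mathbf{1}=0$ and self-adjointness, exploit reversibility to symmetrize the bilinear sums, and reduce the claim to the non-negativity of the square $(V(x)g(y)-V(y)g(x))^2\ge 0$. The paper packages the last step differently—it introduces the auxiliary non-negative double sum $\sum_{x,y}V(x)V(y)\bigl(g(x)/V(x)-g(y)/V(y)\bigr)^2\pi(x)\cL(x,y)\ge 0$, expands it into three inner products, and invokes self-adjointness of $\cL$ to collapse two of them—but this is algebraically identical to your termwise inequality, so the two arguments coincide up to bookkeeping.
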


\proof
We will replicate the proof in \cite{taghvaei-lyapunov-poincare} to obtain a result for the continuous-time Markov chain. Denote $g=f-m\mathbf{1}$ where $\mathbf{1}$ is the vector of $1$s (see Subsection \ref{sssec:notations}). Since it is known that $\cL \mathbf{1}=0$, observe that
\begin{align}
    \label{eqn: glg-to-flf-part-1}
    \langle f, \cL f \rangle_{\pi} = \langle f, \cL f \rangle_{\pi}- m\underbrace{\langle \cL\mathbf{1},  f\rangle_{\pi}}_{= 0} \overset{(a)}{=} \langle f, \cL f \rangle_{\pi}- m\langle \mathbf{1}, \cL f \rangle_{\pi} = \langle f-m\mathbf{1}, \cL f \rangle_{\pi} = \langle g, \cL f \rangle_{\pi}
\end{align}
where (a) follows from the self-adjoint property. Similarly, we also have
\begin{align}
    \label{eqn: glg-to-flf-part-2}
     \langle g, \cL f \rangle_{\pi} = \langle g, \cL (f-m \mathbf{1}+m\mathbf{1}) \rangle_{\pi} = \langle g, \cL g + m \underbrace{\cL \mathbf{1}}_{= 0} \rangle_\pi = \langle g, \cL g \rangle_\pi.
\end{align}
And so, from \eqref{eqn: glg-to-flf-part-1} and \eqref{eqn: glg-to-flf-part-2}, we have 
\begin{align}
    \label{eqn: glg-equal-flf}
    \langle f, -\cL f\rangle_\pi = \langle g, -\cL g\rangle_\pi.
\end{align}
Which means that showing \eqref{eqn: dirichlet-form-bound-from-lyapunov} is equivalent to showing: 
	\begin{align}
	\left\langle \frac{g^2}{V}, -\cL V \right\rangle_\pi 
        &\leq \langle g, -\cL g \rangle_\pi.
        \label{eq:claim11}
	\end{align}

Note that $\cL(x,y) \geq 0$ whenever $x \neq y$, so we have
	\begin{align*}
	0 &\leq \sum_{x \in \cS} \sum_{y \in \cS}  V(x)V(y)\left(\frac{g(y)}{V(y)}-\frac{g(x)}{V(x)}\right)^2 \pi(x) \cL(x,y) \\
        &= \sum_{x \in \cS} \sum_{y \in \cS}  V(x)V(y) \left(\frac{g(x)^2}{V(x)^2} + \frac{g(y)^2}{V(y)^2} - \frac{2g(x)g(y)}{V(x)V(y)} \right) \pi(x) \cL (x,y) \\
	&= \left\langle \frac{g^2}{V}, \cL V \right\rangle_\pi + \left\langle V, \cL \frac{g^2}{V}\right\rangle_\pi -2\langle g, \cL g \rangle_\pi. \\
        \Leftrightarrow &\left\langle \frac{g^2}{V}, -\cL V \right\rangle_\pi + \left\langle V, -\cL \frac{g^2}{V}\right\rangle_\pi \leq 2\langle g, -\cL g \rangle_\pi
	\end{align*}
	Observe that the generator $\cL$ possesses the self-adjoint property from the reversibility of the system, it follows that $\langle \frac{g^2}{V}, -\cL V \rangle_\pi = \langle V, -\cL\frac{g^2}{V} \rangle_\pi$. This gives~\eqref{eq:claim11}. And since we have $g = f-m\mathbf{1}$ and \eqref{eqn: glg-equal-flf}, we have that
    \begin{align*}
        \langle (f-m \mathbf{1})^2/V, -\cL V \rangle_\pi &\leq \langle f, -\cL f \rangle_\pi.
    \end{align*}
    Hence proved.
\endproof

Now that the key lemmas are established, we proceed to prove Theorem \ref{theorem: lyapunov-poincare}.

\proof
It is well-known that $\norm{f-\E_\nu[f] \mathbf{1}}_{2,\nu}\leq \norm{f - m \mathbf{1}}_{2,\nu}$ for all constants $m\in\R$. Therefore, in order to prove the Poincar\'e inequality,  it suffices to show that:
\begin{equation}
\label{eq:temp-PE}
	\norm{f-m \mathbf{1}}^2_{2,\nu}\leq C_P \langle f, -\cL f \rangle_\nu ,\quad \forall f \in \ell_{2,\nu}, 
\end{equation}
for the designated $C_P$ and for some constant $m$ to be chosen later.  Consider the general drift condition~\eqref{eqn: foster-lyapunov}
\begin{align*}
    \cL V(q) \leq -\gamma V(q) + b(q) \, \forall q \in \cS. 
\end{align*}
Multiply both sides by $\frac{(f(q)-m)^2}{V(q)}$ for some $f \in \ell_{2, \nu}$ to obtain
\begin{align*}
	\frac{(f(q)-m)^2}{V(q)} \cL V(q) &\leq -\gamma (f(q)-m)^2 + \frac{b(q)}{V(q)}(f(q)-m)^2 \\
    &\leq -\gamma (f(q)-m)^2 + b(q)(f(q)-m)^2 \text{ since } V(q) \geq 1,
\end{align*}
where the second inequality follows the fact that we have assumed $V(q) \geq 1$. Rearranging the terms to
\begin{equation*}
	\gamma (f(q)-m)^2  \leq -\frac{(f(q)-m)^2}{V(q)}\cL V(q) + b(q)(f(q)-m)^2, 
\end{equation*}
multiplying both sides by $\nu(q)$ and summing with respect to $q \in S$, we have:
\begin{align}
\label{eqn: rearranged-stitching-equation-lyapunov-poincare-proof}
	\gamma \norm{f-m \mathbf{1}}^2_{2,\nu} \leq \langle \br{f-m \mathbf{1}}^2/V, -\cL V \rangle_\nu  + \sum_{q \in \cS} b(q)\nu(q)(f(q)-m)^2. 
\end{align}
\textcolor{black}{Denote $K = \{q: b(q) > 0\}$ and $\tau(q) = \frac{b(q) \nu(q)}{\sum_{q \in K} b(q) \nu(q)} = \frac{b(q) \nu_K(q)}{\sum_{q \in K} b(q) \nu_K(q)}$ where $\nu_K(q) = \frac{\nu(q)}{\sum_{x \in K} \nu(x)}$.
From Assumption \ref{assumption: weighted-poincare} and choose $m= \frac{\sum_{q \in K} b(q)\nu_K(q) f(q)}{\sum_{q \in K} b(q)\nu_K(q)}$, we have:
\begin{align}
    \Var_\tau(f) &\leq \frac{C_b}{\nu(K)} \langle f, -\cL f \rangle_\nu \\
    \label{eqn: local-poincare-with-mass-of-tau}
    \Leftrightarrow \sum_{q \in K} b(q)\nu(q)(f(q)-m)^2 &\leq \br{\sum_{q \in K} b(q) \nu_K(q)} C_b \langle f, -\cL f \rangle_\nu.
\end{align}
From Lemma \ref{lemma: claim-1} and \eqref{eqn: rearranged-stitching-equation-lyapunov-poincare-proof}, \eqref{eqn: local-poincare-with-mass-of-tau}, we have:
\begin{align}
    \Var_\nu(f) \leq \norm{f-m \mathbf{1}}^2_{2,\nu}\leq \frac{1 + \br{\sum_{q \in \cS} b(q) \nu_K(q)} C_b}{\gamma} \langle f,-\cL f \rangle_\nu \, \forall f \in \ell_{2,\nu},
\end{align}
and hence, the system admits a Poincar\'e constant $C_P = \frac{1 + \br{\sum_{q \in \cS} b(q) \nu_K(q)} C_b}{\gamma}$.}
\endproof

\subsubsection{Proof of Corollary \ref{corollary: lyapunov-poincare-constant-b}}
\label{sssec: proof-of-constant-b-corollary}
\proof
Let $b(q) = B 1_K(q)$ and $m = \frac{\sum_{q \in K} \nu(q) f(q)}{\nu(K)}$, we have
\begin{align}
    \tau(q) = \frac{\nu(q) 1_K(q)}{\sum_{x \in K} \nu(x)} = \frac{\nu(q) 1_K(q)}{\nu(K)} = \nu_K(q).
\end{align}
We have Assumption \ref{assumption: weighted-poincare} satisfied for constant $C_L$ means that
\begin{align}
    \Var_{\nu_K}(f) \leq \frac{C_L}{\nu(K)} \langle f, -\cL f \rangle_\nu \, \forall f \in \ell_{2,\nu}
\end{align}
which gives
\begin{align}
    \sum_{q \in \cS} b(q)\nu(q)(f(q)-m)^2 = B \nu(K) \Var_{\nu_K}(f) \leq B C_L \langle f, -\cL f\rangle_\nu.
\end{align}
Substitute this into equation \eqref{eqn: rearranged-stitching-equation-lyapunov-poincare-proof} with our choice of $m$ and apply Lemma \ref{lemma: claim-1}, we have
\begin{align}
    \Var_\nu(f) \leq \frac{1 + B C_L}{\gamma} \langle f, -\cL f \rangle_\nu \, \forall f \in \ell_{2,\nu},
\end{align}
as desired.
\endproof

\subsubsection{Proof of Proposition \ref{prop: lyapunov-poincare-singleton}}
\label{sssec: singleton-poincare-constant-proof}

Observe that if we have the finite set $K$ is a singleton, then we can show the local Poincar\'e constant of the singleton set is $0$. Indeed, let $K = \{x^*\}$ and the local measure corresponds to the finite set $K$ be $\nu_K$, then we have $E_{\nu_K}(f) = f(x^*)$ and so $\Var_{\nu_K}(f) = \E_{\nu_K}\sqbr{(f-\E_{\nu_K}(f))^2} = 0$. And so, we can perform a similar analysis to the proof of Theorem \ref{theorem: lyapunov-poincare} but slightly modify it so that we cab ignore the positive drift term inside the finite set (which happens to be a singleton as well) and bypass the requirement that $V \geq 1$ everywhere.

\proof
Let $K = \{x^*\}$, we will follow a similar approach to the proof of Theorem \ref{theorem: lyapunov-poincare} but with a slight modification. From Assumption \ref{assumption: foster-lyapunov}, we have
\begin{align}
    \cL V(q) \leq -\gamma V(q) \, \forall q \neq x^*.
\end{align}
This gives
\begin{align}
    \frac{(f(q)-f(x^*))^2}{V(q)} \cL V(q) \leq -\gamma (f(q)-f(x^*))^2 \quad \forall q \neq x^*.
\end{align}
Summing this over $q \in \cS - \{x^*\}$ with weight $\pi(q)$, we have
\begin{align}
    \nonumber
    \sum_{q \neq x^*} \pi(q) \frac{(f(q)-f(x^*))^2}{V(q)} \cL V(q) &\leq -\gamma \norm{f-f(x^*)}_{2,\pi}^2 \\
    \label{eqn: last-step-before-dirichlet-bound-singleton}
    \implies \gamma \norm{f-f(x^*)}_{2,\pi}^2 &\leq \sum_{q \neq x^*} -\frac{\pi(q)(f(q)-f(x^*))^2}{V(q)} \cL V(q).
\end{align}
Now, we want to upper bound the RHS of \eqref{eqn: last-step-before-dirichlet-bound-singleton}. Denote $g = f-f(x^*)$ We have
\begin{align}
    &\sum_{q \neq x^*} -\frac{\pi(q)(f(q)-f(x^*))^2}{V(q)} \cL V(q) \overset{(a)}{=} -\sum_{q \neq x^*} \frac{\pi(q)g(q)^2}{V(q)} \sum_{q' \in \cS} \cL(q,q') V(q') \qquad\qquad\qquad\qquad\qquad\qquad \nonumber\\
     &= -\sum_{q \in \cS - \{x^*\}} \frac{\pi(q)g(q)^2}{V(q)} \sqbr{\cL(q,x^*) V(x^*) + \sum_{q' \in \cS-\{x^*\}} \cL(q,q') V(q')} \nonumber\\
     &\overset{(b)}{\leq} -\sum_{q, q' \in \cS-\{x^*\}} \pi(q)g(q)^2 \cL(q,q') \frac{V(q')}{V(q)} \nonumber\\
     &= -\frac{1}{2} \sum_{q, q' \in \cS-\{x^*\}} V(q)V(q') Q(q,q')\sqbr{\frac{g(q)^2}{V(q)^2} + \frac{g(q')^2}{V(q')^2}} \nonumber\\
     &\overset{(c)}{\leq} -\sum_{q, q' \in \cS -  \{x^*\}} Q(q,q') g(q)g(q') \nonumber\\
     &= -\sum_{q, q' \in \cS} \pi(q) \cL(q,q') g(q)g(q') \nonumber\\
     &= \langle g, -\cL g\rangle_\pi \nonumber\\
     \label{eqn: dirichlet-form-bound-singleton}
     &\overset{(d)}{=} \langle f, -\cL f \rangle_\pi
\end{align}
where $Q(x,y) = \pi(x) \cL(x,y) = Q(y,x)$ from the reversibility of the Markov chain. $(a)$ follows from the definition of generator, which is
\begin{align}
    \cL V(q) = \sum_{q' \in \cS} V(q,q') V(q'). \nonumber
\end{align}
(b) follows from the property of generators that $\cL(x,y) \geq 0 \, \forall x \neq y$ and the fact that $V(q) \geq 0$ for all $q \in \cS$. (c) follows from the AM-GM inequality and noting that $V(q) > 0$ for all $q \neq x^*$ and (d) is from \eqref{eqn: glg-equal-flf}. Apply the upper bound \eqref{eqn: dirichlet-form-bound-singleton} to \eqref{eqn: last-step-before-dirichlet-bound-singleton} and notice that $\Var_\pi(f) \leq \norm{f-f(x^*)}_{2,\pi}^2$, we have
\begin{align}
    \Var_\pi(f) \leq \norm{f-f(x^*)}_{2,\pi}^2 \leq \frac{1}{\gamma}\langle f, \cL f\rangle_\pi. \nonumber
\end{align}
Which implies the system admits the Poincar\'e constant $\frac{1}{\gamma}$.
\endproof
\textbf{Remark}: By avoiding $x^*$ in the proof, we can avoid the division by $0$ issue even in the case $V(x^*) = 0$ and lifts the restriction $V(q) \geq 1 \, \forall q \in \cS$ as in the general case. Thus, the advantage of having $K$ as a singleton would simplify the Lyapunov-Poincar\'e proof by a wide margin. Furthermore, this allows us to lift the stringent $V \geq 1$ assumption that is commonly used in many Markov chain mixing works \cite{taghvaei-lyapunov-poincare, meyn1994computable}, which somewhat restricts our Lyapunov choice. It turns out that allowing $V(x^*) = 0$ would allow us to obtain the tight mixing rate, which we will see in the $M/M/\infty$ setting and the Sub-Halfin-Whitt case. In addition, since the final Poincar\'e constant does not depend on $b$ nor the local Poincar\'e constant $C_L$ with respect to the set $K$, we will not suffer any loss in terms of the mixing rate and the obtained Poincar\'e constant would be tight when we have $K$ being a singleton given that we have the right Lyapunov drift.

\subsection{Proof of Proposition \ref{prop: variational-representation-chi-square}}
\label{ssec: variational-representation-proof}
The following proof follows from Example 7.4 in \cite{Polyanskiy_Wu_2024}.

\proof
    Let $f(x) = (x-1)^2$, we have the conjugate $f^*_{ext}(y) = y+\frac{y^2}{4}$. From Theorem 7.26 in \cite{Polyanskiy_Wu_2024}, we have
    \begin{align}
        \chi^2(P,Q) = D_f(P||Q) &= \sup_{h: \R \rightarrow \R} \E_P\sqbr{h(X)} - \E_Q\sqbr{f^*_{ext}(h(X))} \\
        &= \sup_{h: \R \rightarrow \R} \E_P\sqbr{h(X)} - \E_Q\sqbr{h(X) + \frac{h(X)^2}{4}} \text{ from } f^*_{ext}(y) = y+\frac{y^2}{4} \\
        &= \sup_{h: \R \rightarrow \R} 2\E_P\sqbr{h(X)} - \E_Q\sqbr{h(X)^2} - 1
    \end{align}
    where the last step we perform a change of variable $h \leftarrow \frac{h}{2} - 1$. Let $h = \lambda g$, we have
    \begin{align}
        \chi^2(P,Q) &= \sup_{\lambda \in \R} \sup_{g: \R \rightarrow \R} 2\lambda \E_P\sqbr{g(X)} - \lambda^2 \E_Q\sqbr{g(X)^2} - 1 \\
        &= \sup_{g: \R \rightarrow \R} \frac{(\E_P[g(X)]-\E_Q[g(X)])^2}{\Var_Q(g(X))}
    \end{align}
    after optimizing over $\lambda$.
\endproof

\section{Proof of Theorem \ref{thm: unifying-theorem}}
\label{sec:unifying-theorem-proof}
In this Section, we present the proofs of the main mixing Theorem (Theorem \ref{thm: unifying-theorem}). We present the complete proof of key technical lemmas in Appendix \ref{ssec:super-HW-proof}, which complements the proof of the Super-Halfin-Whitt regime in Subsection \ref{ssec:lyapunov-canonical-path-approach} and Section \ref{sec: proof-details-super-hw} of the main text.

In Appendix \ref{ssec:sub-HW-proof}, we present the complete proof for the mixing results in the Sub-Halfin-Whitt regime. We first perform the drift analysis in Appendix \ref{sssec:drift-lemma-2-proof} and then obtain the mixing time bound in Appendix \ref{sssec: sub-hw-mixing-final-proof}. Additionally, we investigate the case when $\lambda_n$ is an integer and show that the mixing rate approaches $1$ at the asymptotic in Appendix \ref{sssec: sub-hw-integral-mixing-proof}.

\subsection{Proof of Proposition \ref{thm: mm1-mixing} and Theorem \ref{thm: unifying-theorem} for the Super-Halfin-Whitt regime}
\label{ssec:super-HW-proof}
In the following Subsection, we provide the proof of the mixing time results for the $M/M/1$ system and the $M/M/n$ system in the Super-Halfin-Whitt regime, that is $\alpha \in (1/2,\infty)$. For the final step of the proof, please refer to Appendix \ref{sssec: mixing-super-HW-proof-final-step}. For the drift lemma (Lemma \ref{lemma: drift-lemma-super-HW}), we refer the reader to Appendix \ref{ssec: proof-detail-drift-lemma-super-hw}. To show Lemma \ref{lemma: roughly-uniform}, that is to show that $\nu_K$ is roughly uniform, we refer the reader to Appendix \ref{sssec: roughly-uniform-lemma-proof}. To prove the local canonical path Lemma \ref{lemma: canonical-path}, we refer the readers to Subsection \ref{sssec: proof-detail-local-canonical-path-lemma}. Finally, for the proof of Lemma \ref{lemma: weighted-poincare-super-HW} and Claim \ref{claim: t1}, Claim \ref{claim: t2}, Claim \ref{claim: tau-bound-super-hw}, we refer the readers to Appendix \ref{sssec: proof-of-weighted-poincare-super-HW}

\subsubsection{Proof of Lemma \ref{lemma: roughly-uniform}}
\label{sssec: roughly-uniform-lemma-proof}

\begin{lemma}[Full statement of Lemma \ref{lemma: roughly-uniform}]
    Let $\lambda_n = n-n^{1-\alpha}, K = \{\lfloor 2\lambda_n \rfloor - n,...,n-1\}$ and $\nu_K(x) = \frac{\nu(x)}{\sum_{x \in K} \nu(x)} \forall x \in K$ and $\nu_K(x) = 0$ otherwise. For $\alpha \in (1/2,1)$ and $n \geq 110$, we have
    \begin{align}
        \underbrace{\frac{\br{1-\frac{1}{(n^\alpha-2)^2}}^n \sqrt{1-2n^{-\alpha}-n^{-1}}}{e^{\frac{1}{12\lfloor n-2n^{1-\alpha}\rfloor}} \br{2+n^{\alpha-1}}}}_{L_K(n)} \times n^{\alpha-1} \leq \nu_K(x) \leq \underbrace{\frac{e^{\frac{1}{12\lfloor n-2n^{1-\alpha}\rfloor}} \sqrt{1-2n^{-\alpha}-n^{-1}}}{2 \br{1-\frac{1}{(n^\alpha-2)^2}}^n}}_{U_K(n)} \times n^{\alpha-1}.
    \end{align}
\end{lemma}
\proof
    From Stirling's approximation, we have
    \begin{align*}
        \sqrt{2\pi n} \br{\frac{n}{e}}^n \leq n! \leq \sqrt{2\pi n} \br{\frac{n}{e}}^n e^{\frac{1}{12n}}.
    \end{align*}
    Thus, we have
    \begin{align}
        \nonumber
        \frac{e^{-\lambda_n}\lambda_n^x}{x!} &\leq e^{-\lambda_n} \frac{\lambda_n^x}{\sqrt{2\pi x} \br{\frac{x}{e}}^x} = \frac{e^{x-\lambda_n}}{\sqrt{2\pi x}}\br{1 + \frac{\lambda_n-x}{x}}^x \\
        \nonumber
        &\leq \frac{e^{x-\lambda_n}}{\sqrt{2\pi x}} e^{\lambda_n-x} = \frac{1}{\sqrt{2\pi x}} \text{ from } 1+t \leq e^t \\
        \label{eqn: poisson-upper-bound}
        &\leq \frac{1}{\sqrt{2\pi(n-2n^{1-\alpha}-1)}} \, \forall x \in K.
    \end{align}
    We also have from Stirling's approximation that
    \begin{align}
        \nonumber
        \frac{e^{-\lambda_n}\lambda_n^x}{x!} &\geq \frac{e^{x-\lambda_n}}{\sqrt{2 \pi x}} \br{1+\frac{\lambda_n-x}{x}}^x e^{-\frac{1}{12x}}.
    \end{align}
    Note that for $n \geq n_0$, we have that $\lfloor 2\lambda_n \rfloor - n = \lfloor n - 2n^{1-\alpha}\rfloor \geq \frac{4n}{5} \geq 88$, thus we have $0 \not \in K$ and $88 \leq x \leq n \, \forall x \in K$. We have:
    \begin{align}
        \label{eqn: poisson-lower-bound}
        \frac{e^{-\lambda_n}\lambda_n^x}{x!} &\geq \frac{e^{x-\lambda_n}}{\sqrt{2 \pi x}} \br{1+\frac{\lambda_n-x}{x}}^x e^{-\frac{1}{12x}} \geq \frac{e^{x-\lambda_n}}{\sqrt{2 \pi n}} \br{1+\frac{\lambda_n-x}{x}}^x e^{-\frac{1}{12\lfloor n-2n^{1-\alpha}\rfloor}}. 
    \end{align}
    Let $t = \frac{\lambda_n}{x}-1$, we have for $n \geq n_0$ that $|t| \leq \frac{1}{n^\alpha-2} \leq \frac{5}{4} n^{-\alpha} \, \forall x \in K$ and $1+t \leq e^t \leq 1+t+t^2 e^t$ from Lemma \ref{lemma: taylor-expansion-bound}. This gives
    \begin{align*}
        &\br{\frac{1+t}{e^t}}^x = \br{1+\frac{1+t-e^t}{e^t}}^x \geq \br{1-t^2}^x \geq \br{1-\frac{1}{(n^\alpha-2)^2}}^n.
    \end{align*}
    Putting this back to \eqref{eqn: poisson-lower-bound}, we obtain
    \begin{align}
        \label{eqn: poisson-lower-bound-2}
        \frac{e^{-\lambda_n}\lambda_n^x}{x!} \geq \frac{\br{1-\frac{1}{(n^\alpha-2)^2}}^n e^{-\frac{1}{12\lfloor n - 2n^{1-\alpha}\rfloor}}}{\sqrt{2\pi n}} \, \forall x \in K.
    \end{align}
    This implies that:
    \begin{align*}
        \sum_{x \in K} e^{-\lambda_n}\lambda_n^x/x! &\leq \sum_{x \in K} \frac{1}{\sqrt{2\pi(n-2n^{1-\alpha}-1)}} \leq \frac{\br{2+n^{\alpha-1}}}{\sqrt{2\pi\br{1-2n^{-\alpha}-n^{-1}}}} \times n^{1/2-\alpha} \text{ from } \eqref{eqn: poisson-upper-bound}, \\
        \sum_{x \in K} e^{-\lambda_n}\lambda_n^x/x! &\geq \sum_{x \in K} \frac{\br{1-\frac{1}{(n^\alpha-2)^2}}^n e^{-\frac{1}{12\lfloor n-2n^{1-\alpha}\rfloor}}}{\sqrt{2\pi n}} \geq \frac{2 \br{1-\frac{1}{(n^\alpha-2)^2}}^n}{\sqrt{2\pi} e^{\frac{1}{12\lfloor n-2n^{1-\alpha}\rfloor}}} \times n^{1/2-\alpha} \text{ from } \eqref{eqn: poisson-lower-bound-2}.
    \end{align*}
    Hence, we have shown that $\sum_{x \in K} e^{-\lambda_n}\lambda_n^x/x! = \Theta\br{n^{1/2-\alpha}}$, which gives
    \begin{align}
        \label{eqn: nu-K-lower-bound}
        \nu_K(x) &= \frac{e^{-\lambda_n}\lambda_n^x/x!}{\sum_{x \in K} e^{-\lambda_n}\lambda_n^x/x!} \geq \underbrace{\frac{\br{1-\frac{1}{(n^\alpha-2)^2}}^n \sqrt{1-2n^{-\alpha}-n^{-1}}}{e^{\frac{1}{12\lfloor n-2n^{1-\alpha}\rfloor}} \br{2+n^{\alpha-1}}}}_{L_K(n)} \times n^{\alpha-1} \text{ and }\\
        \label{eqn: nu-K-upper-bound}
        \nu_K(x) &= \frac{e^{-\lambda_n}\lambda_n^x/x!}{\sum_{x \in K} e^{-\lambda_n}\lambda_n^x/x!} \leq \underbrace{\frac{e^{\frac{1}{12\lfloor n-2n^{1-\alpha}\rfloor}} \sqrt{1-2n^{-\alpha}-n^{-1}}}{2 \br{1-\frac{1}{(n^\alpha-2)^2}}^n}}_{U_K(n)} \times n^{\alpha-1}.
    \end{align}
    Moreover, since $\alpha \in (1/2,\infty)$, we have that $\lim_{n \rightarrow \infty} \br{1-\frac{1}{(n^\alpha-2)^2}}^n = 1$, which implies
    \begin{align}
        \lim_{n \rightarrow \infty} L_K(n) = \lim_{n \rightarrow \infty} U_K(n) = \frac{1}{2}.
    \end{align}
    And so, we have shown that $\nu_K(x) = \Theta\br{n^{\alpha-1}} = \Theta\br{|K|^{-1}}$, which implies that $\nu_K$ is a roughly uniform distribution inside $K$ and converges to a uniform distribution at the asymptotic.
\endproof


From the bound on $\nu_K$, we obtain a bound on $Q_K(x,y) = \nu_K(x) \cL(x,y)$ for $\alpha \geq 1/2$ as follows.

\begin{corollary}
    \label{corollary: q-bound}
    Let $Q_L(n) = \frac{\br{1-\frac{1}{(n^\alpha-2)^2}}^n \br{1-n^{-\alpha}} \sqrt{1-2n^{-\alpha}-n^{-1}}}{e^{\frac{1}{12\lfloor n - 2n^{1-\alpha}\rfloor}} \br{2+n^{\alpha-1}}}$, we have for $\alpha \geq 1/2$ that
    \begin{align}
        Q_L(n) n^\alpha \leq Q_K(x,x+1)
    \end{align}
    for all $x \in K' = \{\lfloor 2\lambda_n - n,...,n-2\}$ where $\lambda_n = n-n^{1-\alpha}$.
\end{corollary}

\begin{proof}
    For $e = (x,x+1)$ where $x,x+1 \in K' = K-\{n-1\}$, we have that $Q_K(e) = Q_K(x,x+1) = \nu_K(x) \cL(x,x+1) = \lambda_n \nu_K(x)$ From Lemma \ref{lemma: roughly-uniform}, this gives
    \begin{align}
        \label{eqn: qk-lower-bound}
        \underbrace{\frac{\br{1-\frac{1}{(n^\alpha-2)^2}}^n \br{1-n^{-\alpha}} \sqrt{1-2n^{-\alpha}-n^{-1}}}{e^{\frac{1}{12\lfloor n - 2n^{1-\alpha}\rfloor}} \br{2+n^{\alpha-1}}}}_{Q_L(n)} n^\alpha 
        \leq Q_K(x,x+1) = \lambda_n \nu_n(x).
    \end{align}
    for all $x \in K'$.
\end{proof}

\subsubsection{Proof of Claim \ref{claim: tau-bound-super-hw} (total mass of $\tau$ is bounded by a constant)}
\label{sssec: tau-bound-claim-proof}
\textcolor{black}{
\proof
    We will show that the total mass of $\tau$ is only a constant factor different from the total mass of $\nu_K$. Indeed, from \eqref{eqn: tau-upper-bound-1} and \eqref{eqn: tau-upper-bound-2}, we have for all $n \in \Z^+, \alpha \geq 1/2$:
    \begin{align}
        \nonumber
        &\sum_{q \in K} b(q) \nu_K(q) = \sum_{q \in K} \tau(q) = \tau(n-1) + \sum_{q \in K-\{n-1\}} \tau(q) \\
        \nonumber
        &\leq \br{1 + \frac{1}{4(n^\alpha-1)}} \frac{e^{\frac{1}{12\lfloor n - 2n^{1-\alpha}\rfloor}} \sqrt{1-2n^{-\alpha}-n^{-1}}}{2 \br{1-\frac{1}{(n^\alpha-2)^2}}^n} + 2n^{1-\alpha} \times \frac{e^{\frac{n^{1-2\alpha}}{1-n^{-\alpha}}}}{1-n^{-\alpha}} \frac{\br{1-\frac{1}{(n^\alpha-2)^2}}^n \sqrt{1-2n^{-\alpha}-n^{-1}}}{e^{\frac{1}{12\lfloor n - 2n^{1-\alpha}\rfloor}} \br{2+n^{\alpha-1}}} n^{-\alpha} \\
        \label{eqn: sum-of-tau-bound}
        &\leq \br{1 + \frac{1}{4(n^\alpha-1)}} \frac{e^{\frac{1}{12\lfloor n - 2n^{1-\alpha}\rfloor}} \sqrt{1-2n^{-\alpha}-n^{-1}}}{2 \br{1-\frac{1}{(n^\alpha-2)^2}}^n} + \frac{e^{\frac{n^{1-2\alpha}}{1-n^{-\alpha}}}}{1-n^{-\alpha}} \frac{\br{1-\frac{1}{(n^\alpha-2)^2}}^n \sqrt{1-2n^{-\alpha}-n^{-1}}}{e^{\frac{1}{12\lfloor n - 2n^{1-\alpha}\rfloor}} \br{2+n^{\alpha-1}}} \times 2n^{1-2\alpha}.
    \end{align}
    For $n \geq n_0$ and $\alpha \geq 1/2$, we have $\sum_{q \in K} \tau(q)$ is upper bounded as
    \begin{align}
        \nonumber
        &\leq \br{1 + \underbrace{\frac{1}{4(n^\alpha-1)}}_{\leq \frac{1}{4(\sqrt{n_0}-1)}}} \frac{e^{\frac{1}{12\lfloor n - 2n^{1-\alpha}\rfloor}} \sqrt{1-2n^{-\alpha}-n^{-1}}}{2 \br{1-\frac{1}{(n^\alpha-2)^2}}^n} + \frac{e^{\frac{n^{1-2\alpha}}{1-n^{-\alpha}}}}{1-n^{-\alpha}} \frac{\br{1-\frac{1}{(n^\alpha-2)^2}}^n \sqrt{1-2n^{-\alpha}-n^{-1}}}{e^{\frac{1}{12\lfloor n - 2n^{1-\alpha}\rfloor}} \br{2+n^{\alpha-1}}} \times \underbrace{2n^{1-2\alpha}}_{\leq 2} \\
        \nonumber
        &\leq \br{1+\frac{1}{4(\sqrt{n_0}-1)}} \frac{e^{\frac{5}{48n_0}}}{2\br{1-\frac{1}{(n_0^\alpha-2)^2}}^{n_0}} + \frac{e^{\frac{1}{1-\frac{1}{\sqrt{n_0}}}}}{1-\frac{1}{\sqrt{n_0}}} \times \frac{e^{-n^{1-2\alpha}}}{2} \times 2 \\
        \label{eqn: sum-of-tau-bound-n0}
        &\leq \frac{37}{36} \times \frac{e^{\frac{1}{1056}}}{2\br{1-\frac{1}{64}}^{100}} + \frac{e^{\frac{10}{9}}}{\frac{9}{10}} \times \frac{1}{e} \leq 2.485 + 0.995 = 3.48.
    \end{align}
    So the total mass of $\tau$ is at most a constant factor more than the total mass of $\nu_K$. We also have
    \begin{align}
        \lim_{n \rightarrow \infty} \sum_{q \in K} \tau(q) = \lim_{n \rightarrow \infty} \br{\frac{1}{2\br{1-\frac{1}{(n^\alpha-2)^2}}^n} + e^{n^{1-2\alpha}} \br{1-\frac{1}{(n^\alpha-2)^2}}^n n^{1-2\alpha}}.
    \end{align}
    If $\alpha > 1/2$ then we have
    \begin{align}
        \lim_{n \rightarrow \infty} \sum_{q \in K} \tau(q) = \frac{1}{2}.
    \end{align}
    If $\alpha = 1/2$ then we have
    \begin{align}
        \lim_{n \rightarrow \infty} \sum_{q \in K} \tau(q) = \frac{e}{2} + 1.
    \end{align}
\endproof}

\subsubsection{Proof of Claim \ref{claim: t1} and Claim \ref{claim: t2} (bounds on $T_1, T_2$)}
\label{sssec: proof-of-claims}
\proof Denote $\tau(q) = b(q)\nu_K(q) \, \forall q \in \Z_{\geq 0}$, we have
    \begin{align*}
        &T_1 = \sum_{x \in K} \tau(n-1) \tau(x) \br{f(n-1)-f(x)}^2 
        \\
        &\overset{(a)}{\leq} \sum_{x \in K} \br{1 + \frac{1}{4(n^\alpha-1)}} \frac{e^{\frac{1}{12\lfloor n - 2n^{1-\alpha}\rfloor}} \sqrt{1-2n^{-\alpha}-n^{-1}}}{2 \br{1-\frac{1}{(n^\alpha-2)^2}}^n} \times \frac{e^{\frac{n^{1-2\alpha}}{1-n^{1-\alpha}}}}{1-n^{-\alpha}} \frac{\sqrt{1-2n^{-\alpha}-n^{-1}} n^{-\alpha} \br{f(n-1)-f(x)}^2}{2 e^{-\frac{1}{12\lfloor n - 2n^{1-\alpha}\rfloor}} \br{1-\frac{1}{(n^\alpha-2)^2}}^n} \\
        &\overset{(b)}{\leq} \br{1 + \frac{1}{4(n^\alpha-1)}} \frac{e^{\frac{5}{24n}} \br{1-2n^{-\alpha}-n^{-1}}}{4 (1-n^{-\alpha}) \br{1-\frac{1}{(n^\alpha-2)^2}}^{2n}} \sum_{x \in K} n^{-\alpha} \br{f(n-1)-f(x)}^2 \\
        &\overset{(c)}{\leq} 2\br{1 + \frac{1}{4(n^\alpha-1)}} \frac{e^{\frac{5}{24n}} \br{1-2n^{-\alpha}-n^{-1}}}{4 (1-n^{-\alpha}) \br{1-\frac{1}{(n^\alpha-2)^2}}^{2n}} \times n^{1-2\alpha} \max_{x \in K} \br{f(n-1)-f(x)}^2 \\
        &\leq 2\br{1 + \frac{1}{4(n^\alpha-1)}} \frac{e^{\frac{5}{24n}} \br{1-2n^{-\alpha}-n^{-1}}}{4 (1-n^{-\alpha}) \br{1-\frac{1}{(n^\alpha-2)^2}}^{2n}} \times n^{1-2\alpha} \times 2n^{1-\alpha} \sum_{x \in K'} \br{f(x+1)-f(x)}^2 \\
        &= \br{1 + \frac{1}{4(n^\alpha-1)}} \frac{e^{\frac{5}{24n}} \br{1-2n^{-\alpha}-n^{-1}}}{(1-n^{-\alpha}) \br{1-\frac{1}{(n^\alpha-2)^2}}^{2n}} \times n^{2-3\alpha} \sum_{x \in K'} \br{f(x+1)-f(x)}^2.
    \end{align*}
    Here, (a) follows from \eqref{eqn: tau-upper-bound-1} and \eqref{eqn: tau-upper-bound-2}, (b) follows from $\lfloor n - 2n^{1-\alpha} \rfloor \geq 4n/5 \, \forall n \geq n_0$, (c) follows from the cardinality of $K-\{n-1\}$ is at most $2n^{1-\alpha}$ and the last inequality is obtained from applying Cauchy-Schwarz along the path $n-1$ to $x$. Recall that $K' = K - \{n-1\} \Rightarrow K = K' \cup \{n-1\}$. Denote $U_1(n) = \br{1 + \frac{1}{4(n^\alpha-1)}} \frac{e^{\frac{5}{24n}} \br{1-2n^{-\alpha}-n^{-1}}}{(1-n^{-\alpha}) \br{1-\frac{1}{(n^\alpha-2)^2}}^{2n}}, Q_L(n) = \frac{\br{1-\frac{1}{(n^\alpha-2)^2}}^n \br{1-n^{-\alpha}} \sqrt{1-2n^{-\alpha}-n^{-1}}}{e^{\frac{1}{12\lfloor n - 2n^{1-\alpha}\rfloor}} \br{2+n^{\alpha-1}}}$ (as defined in Corollary \ref{corollary: q-bound}), we can bound $T_1$ as
    \begin{align*}
        T_1 &\leq U_1(n) \times n^{2-3\alpha} \sum_{x \in K'} \br{f(x+1)-f(x)}^2 \\
        &= \frac{U_1(n) n^{2-4\alpha}}{Q_L(n)} \times Q_L(n) n^\alpha \sum_{k: k,k+1 \in K} \br{f(k)-f(k+1)}^2 \\
        &\leq \underbrace{\frac{U_1(n)}{Q_L(n)}}_{= g_1(n)} n^{2-4\alpha} \sum_{k: k,k+1 \in K} \br{f(k)-f(k+1)}^2 Q_K(k,k+1) \text{ from Corollary \ref{corollary: q-bound}}.
    \end{align*}
    Now, observe that for $n \geq n_0$, we have
    \begin{align}
        U_1(n) \leq U_1(n_0) \leq \frac{37}{36} \times \frac{e^{\frac{1}{1056}} \br{1-\frac{2}{10} - \frac{1}{100}}}{\br{1-\frac{1}{10}}\br{1-\frac{1}{64}}^{200}} \leq 21.07
    \end{align}
    and
    \begin{align}
        Q_L(n) \geq Q_L(n_0) \geq \frac{\br{1-\frac{1}{64}}^{100} \br{1-\frac{1}{10}}\sqrt{1-\frac{2}{10}-\frac{1}{100}}}{e^{\frac{1}{1056}} \br{2 + 1}} = 0.055.
    \end{align}
    Combine these two bounds together, we have
    \begin{align}
        \label{eqn: g1-n0 bound}
        g_1(n) \leq g_1(n_0) \leq 384.
    \end{align}
    This completes the proof.
\endproof

In summary, the proof of Claim \ref{claim: t1} is simply performing Cauchy-Schwarz along the path joining $n-1$ and $x$. Note that as $n \rightarrow \infty$, we have $U_1(n) \rightarrow 1, Q_L(n) \rightarrow 1/2$ for $\alpha > 1/2$ and $U_1(n) \rightarrow e^2, Q_L(n) \rightarrow 1/(2e)$ for $\alpha = 1/2$, which gives $g_1(n) \rightarrow 2e^3$ for $\alpha = 1/2$ and significantly improves from the bound in \eqref{eqn: g1-n0 bound}. For Claim \ref{claim: t2}, the analysis is slightly more involved as we need to use the canonical path method.

Next, we shall prove Claim \ref{claim: t2} as follows.
\proof
    The second term $T_2$ can be bounded using the local canonical path method. First, we bound the number of paths passing through $e$. For $e = (k,k+1)$ where $n-1 \geq k \geq \lfloor 2\lambda_n \rfloor - n$, note that there are $n+k-\lfloor 2\lambda_n \rfloor$ ways to choose $x \leq k$ and $n-k-1$ ways to choose $y \geq k+1$. Hence, there are at most $\br{n+k-\lfloor 2\lambda_n \rfloor} \times \br{n-k-1} \leq \br{\frac{2n-\lfloor 2\lambda_n \rfloor-1}{2}}^2 \leq \br{n-\lambda_n}^2 = n^{2-2\alpha}$ paths containing the edge $e$, each with length at most $(n-1) - (\lfloor 2\lambda_n\rfloor - n) \leq 2(n-\lambda_n) = 2n^{1-\alpha}$. Let $K' = K-\{n-1\}$, observe that $\tau(q) = b(q) \nu_K(q) = 2e^2n^{1-2\alpha} \nu_K(q) \, \forall q \in K'$, i.e. $\tau(q)$ is proportional to $\nu_K(q)$ whenever $q \in K'$. Thus, we can bound $T_2$ using Lemma \ref{lemma: canonical-path} with respect to the distribution $\tau$ as follows 
    \begin{align*}
        &T_2 = \frac{1}{2}\sum_{x,y \in K'} \tau(x) \tau(y) (f(x)-f(y))^2 \\
        &= \br{\frac{n^{1-2\alpha}}{1-n^{-\alpha}} e^{\frac{n^{1-2\alpha}}{1-n^{-\alpha}}}}^2 n^{2-4\alpha} \times \frac{1}{2}\sum_{x,y \in K'} \nu_K(x) \nu_K(y) (f(x)-f(y))^2 \text{ from Lemma \ref{lemma: drift-lemma-super-HW}} \\
        &\overset{(a)}{\leq} \br{\frac{n^{1-2\alpha}}{1-n^{-\alpha}} e^{\frac{n^{1-2\alpha}}{1-n^{-\alpha}}}}^2 n^{2-4\alpha} \max_{k: k,k+1 \in K'} \frac{\sum_{x,y \in K': x \leq k \leq y-1} |x-y| \nu_K(x) \nu_K(y)}{Q_K(k,k+1)} \\
        &\times \sum_{k: k,k+1 \in K'} \br{f(k)-f(k+1)}^2 Q_K(k,k+1)  \\
        &\overset{(b)}{\leq} \br{\frac{n^{1-2\alpha}}{1-n^{-\alpha}} e^{\frac{n^{1-2\alpha}}{1-n^{-\alpha}}}}^2 n^{2-4\alpha} \times \frac{n^{2-2\alpha} \times 2n^{1-\alpha} \times \frac{e^{\frac{1}{12\lfloor n - 2n^{1-\alpha}\rfloor}} \sqrt{1-2n^{-\alpha}-n^{-1}}}{2 \br{1-\frac{1}{(n^\alpha-2)^2}}^n} n^{\alpha-1} \times \frac{e^{\frac{1}{12\lfloor n - 2n^{1-\alpha}\rfloor}} \sqrt{1-2n^{-\alpha}-n^{-1}}}{2 \br{1-\frac{1}{(n^\alpha-2)^2}}^n} n^{\alpha-1}}{Q_L(n) n^\alpha} \\
        &\times \sum_{k: k,k+1 \in K'} \br{f(k)-f(k+1)}^2 Q_K(k,k+1) \\
        &= \underbrace{\frac{1}{2} \br{\frac{n^{1-2\alpha}}{1-n^{-\alpha}} e^{\frac{n^{1-2\alpha}}{1-n^{-\alpha}}}}^2 \times \frac{e^{\frac{5}{24n}} (1-2n^{-\alpha}-n^{-1})}{\br{1-\frac{1}{(n^\alpha-2)^2}}^{2n} Q_L(n)}}_{= g_2(n)} \times n^{3-6\alpha} \times \sum_{k: k,k+1 \in K'} \br{f(k)-f(k+1)}^2 Q_K(k,k+1).
    \end{align*}
    Here, we have (a) follows from Lemma \ref{lemma: canonical-path} and (b) follows from the number of paths upper bound and the bounds on $\nu_K$ from Lemma \ref{lemma: roughly-uniform} and the lower bound of $Q_K$ from Corollary \ref{corollary: q-bound}. For $\alpha \geq 1/2$, we have $g_2(n)$ is a decreasing function, and so we have
    \begin{align}
        g_2(n) \leq g_2(n_0) &= \frac{1}{2}\br{\frac{1}{1-\frac{1}{10}} e^{\frac{1}{1-\frac{1}{10}}}}^2 \times \frac{e^{\frac{1}{528}} \br{1-\frac{2}{10}-\frac{1}{100}}}{\br{1-\frac{1}{64}}^{100} Q_L(n_0)} \leq 395.93.
    \end{align}
    Moreover, if $\alpha > 1/2$ then
    \begin{align}
        \lim_{n \rightarrow \infty} g_2(n) = 0.
    \end{align}
    Otherwise, if $\alpha = 1/2$ then
    \begin{align}
        \lim_{n \rightarrow \infty} g_2(n) = \frac{e^2}{2} \times \frac{1}{\frac{1}{e^2} \times \frac{1}{2e}} = e^5.
    \end{align}
\endproof



\subsubsection{Final step: Proof of Theorem \ref{thm: unifying-theorem} for $\alpha \in (1/2,1)$ and Proposition \ref{thm: mm1-mixing}}
\label{sssec: mixing-super-HW-proof-final-step}


\proof
    We will consider two separate cases: the case where $\alpha \geq 1$ or $n = 1$ and the case where $\alpha \in (1/2,1)$.

    \textbf{Case 1 ($\alpha \geq 1$ or $n = 1$)}: From Proposition \ref{prop: lyapunov-poincare-singleton}, Lemma \ref{lemma: drift-lemma-super-HW} and observe that in this case, we have $K = \{n-1\}$, we have that the system admits the Poincar\'e constant $C_P = \frac{1}{(\sqrt{n}-\sqrt{\lambda_n})^2}$. And so, from Proposition \ref{prop: poincare-equals-mixing}, we have
    \begin{align}
        \chi(\pi_{n,t},\nu_n) \leq e^{- (\sqrt{n}-\sqrt{\lambda_n})^2 t} \chi(\pi_{n,0},\nu_n).
    \end{align}

    \textbf{Case 2 ($\alpha \in (1/2,1)$)}: From Lemma \ref{lemma: drift-lemma-super-HW}, Lemma \ref{lemma: weighted-poincare-super-HW}, Claim \ref{claim: tau-bound-super-hw} and Theorem \ref{theorem: lyapunov-poincare}, we obtain the Poincar\'e constant to be:
    \begin{align*}
        C_P &= \frac{1+ \br{\sum_{q \in K} b(q) \nu_K(q)} C_b}{(\sqrt{n}-\sqrt{\lambda_n})^2} \leq \frac{1 + 3.48 \times \br{384 n^{2-4\alpha} + 395.93 n^{3-6\alpha}}}{(\sqrt{n}-\sqrt{\lambda_n})^2}
    \end{align*}
    Let $C_n = \frac{1}{1 + 3.48 \times \br{384 n^{2-4\alpha} + 395.93 n^{3-6\alpha}}}$, we have that $\lim_{n \rightarrow \infty} C_n = 1$ since $\alpha > 1/2$. From Proposition \ref{prop: poincare-equals-mixing}, we have
    \begin{align}
        \chi(\pi_{n,t},\nu_n) \leq e^{-C_n (\sqrt{n}-\sqrt{\lambda_n})^2 t} \chi(\pi_{n,0},\nu_n)
    \end{align}
    where $\lim_{n \rightarrow \infty} C_n = 1$.
\endproof

\subsection{Proof of Theorem \ref{thm: unifying-theorem} for $\alpha = 1/2$}
\label{ssec:HW-proof}

For the $\alpha = 1/2$ regime, we are not able to obtain a tight finite-time bound and characterize a phase transition as in \cite{Halfin-Whitt-Gamarnik_2013}. Yet, we can still show that the mixing is bounded below by some universal constant by performing a similar analysis as in the $\alpha \in (1/2,1)$ case as follows.

\proof
    Let $K = \{\lfloor 2\lambda \rfloor - n,...,n-1\}$, choosing the same Lyapunov function $V$ as in Lemma \ref{lemma: drift-lemma-super-HW}, we have that
    \begin{align}
        \label{eqn: drift-halfin-whitt}
        \cL V(q) \leq -(\sqrt{n}-\sqrt{\lambda_n})^2V(q) + b(q)
    \end{align}
    where $b(n-1) \leq 3 \sqrt{n}$ and $b(q) \leq L \, \forall q \in K - \{n-1\}$ for some constant $L$. Next, from Lemma \ref{lemma: weighted-poincare-super-HW} for $\alpha = 1/2$, we have
    \begin{align}
        C_b = 
        g_1(n) + g_2(n) \geq g_1(n_0) + g_2(n_0) \geq 384 + 395.93 = 779.93.
    \end{align}
    Applying the Stitching Theorem \ref{theorem: lyapunov-poincare}, we obtain the Poincar\'e constant bound
    \begin{align}
        C_P(n) = \frac{1 + \br{\sum_{q \in K} b(q) \nu_K(q)} C_b}{(\sqrt{n}-\sqrt{\lambda_n})^2}.
    \end{align}
    Since $\lambda_n = n-\sqrt{n}$, we have for $n \geq 2$:
    \begin{align}
        (\sqrt{n}-\sqrt{\lambda_n})^2 = \br{\frac{n-\lambda_n}{\sqrt{n}+\sqrt{n-\sqrt{n}}}}^2 = \frac{1}{\br{1+\sqrt{1-1/\sqrt{n}}}^2} \geq \frac{1}{4}
    \end{align}
    Which implies that $C_P(n) \leq 4 \times (1 + 3.48 \times (384 + 395.93)) \leq 10861$. Moreover, taking $n \rightarrow \infty$, we have
    \begin{align}
        \lim_{n \rightarrow \infty} C_P(n) = \lim_{n \rightarrow \infty} 4 \times \br{1 + g_3(n) \br{g_1(n) + g_2(n)}} = 4\br{1 + \br{\frac{e}{2} + 1} \br{2e^3 + e^5}} \leq 1781.
    \end{align}
    
    And so, from Proposition \ref{prop: poincare-equals-mixing}, we have
    \begin{align}
        \chi(\pi_{n,t},\nu_n) \leq e^{-H_n t} \chi(\pi_{n,0},\nu_n) \, \forall t \geq 0
    \end{align}
    where $H_n = \frac{1}{4(1+g_3(n)(g_1(n)+g_2(n)))} \geq \frac{1}{10861} \forall n \geq n_0$ and $\lim_{n \rightarrow \infty} H_n =  \frac{1}{1781}$. Hence, we are done.
\endproof
\textbf{Remark on the constant}: It is noteworthy that our goal here is to show that the mixing rate is bounded below by some universal constant, which is $\frac{1}{10861}$ in this case. Since we are not able to obtain a convergence rate that matches the limiting behavior, we have not made any effort to optimize this constant. Based on the spectral gap characterization at Halfin-Whitt in \cite{Halfin-Whitt-Gamarnik_2013}, it seems that characterizing the phase transition in this regime is rather non-trivial.

\subsection{Proof of Theorem \ref{thm: unifying-theorem} in the Sub-Halfin-Whitt regime}
\label{ssec:sub-HW-proof}

To prove the Theorem \ref{thm: unifying-theorem} for $\alpha \in (0,1/2)$ (Equation \eqref{eqn: sub-halfin-whitt-mixing-bound} in Theorem \ref{thm: unifying-theorem}), we will first perform a drift analysis and then apply the Lyapunov-Poincar\'e method. Similar to the Super-Halfin-Whitt case, we will first establish the negative drift result in Appendix \ref{sssec:drift-lemma-2-proof} and prove Equation \eqref{eqn: sub-halfin-whitt-mixing-bound} in Appendix \ref{sssec: sub-hw-mixing-final-proof}.

\subsubsection{Proof of Lemma \ref{lemma: drift-lemma-2}}
\label{sssec:drift-lemma-2-proof}

\proof
By the definition of $\alpha_n$, we have $\lambda_n = n-n^{1-\alpha_n}$. Due to the discrete nature of the state space, we will consider two cases: $\lambda_n \in \Z$ and $\lambda_n \not \in \Z$. In both cases, we choose 
\begin{align*}
    V(q) = \zeta e^{\theta(q-\lambda_n)} \forall q > n
\end{align*}
where $\zeta$ is chosen such that $\zeta e^{\theta n^{1-\alpha_n}} = \zeta e^{\theta(n-\lambda_n)} = |n - \lambda_n| =  n^{1-\alpha_n}$. Choosing $\theta = \log\br{1 + n^{\alpha_n-1}} > 0$, since $\theta > 0$, we have that $e^\theta > 1$ and so $V(q) \geq V(n) \geq 1 \forall q \geq n$. For $q \leq n$, our choice of $V$ will be slightly different between the two cases, which we will discuss in detail below.

\textbf{Case 1}: Assume that $\lambda_n \in \Z$, we choose $V(q) = |q-\lambda_n| \, \forall q \leq n$. We have that $V(q) \geq 1 \forall q \leq n, q \neq \lambda_n$. In this case, our analysis is relatively simple since we will have a negative drift outside of the singleton $\{\lambda_n\}$.

For $q < n$ and $q \neq \lambda_n$, we have:
\begin{align*}
    \cL V &= \lambda_n |q+1-\lambda_n| + q  |q-1-\lambda_n| - (\lambda_n+q) |q-\lambda_n| = -|q-\lambda_n| = -V.
\end{align*}

For $q = \lambda_n$, we have $V(\lambda_n) = 0, V(\lambda_n-1) = V(\lambda_n+1) = 1$. And so
\begin{align*}
    \cL V(\lambda_n) &= \lambda_n + \lambda_n = 2\lambda_n - V(\lambda_n)
\end{align*}

For $q > n$, we have:
\begin{align*}
    \cL V &= \br{e^\theta-1}\br{\lambda_n - \frac{n}{e^{\theta}}} V \\
    &= n^{\alpha_n-1} \br{\lambda_n - \frac{n}{1 + n^{\alpha_n-1}}} V \\
    &= n^{\alpha_n-1}\br{\frac{n^{\alpha_n}}{1 + n^{\alpha_n-1}} - n^{1-\alpha_n}} V \\
    &= \br{\frac{n^{2\alpha_n-1}}{1 + n^{\alpha_n-1}}-1}V
\end{align*}
for $\alpha \in (0,1/2)$. Since $\frac{n^{2\alpha_n-1}}{1 + n^{\alpha_n-1}} < 1$ and $\alpha_n \in (0,1/2)$ for all $n \in \Z^+$, we have this is a negative drift.

For $q = n$, we have:
    \begin{align*}
        \nonumber
        \cL V(n) &= \lambda_n \zeta e^{\theta(n+1-\lambda_n)} + n |n-1-\lambda_n| - (\lambda_n+n) |n-\lambda_n| \\
        &= \lambda_n \br{e^\theta|n-\lambda_n| - |n+1-\lambda_n|} - |n-\lambda_n| \\
        &= \br{\lambda_n \br{e^\theta-1} -\frac{\lambda_n}{n-\lambda_n} - 1}V(n) \\
        &= \br{(n-n^{1-\alpha_n}) n^{\alpha_n-1} - \frac{n-n^{1-\alpha_n}}{n^{1-\alpha_n}} - 1}V(n) \\
        &= \br{n^{\alpha_n} - 1 - n^{\alpha_n} + 1 - 1} V(n) \\
        &= -V(n).
    \end{align*}
This gives us the drift condition for $\lambda_n \in \Z$ and some constant $b \geq 0$ (in this case, we don't have to compute $b$ since $K$ is a singleton):
\begin{align*}
    \cL V &\leq -\min \left\{ 1-\frac{n^{2\alpha_n-1}}{1 + n^{\alpha_n-1}}, 1 \right\} V + b 1_{\{\lambda_n\}} = -\br{1-\frac{n^{2\alpha_n-1}}{1 + n^{\alpha_n-1}}} V + b 1_{\{\lambda_n\}}.
\end{align*}

Note that the rate of the negative drift $\gamma_n = 1-\frac{n^{2\alpha_n-1}}{1 + n^{\alpha_n-1}}$ is positive since we are consider $n \geq 2$.

\textbf{Case 2}: Assume that $\lambda_n \not \in \Z$, we denote $r = \lambda_n - \lfloor \lambda_n \rfloor$ to be the fractional part of $\lambda_n$. Note that in this case, however, choosing the same Lyapunov function as in \textbf{Case 1} will no longer give us the finite set $K$ as a singleton. And so, we will get a different finite set $K$, and we need to slightly modify the Lyapunov function $V$ inside $K$.

From our choice of Lyapunov function \eqref{eqn: sub-HW-lyapunov-choice-below-n-non-integer-lambda}, we have that $V(q) \geq 1 \, \forall q \in \Z_{\geq 0}$. Perform a similar drift analysis to the case $\lambda_n \in \Z_{\geq 0}$, we have that for $q \not \in K$:
\begin{align*}
    \cL V(q) \leq -\br{1-\frac{n^{2\alpha_n-1}}{1 + n^{\alpha_n-1}}} V(q).
\end{align*}
As $1 \leq V(q) \leq 2$ for $q \in K$ and either $V(q)=V(q+1)$ or $V(q)=V(q-1)$ for $q \in K$, we have for all $q \in K - \{\lceil \lambda_n \rceil+1\}$ \begin{align*}
    \cL V(q) = \lambda_nV(q+1) + q V(q-1)-(\lambda_n+q)V(q) \leq \max\{\lambda_n, q\} \leq \lceil \lambda_n \rceil \leq \lceil \lambda_n \rceil + 2 - V(q).
\end{align*}
Similarly, for $q=\lceil \lambda_n \rceil+1$, as $V(q-1)=V(q)$, we get
\begin{align*}
    \cL V(q) = \lambda_nV(q+1) + q V(q-1)-(\lambda_n+q)V(q) \leq \lambda_n \leq \lceil \lambda_n \rceil + 2 - V(q). 
\end{align*}
Thus, we have $b \leq \lceil \lambda_n \rceil + 2$ for $\lambda_n \not \in \Z$. When $\lambda_n \geq 3$, we have $b \leq 2\lambda_n$
\endproof

As we can see in the drift analysis, the main difference between the two cases is a different finite set $K$, which would affect the tightness of our mixing rate bound since the local mixing bound will not be strong enough to cancel the $b$ term (we refer the readers to Appendix \ref{sssec: sub-hw-mixing-final-proof} for the full proof). Furthermore, a crucial observation here is that $\lim_{n \rightarrow \infty} \gamma_n = 1$. This will be the foundation to show that the mixing rate of the system in the Sub-Halfin-Whitt regime is bounded below by some universal constant. Moreover, we can only obtain $\lim_{n \rightarrow \infty} \gamma_n = 1$ when $\alpha \in (0,1/2)$, and so this drift analysis is only applicable in this regime. For the regime $\alpha \in [1/2,\infty)$, we will need to redo the drift analysis in order to obtain a good mixing bound, as done in Appendix \ref{ssec:super-HW-proof}.

\subsubsection{Proof of Theorem \ref{thm: unifying-theorem} for $\alpha \in (0,1/2)$}
\label{sssec: sub-hw-mixing-final-proof}
Before we go to the proof of Theorem \ref{thm: unifying-theorem} for $\alpha \ensuremath{\in} (0,1/2)$, we need to establish a result that is analogous to Lemma \ref{lemma: roughly-uniform} for the Sub-Halfin-Whitt regime as follows.
\textcolor{black}{
\begin{lemma}
    \label{lemma: nu-K-lower-bound-sub-HW}
    Consider the continuous-time $M/M/n$ system with the arrival rate $\lambda_n = n - n^{1-\alpha}$, the service rate $1$ and the stationary distribution be $\nu_n$. For $\alpha \in (0,1/2)$ and $n$ sufficiently large such that $\lambda_n \geq 3$ and $n - \lambda_n \geq 1$, we have that:
    \begin{align}
        \frac{\nu(q)}{\sum_{q \in K} \nu(q)} \geq \frac{\lambda_n^2}{4(\lambda_n+1)^2} \, \forall q \in K.
    \end{align}
\end{lemma}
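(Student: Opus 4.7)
The plan is to exploit the closed-form Poisson expression for the stationary distribution. Under the hypothesis $n - \lambda_n \geq 1$, every state in $K$ is at most $n$, so $\nu_n(q) \propto \lambda_n^q/q!$ for $q \in K$, and consecutive ratios $\nu_n(q+1)/\nu_n(q) = \lambda_n/(q+1)$ are elementary to handle. In the easy case $\lambda_n \in \Z_{\geq 0}$, $K = \{\lambda_n\}$ is a singleton and the ratio in the lemma equals $1$, which trivially majorizes $\lambda_n^2/(4(\lambda_n+1)^2) < 1/4$.

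For the non-integer case, set $m = \lfloor \lambda_n \rfloor$ and $r = \lambda_n - m \in (0,1)$, so $K = \{m-1, m, m+1, m+2\}$. Monotonicity of the ratios $\lambda_n/(q+1)$ across $K$ shows that $\nu_n(m) = \max_{q \in K} \nu_n(q)$ and $\min_{q \in K} \nu_n(q) \in \{\nu_n(m-1), \nu_n(m+2)\}$; hence it suffices to establish the claimed lower bound at the two extremal points $q \in \{m-1, m+2\}$, since the bound for $q \in \{m, m+1\}$ follows automatically.

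For $q = m-1$, I would use the coarse estimate $\sum_{q' \in K} \nu_n(q') \leq 4 \nu_n(m)$ (each term is at most $\nu_n(m)$), reducing the target to $m(\lambda_n+1)^2 \geq \lambda_n^3$. Substituting $m = \lambda_n - r$, this rearranges to $(r-2)\lambda_n^2 + (2r-1)\lambda_n + r \leq 0$, which holds for $\lambda_n \geq 2$ since the leading coefficient $r - 2 \leq -1$ dominates.

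The main obstacle is the case $q = m+2$: the coarse bound $\sum \leq 4 \nu_n(m)$ is insufficient because $(m+1)(m+2)$ can exceed $(\lambda_n+1)^2$ when $r$ is small. Instead I would prove the sharper estimate $\sum_{q' \in K} \nu_n(q') \leq 4 \nu_n(m) (\lambda_n+1)^2 / ((m+1)(m+2))$, which after multiplication by $(m+1)(m+2)/\nu_n(m)$ is equivalent to
\[
(m+1)(m+2) + \frac{m(m+1)(m+2)}{\lambda_n} + \lambda_n(m+2) + \lambda_n^2 \leq 4(\lambda_n+1)^2.
\]
Substituting $\lambda_n = m + r$ and collecting terms, all $m^2$, $m$, and constant contributions cancel exactly, and the difference between the two sides reduces to $r\bigl[5m + 6 + 3r + (m+1)(m+2)/(m+r)\bigr] \geq 0$, with equality only at $r = 0$. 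Combined with $\nu_n(m+2)/\nu_n(m) = \lambda_n^2/((m+1)(m+2))$, this yields $\nu_n(m+2)/\sum \geq \lambda_n^2/(4(\lambda_n+1)^2)$. The tightness of this inequality as $r \to 0$ is precisely why the lemma's constant takes the form $\lambda_n^2/(4(\lambda_n+1)^2)$, and the miraculous cancellation of lower-order terms leaving a manifestly non-negative multiple of $r$ is the crux of the argument.
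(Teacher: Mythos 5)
Your proof is correct and takes a genuinely different route from the paper's. The paper bounds $\sum_{q\in K}\nu_n(q)$ against each of the four weights $\nu_n(s),\nu_n(t),\nu_n(u),\nu_n(v)$ individually, using only the coarse integer-part bounds $\lambda_n-1\leq\lfloor\lambda_n\rfloor\leq\lambda_n\leq\lceil\lambda_n\rceil\leq\lambda_n+1$, so that every ratio of consecutive weights is sandwiched between $\lambda_n$-dependent expressions (e.g.\ $\nu_n(v)\geq\tfrac{\lambda_n^2}{(\lambda_n+1)(\lambda_n+2)}\nu_n(s)$); summing these gives four candidate upper bounds on $\sum/\nu_n(q)$, of which $4(\lambda_n+1)^2/\lambda_n^2$ is the largest for $\lambda_n\geq 3$. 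You instead first reduce to the two extremal states $q\in\{m-1,m+2\}$ via the unimodality of the Poisson weights, handle $q=m-1$ with the crude bound $\sum\leq 4\nu_n(m)$ plus the inequality $m(\lambda_n+1)^2\geq\lambda_n^3$, and for $q=m+2$ work with exact ratios in $m,r$ rather than with $\lambda_n$-only bounds, which produces the clean identity
\begin{align*}
4(\lambda_n+1)^2-\Bigl[(m+1)(m+2)+\tfrac{m(m+1)(m+2)}{\lambda_n}+\lambda_n(m+2)+\lambda_n^2\Bigr]
= r\Bigl[5m+6+3r+\tfrac{(m+1)(m+2)}{m+r}\Bigr]\geq 0.
\end{align*}
I verified both the extremal-state reduction and the cancellation; they are right. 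What each approach buys: the paper's argument is slightly more mechanical and does not require locating the minimizer, but it hides why the constant is exactly $\lambda_n^2/(4(\lambda_n+1)^2)$; your exact cancellation makes the tightness transparent (the slack is a nonnegative multiple of the fractional part $r$, vanishing as $r\to 0$), at the price of a bit more algebra. Your handling of the integer case ($K$ a singleton) is also consistent with how the paper actually uses this lemma, since Theorem~\ref{thm: unifying-theorem}'s proof invokes it only when $\lambda_n\notin\Z$.
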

\proof
    For convenience, denote $s = \lfloor \lambda_n \rfloor-1, t = \lfloor \lambda_n \rfloor, u = \lceil \lambda_n \rceil, v = \lceil \lambda_n \rceil + 1$. Observe that as $\lambda_n \geq 3$ and $n - \lambda_n \geq 1$ by assumption, we have $s+2 \geq \lambda_n \geq 3 \Rightarrow s \geq 1$ and we have $K = \{\lfloor \lambda_n \rfloor - 1, \lfloor \lambda_n \rfloor, \lceil \lambda_n \rceil, \lceil \lambda_n \rceil + 1\} \subset \{0,...,n\}$. And so, we have
    \begin{align}
        \nu_n(q) = \nu_n(0) \frac{\lambda_n^q}{q!}.
    \end{align}    
    From here, we will bound $\nu_n(q)$ with respect to $\sum_{q \in K} \nu_n(q)$ as follows. From $\lambda_n -1 \leq s+1 \leq \lambda_n$ and $\lambda_n \geq 3$, we have
    \begin{align}
        \label{eqn: nu-t-nu-s-bound}
        \nu_n(s) \leq \nu_n(t) &= \nu_n(s) \frac{\lambda_n}{s+1}  \leq \nu_n(s) \frac{\lambda_n}{\lambda_n - 1}.
    \end{align}
    Similarly, we have from $\lambda_n \geq 3, \lambda_n - 1 \geq s = \lfloor \lambda_n \rfloor - 1 \geq \lambda_n - 2$ that
    \begin{align}
        \label{eqn: nu-u-nu-s-bound}
        \frac{\lambda_n}{\lambda_n+1} \nu_n(s) \leq \nu_n(u) &= \nu_n(s) \frac{\lambda_n^2}{(s+1)(s+2)} \leq \nu_n(s) \frac{\lambda_n}{\lambda_n-1} 
    \end{align}
    and since $\lambda_n-1 \leq s+1 = \lfloor \lambda_n \rfloor \leq \lambda_n$, we have
    \begin{align}
        \label{eqn: nu-v-nu-s-bound}
        \frac{\lambda_n^2}{(\lambda_n+1)(\lambda_n+2)} \nu_n(s) \leq \nu_n(v) &= \frac{\lambda_n^3}{(s+1)(s+2)(s+3)} \nu_n(s) \leq \frac{\lambda_n^2}{(\lambda_n-1)(\lambda_n+1)} \nu_n(s)
    \end{align}
    and since $\lfloor \lambda_n \rfloor + 1 \geq \lambda_n$, we have
    \begin{align}
        \label{eqn: nu-u-nu-t-bound}
        \frac{\lambda_n}{\lambda_n+1} \nu_n(t) \leq \nu_n(u) &= \frac{\lambda_n}{t+1} \nu_n(t) \leq \nu_n(t)
    \end{align}
    and since $\lceil \lambda_n \rceil + 1 \geq \lambda_n$, we have
    \begin{align}
        \label{eqn: nu-v-nu-u-bound}
        \frac{\lambda_n}{\lambda_n+2} \nu_n(u) &\leq \nu_n(v) = \frac{\lambda_n}{u+1} \nu_n(u) \leq \nu_n(u).
    \end{align}
    Finally, we have from $\lambda_n + 1 \geq t+1 = \lfloor \lambda_n \rfloor + 1 \geq \lambda_n$ that
    \begin{align}
        \label{eqn: nu-v-nu-t-bound}
        \frac{\lambda_n^2}{(\lambda_n+1)(\lambda_n+2)} \nu_n(t) &\leq \nu_n(v) = \frac{\lambda_n^2}{(t+1)(t+2)} \nu_n(t) \leq \nu_n(t)
    \end{align}
    From \eqref{eqn: nu-t-nu-s-bound}, \eqref{eqn: nu-u-nu-s-bound}, \eqref{eqn: nu-v-nu-s-bound}, \eqref{eqn: nu-u-nu-t-bound}, \eqref{eqn: nu-v-nu-u-bound}, \eqref{eqn: nu-v-nu-t-bound}, we have
    \begin{align}
        \sum_{q \in K} \nu_n(q) &\leq \br{1 + \frac{\lambda_n}{\lambda_n-1} + \frac{\lambda_n}{\lambda_n-1} + \frac{\lambda_n^2}{\lambda_n^2-1}} \nu_n(s) = \frac{4\lambda_n^2+2\lambda_n-1}{\lambda_n^2-1} \nu_n(s) \\
        \sum_{q \in K} \nu_n(q) &\leq \br{1 + 1 + 1 + 1} \nu_n(t) = 4\nu_n(t) \\
        \sum_{q \in K} \nu_n(q) &\leq \br{\frac{\lambda_n+1}{\lambda_n} + \frac{\lambda_n+1}{\lambda_n} + 1 + 1} \nu_n(u) = \frac{4\lambda_n+2}{\lambda_n} \nu_n(u) \\
        \sum_{q \in K} \nu_n(q) &\leq \br{\frac{(\lambda_n+1)(\lambda_n+2)}{\lambda_n^2} + \frac{(\lambda_n+1)(\lambda_n+2)}{\lambda_n^2} + \frac{\lambda_n+2}{\lambda_n} + 1} \nu_n(v) = \frac{4(\lambda_n+1)^2}{\lambda_n^2} \nu_n(v)
    \end{align}
    Since $\frac{4(\lambda_n+1)^2}{\lambda_n^2} = \max \left\{4, \frac{4\lambda_n^2+2\lambda_n-1}{\lambda_n^2-1}, \frac{4\lambda_n+2}{\lambda_n}, \frac{4(\lambda_n+1)^2}{\lambda_n^2}\right\}$ for $\lambda_n \geq 3$, we have
    \begin{align}
        \frac{\nu(q)}{\sum_{q \in K} \nu(q)} \geq \frac{\lambda_n^2}{4(\lambda_n+1)^2} \, \forall q \in K.
    \end{align}
    Hence proved.
\endproof}

Now that we have obtained all necessary ingredients, we proceed to prove the final piece of Theorem \ref{thm: unifying-theorem} as follows.

\proof
    Our proof consists of two parts: Proving $D_n$ is bounded below and showing $\lim_{n \rightarrow \infty} D_n = 1/25$. We will show that the Poincar\'e constant is bounded above by some universal constant, which will imply the mixing rate of the $M/M/n$ in the Sub-Halfin-Whitt regime is bounded below by some constant as well. Since $n \geq 2, \gamma_n > 0$ and $\lim_{n \rightarrow \infty} \gamma_n = 1$ from Lemma \ref{lemma: drift-lemma-2} for $\alpha_n = \alpha \in (0,1/2)$, we have that $\gamma_n \geq d_\gamma > 0$ for some positive constant $d_\gamma$ dependent on the system parameter $\alpha$ for all positive integer $n \geq 2$. If we have $\lambda_n \in \Z$ then $D_n = \gamma_n \geq d_\gamma$ from Proposition \ref{prop: lyapunov-poincare-singleton} where $d_\gamma$ is some constant and we are done.
    
    Otherwise, if $\lambda_n \not \in \Z$ then we have to handle the positive drift term via the weighted-Poincar\'e inequality. Recall that from Lemma \ref{lemma: drift-lemma-2}, we have $b 
    \leq 2\lambda_n$ for sufficiently large $n$ such that $\lambda_n \geq 3$. From here, we have for a sufficiently large $n$ that
    \begin{align*}
        \sum_{q \in \cS} b \nu_n(q) (f(q)-m)^2 &\leq 2\lambda_n \sum_{q \in K} \nu_n(q) (f(q)-m)^2.
    \end{align*}
    Let $K = \{\lfloor \lambda_n \rfloor - 1, \lfloor \lambda_n \rfloor, \lceil \lambda_n \rceil, \lceil \lambda_n \rceil + 1\}$ and choose $m = \frac{\sum_{q \in K} \nu_n(q) f(q)}{\sum_{q \in K} \nu_n(q)}$,
    we have
    \begin{align}
        \sum_{q \in K} \nu_n(q) (f(q)-m)^2 = \br{\sum_{q \in K} \nu_n(q)} \frac{\sum_{q \in K} \nu_n(q)(f(q)-m)^2}{\sum_{q \in K} \nu_n(q)}
    \end{align}
    and \textcolor{black}{
    \begin{align*}
        \frac{\sum_{q \in K} \nu_n(q)(f(q)-m)^2}{\sum_{q \in K} \nu_n(q)} &= \frac{\sum_{q_1 \in K} \nu_n(q_1) \left(\sum_{q_2 \in K} (f(q_1)-f(q_2))\sqrt{\nu_n(q_2)}\sqrt{\nu_n(q_2)}\right)^2}{\left(\sum_{q \in K} \nu_n(q)\right)^3} \\
        &\leq \frac{\sum_{q_1, q_2 \in K} \nu_n(q_1) \nu_n(q_2)  (f(q_1)-f(q_2))^2}{\left(\sum_{q \in K} \nu_n(q)\right)^2} \\
        &\leq 3\frac{\sum_{q_1, q_2 \in K} \nu_n(q_1) \nu_n(q_2)  \sum_{k=\lfloor \lambda_n \rfloor-1}^{\lceil \lambda_n \rceil}(f(k+1)-f(k))^2}{\left(\sum_{q \in K} \nu_n(q)\right)^2} \\
        &= 3\sum_{k=\lfloor \lambda_n \rfloor-1}^{\lceil \lambda_n \rceil}(f(k+1)-f(k))^2.
    \end{align*}
    This gives us
    \begin{align*}
        \sum_{q \in K} \nu_n(q) (f(q)-m)^2 &\leq \frac{3}{\lambda_n} \br{\sum_{q \in K} \nu_n(q)} \sum_{k=\lfloor \lambda_n \rfloor-1}^{\lceil \lambda_n \rceil} \lambda_n (f(k)-f(k+1))^2.
    \end{align*}
    From Lemma \ref{lemma: nu-K-lower-bound-sub-HW}, we have
    \begin{align}
        \nonumber
        \sum_{q \in K} \nu_n(q) (f(q)-m)^2 &\leq \frac{3}{\lambda_n} \times \frac{4(\lambda_n+1)^2}{\lambda_n^2} \sum_{k=\lfloor \lambda_n \rfloor-1}^{\lceil \lambda_n \rceil} \lambda_n \nu_n(k) (f(k+1)-f(k))^2 \\
        \nonumber
        &\leq \frac{12(\lambda_n+1)^2}{\lambda_n^3} \sum_{k=0}^\infty \lambda_n \nu_n(k) (f(k)-f(k+1))^2 \\
        &= \frac{12(\lambda_n+1)^2}{\lambda_n^3} \cE(f,f),
    \end{align}
    where the last inequality follows from $\lambda_n \nu_n(k) (f(k)-f(k+1))^2 \geq 0 \forall k \geq 0$. And so, this implies  $C_L = \frac{12(\lambda_n+1)^2}{\lambda_n^3}$. From here, Lemma \ref{lemma: drift-lemma-2} and the fact that $\lambda_n \geq 3$ and there exists a constant $d_\gamma$ such that $\gamma_n \geq d_\gamma \forall n$, we can apply Corollary \ref{corollary: lyapunov-poincare-constant-b} and obtain that the system admits Poincar\'e constant $C_P(n) = \frac{1 + \max\{\lceil \lambda_n \rceil + 2, 2\lambda_n\} \times \frac{12(\lambda_n+1)^2}{\lambda_n^3}}{\gamma_n} \leq \frac{1 + \frac{24(\lambda_n+1)^2}{\lambda_n^2}}{\gamma_n} \leq \frac{131}{3\gamma_n} \leq \frac{131}{3d_\gamma} \, \forall n$ such that $\lambda_n \geq 3$.}
    

    \textcolor{black}{Thus, from Proposition \ref{prop: poincare-equals-mixing}, we have in both cases that:
    \begin{align}
        \chi(\pi_t,\pi) \leq e^{-D_n t} \chi(\pi_0,\pi) \, \forall t \geq 0
    \end{align}
    for some constant $D_n \geq d = \frac{3}{131} \times d_\gamma$ where $d_\gamma = \min_n \gamma_n$. Moreover, since $\lim_{n \rightarrow \infty} \gamma_n = \lim_{n \rightarrow \infty} 1 - \frac{n^{2\alpha-1}}{1+n^{\alpha-1}} = 1$ and $\lim_{n \rightarrow \infty} \frac{(\lambda_n+1)^2}{\lambda_n^2} = 1$ for $\alpha \in (0,1/2)$, we also have
    \begin{align}
        \lim_{n \rightarrow \infty} C_P(n) = \frac{1 + \frac{24(\lambda_n+1)^2}{\lambda_n^2}}{\gamma_n} = 25.
    \end{align}
    Which gives $\lim_{n \rightarrow \infty} D_n = \frac{1}{25}$. Hence, we are done.}
\endproof
\textbf{Remark on the approach}: A natural question to ask is what convergence rate we would obtain if we use the same drift analysis and the same approach as in the Super-Halfin-Whitt regime. A back of envelope calculation shows that we would get a mixing rate of $\frac{O\br{n^{1-2\alpha}}}{1 + O\br{n^{2-4\alpha}} + O\br{n^{3-6\alpha}}}$ which would vanish to $0$ as $n$ goes to infinity for $\alpha \in (0,1/2)$. And so, such an approach would not work for this regime.

\textbf{Remark on the convergence results}: Despite our best efforts, the reason that we cannot achieve constant $1$ is possibly because the Lyapunov-Poincar\'e framework does not give a tight enough bound when the set $K$ is not a singleton and we do not always get a singleton as the set that does not have a negative drift, especially when there is discreteness in the jump. Moreover, note that the eigenfunction of the generator of the $M/M/\infty$ system is the function $V(q) = q-\lambda$, which suggests that this should have been our choice of Lyapunov function. However, this function does not satisfy the $V \geq 1$ condition in Theorem \ref{theorem: lyapunov-poincare} or Corollary \ref{corollary: lyapunov-poincare-constant-b}, rendering this function ineligible for the Lyapunov-Poincar\'e method. Thus, we have to choose a suboptimal Lyapunov function and so it is expected to get a suboptimal convergence rate.

\subsubsection{Proof of Proposition \ref{prop: sub-halfin-whitt-mixing-integral-lambda}}
\label{sssec: sub-hw-integral-mixing-proof}
\proof
    From Lemma \ref{lemma: drift-lemma-2}, there exists a Lyapunov function $V$ such that
    \begin{align*}
        \cL V \leq -\gamma_n + b1_{\{\lambda_n\}}
    \end{align*}
    where $\gamma_n = 1-\frac{n^{2\alpha_n-1}}{1 + n^{\alpha_n-1}}$. From Proposition \ref{prop: lyapunov-poincare-singleton}, we have that the system admits the Poincar\'e constant $\frac{1}{\gamma_n}$. On the other hand, note that $\gamma_n > 0$ and $\lim_{n \rightarrow \infty} \gamma_n = 1$ from Lemma \ref{lemma: drift-lemma-2} where $\alpha_n \in (0,1/2)$ and $\lim_{n \rightarrow \infty} \alpha_n = \alpha \in (0,1/2)$. And so, from Proposition \ref{prop: poincare-equals-mixing}, we have that
    \begin{align*}
        \chi(\pi_{n,t},\nu_n) \leq e^{-\overline{D}_n t} \chi(\pi_{n,0},\nu_n)
    \end{align*}
    such that $\overline{D}_n > 0$ and $\lim_{n \rightarrow \infty} \overline{D}_n = 1$.
\endproof

\subsection{Proof of Mean Field results}
\label{ssec: light-traffic-mixing-proof}

Here, we will provide a detail analysis of the Mean Field regime (when $\lambda_n/n \rightarrow 0$, we call this the Light Traffic regime). First, we will redo the drift analysis in Appendix \ref{sssec: light-traffic-drift-analysis}. Next, since we redo the drift analysis, we have a different finite set and so we do another local mixing analysis in Appendix \ref{sssec: light-traffic-local-mixing} using a different method called the truncation method. Finally, we put these results together in Appendix \ref{sssec: light-traffic-mixing-proof}.

\subsubsection{Mean Field regime drift analysis}
\label{sssec: light-traffic-drift-analysis}
It is evident that our previous attempt in the Sub-Halfin-Whitt regime that choosing $V(q) = |q-\lambda_n|$ for $q \leq n$ will be problematic whenever $\lambda_n \not \in \Z$ since the bounded set that does not have the negative drift is no longer a singleton. And so, we redo the drift lemma as follows.

\begin{lemma}
    \label{lemma: drift-lemma-3-light-traffic}
    Let $V(q) = z_n^{q-n}$ for $z_n \in \br{1, \frac{n}{\lambda_n}}$ $q \geq n$ and $V(q) = 1$ for $q < n$ and let $\cL$ be the generator of the $M/M/n$ system with arrival rate $\lambda_n$ and unit service rate, we have
    \begin{align}
        \cL V(q) \leq -\gamma V(q) + B 1_K
    \end{align}
    where $\gamma = \lambda_n(z_n-1)\br{\frac{n}{\lambda_n z_n}-1}, B = \lambda(z_n-1)\br{1-\frac{n}{\lambda_n z_n}} + \lambda_n(z_n-1), K = \{0,1,...,n\}$.
\end{lemma}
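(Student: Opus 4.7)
The plan is to proceed by a direct case analysis on $q$ relative to the threshold $n$, exploiting the piecewise structure of $V$ together with the birth-and-death form of the generator
\[
\cL f(q) = \lambda_n\bigl(f(q+1)-f(q)\bigr) + \min\{q,n\}\bigl(f(q-1)-f(q)\bigr).
\]
The key design point is that $V$ is chosen so that $V(n)=1$, meaning the exponential piece on $\{q\geq n\}$ glues continuously to the constant piece on $\{q<n\}$; this continuity is what makes the drift computation at the boundary compatible.

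First I would handle the ``outside the set'' region $q \geq n+1$, where all $n$ servers are busy so $\min\{q,n\}=n$, and both neighbors $q\pm 1$ lie in the exponential piece (using $V(n)=1$ to cover the step down from $q=n+1$ to $q=n$). A direct substitution gives $V(q\pm 1)=e^{\pm\theta}V(q)$, so
\[
\cL V(q) = V(q)\bigl[\lambda_n(e^\theta-1)+n(e^{-\theta}-1)\bigr] = -V(q)(e^\theta-1)\Bigl(\tfrac{n}{e^\theta}-\lambda_n\Bigr) = -\gamma V(q).
\]
The assumption $\theta\in(0,\log(n/\lambda_n))$ ensures $n/e^\theta>\lambda_n$, so $\gamma>0$.

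Next I would handle the boundary point $q=n$, where $V(n-1)=V(n)=1$ and $V(n+1)=e^\theta$, producing $\cL V(n)=\lambda_n(e^\theta-1)$; by the choice $B=\gamma+\lambda_n(e^\theta-1)$, this can be rewritten as $-\gamma V(n)+B$, matching the claimed bound with equality on this point. Finally, for $q\leq n-1$ the function $V$ is constantly equal to $1$ on $\{q-1,q,q+1\}$, so $\cL V(q)=0$, and the inequality $0\leq -\gamma+B=\lambda_n(e^\theta-1)$ holds trivially since $\theta>0$. Combining the three cases yields $\cL V(q)\leq -\gamma V(q)+B\,\mathbf{1}_K(q)$ for all $q\in\Z_{\geq 0}$, which is exactly the claim.

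There is no real obstacle here, as the computation is elementary; the only thing to be careful about is the matching at $q=n$, since $V(n)=1$ is both the value of the constant piece and of the exponential piece evaluated at $q=n$, which is precisely what lets the $q=n+1$ step-down feed correctly into the exponential recursion. Once this matching is in place, the three case computations produce a single uniform inequality and identify the constants $\gamma$ and $B$ in the statement.
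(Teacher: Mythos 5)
Your proof is correct and follows essentially the same direct three-case computation as the paper's: the interior region $q\geq n+1$ where the exponential recursion gives exactly $-\gamma V$, the boundary $q=n$ where the value of $B$ is read off, and the region $q\leq n-1$ where $V$ is locally constant so $\cL V=0$. The only (harmless) cosmetic difference is that you verify $0\leq -\gamma+B$ explicitly, whereas the paper notes $0=-\gamma\cdot 1+\gamma$ and absorbs the extra $\lambda_n(e^\theta-1)$ into $B$ only at $q=n$; both yield the same constants.
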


\proof{\textit{Proof of Lemma \ref{lemma: drift-lemma-3-light-traffic}}:}
    For $q > n$, we have
    \begin{align*}
        \cL V(q) = -\underbrace{\lambda_n(z_n-1)\br{\frac{n}{\lambda_n z_n}-1}}_{= \gamma} V(q).
    \end{align*}
    For $q < n$, we have
    \begin{align*}
        \cL V(q) = 0 = -\lambda_n(z_n-1)\br{\frac{n}{\lambda_n z_n}-1} \underbrace{V(q)}_{= 1} + \lambda_n(z_n-1)\br{\frac{n}{\lambda_n z_n}-1}.
    \end{align*}
    For $q = n$, we have
    \begin{align*}
        \cL V(n) &= \lambda_n (z_n - 1) = -\lambda_n(z_n-1)\br{\frac{n}{\lambda_n z_n}-1} V(n) + \lambda_n(z_n-1)\br{\frac{n}{\lambda_n z_n}-1} + \lambda_n(z_n-1).
    \end{align*}
    Hence, we are done.
$\square$ \endproof

\subsubsection{Local mixing analysis}
\label{sssec: light-traffic-local-mixing}

Observe that the stationary distribution of $M/M/n$ truncated at $n$ is a truncated Poisson distribution. And so, one can show that if the original distribution admits some Poincar\'e constant $C_P$ then the truncated distribution also admits that Poincar\'e constant. We prove this in the following Lemma \ref{lemma: truncated poincare} as follows.
\proof
    Since $\cL$ is the generator of a birth-and-death process, we have
    \begin{align}
        \label{eqn: birth-and-death-process-condition}
        \cL(x,y) = 0 \, \forall x,y \in \cS \text{ such that } |x-y| > 1.
    \end{align}
    Let $f \in \ell_{2,\nu_K}$ and let $\overline{f}$ be the extension of $f$ such that $\overline{f}(x) = f(m) \forall x \leq m$ and $\overline{f}(x) = f(M) \forall x \geq M$, we have $\overline{f} \in \ell_{2,\nu}$. Denote $Q_K(x,y) = \nu_K(x) \cL(x,y)$, observe that \textcolor{black}{
    \begin{align*}
        \Var_{\nu_K}(f) &\leq \frac{\Var_\nu(\overline{f})}{\nu(K)} \text{ by direct algebraic manipulation} \\
        &\leq \frac{C_P \langle \overline{f},-\cL \overline{f} \rangle_\nu}{\nu(K)} \text{ from } \eqref{eqn: poincare-inequality-entire-chain} \\
        &= C_P \sum_{x \in \cS} Q_K(x,x+1) \br{\overline{f}(x)-\overline{f}(x+1)}^2 
        \\
        &= C_P \sum_{x:x,x+1 \in K} Q_K(x,x+1) \br{f(x)-f(x+1)}^2
    \end{align*}
    where the last two equalities follow $\text{from \eqref{eqn: birth-and-death-process-condition} and definition of } \overline{f}$.} Hence proved.
\endproof

Now that we have established that the truncated distribution also admits the same Poincar\'e constant as the original distribution (but this is not necessarily the best Poincar\'e constant), we can easily establish the following Poincar\'e inequality result for the truncated Poisson distribution, which corresponds to the stationarity distribution of the $M/M/n$ system restricted to $K = \{0,1,...,n\}$.
\begin{lemma}
    \label{lemma: truncated-mmn}
    (Truncated $M/M/n$) Let $\cL_n, \cL_\infty$ be the generator of the $M/M/n$ and $M/M/\infty$ system, each with unit service rate, arrival rate $\lambda_n, \lambda$ and stationary distribution $\nu_n, \nu$ respectively. Furthermore, consider $K = \{0,1,...,n\}$ and let $\nu_K(x) = \nu_n(x)/(\sum_{x \in K} \nu_n(x)) \sim \frac{e^{-\lambda} \lambda^k}{k!}$ is the steady state of the $M/M/n$ queue restricted to $K$, we have
    \begin{align*}
        \Var_{\nu_K}(f) \leq \sum_{x:x,x+1 \in K} \nu_K(x) \cL_n(x,x+1) \br{f(x)-f(x+1)}^2.
    \end{align*}
\end{lemma}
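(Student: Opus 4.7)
The plan is to apply the truncation method outlined in Section \ref{ssec:lyapunov-truncation-approach}, exploiting the fact that the $M/M/n$ generator and the $M/M/\infty$ generator with arrival rate $\lambda_n$ agree on every transition whose endpoints both lie in $K = \{0, 1, \ldots, n\}$. The starting point is the Poincar\'e inequality for the $M/M/\infty$ queue with arrival rate $\lambda_n$, namely $\Var_\nu(f) \leq \langle f, -\cL_\infty f \rangle_\nu$ for all $f \in \ell_{2,\nu}$, where $\nu$ is $\mathrm{Poisson}(\lambda_n)$. This is the classical Poincar\'e inequality for the Poisson law, $\Var_\nu(f) \leq \lambda_n \, \E_\nu[(f(X+1)-f(X))^2]$, which holds for every $\lambda_n > 0$; it is also precisely what Proposition \ref{thm: mminf-mixing} encodes, since the mixing-rate constant of $1$ translates into a Poincar\'e constant of $1$ via Proposition \ref{prop: poincare-equals-mixing}.

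Next, I would invoke Lemma \ref{lemma: truncated poincare} with generator $\cL_\infty$, stationary distribution $\nu = \mathrm{Poisson}(\lambda_n)$, Poincar\'e constant $C_P = 1$, and the connected subset $K = \{0, 1, \ldots, n\}$. This yields
\[
\Var_{\tilde{\nu}_K}(f) \leq \sum_{x: x, x+1 \in K} \tilde{\nu}_K(x) \, \cL_\infty(x, x+1) \, (f(x) - f(x+1))^2,
\]
where $\tilde{\nu}_K$ is the restriction of $\nu$ to $K$. A direct computation shows $\tilde{\nu}_K(k) \propto \lambda_n^k / k!$ on $K$, and the $M/M/n$ stationary distribution restricted to $K$ is proportional to $\lambda_n^k / k!$ on $K$ as well, so $\tilde{\nu}_K = \nu_K$ after matching normalizations. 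Moreover, for every $x \in K - \{n\}$ the upward transition rates coincide: $\cL_\infty(x, x+1) = \lambda_n = \cL_n(x, x+1)$. Substituting $\cL_n$ for $\cL_\infty$ on the right-hand side of the displayed inequality produces the claimed bound.

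The only subtlety is confirming that the $M/M/\infty$ Poincar\'e inequality holds for arbitrary $\lambda_n > 0$ and not merely for integer $\lambda_n$ as in the statement of Proposition \ref{thm: mminf-mixing}. This is immediate from the classical Poisson Poincar\'e inequality and is arguably the only step requiring external input; alternatively, a self-contained derivation is available via the spectral decomposition of $\cL_\infty$ on the orthonormal basis of Charlier polynomials, whose smallest nontrivial eigenvalue is $1$. Once this input is in hand, the remainder of the argument reduces to matching the two stationary distributions and the two birth rates edge-by-edge on $K$, both of which are routine algebraic checks.
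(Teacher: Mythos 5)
Your proposal is correct and follows essentially the same route as the paper: both start from the classical Poisson Poincar\'e inequality with constant $1$ (applicable for all $\lambda_n > 0$, not just integers), apply Lemma \ref{lemma: truncated poincare} to the set $K = \{0,\ldots,n\}$, and then replace $\cL_\infty$ by $\cL_n$ using the fact that the two generators agree on transitions within $K$. Your write-up is a bit more explicit than the paper's about matching the truncated Poisson law to the restricted $M/M/n$ stationary distribution and about the applicability of the Poisson Poincar\'e inequality for non-integer $\lambda_n$, but these are just expanded versions of the same steps.
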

\proof
    Let $\nu$ be the Poisson distribution with mean $\lambda$, we have that $\nu$ is also the steady state distribution of the $M/M/\infty$ system with arrival rate $\lambda$ and unit service rate. From the Poisson Poincar\'e inequality \cite{Chafa_2006_entropic_inequalities_infty_queue, Klaassen1985OnAI, kontoyiannis-entropy-law-of-small-number},
    we have 
    \begin{align}
        \label{eqn: poisson-poincare}
        \Var_\nu(f) \leq \langle f, \cL_\infty f \rangle_\nu.
    \end{align}
    Applying Lemma \ref{lemma: truncated poincare} to the set $K = \{0,1,...,n\}$, we have
    \begin{align}
        \label{eqn: truncated-poisson-poincare}
        \Var_{\nu_K}(f) \leq \sum_{x:x,x+1 \in K} \nu_K(x) \cL_\infty(x,x+1) \br{f(x)-f(x+1)}^2.
    \end{align}
    Now, note that the generator $\cL_n$ of the $M/M/n$ system is the same as the generator $\cL_\infty$ of the $M/M/\infty$ system with arrival rate $\lambda$ and unit service rate when the queue length is no more than $n$. And so, from \eqref{eqn: poisson-poincare} and \eqref{eqn: truncated-poisson-poincare}, we have
    \begin{align}
        \Var_{\nu_K}(f) \leq \sum_{x:x,x+1 \in K} \nu_K(x) \cL_n(x,x+1) \br{f(x)-f(x+1)}^2.
    \end{align}
    which implies that the local Poincar\'e of the $M/M/n$ system on the truncated Poisson distribution $\nu_K$ also admits Poincar\'e constant $1$. Hence proved.
\endproof

\textbf{Remark}: It is well-known that the Poisson distribution admits Poincar\'e constant $1$ \cite{Klaassen1985OnAI, kontoyiannis-entropy-law-of-small-number}, so this result also implies the truncated Poisson distribution also admits this Poincar\'e constant. While this is not necessarily the best constant, \cite{prakirt-confluence-large-deviation} shows that the probability of having the queue length exceeding $n$ goes to $0$ as $n$ goes to infinity, so this is a good mixing approximation of the truncated $M/M/n$ system when $n$ is large.

\subsubsection{Proof of Proposition \ref{prop: light-traffic-mixing}}
\label{sssec: light-traffic-mixing-proof}
\proof
    Denote $\nu_K(x) = \frac{\nu_n(x)}{\sum_{q \in K} \nu_n(q)}$, from Lemma \ref{lemma: drift-lemma-3-light-traffic} we have
    \begin{align}
        \cL V \leq - (\sqrt{n}-\sqrt{\lambda_n})^2 V + b 1_K
    \end{align}
    where $b = (\sqrt{n}-\sqrt{\lambda_n})^2 + \lambda_n\br{\sqrt{\frac{n}{\lambda_n}}-1}$ and $K = \{0,1,...,n\}$. From Lemma \ref{lemma: truncated-mmn}, we have that
    \begin{align}
        \Var_{\nu_K}(f) &\leq \sum_{x:x,x+1 \in K} \nu_K(x) \cL_n(x,x+1) \br{f(x)-f(x+1)}^2 \\
        &= \frac{1}{\nu_n(K)} \sum_{x:x,x+1 \in K} \nu_n(x) \cL_n(x,x+1) \br{f(x)-f(x+1)}^2 \\
        &\leq \frac{1}{\nu_n(K)} \sum_{x = 0}^\infty \nu_n(x) \cL_n(x,x+1) \br{f(x)-f(x+1)}^2 \\
        &= \frac{1}{\nu_n(K)} \langle f, -\cL f \rangle_{\nu_n}
    \end{align}
    where $m = \frac{\sum_{q \in K} \nu_n(q) f(q)}{\nu_n(K)}$. Now that we have established the drift in Lemma \ref{lemma: drift-lemma-3-light-traffic} and showed that Assumption \ref{assumption: weighted-poincare} holds for constant $B = (\sqrt{n}-\sqrt{\lambda_n})^2 + \lambda_n \br{\sqrt{\frac{n}{\lambda_n}}-1}$ and $C_L = 1$, from Corollary \ref{corollary: lyapunov-poincare-constant-b}, we have the system admits the Poincar\'e inequality
    \begin{align}
        \Var_{\nu_n} \leq C_P(n) \langle f, -\cL f \rangle_{\nu_n}
    \end{align}
    where
    \begin{align}
        \nonumber
        C_P(n) = \frac{1+B}{\gamma} &= \frac{1 + \lambda_n(z_n-1)\br{\frac{n}{\lambda_n z_n}-1} + \lambda_n(z_n-1)}{\lambda_n(z_n-1)\br{\frac{n}{\lambda_n z_n}-1}} \\
        \nonumber
        &= 1 + \frac{\lambda_n(z_n-1)}{\lambda_n(z_n-1)\br{1-\frac{n}{\lambda_n z_n}}} + \frac{1}{\lambda_n(z_n-1)\br{\frac{n}{\lambda_n z_n}-1}} \\
        &= 1 + \frac{1}{\frac{n}{\lambda_n z_n}-1} + \frac{1}{\lambda_n(z_n-1)\br{\frac{n}{\lambda_n z_n}-1}}.
    \end{align}
    If $c = 0$, we can choose $z_n = \sqrt{\frac{n}{\lambda_n}} \rightarrow \sqrt{\frac{1}{c}}$ as $n \rightarrow \infty$, which gives
    \begin{align}
        \lim_{n \rightarrow \infty} C_P(n) = \lim_{n \rightarrow \infty} 1 + \frac{1}{\sqrt{\frac{n}{\lambda_n}}-1} + \frac{1}{n\br{1-\sqrt{\frac{\lambda_n}{n}}}^2} = 1.
    \end{align}
    On the other hand, if $c \in (0,1)$ then note that $\lim_{n \rightarrow \infty} \frac{\lambda_n}{n}=c \in [0,1)$, and so
    \begin{align*}
        \lim_{n \rightarrow \infty} C_P(n) &= \lim_{n \rightarrow \infty} 1 + \frac{1}{\frac{n}{\lambda_n z_n}-1} + \frac{1}{\lambda_n(z_n-1)\br{\frac{n}{\lambda_n z_n}-1}} \\
        \nonumber
        &= \lim_{n \rightarrow \infty} 1 + \frac{1}{\frac{1}{c z_n}-1} + \frac{1}{\lambda_n (z_n-1)\br{\frac{1}{c z_n}-1}} \\
        &= \lim_{n \rightarrow \infty} 1 + \frac{1}{\frac{1}{c z_n}-1} + \frac{1}{c n (z_n-1) \br{\frac{1}{c z_n}-1}}
    \end{align*}
    We want to choose $z_n$ such that $\lim_{n \rightarrow \infty} z_n = 1$ but $z_n$ converges at a slow enough rate such that $\lim_{n \rightarrow \infty} \lambda_n (z_n-1)\br{\frac{1}{c z_n}-1} = \infty$. It is sufficient to choose $z_n = \min \left\{1 + \frac{1}{\ln n}, \frac{1 + \frac{n}{\lambda_n}}{2}\right\}$. Note that this choice of $z_n$ gives $z_n \in \br{1, \frac{n}{\lambda_n}}$. Moreover, for large enough $n$, we have $z_n = 1 + \frac{1}{\ln n}$ since $\lim_{n \rightarrow \infty} 1 + \frac{1}{\ln n} = 1$ whereas $\lim_{n \rightarrow \infty} \frac{1 + \frac{n}{\lambda_n}}{2} = \frac{c+1}{2c} \in \br{1, \frac{1}{c}}$. Hence,
    \begin{align}
        \nonumber
        \lim_{n \rightarrow \infty} C_P(n) &= \lim_{n \rightarrow \infty} 1 + \frac{1}{\frac{1}{c z_n}-1} + \frac{1}{c n (z_n-1) \br{\frac{1}{c z_n}-1}} \\
        &= 1 + \frac{1}{\frac{1}{c}-1} + \underbrace{\lim_{n \rightarrow \infty} \frac{1}{\frac{c n}{\ln n} \br{\frac{1}{c\br{1+\frac{1}{\ln n}}} - 1}}}_{= 0} = \frac{1}{1-c}
    \end{align}
    
    Let $L_n = \frac{1}{C_P(n)}$, we have from Proposition \ref{prop: poincare-equals-mixing} that
    \begin{align}
        \chi(\pi_{n,t},\nu_n) \leq e^{-L_n t} \chi(\pi_{n,0},\nu_n)
    \end{align}
    such that $L_n > 0$ and $\lim_{n \rightarrow \infty} L_n = 1-c$.
\endproof

\textcolor{black}{\textbf{Remark}: If we fix $z_n = z$ for some appropriate value $z$ such that we get negative drift or if $z_n \rightarrow z \neq 1$ then we obtain the mixing rate $1 - c z$, which will not match the previous result \cite{zeifman_lognorm} where it yields the mixing rate $1 - \frac{\lambda_n}{n-1} \rightarrow 1-c$. Thus, it is crucial to carefully pick $z_n$ such that $\lim_{n \rightarrow \infty} z_n = 1$ but the drift rate will grow fast enough to cancel out the extra terms.}


\subsection{Proof of Proposition \ref{thm: mminf-mixing}}
\label{ssec: mminf-proof}
\proof
    Considering the Lyapunov function $V(q) = |q-\lambda/\mu|$, we want to show that $\cL V \leq -\mu V$ whenever $V$ takes some value outside of the singleton set $K = \{\lambda/\mu\}$ since $\lambda/\mu \in \Z^+$.

    Indeed, when $q \not \in K$, we have that $|q-\lambda| \geq 1$ which means that $q-\lambda+1, q-\lambda, q-\lambda-1$ all have the same signs. We have:
    \begin{align*}
        \cL V(q) &= \lambda |q+1-\lambda/\mu| + q\mu |q-1-\lambda/\mu| - (\lambda+q\mu) |q-\lambda| = - \mu|q-\lambda/\mu| = -\mu V(q).
    \end{align*}
    For $q \in K =  \{\lambda/\mu\}$, we have:
    \begin{align*}
        \cL V(\lambda) &= \lambda |\lambda/\mu+1-\lambda/\mu| + \lambda/\mu |\lambda/\mu-1-\lambda/\mu| - (\lambda+\lambda/\mu)  |\lambda/\mu - \lambda/\mu| = \lambda+\lambda/\mu.
    \end{align*}
    Thus, we have shown that $M/M/\infty$ satisfies a Foster-Lyapunov condition with rate $-\mu$ outside the singleton set $K = \{\lambda/\mu\}$ when $\lambda/\mu$ is an integer. From Proposition \ref{prop: poincare-equals-mixing} and Proposition \ref{prop: lyapunov-poincare-singleton}, the following holds for $\lambda/\mu \in \Z$
    \begin{align*}
        \chi(\pi_{t, \infty},\nu_\infty) \leq e^{-\mu t} \chi(\pi_{0, \infty},\nu_\infty).
    \end{align*}
    Hence proved. 
\endproof

\textbf{Remark}: The $e^{-\mu t}$ convergence rate matches the transient solution of $M/M/\infty$ for $\mu$ is the service rate of the system. Previously, there are multiple mixing proofs of $M/M/\infty$, including an entropic functional inequality proof \cite{Chafa_2006_entropic_inequalities_infty_queue}. Our proof is the first Lyapunov-Poincar\'e proof for $M/M/\infty$. However, it is unfortunate that this proof only works for $\lambda/\mu \in \Z$, as otherwise we will not be able to obtain the mixing rate $\mu$ for $M/M/\infty$ due to the finite set in this case is no longer a singleton.

\section{Proof of Corollary \ref{corollary: mixing-time-bound}}
\label{sec: proof-of-tv-corollary}
\proof
Let $M_{n,\alpha}$ be the corresponding $M/M/n$ system with the heavy traffic parameter $\alpha$, we will show that $\tau_{mix}(\varepsilon) \leq \frac{\log\br{\frac{\chi(\pi_{n,0}, \nu_n)}{\varepsilon}}}{M_{n,\alpha}}$. Indeed, we have
\begin{align}
    \label{eqn: t-eps-mixing-time-bound}
    t \geq \frac{\log\br{\frac{\chi(\pi_{n,0}, \nu_n)}{\varepsilon}}}{M_{n,\alpha}} \Leftrightarrow -M_{n,\alpha} t + \log \chi(\pi_{n,0}, \nu_n) \leq \log \varepsilon \Leftrightarrow e^{-M_{n,\alpha} t} \chi(\pi_{n,0}, \nu_n) \leq \varepsilon.
\end{align}
From Theorem \ref{thm: unifying-theorem} and \eqref{eqn: t-eps-mixing-time-bound}, we have
\begin{align}
    \chi(\pi_{n,t},\nu_n) \leq e^{-M_{n,\alpha} t} \chi(\pi_{n,0}, \nu_n) \leq \varepsilon
\end{align}
which implies $\tau_{mix}(\varepsilon) \leq \frac{\log\br{\frac{\chi(\pi_{n,0}, \nu_n)}{\varepsilon}}}{M_{n,\alpha}}$.

And so, substituting the corresponding mixing rate obtained from Theorem \ref{thm: unifying-theorem}, we have the following results. For $\alpha \geq 1$, we have that:
    \begin{align*}
        \tau_{mix}(\varepsilon) \leq \frac{\log\br{\frac{\chi(\pi_{n,0}, \nu_n)}{\varepsilon}}}{(\sqrt{n}-\sqrt{\lambda_n})^2} \overset{(a)}{\leq} 4 n^{2\alpha-1} \log\br{\frac{\chi(\pi_{n,0}, \nu_n)}{\varepsilon}}.
    \end{align*}
    For $\alpha \in (1/2,1)$, we have that:
    \begin{align*}
        \tau_{mix}(\varepsilon) \leq \frac{\log\br{\frac{\chi(\pi_{n,0}, \nu_n)}{\varepsilon}}}{C_n (\sqrt{n}-\sqrt{\lambda_n})^2} \overset{(a)}{\leq} 4 C_n^{-1} n^{2\alpha-1} \log\br{\frac{\chi(\pi_{n,0}, \nu_n)}{\varepsilon}}
    \end{align*}    
    for some $C_n > 0$ such that $\lim_{n \rightarrow \infty} C_n = 1$. Here, note that $(a)$ holds since $(\sqrt{n}-\sqrt{\lambda_n})^2 = \frac{(n-\lambda_n)^2}{(\sqrt{n}+\sqrt{\lambda_n})^2} = \frac{n^{2-2\alpha}}{2n-n^{1-\alpha} + 2\sqrt{n(n-n^{1-\alpha})}} \geq \frac{n^{2-2\alpha}}{4n} = \frac{n^{1-2\alpha}}{4}$.
    
    For $\alpha \in (0,1/2)$, we have that:
    \begin{align*}
        \tau_{mix}(\varepsilon) \leq \frac{\log\br{\frac{\chi(\pi_{n,0}, \nu_n)}{\varepsilon}}}{D_n},
    \end{align*}
    for some $D_n > 0$ such that $\lim_{n \rightarrow \infty} D_n = 1/25$.

    Finally, for $\alpha = 1/2$ and $n \geq 110$, we have 
    \begin{align}
        \tau_{mix}(\varepsilon) \leq \frac{\log\br{\frac{\chi(\pi_{n,0}, \nu_n)}{\varepsilon}}}{H_n}.
    \end{align}
    for some $H_n > \frac{1}{10861} \forall n \geq 110$ and $\lim_{n \rightarrow \infty} H_n = \frac{1}{1781}$. 
\endproof

\section{Proof of finite-time statistics results}
\subsection{Proof of Corollary \ref{corollary: mean-queue-length-mmn}}
\label{ssec: mean-queue-length-mmn-proof}
In this Subsection, we will provide the full details of the proofs of Corollary \ref{corollary: mean-queue-length-mmn}. From Proposition \ref{prop: variational-representation-chi-square}, we need to establish an upper bound on the variance of the stationary distribution. In particular, we establish the variance bound for the stationary distribution of $M/M/n$ as follows.

\begin{lemma}
    \label{lemma: mmn-variance-bound}
    Let $\nu_n$ be the stationary distribution of the $M/M/n$ queue with arrival rate $\lambda = n-n^{1-\alpha}$ and unit service rate, and let $X$ be a random variable that admits $\nu_n$ as the distribution, we have
    \begin{align*}
        \Var_{\nu_n}\sqbr{X} \leq 2(n^\alpha + n)^2.
    \end{align*}
\end{lemma}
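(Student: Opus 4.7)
The plan is to exploit the two-phase structure of $\nu_n$: below $n$ the distribution is a truncated Poisson with parameter $\lambda_n$, while above $n$ it has a clean geometric tail. Explicitly, for $q \geq n$ we have $\nu_n(n+k) = \nu_n(n)\,\rho^k$ with $\rho = \lambda_n/n = 1 - n^{-\alpha}$, so conditionally on $\{X \geq n\}$ the overshoot $Y := X - n$ is geometric on $\{0,1,2,\ldots\}$ with success parameter $p = 1 - \rho = n^{-\alpha}$. I will use the crude but sufficient estimate $\Var_{\nu_n}(X) \leq \E_{\nu_n}[X^2]$ and split this second moment according to whether $X$ is above or below $n$.

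First I would bound the piece with $X < n$ by the trivial estimate $X^2 \leq n^2$, giving $\E[X^2\,\mathbf{1}_{X<n}] \leq n^2\,\Pr(X<n)$. For the piece with $X \geq n$, I would expand $X^2 = (n+Y)^2 = n^2 + 2nY + Y^2$ and use the standard geometric-moment identities
\[
\E[Y \mid X \geq n] = \tfrac{1-p}{p} \leq n^\alpha, \qquad \E[Y^2 \mid X \geq n] = \tfrac{(1-p)(2-p)}{p^2} \leq 2 n^{2\alpha},
\]
to conclude that $\E[X^2 \mid X \geq n] \leq n^2 + 2n \cdot n^\alpha + 2n^{2\alpha}$. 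The key algebraic observation is that the first three terms assemble into the square $(n+n^\alpha)^2$, leaving a spare $n^{2\alpha}$ which is itself bounded by $(n+n^\alpha)^2$. Hence $\E[X^2 \mid X \geq n] \leq 2(n+n^\alpha)^2$.

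Combining the two regions,
\[
\E[X^2] \leq n^2\,\Pr(X<n) + 2(n+n^\alpha)^2\,\Pr(X \geq n) \leq 2(n+n^\alpha)^2,
\]
since $n^2 \leq 2(n+n^\alpha)^2$ for all $n \geq 1$ and $\alpha > 0$. This yields the stated bound on the variance.

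The main (mild) obstacle is resisting the temptation to use the loose inequality $X^2 \leq 2n^2 + 2((X-n)^+)^2$, which would give $2n^2 + 4n^{2\alpha}$ and fails to fit inside $2(n+n^\alpha)^2$ once $\alpha > 1$. One must instead expand $(n+Y)^2$ fully and retain the cross term $2n\cdot n^\alpha = 2n^{1+\alpha}$ to complete the square $(n+n^\alpha)^2$. Everything else is a routine moment calculation for a geometric random variable, together with the geometric-tail property of $\nu_n$ above $n$.
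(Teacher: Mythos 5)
Your proof is correct and follows essentially the same strategy as the paper's: bound $\Var_{\nu_n}[X]$ by $\E_{\nu_n}[X^2]$, split at $q=n$, use the trivial bound $q^2 \le n^2$ below $n$, and exploit the geometric tail $\nu_n(n+k) \propto \rho^k$ above $n$ by expanding $(n+Y)^2$ and computing first and second moments. The one genuine difference is in how the two pieces are combined: the paper adds the two bounds (yielding $2n^2 + 2n^{1+\alpha} + 2n^{2\alpha}$, then completes the square), whereas you observe that $\E[X^2]$ is a convex combination of $\E[X^2\mid X<n]\le n^2$ and $\E[X^2\mid X\ge n]\le 2(n+n^\alpha)^2$, so the total is bounded by the maximum rather than the sum. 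Your probabilistic framing via conditional geometric moments is a little cleaner than the paper's direct summation, and your remark about why the naive bound $X^2 \le 2n^2 + 2((X-n)^+)^2$ would lose the needed cross term $2n^{1+\alpha}$ when $\alpha>1$ is accurate and is exactly the pitfall the careful expansion avoids. Both proofs are sound and deliver the same constant.
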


\proof
    To prove this result, we will do algebraic manipulation on the LHS and establish an upper bound on the second moment. We have
    
    \begin{align*}
        \Var_{\nu_n} \sqbr{X} &= \E_{\nu_n}\sqbr{X^2} - \E_{\nu_n}\sqbr{X}^2 \qquad\qquad\qquad\qquad\qquad\qquad\qquad\qquad \\
        &\leq \sum_{q = 0}^{n-1} \nu_n(0) \frac{\lambda^q}{q!} q^2 + \sum_{q = n}^\infty \nu_n(0) \frac{\lambda^n}{n!} \br{\frac{\lambda}{n}}^{q-n} q^2 \\
        &= \sum_{q = 0}^{n-1} \nu_n(0) \frac{\lambda^q}{q!} q^2 + \sum_{q = 0}^\infty \nu_n(0) \frac{\lambda^n}{n!} \br{\frac{\lambda}{n}}^{q} (q+n)^2 \text{ by letting $q \rightarrow q-n$ for the second term}\\
        &= \sum_{q = 0}^{n-1} \nu_n(0) \frac{\lambda^q}{q!} q^2 + \sum_{q = 0}^\infty \nu_n(0) \frac{\lambda^n}{n!} \br{\frac{\lambda}{n}}^{q} (q^2 + 2qn + n^2) 
    \end{align*}
    where the inequality follows from the fact that $\E_{\nu_n}[X]^2 =  \nu_n(0)^2 \br{\sum_{q = 0}^{n-1} \frac{\lambda^q}{q!} q + \frac{\lambda^n}{n!} n^\alpha \br{n^{\alpha} + n}}^2 \geq 0$. Moreover, we have
    \begin{align}
        \label{eqn: mmn-variance-before-subbing-lambda-bound}
        \Var_{\nu_n}\sqbr{X} &\leq \sum_{q = 0}^{n-1} \nu_n(0) \frac{\lambda^q}{q!} q^2 + \nu_n(0) \frac{\lambda^n}{n!} \br{\frac{\frac{\lambda}{n} \br{\frac{\lambda}{n} + 1}}{\br{1-\frac{\lambda}{n}}^3} + \frac{2n}{\br{1-\frac{\lambda}{n}}^2} + \frac{n^2}{1-\frac{\lambda}{n}}} 
        \qquad\qquad\qquad\qquad\qquad\qquad \\
        \label{eqn: mmn-variance-early-bound}
        &= \sum_{q = 0}^{n-1} \nu_n(0) \frac{\lambda^q}{q!} q^2 + \nu_n(0) \frac{\lambda^n}{n!} n^\alpha \br{\frac{\lambda}{n} \br{\frac{\lambda}{n}+1} n^{2\alpha} + 2n^{1+\alpha} + n^2}.
    \end{align}
    From the fact that $\nu_n(0) = \sqbr{\sum_{q = 0}^{n-1} \frac{\lambda^q}{q!} q^2 + \frac{\lambda^n}{n!} n^\alpha}^{-1}$, we have the bound
    \begin{align}
        \label{eqn: mmn-second-moment-bound-below-n}
        \sum_{q = 0}^{n-1} \nu_n(0) \frac{\lambda^q}{q!} q^2 &\leq  \br{\sum_{q = 0}^{n-1} \nu_n(0) \frac{\lambda^q}{q!}} (n-1)^2 \leq \br{\underbrace{\sum_{q = 0}^{n-1} \nu_n(0) \frac{\lambda^q}{q!} + \sum_{q = n}^\infty \nu_n(0) \frac{\lambda^n}{n!} \br{\frac{\lambda}{n}}^{q-n}}_{= 1}} (n-1)^2 < n^2.
    \end{align}
    Furthermore, we have
    \begin{align}
        \label{eqn: mmn-second-moment-bound-above-n}
        \nu_n(0) \frac{\lambda^n}{n!} n^\alpha \br{\frac{\lambda}{n} \br{\frac{\lambda}{n}+1} n^{2\alpha} + 2n^{1+\alpha} + n^2} \leq 2n^{2\alpha} + 2n^{1+\alpha} + n^2
    \end{align}
    from the fact that
    \begin{align}
        \nu_n(0) \frac{\lambda^n}{n!}n^\alpha = \nu_n(0) \frac{\lambda^n}{n!}\frac{1}{1-\frac{\lambda}{n}} = \sum_{q = n}^\infty \nu_n(0) \frac{\lambda^n}{n!} \br{\frac{\lambda}{n}}^{q-n} = \sum_{q = n}^\infty \nu_n(q) \leq 1
    \end{align}
    and $\frac{\lambda}{n} \leq 1$. From \eqref{eqn: mmn-variance-early-bound}, \eqref{eqn: mmn-second-moment-bound-below-n} and \eqref{eqn: mmn-second-moment-bound-above-n}, we have
    \begin{align}
        \Var_{\nu_n}\sqbr{X} \leq 2 \br{n^{2\alpha} + n^{1+\alpha} + n^2} \leq 2(n^\alpha + n)^2.
    \end{align}
\endproof

In essence, Lemma \ref{lemma: mmn-variance-bound} tells us that $\Var_{\nu_n}\sqbr{X} = O\br{(n+n^\alpha)^2}$, which is the same order of the squared mean queue length. While we may obtain a tighter bound with a more fine-grained analysis on $\E_{\nu_n}[X]^2$, this bound matches the desired order and is sufficient for our needs. For the Light Traffic regime (i.e. $\lambda_n/n \rightarrow 0$), as we have the arrival rate $\lambda_n$ grows at a much slower pace compared to $n$, and so we have the quantity $\Var_{\nu_n}\sqbr{X}$ is much smaller. We have the following lemma.

\begin{lemma}
    \label{lemma: mmn-variance-bound-light-traffic}
    Let $\nu_n$ be the stationary distribution of the $M/M/n$ queue with arrival rate $\lambda_n \in (0,n)$ such that $\lim_{n \rightarrow \infty} \frac{\lambda_n}{n} = c$ for $c \in [0,1)$ and unit service rate, and let $X$ be a random variable that admits $\nu_n$ as the distribution, we have
    \begin{align*}
        \Var_{\nu_n}\sqbr{X} \leq n \, \forall n \geq n_0
    \end{align*}
    where $n_0$ is a sufficiently large positive integer.
\end{lemma}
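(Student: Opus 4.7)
The plan is to exploit the intuition that in the Mean Field / Light Traffic regime ($\lambda_n/n \to 0$) the $M/M/n$ queue behaves almost identically to an $M/M/\infty$ queue with rate $\lambda_n$, whose stationary distribution is Poisson$(\lambda_n)$ with variance exactly $\lambda_n$. Since $\lambda_n \ll n$ by assumption, this will give the desired bound $\Var_{\nu_n}[X]\le n$. The first simplification is to replace the variance by a shifted second moment: $\Var_{\nu_n}[X]\le \E_{\nu_n}[(X-\lambda_n)^2]$, which I will split as a sum over $q<n$ and $q\ge n$.

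The key structural comparison is $\nu_n(0)\le e^{-\lambda_n}$. To see this, I would observe that for $q\ge n$, the geometric representation gives $\nu_n(q)=\nu_n(0)\,\tfrac{\lambda_n^q}{n!\,n^{q-n}}$, and since $q!\ge n!\,n^{q-n}$, this is at least the Poisson$(\lambda_n)$ mass $e^{-\lambda_n}\lambda_n^q/q!$ after renormalization. Summing from $0$ to $\infty$ therefore yields $\nu_n(0)^{-1}\ge e^{\lambda_n}$. The immediate corollary is that, for every $q<n$, $\nu_n(q)\le e^{-\lambda_n}\lambda_n^q/q!$, the genuine Poisson PMF. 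Consequently
\begin{align*}
\sum_{q=0}^{n-1}\nu_n(q)(q-\lambda_n)^2 \le \sum_{q=0}^{\infty} e^{-\lambda_n}\frac{\lambda_n^q}{q!}(q-\lambda_n)^2 = \lambda_n,
\end{align*}
which captures the ``Poisson-like'' contribution.

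It then remains to show that the tail contribution $\sum_{q\ge n}\nu_n(q)(q-\lambda_n)^2$ is negligible. Using the geometric form of $\nu_n$ above $n$ and the identities used in the proof of Lemma~\ref{lemma: mmn-variance-bound}, this sum equals $\P_{\nu_n}[X\ge n]$ times a factor of order $n^2+O(\lambda_n n^{\alpha_n}+n^{2\alpha_n})$. Combining $\nu_n(0)\le e^{-\lambda_n}$ with Stirling's bound $n!\ge\sqrt{2\pi n}(n/e)^n$ yields
\begin{align*}
\P_{\nu_n}[X\ge n] \;\le\; \frac{e^{-\lambda_n}}{\sqrt{2\pi n}}\left(\frac{e\lambda_n}{n}\right)^n n^{\alpha_n}.
\end{align*}
Because $\lambda_n/n\to 0$, the hypothesis is equivalent to $\alpha_n\log n\to 0$, so $n^{\alpha_n}\to 1$ and $e\lambda_n/n<1/2$ for all $n$ beyond some threshold. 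The factor $(e\lambda_n/n)^n$ therefore decays faster than any power of $n$, dominating the polynomial factor $n^2+O(\lambda_n n^{\alpha_n}+n^{2\alpha_n})$ and making the whole tail contribution vanish in the limit.

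The last step is to combine the two pieces: for $n$ large enough, the tail contribution is at most $1$, and $\lambda_n\le n-1$, so $\Var_{\nu_n}[X]\le\lambda_n+1\le n$. The main obstacle is making the tail estimate quantitative enough to determine the threshold $n_0$ explicitly; this requires carefully tracking how $\alpha_n\log n\to 0$ interacts with the Stirling bound so that $(e\lambda_n/n)^n\cdot n^{\alpha_n}\cdot n^2 \le 1/\sqrt{2\pi n}$ from some $n_0$ onward. Everything else is a routine specialization of the algebra already carried out in the proof of Lemma~\ref{lemma: mmn-variance-bound}.
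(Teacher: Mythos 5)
Your proposal is correct and takes a genuinely different — and sharper — route than the paper's. The paper specializes the computation in Lemma~\ref{lemma: mmn-variance-bound}: drop $\E_{\nu_n}[X]^2$, bound the $q<n$ part of $\E_{\nu_n}[X^2]$ crudely by $(n-1)^2$, and control the $q\ge n$ tail via Stirling and $\lambda_n/n \le 1/3$, concluding $\Var_{\nu_n}[X]\le (n-1)^2+n\le n^2$, which is in fact a factor of $n$ weaker than the bound the lemma statement asserts (Corollary~\ref{corollary: mean-queue-length-mmn} downstream is consistent with $\le n^2$, so the statement appears to carry a typo). You instead center at $\lambda_n$ via $\Var_{\nu_n}[X]\le\E_{\nu_n}[(X-\lambda_n)^2]$, and your key observation — that $q!\ge n!\,n^{q-n}$ for $q\ge n$ forces $\nu_n(0)\le e^{-\lambda_n}$ — gives exact Poisson$(\lambda_n)$ domination of $\nu_n$ on $\{0,\dots,n-1\}$, so that piece contributes at most the Poisson variance $\lambda_n$. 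The tail is $\nu_n(0)\frac{\lambda_n^n}{n!}\sum_{k\ge 0}\rho^k(k+n-\lambda_n)^2$ with $\rho=\lambda_n/n\to 0$; the geometric sum is $O(n^2)$ once $\rho$ is bounded away from $1$, and the prefactor is at most $\tfrac{1}{\sqrt{2\pi n}}(e\lambda_n/n)^n$ by Stirling, so the product vanishes super-polynomially and is eventually $\le 1$. That yields $\Var_{\nu_n}[X]\le\lambda_n+1\le n$, the correct $\Theta(\lambda_n)$ order, matching the lemma as literally stated. Pinning down $n_0$, which you flag as the one loose end, is routine once you fix a constant $c<1/e$, take $n_0$ large enough that $\lambda_n/n\le c$, and demand $(ec)^n n^2\le\sqrt{2\pi n}/4$. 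As a side benefit your route also avoids a small sign slip in the paper's own chain, which discards the factors $(1-\lambda_n/n)^{-j}>1$ as if they were $\le 1$.
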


\proof
    Since $\lambda_n \in (0,n)$, there exists a unique $\alpha_n \in (0,1)$ such that $\lambda_n = n-n^{1-\alpha_n}$. Similarly from Lemma \ref{lemma: mmn-variance-bound}, we have the bound
    \begin{align}
        \nonumber
        \Var_{\nu_n}\sqbr{X} &\leq \sum_{q = 0}^{n-1} \nu_n(0) \frac{\lambda_n^q}{q!} q^2 + \nu_n(0) \frac{\lambda_n^n}{n!} \br{\frac{\frac{\lambda_n}{n} \br{\frac{\lambda_n}{n} + 1}}{\br{1-\frac{\lambda_n}{n}}^3} + \frac{2n}{\br{1-\frac{\lambda_n}{n}}^2} + \frac{n^2}{1-\frac{\lambda_n}{n}}} \text{ from \eqref{eqn: mmn-variance-before-subbing-lambda-bound}} \\
        \nonumber
        &\leq (n-1)^2 + \nu_n(0) \frac{\lambda_n^n}{n!} \br{\frac{\frac{\lambda_n}{n} \br{\frac{\lambda_n}{n} + 1}}{\br{1-\frac{\lambda_n}{n}}^3} + \frac{2n}{\br{1-\frac{\lambda_n}{n}}^2} + \frac{n^2}{1-\frac{\lambda_n}{n}}} \text{ from \eqref{eqn: mmn-second-moment-bound-below-n}} \\
        \label{eqn: light-traffic-variance-initial-bound}
        &\leq (n-1)^2 + \nu_n(0) \frac{\lambda_n^n}{n!} \br{\frac{\lambda_n}{n} \br{\frac{\lambda_n}{n} + 1} n^{3\alpha_n} + 2n^{1+2\alpha_n} + n^{2+\alpha_n}} \text{ from $\lambda_n \geq 0$}.
    \end{align}
    Since $\lim_{n \rightarrow \infty} \frac{\lambda_n}{n} = c$, there exists a positive integer $n_1$ such that $\frac{\lambda_n}{n} \leq \frac{2c+1}{3} \, \forall n \geq n_1$. We then pick $m = \lceil n(1+c)/2 \rceil$ to obtain
    \begin{align}
        \nu_n(0) \frac{\lambda_n^n}{n!} \leq \nu_n(0) \frac{\lambda_n^m}{m!} \br{\frac{\lambda_n}{m}}^{n-m} \leq \nu_n(m) \br{\frac{\lambda_n}{\frac{n(1+c)}{2}}}^{n-\frac{n(1+c)}{2}} \leq \nu_n(m) \br{\frac{2(2c+1)}{3(c+1)}}^{\frac{n(1-c)}{2}}.
    \end{align}
    Here, note that $\frac{2(2c+1)}{3(c+1)} < 1$ since $c \in [0,1)$ and $\nu_n(m) \leq 1$. From here and since $\lambda_n < n$, we have
    \begin{align}
        \label{eqn: poisson-light-traffic-bound}
        \nu_n(0) \frac{\lambda_n^n}{n!} \br{\frac{\lambda_n}{n} \br{\frac{\lambda_n}{n} + 1} n^{3\alpha_n} + 2n^{1+2\alpha_n} + n^{2+\alpha_n}} &\leq \br{\frac{2(2c+1)}{3(c+1)}}^{\frac{n(1-c)}{2}} \br{2 n^{3\alpha_n} + 2n^{1+2\alpha_n} + n^{2+\alpha_n}}.
    \end{align}
    Since $\frac{2(2c+1)}{3(c+1)} < 1$, we can choose a sufficiently large $n_2$ such that for $n \geq n_2$ such that
    \begin{align}
        \label{eqn: light-traffic-second-bound}
        \br{\frac{2(2c+1)}{3(c+1)}}^{\frac{n(1-c)}{2}} \br{2 n^{3\alpha_n} + 2n^{1+2\alpha_n} + n^{2+\alpha_n}} \leq n.
    \end{align}
    From \eqref{eqn: light-traffic-variance-initial-bound}, \eqref{eqn: light-traffic-second-bound}, $\lim_{n \rightarrow \infty} \alpha_n = 0$ and for a sufficiently large $n_0 \geq \max\{n_1, n_2\}$, we have
    \begin{align}
        \Var_{\nu_n}\sqbr{X} \leq (n-1)^2 + n \leq n^2.
    \end{align}
    Hence proved.
\endproof
From here, we can use the variance bounds in Lemma \ref{lemma: mmn-variance-bound} and Lemma \ref{lemma: mmn-variance-bound-light-traffic} to establish the finite-time mean queue length as follows.


\proof
Let $M_{n,\alpha}$ be the corresponding mixing rate of the system when the system has $n$ servers and the heavy-traffic parameter $\alpha$. From Theorem \ref{thm: unifying-theorem}, we have
\begin{align}
    \label{eqn: chi-square-convergence-1}
    \chi^2(\pi_{n,t},\nu_n) = \sum_{x = 0}^\infty \frac{\br{\pi_{n,t}(x) - \nu_n(x)}^2}{\pi(x)} \leq e^{-2M_{n,\alpha} t} \chi^2(\pi_{n,0}, \nu_n).
\end{align}
From Corollary \ref{corollary: moment-bound}, we have that
\begin{align}
    \label{eqn: donsker-varadhan-first-moment}
    \left| \E_{\pi_{n,t}} \sqbr{X} - \E_{\nu_n} \sqbr{X} \right| \leq \chi(\pi_{n,t},\nu_n) \sqrt{\E_{\nu_n}\sqbr{X^2}-\E_{\nu_n}\sqbr{X}^2}
\end{align}
From \eqref{eqn: chi-square-convergence-1} and \eqref{eqn: donsker-varadhan-first-moment}, we have
\begin{align}
    \label{eqn: mean-queue-length-bound-with-general-mixing-rate}
    \big| \E_{\pi_{n,t}}\sqbr{X}-\E_{\nu_n}\sqbr{X} \big| \leq e^{-M_{n,\alpha} t} \sqrt{\Var_{\nu_n}\sqbr{X}} \chi(\pi_{n,0},\nu_n).
\end{align}
For $\alpha > 0$ and from Lemma \ref{lemma: mmn-variance-bound}, we have
\begin{align}
    \big| \E_{\pi_{n,t}}\sqbr{X}-\E_{\nu_n}\sqbr{X} \big| \leq e^{-M_{n,\alpha} t} \sqrt{2} \br{n^\alpha+n} \chi(\pi_{n,0},\nu_n).
\end{align}
From Theorem \ref{thm: unifying-theorem}, we have three cases: $\alpha \geq 1, \alpha \in (1/2,1)$ and $\alpha \in (0,1/2)$. For $\alpha \geq 1$, we have $M_{n,\alpha} = \br{\sqrt{n}-\sqrt{\lambda_n}}^2$ and so
\begin{align*}
    \big| \E_{\pi_{n,t}}\sqbr{X}-\E_{\nu_n}\sqbr{X} \big| \leq e^{-\br{\sqrt{n}-\sqrt{\lambda_n}}^2 t} \sqrt{2} \br{n^\alpha+n} \chi(\pi_{n,0},\nu_n).
\end{align*}
For $1/2 < \alpha < 1$, we have $M_{n,\alpha} = C_n \br{\sqrt{n}-\sqrt{\lambda_n}}^2$ such that $\lim_{n \rightarrow \infty} C_n = 1$ and this gives
\begin{align*}
    \big| \E_{\pi_{n,t}}\sqbr{X}-\E_{\nu_n}\sqbr{X} \big| \leq e^{-C_n \br{\sqrt{n}-\sqrt{\lambda_n}}^2 t} \sqrt{2} \br{n^\alpha+n} \chi(\pi_{n,0},\nu_n).
\end{align*}
Similarly, for $0 < \alpha < 1/2$, we have $M_{n,\alpha} = D_n$ such that $D_n > d$ for some positive universal constant $d$ and we have
\begin{align*}
    \big| \E_{\pi_{n,t}}\sqbr{X}-\E_{\nu_n}\sqbr{X} \big| \leq e^{-D_n t} \sqrt{2} \br{n^\alpha+n} \chi(\pi_{n,0},\nu_n).
\end{align*}
Finally, when $\lambda_n$ is a sequence of arrival rates such that $\lim_{n \rightarrow \infty} \frac{\lambda_n}{n} = c$, we have from \eqref{eqn: mean-queue-length-bound-with-general-mixing-rate} and Lemma \ref{lemma: mmn-variance-bound-light-traffic} that
\begin{align}
    \big| \E_{\pi_{n,t}}\sqbr{X}-\E_{\nu_n}\sqbr{X} \big| \leq e^{-L_n t} n \chi(\pi_{n,0},\nu_n)
\end{align}
where $\{L_n\}_{n \geq 0}$ is a positive sequence such that $\lim_{n \rightarrow \infty} L_n = 1-c$.
\endproof

\textbf{Remark on the dependence of $n$}: While it is known that with a proper choice of the rescaling factor, we can show the existence of the MGF and consequently, the boundedness of the second moment, it would give a weak characterization on the dependence of $n$. By establishing the second moment bound of $\nu$, we have shown that the $n$ dependency is only polynomial in $n$, and thus, it would be negligible given the exponential convergence of the system.

\subsection{Proof of Corollary \ref{corollary: tail-bounds}}
\label{sssec: tail-bounds-proof}
To establish tail bound results, we first need to obtain bounds on the MGF, which involves careful analysis and choice of parameters. Our MGF bounds are stated as follows.

\begin{lemma}
\label{lemma: mgf-bounds}
Let $n \geq n_0$ and $\nu_n$ be the stationary distribution of the $M/M/n$ system with arrival rate $\lambda = n-n^{1-\alpha}$ and unit service rate, we have  $\E_{\nu_n}\sqbr{e^{\theta_n(q-n)}} < \infty$ for $\theta < \log \frac{n}{\lambda}$. Moreover, when $\varepsilon_n = 1-\frac{\lambda}{n}$ and $\theta_n = \frac{1}{1+\delta} \log \br{\frac{n}{\lambda}}$, we have
\begin{align}
    \label{eqn: mgf-pre-limit-bound}
    \E_{\nu_n}\sqbr{e^{\frac{\varepsilon_n (q-n)}{1+\delta}}} \leq \frac{\sum_{q=0}^{n-1} \frac{\lambda^q}{q!} e^{\theta_n(q-n)} + \frac{\delta+1}{\delta} \frac{\lambda^n}{n!} n^\alpha}{\sum_{q=0}^{n-1} \frac{\lambda^q}{q!} + \frac{\lambda^n}{n!} n^\alpha} \leq 1+\frac{1}{\delta}.
\end{align}
Moreover, if $\alpha \in (0,1/2)$ then
\begin{align*}
    \lim_{n \rightarrow \infty} \E_{\nu_n}\sqbr{e^{\frac{\varepsilon_n (q-n)}{1+\delta}}} = 0.
\end{align*}
If $\alpha \in \br{1/2,\infty}$ then
\begin{align*}
    \lim_{n \rightarrow \infty} \E_{\nu_n}\sqbr{e^{\frac{\varepsilon_n (q-n)}{1+\delta}}} &= 1+\frac{1}{\delta}.
\end{align*}
\end{lemma}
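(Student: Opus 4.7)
The plan is to exploit the product-form stationary distribution of $M/M/n$: $\nu_n(q)=\nu_n(0)\lambda^q/q!$ for $q\le n-1$ and $\nu_n(q)=\nu_n(0)(\lambda^n/n!)(\lambda/n)^{q-n}$ for $q\ge n$, with normalizer $\nu_n(0)^{-1}=A'+B$, where $A'=\sum_{q=0}^{n-1}\lambda^q/q!$ and $B=(\lambda^n/n!)n^\alpha$ (using $\sum_{k\ge 0}(\lambda/n)^k=n^\alpha$ since $1-\lambda/n=\varepsilon=n^{-\alpha}$). Finiteness for $\theta<\log(n/\lambda)$ is immediate by splitting at $q=n$: the sum over $q<n$ is finite, and the tail is a geometric series with ratio $(\lambda/n)e^\theta<1$ exactly under this condition.

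For the middle inequality I would substitute $\theta_n=\log(n/\lambda)/(1+\delta)$ so that the geometric ratio becomes $(\lambda/n)^{\delta/(1+\delta)}$ and the tail sums to $(1-(\lambda/n)^{\delta/(1+\delta)})^{-1}$. The concavity-tangent inequality $y^r\le ry+(1-r)$ evaluated at $y=1-\varepsilon$ with $r=\delta/(1+\delta)\in(0,1)$ yields $(1-\varepsilon)^{\delta/(1+\delta)}\le 1-\delta\varepsilon/(1+\delta)$, so the tail sum is at most $(1+\delta)n^\alpha/\delta$. Combining this with the finite sum over $q<n$ and dividing by $\nu_n(0)^{-1}$ produces the middle expression as an upper bound on $\E_{\nu_n}[e^{\theta_n(q-n)}]$. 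Since $\varepsilon\le -\log(1-\varepsilon)=\log(n/\lambda)$ implies $\varepsilon/(1+\delta)\le\theta_n$, we have $e^{\varepsilon(q-n)/(1+\delta)}\le e^{\theta_n(q-n)}$ for $q\ge n$ and $e^{\varepsilon(q-n)/(1+\delta)}\le 1$ for $q<n$, which together transfer the bound to the stated LHS. The final inequality $\le 1+1/\delta$ follows because $e^{\theta_n(q-n)}\le 1$ for $q\le n-1$, so the numerator is at most $A'+(1+1/\delta)B$, making the ratio a convex combination of $1$ and $1+1/\delta$, hence at most $1+1/\delta$.

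For the asymptotics, set $p_n:=B/(A'+B)=\nu_n(q\ge n)$ and rewrite the ratio form as $(1-p_n)R_n+p_n(1+1/\delta)$, where $R_n\in[0,1]$ is the conditional mean of $e^{\theta_n(q-n)}$ over $q<n$. Stirling gives $\lambda^n/n!\asymp e^\lambda e^{-n^{1-2\alpha}/2}/\sqrt{2\pi n}$ via $(1-\varepsilon)^n=e^{-n\varepsilon-n\varepsilon^2/2+o(1)}$, hence $B\asymp e^\lambda n^{\alpha-1/2}e^{-n^{1-2\alpha}/2}/\sqrt{2\pi}$, while the Gaussian approximation of the Poisson CDF at its mean yields $A'\asymp e^\lambda\Phi(n^{1/2-\alpha})$. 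For Super-HW ($\alpha>1/2$), $n^{1/2-\alpha}\to 0$ so $A'\sim e^\lambda/2$ while $B/A'\to\infty$, giving $p_n\to 1$ and the bound tends to $1+1/\delta$. For Sub-HW ($\alpha<1/2$), the factor $e^{-n^{1-2\alpha}/2}\to 0$ forces $p_n\to 0$; additionally, the $\nu_n$-typical $q$ lies at distance $\Theta(n^{1-\alpha})$ below $n$ with $\theta_n\asymp n^{-\alpha}$, so $\theta_n(q-n)\asymp -n^{1-2\alpha}\to-\infty$ and $R_n\to 0$, driving the bound to $0$.

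The main obstacle will be making the Stirling/Gaussian asymptotics for $p_n$ and $R_n$ fully rigorous in both regimes, in particular extracting the correct $n^{\alpha-1/2}$ and $e^{-n^{1-2\alpha}/2}$ prefactors from a careful analysis of the Poisson tails. These estimates are closely analogous to those already developed in Lemma \ref{lemma: roughly-uniform} and Lemma \ref{lemma: mmn-variance-bound-light-traffic}, so I would adapt the same Stirling-plus-concentration machinery rather than build it from scratch.
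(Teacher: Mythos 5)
Your plan for the $\le 1+1/\delta$ bound follows the paper's skeleton: split the normalizer at $q=n$, sum the geometric tail with ratio $(\lambda/n)^{\delta/(1+\delta)}$, and cap that factor by $\frac{\delta+1}{\delta}n^\alpha$. Two of your steps are actually cleaner: $\varepsilon/(1+\delta)\le\theta_n$ follows from $\varepsilon\le-\log(1-\varepsilon)$ alone (the paper routes through a second-order Taylor bound, Lemma~\ref{lemma: log-lower-bound}, and needs $n\ge n_0$), and your concavity tangent $(1-\varepsilon)^{\delta/(1+\delta)}\le 1-\delta\varepsilon/(1+\delta)$ is a one-line substitute for the paper's Lemma~\ref{lemma: mgf-above-n-asymptotic}.

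However, the ``transfer'' step creates a real gap. Bounding $e^{\varepsilon(q-n)/(1+\delta)}\le 1$ for $q<n$ and $\le e^{\theta_n(q-n)}$ for $q\ge n$ gives $\E[e^{\varepsilon(q-n)/(1+\delta)}]\le (A'+\tfrac{\delta+1}{\delta}B)/(A'+B)$ with $A'=\sum_{q<n}\lambda^q/q!$ and $B=\tfrac{\lambda^n}{n!}n^\alpha$, which is $\le 1+1/\delta$ but is strictly larger than the middle expression of~\eqref{eqn: mgf-pre-limit-bound} (whose numerator carries $\sum_{q<n}(\lambda^q/q!)e^{\theta_n(q-n)}\le A'$). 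More seriously, this weakened bound cannot give the sub-Halfin-Whitt limit: since $p_n=B/(A'+B)\to 0$ when $\alpha\in(0,1/2)$, your bound tends to $1$, not to $0$. The $\to 0$ conclusion depends precisely on the $q<n$ contribution to $\E[e^{\varepsilon(q-n)/(1+\delta)}]$ being vanishingly small --- typical $q$ sits $\Theta(n^{1-\alpha})$ below $n$, so the exponent is $\asymp -n^{1-2\alpha}$ --- and bounding that factor by $1$ throws the decay away. Your $(1-p_n)R_n+p_n(1+1/\delta)$ decomposition and CLT/Stirling heuristics are a nice way to see \emph{why} the limits hold, but they are written for the $\theta_n$-exponent middle expression; to close the argument you must instead work directly with the exponent $\varepsilon/(1+\delta)$ on the $q<n$ sum and prove the analogue of $R_n\to 0$ by an explicit Poisson-tail estimate, which is exactly what the paper does by splitting at $m=\lfloor n-n^{1-\alpha}/2\rfloor$ and bounding consecutive-term ratios. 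Finally, the super-Halfin-Whitt claim is an \emph{equality} in the limit, and your plan (as does the paper's write-up) only supplies the upper bound $\le 1+1/\delta$; a matching lower bound from the $q\ge n$ contribution with exponent $\varepsilon/(1+\delta)$ still needs to be argued.
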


\proof
Let $n_0 = \max\left\lbrace 65, 2^{\frac{1}{\alpha}} \right\rbrace$ and $\theta_n = \frac{1}{1+\delta} \log \br{\frac{n}{\lambda}}$ for $\delta \in (0,+\infty)$. First, we will establish the following bound:
\begin{align*}
    \E_{\nu_n}\sqbr{e^{\frac{\varepsilon_n (q-n)}{1+\delta}}} &\leq \E_{\nu_n}\sqbr{e^{\theta_n(q-n)}}.
\end{align*}
Let $n_0 = \max\left\lbrace 65, 2^{\frac{1}{\alpha}} \right\rbrace$ and $\theta_n = \frac{1}{1+\delta} \log \br{\frac{n}{\lambda}}$ for $\delta \in (0,+\infty)$. Note that for all $n \geq n_0$, we have
\begin{align}
    \label{eqn: mgf-epsilon-tail-bound}
    \frac{\varepsilon_n}{1+\delta} = \theta_n \left(n^\alpha \log\left(\frac{n}{\lambda}\right)\right)^{-1} \overset{(a)}{\leq} 
    \theta_n \implies \E_{\nu_n}\sqbr{e^{\frac{\varepsilon_n (q-n)}{1+\delta}}} &\leq \E_{\nu_n}\sqbr{e^{\theta_n(q-n)}},
\end{align}
where the (a) follows from Lemma \ref{lemma: log-lower-bound} as
\begin{align}
    n^\alpha \log \br{\frac{n}{\lambda}} &\geq n^\alpha \br{\frac{n^{1-\alpha}}{\lambda} - \frac{1}{2}\br{\frac{n^{1-\alpha}}{\lambda}}^2} \\
    &= \frac{n}{\lambda} - \frac{n^{2-\alpha}}{2\lambda^2} \\
    &= 1 + \frac{n^{1-\alpha}}{\lambda} - \frac{n^{2-\alpha}}{2\lambda^2} \\
    &\geq 1
\end{align}
where the last inequality holds since $\frac{\lambda}{n} \geq \frac{1}{2}$, which is true for all $n \geq n_0$.
Next, we will bound the quantity $\E_{\nu_n}\sqbr{e^{\theta_n(q-n)}}$ for a fixed $n \geq n_0$. 
Let $\nu_n$ be the stationary distribution of the $M/M/n$ system, we have
\begin{align*}
    \nu_n(0) = \sqbr{\sum_{k=0}^{n-1} \frac{\lambda^k}{k!} + \frac{\lambda^n}{n!} \frac{1}{1-\rho}}^{-1} = \sqbr{\sum_{k=0}^{n-1} \frac{\lambda^k}{k!} + \frac{\lambda^n}{n!} n^\alpha}^{-1}.
\end{align*}
Consider the MGF $\E_{\nu_n}\sqbr{e^{\theta_n(q-n)}}$, we have
\begin{align*}
    \E_{\nu_n}\sqbr{e^{\theta_n (q-n)}} = \sum_{q < n} \nu_n(0) \frac{\lambda^q}{q!} e^{\theta_n (q-n)} + \sum_{q \geq n} \nu_n(0) \frac{\lambda^n}{n!} \br{\frac{e^{\theta_n} \lambda}{n}}^{q-n}.
\end{align*}
Since $\theta_n = \frac{1}{1+\delta} \log \frac{n}{\lambda}$, we have $\frac{e^{\theta_n} \lambda}{n} < 1$ and so $\sum_{q \geq n} \nu_n(0) \frac{\lambda^n}{n!} \br{\frac{e^{\theta_n} \lambda}{n}}^{q-n}$ is summable, which means that $\E\sqbr{e^{\theta_n(q-n)}}$ exists. We have:
\begin{align*}
    \E_{\nu_n} \sqbr{e^{\theta_n(q-n)}} &= \sum_{q < n} \nu_n(0) \frac{\lambda^q}{q!} e^{\theta_n (q-n)} + \sum_{q \geq n} \nu_n(0) \frac{\lambda^n}{n!} \br{\frac{e^{\theta_n} \lambda}{n}}^{q-n} \\
    &= \sum_{q < n} \nu_n(0) \frac{\lambda^q}{q!} e^{\theta_n (q-n)} + \nu_n(0) \frac{\lambda^n}{n!} \frac{1}{1-\frac{e^{\theta_n} \lambda}{n}} \\
    &= \sum_{q < n} \nu_n(0) \frac{\lambda^q}{q!} e^{\theta_n (q-n)} + \nu_n(0) \frac{\lambda^n}{n!} \frac{1}{1-\br{\frac{\lambda}{n}}^{\frac{\delta}{\delta+1}}} \\
    &= \frac{\sum_{q=0}^{n-1} \frac{\lambda^q}{q!} e^{\theta_n(q-n)} + \frac{\lambda^n}{n!} \frac{1}{1-\br{\frac{\lambda}{n}}^{\frac{\delta}{\delta+1}}}}{\sum_{q=0}^{n-1} \frac{\lambda^q}{q!} + \frac{\lambda^n}{n!} n^\alpha} \\
    &= \frac{\sum_{q=0}^{n-1} \frac{\lambda^q}{q!} e^{\theta_n(q-n)} + \frac{\lambda^n}{n!} n^\alpha \frac{1}{n^\alpha\br{1-\br{\frac{\lambda}{n}}^{\frac{\delta}{\delta+1}}}}}{\sum_{q=0}^{n-1} \frac{\lambda^q}{q!} + \frac{\lambda^n}{n!} n^\alpha}.
\end{align*}
From here, note that we have the bound $n^\alpha \br{1-\br{\frac{\lambda}{n}}^{\frac{\delta}{\delta+1}}} \geq \frac{\delta+1}{\delta}$. For more details, the reader can refer to Lemma \ref{lemma: mgf-above-n-asymptotic} in Appendix \ref{sec: misc-results}. Apply this bound and from \eqref{eqn: mgf-epsilon-tail-bound}, we have


\begin{align}
    \E_{\nu_n}\sqbr{e^{\frac{\varepsilon_n (q-n)}{1+\delta}}}
    &\leq \frac{\sum_{q=0}^{n-1} \frac{\lambda^q}{q!} e^{\theta_n(q-n)} + \frac{\delta+1}{\delta} \frac{\lambda^n}{n!} n^\alpha}{\sum_{q=0}^{n-1} \frac{\lambda^q}{q!} + \frac{\lambda^n}{n!} n^\alpha} \leq 1+\frac{1}{\delta}.
\end{align}
The last inequality follows from the fact that if $\frac{a}{b} \leq \frac{c}{d}$ then $\frac{a+c}{b+d} \leq \frac{c}{d}$ for $a,b,c,d > 0$. 

Now, we will study the asymptotic behavior of the MGF $\E_{\nu_n}\sqbr{e^{\frac{\varepsilon_n (q-n)}{1+\delta}}}$. Taking limit on both sides and also apply Lemma \ref{lemma: mgf-above-n-asymptotic}, we have
\begin{align}
    \label{eqn: mgf-limit}
    \lim_{n \rightarrow \infty} \E_{\nu_n} \sqbr{e^{\theta_n(q-n)}} &= \lim_{n \rightarrow \infty} \frac{\sum_{q=0}^{n-1} \frac{\lambda^q}{q!} e^{\theta_n(q-n)} + \frac{\delta+1}{\delta} \frac{\lambda^n}{n!} n^\alpha}{\sum_{q=0}^{n-1} \frac{\lambda^q}{q!} + \frac{\lambda^n}{n!} n^\alpha}.
\end{align}
Here, we have to consider the different cases when $\alpha \in (0,1/2)$ and $\alpha \in (1/2, +\infty)$ since there is phase transition at the $\alpha = 1/2$ regime.

\textbf{Case 1 ($\alpha \in (0,1/2)$):} In this case, we want to prove that
\begin{align*}
    \lim_{n \rightarrow \infty} \E_{\nu_n} \sqbr{e^{\theta_n(q-n)}} &= 0.
\end{align*}
Choosing $m = \left\lfloor n - \frac{n^{1-\alpha}}{2} \right\rfloor$, observe that for $q \geq m$, we have $\frac{\lambda^q}{q!} \leq \frac{\lambda^m}{m!} \br{\frac{\lambda}{m}}^{q-m}$ which implies
\begin{align}
\sum_{m < q < n} \frac{\lambda^q}{q!} &\leq \frac{\lambda^m}{m!} \sum_{m < q < n} \br{\frac{\lambda}{m}}^{q-m} < \frac{\lambda^m}{m!} \frac{1}{1-\frac{\lambda}{m}} = \frac{\lambda^m}{m!} \frac{m}{m-\lambda} \leq \frac{\lambda^m}{m!} \frac{n - \frac{n^{1-\alpha}}{2}}{\frac{n^{1-\alpha}}{2}} < \frac{\lambda^m}{m!} 2n^\alpha. 
    \label{eqn: poisson-sum-tail-case}
\end{align}
And so, we can bound the MGF as follows:
\begin{align*}
    \E_{\nu_n} \sqbr{e^{\theta_n(q-n)}} &\leq \frac{\sum_{q=0}^{n-1} \frac{\lambda^q}{q!} e^{\theta_n(q-n)} + \frac{\delta+1}{\delta} \frac{\lambda^n}{n!} n^\alpha}{\sum_{q=0}^{n-1} \frac{\lambda^q}{q!} + \frac{\lambda^n}{n!} n^\alpha} \\
    &= \frac{\sum_{0 \leq q \leq m} \frac{\lambda^q}{q!} e^{\theta_n(q-n)} + \sum_{m < q < n} \frac{\lambda^q}{q!} e^{\theta_n (q-n)} + \frac{\delta + 1}{\delta} \frac{\lambda^n}{n!} n^\alpha}{\sum_{q=0}^{n-1} \frac{\lambda^q}{q!} + \frac{\lambda^n}{n!} n^\alpha} \\
    &< \frac{e^{-\theta_n \frac{n^{1-\alpha}}{2}} \sum_{0 \leq q \leq m} \frac{\lambda^q}{q!} + \frac{\lambda^m}{m!} 2n^\alpha e^{-\theta_n} + \frac{\delta + 1}{\delta} \frac{\lambda^n}{n!} n^\alpha}{\sum_{q=0}^{n-1} \frac{\lambda^q}{q!} + \frac{\lambda^n}{n!} n^\alpha} \text{ from \eqref{eqn: poisson-sum-tail-case}}\\
    &< e^{-\theta_n \frac{n^{1-\alpha}}{2}} + \frac{\frac{\lambda^m}{m!} 2n^\alpha}{\frac{\lambda^m}{m!} \frac{n^{1-\alpha}}{4}} + \frac{\frac{\delta + 1}{\delta} \frac{\lambda^n}{n!} n^\alpha}{\frac{\lambda^n}{n!} \frac{n^{1-\alpha}}{2}} \text{ from $\theta_n > 0$ and } \frac{\lambda^q}{q!} \geq \frac{\lambda^n}{n!} \, \forall \lambda < q \leq n \\
    &= e^{-\theta_n \frac{n^{1-\alpha}}{2}} + 8n^{2\alpha - 1} + \frac{\delta + 1}{\delta} 2n^{2\alpha - 1}
\end{align*}
Now, we need to bound $\theta_n$. For $\theta_n = \frac{1}{1+\delta} \log \br{\frac{n}{\lambda}}$, we have $\theta_n \leq \frac{1}{1+\delta} \br{\frac{n}{\lambda}-1} < \frac{1}{1+\delta} \frac{n^{1-\alpha}}{\frac{7n}{8}} = \frac{8}{7(1+\delta)}n^{-\alpha}$. For $\alpha \in \br{0, \frac{1}{2}}$, this gives $\E_{\nu_n} \sqbr{e^{\theta_n(q-n)}} < e^{-\frac{4n^{1-2\alpha}}{7(1+\delta)}} + \br{10 + \frac{2}{\delta}}n^{2\alpha - 1}$. Taking limits on both sides, we have
\begin{align*}
    \lim_{n \rightarrow \infty} \E_{\nu_n} \sqbr{e^{\theta_n(q-n)}}
    &< \lim_{n \rightarrow \infty} \underbrace{e^{-\frac{4n^{1-2\alpha}}{7(1+\delta)}}}_{\rightarrow 0} + \underbrace{\br{10 + \frac{2}{\delta}}n^{2\alpha - 1}}_{\rightarrow 0} = 0.
\end{align*}
By the Sandwich Theorem and since $\E_{\nu_n} \sqbr{e^{\theta_n(q-n)}} \geq 0$, we have $\lim_{n \rightarrow \infty} \E_{\nu_n} \sqbr{e^{\theta_n(q-n)}} = 0$.

\textbf{Case 2 ($\alpha \in (1/2,\infty)$):} In this case, we want to prove that
\begin{align*}
    \lim_{n \rightarrow \infty} \E_{\nu_n} \sqbr{e^{\theta_n(q-n)}} &= \frac{\delta+1}{\delta}.
\end{align*}
Observe that for $\alpha \in \br{\frac{1}{2}, \infty}$, we have
\begin{align*}
    \lim_{n \rightarrow \infty} \frac{\sum_{q=0}^{n-1} \frac{\lambda^q}{q!}}{\frac{\lambda^n}{n!} n^\alpha} = 0.
\end{align*}
Note that we have
\begin{align}
\label{eqn: poisson-stirling-bound-for-mgf}
\frac{\sum_{q=0}^{n-1} \frac{\lambda^q}{q!} e^{\theta_n(q-n)}}{\frac{\lambda^n}{n!} n^\alpha} \leq \frac{\sum_{q=0}^{n-1} \frac{\lambda^q}{q!}}{\frac{\lambda^n}{n!} n^\alpha} \overset{*}{\leq}  \frac{\sum_{q=0}^{\infty} \frac{\lambda^q}{q!}}{\frac{\lambda^n}{n!} n^\alpha} \overset{**}{=} n!e^{\lambda} \lambda^{-n} n^{-\alpha} \leq 2\sqrt{2\pi} e^{-n^{1-\alpha}} \left(\frac{\lambda}{n}\right)^{-n} n^{\frac{1}{2}-\alpha} \leq 4\sqrt{2\pi} n^{\frac{1}{2}-\alpha},
\end{align}
where $(*)$ and $(**)$ follows from
\begin{align}
    \sum_{q=0}^{n-1} \frac{e^{-\lambda} \lambda^q}{q!} \leq \sum_{q=0}^\infty \frac{e^{-\lambda} \lambda^q}{q!} = 1.
\end{align}
In addition, the second last inequality of \eqref{eqn: poisson-stirling-bound-for-mgf} holds by Stirling's formula and the last inequality holds as $\left(\frac{\lambda}{n}\right)^{-n} = \left(1-n^{-\alpha}\right)^{-n} \leq 2e^{n^{1-\alpha}}$ for $n \geq n_0$. Now, by taking the limit as $n \rightarrow \infty$ and noting that $\frac{\sum_{q=0}^{n-1} \frac{\lambda^q}{q!} e^{\theta_n(q-n)}}{\frac{\lambda^n}{n!} n^\alpha} \geq 0$, we get
\begin{align}
    \label{eqn: poisson-ratio-limit-1}
    \lim_{n \rightarrow \infty} \frac{\sum_{q=0}^{n-1} \frac{\lambda^q}{q!} e^{\theta_n(q-n)}}{\frac{\lambda^n}{n!} n^\alpha}  = \lim_{n \rightarrow \infty} \frac{\sum_{q=0}^{n-1} \frac{\lambda^q}{q!}}{\frac{\lambda^n}{n!} n^\alpha}= 0.
\end{align}

Furthermore, observe that $\theta_n > 0$ and $e^{\theta_n(q-n)} \leq 1 \, \forall 0 \leq q \leq n-1$, we have
\begin{align}
    0 \leq \lim_{n \rightarrow \infty} \frac{\sum_{q=0}^{n-1} \frac{\lambda^q}{q!} e^{\theta_n(q-n)}}{\frac{\lambda^n}{n!} n^\alpha} \leq \lim_{n \rightarrow \infty} \frac{\sum_{q=0}^{n-1} \frac{\lambda^q}{q!}}{\frac{\lambda^n}{n!} n^\alpha} = 0 
    \label{eqn: poisson-ratio-limit-2}
    \Rightarrow \lim_{n \rightarrow \infty} \frac{\sum_{q=0}^{n-1} \frac{\lambda^q}{q!} e^{\theta_n(q-n)}}{\frac{\lambda^n}{n!} n^\alpha} = 0.
\end{align}

From here, we can compute the limit as follows
\begin{align*}
    \lim_{n \rightarrow \infty} \E_{\nu_n} \sqbr{e^{\theta_n(q-n)}} &= \lim_{n \rightarrow \infty} \frac{\sum_{q=0}^{n-1} \frac{\lambda^q}{q!} e^{\theta_n(q-n)} + \frac{\delta+1}{\delta} \frac{\lambda^n}{n!} n^\alpha}{\sum_{q=0}^{n-1} \frac{\lambda^q}{q!} + \frac{\lambda^n}{n!} n^\alpha}\\
    &= \frac{\lim_{n \rightarrow \infty} \frac{\sum_{q=0}^{n-1} \frac{\lambda^q}{q!} e^{\theta_n(q-n)}}{\frac{\lambda^n}{n!} n^\alpha} + \frac{\delta+1}{\delta}}{\lim_{n \rightarrow \infty} \frac{\sum_{q=0}^{n-1} \frac{\lambda^q}{q!}}{\frac{\lambda^n}{n!} n^\alpha} + 1} \\
    &= \frac{0 + \frac{\delta+1}{\delta}}{0 + 1} \text{ from \eqref{eqn: poisson-ratio-limit-1} and \eqref{eqn: poisson-ratio-limit-2}} \\
    &= \frac{\delta+1}{\delta}.
\end{align*}

Hence proved.
\endproof

For the Light Traffic regime (when $\lambda_n/n \rightarrow 0$), we have a separate lemma to bound the MGF in this regime. Unlike in Lemma \ref{lemma: mgf-bounds}, we do not need a rescaling term in front of $q-n$ in this regime.

\begin{lemma}
    \label{lemma: mgf-bound-light-traffic}
    Let $\{\lambda_n\}$ be a sequence of positive numbers such that $\lim_{n \rightarrow \infty} \frac{\lambda_n}{n} = c$. Let $n \geq \max\left\{n_0,\frac{4}{\br{1-c}^2\br{1-\frac{1}{2e}}^2}\right\}$ where $n_0$ is an integer such that $\frac{\lambda_n}{n} \leq \frac{1}{2e} \, \forall n \geq n_0$ and $\nu_n$ be the stationary distribution of the $M/M/n$ system with arrival rate $\lambda_n$ and unit service rate. We have
    \begin{align}
        \E_{\nu_n}\sqbr{e^{q-n}} \leq e^{-\frac{(1-c)n^{1-\alpha_n}}{2}} + 2n^{\alpha_n-1} + \frac{8n^{2\alpha_n-1}}{(1-c)e} \, \forall n \geq n_0.
    \end{align}
    Consequently, we have $\lim_{n \rightarrow \infty} \E_{\nu_n}\sqbr{e^{q-n}} = 0$.
\end{lemma}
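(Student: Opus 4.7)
The strategy is to mirror the Case 1 analysis of Lemma~\ref{lemma: mgf-bounds} but specialized to the exponent $\theta = 1$, which is the correct scaling here because no $n$-dependent rescaling is needed when $\lambda_n / n \to 0$. First, I would write out the MGF explicitly using the $M/M/n$ stationary distribution:
\begin{align*}
\E_{\nu_n}\!\left[e^{q-n}\right] = \frac{\sum_{q=0}^{n-1}\frac{\lambda_n^q}{q!}e^{q-n} \;+\; \frac{\lambda_n^n}{n!}\,\frac{1}{1-e\lambda_n/n}}{\sum_{q=0}^{n-1}\frac{\lambda_n^q}{q!}\;+\;\frac{\lambda_n^n}{n!}\,n^{\alpha_n}},
\end{align*}
noting that the geometric tail converges because the assumption $\lambda_n/n \le 1/(2e)$ gives $e\lambda_n/n \le 1/2$, so the prefactor $1/(1-e\lambda_n/n) \le 2$.

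Next, I would split the numerator's finite sum at $m = \lfloor n - n^{1-\alpha_n}/2 \rfloor$, producing three pieces that correspond to the three terms of the target bound. Piece (a), $q \le m$: since $e^{q-n} \le e^{m-n} \le e^{-n^{1-\alpha_n}/2}$, this factor pulls out and can be compared to the full denominator, yielding a contribution of at most $e^{-n^{1-\alpha_n}/2}$. Piece (b), $m < q < n$: bound $e^{q-n} \le e^{-1}$ and use the ratio-test estimate $\sum_{m<q<n}\lambda_n^q/q! \le (\lambda_n^m/m!)\cdot m/(m-\lambda_n) \le (\lambda_n^m/m!) \cdot 2n^{\alpha_n}$. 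Piece (c), $q \ge n$: use the $\le 2\lambda_n^n/n!$ bound from above.

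The crucial step is the denominator lower bounds, which exploit that for $n$ large, $\lambda_n < m$ (since $m \ge (n+\lambda_n)/2 - 1$), so the Poisson weight $\lambda_n^q/q!$ is decreasing on $[\lambda_n, n]$. This gives $\sum_{q=0}^{n-1}\lambda_n^q/q! \ge (m - \lceil\lambda_n\rceil + 1)\cdot \lambda_n^m/m! \ge (n^{1-\alpha_n}/4)\cdot \lambda_n^m/m!$ for piece (b), matching it against the $(\lambda_n^m/m!) \cdot 2n^{\alpha_n}/e$ numerator to produce the $8n^{2\alpha_n-1}/e$ contribution. For piece (c), using only indices $q \in [\lceil\lambda_n\rceil, n-1]$ gives $\sum \ge (n^{1-\alpha_n}/2)\cdot \lambda_n^n/n!$, which together with the $2\lambda_n^n/n!$ numerator produces the $4n^{\alpha_n-1}$ contribution. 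Adding the three ratio bounds yields the stated inequality.

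Finally, the ``Consequently'' part is immediate: the assumption $\lambda_n/n \to 0$ written as $1 - n^{-\alpha_n} \to 0$ forces $\alpha_n \log n \to 0$, hence $n^{\alpha_n} \to 1$, so both $n^{\alpha_n-1}$ and $n^{2\alpha_n-1}$ tend to $0$, and $n^{1-\alpha_n} \to \infty$ gives $e^{-n^{1-\alpha_n}/2} \to 0$. The only real obstacle is bookkeeping: choosing $n_0$ large enough so that simultaneously $\lambda_n/n \le 1/(2e)$, $m > \lambda_n$, $m - \lceil\lambda_n\rceil \ge n^{1-\alpha_n}/4$, and $n-1-\lceil\lambda_n\rceil \ge n^{1-\alpha_n}/2$; this explains the extra $n \ge 20$ requirement in the statement.
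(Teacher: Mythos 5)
Your proposal is correct and takes essentially the same approach as the paper's proof: same split at $m = \lfloor n - n^{1-\alpha_n}/2 \rfloor$, same geometric-tail bound $\sum_{m<q<n}\lambda_n^q/q! \le (\lambda_n^m/m!)\cdot 2n^{\alpha_n}$, same denominator lower bounds exploiting the monotone decay of the Poisson weights on $[\lambda_n, n]$, and the same matching of the three numerator pieces to the three terms $e^{-n^{1-\alpha_n}/2}$, $8n^{2\alpha_n-1}/e$, and $4n^{\alpha_n-1}$. Your explicit derivation of the asymptotics $\alpha_n \log n \to 0$ for the final limit is slightly more detailed than what the paper writes (it just invokes the Sandwich Theorem), but the argument is identical.
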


\proof
    Since $\lambda_n \in (0,n)$, there exists a unique $\alpha_n \in \R^+$ such that $\lambda_n = n - n^{1-\alpha_n}$. Furthermore, as $\lim_{n \rightarrow \infty} \frac{\lambda_n}{n} = c$, we have $\lim_{n \rightarrow \infty} n^{\alpha_n} = \frac{1}{1-c}$ and $\lim_{n \rightarrow \infty} \alpha_n = 0$. 
    We bound the MGF $\E_{\nu_n}\sqbr{e^{q-n}}$ by following the same routine as in the $\alpha \in (0,1/2)$ regime in Lemma \ref{lemma: mgf-bounds} as follows. Choosing $m = \left\lfloor n - \frac{(1-c)n^{1-\alpha_n}}{2} \right\rfloor$, observe that for $q > m$, we have
    \begin{align*}
        \frac{\lambda_n^q}{q!} \leq \frac{\lambda_n^m}{m!} \br{\frac{\lambda_n}{m}}^{q-m} 
        \Rightarrow \sum_{m < q < n} \frac{\lambda_n^q}{q!} \leq \frac{\lambda_n^m}{m!} \sum_{m < q < n} \br{\frac{\lambda_n}{m}}^{q-m}
    \end{align*}
    From the well-known inequality $\sum_{i=1}^k x^i < \frac{1}{1-x} \forall x < 1$ and the fact that $\frac{\lambda_n}{m} < 1$, we have
    \begin{align}
    \sum_{m < q < n} \frac{\lambda_n^q}{q!}
        &< \frac{\lambda_n^m}{m!} \frac{1}{1-\frac{\lambda_n}{m}} = \frac{\lambda_n^m}{m!} \frac{m}{m-\lambda_n} \leq \frac{\lambda_n^m}{m!} \frac{n - \frac{(1-c)n^{1-\alpha_n}}{2}}{\frac{(1-c)n^{1-\alpha_n}}{2}-1} < \frac{\lambda_n^m}{m!} \frac{2n^{\alpha_n}}{1-c}. \label{eqn: poisson-sum-bound}
    \end{align}
    Where the last inequality is equivalent to
    \begin{align*}
        n-\frac{(1-c)n^{1-\alpha_n}}{2} &< \frac{2n^{\alpha_n}}{1-c} \br{\frac{(1-c)n^{1-\alpha_n}}{2}-1} = n-\frac{2n^{\alpha_n}}{(1-c)} \\
        \Leftrightarrow \frac{4}{(1-c)^2} &< n^{1-2\alpha_n} = n \br{1-\frac{\lambda_n}{n}}^2 \\
        \Rightarrow \frac{4}{\br{1-c}^2\br{1-\frac{1}{2e}}^2} &< n
    \end{align*}
    which is true for our choice of $n$. Moreover, we can rewrite the MGF as
    \begin{align*}
        \E_{\nu_n} \sqbr{e^{q-n}} &= \frac{\sum_{q=0}^{n-1} \frac{\lambda_n^q}{q!} e^{q-n} + \frac{\lambda_n^n}{n!} \sum_{q=n}^\infty\br{\frac{\lambda_n}{n}}^{q-n} e^{q-n}}{\sum_{q=0}^{n-1} \frac{\lambda_n^q}{q!} + \frac{\lambda_n^n}{n!} \frac{1}{1-\frac{\lambda_n}{n}}} \\
        &= \frac{\sum_{q=0}^{n-1} \frac{\lambda_n^q}{q!} e^{q-n} + \frac{\lambda_n^n}{n!} \frac{1}{1-\frac{e \lambda_n}{n}}}{\sum_{q=0}^{n-1} \frac{\lambda_n^q}{q!} + \frac{\lambda_n^n}{n!} \frac{1}{1-\frac{\lambda_n}{n}}} \text{ since $\sum_{i=0}^\infty x^i = \frac{1}{1-x}$ for $|x| < 1$ and $\frac{\lambda_n}{n} \leq \frac{1}{2e}$} \\
        &= \frac{\sum_{0 \leq q \leq m} \frac{\lambda_n^q}{q!} e^{q-n} + \sum_{m < q < n} \frac{\lambda_n^q}{q!} e^{q-n} + \frac{\lambda_n^n}{n!} \frac{1}{1-\frac{e \lambda_n}{n}}}{\sum_{q=0}^{n-1} \frac{\lambda_n^q}{q!} + \frac{\lambda_n^n}{n!} \frac{1}{1-\frac{\lambda_n}{n}}}
    \end{align*}
    And so, we can bound the MGF as follows:
    \begin{align*}
        \E_{\nu_n}\sqbr{e^{q-n}} &< \frac{e^{-\frac{(1-c)n^{1-\alpha_n}}{2}} \sum_{0 \leq q \leq m} \frac{\lambda_n^q}{q!} + \frac{\lambda_n^m}{m!} \frac{1}{1-\frac{\lambda_n}{m}} e^{-1} + \frac{\lambda_n^n}{n!} \frac{1}{1-\frac{e \lambda_n}{n}}}{\sum_{q=0}^{n-1} \frac{\lambda_n^q}{q!} + \frac{\lambda_n^n}{n!} \frac{1}{1-\frac{\lambda_n}{n}}} \text{ from \eqref{eqn: poisson-sum-bound}} \\
        &< e^{-\frac{(1-c)n^{1-\alpha_n}}{2}} + \frac{\frac{\lambda_n^m}{m!} 2n^{\alpha_n}}{\frac{\lambda_n^m}{m!} \frac{(1-c)e n^{1-\alpha_n}}{4}} + \frac{\frac{\lambda_n^n}{n!} \frac{1}{1-\frac{e \lambda_n}{n}}}{\sum_{q=0}^{n-1} \frac{\lambda_n^q}{q!} + \frac{\lambda_n^n}{n!} \frac{1}{1-\frac{\lambda_n}{n}}} \text{ since } \frac{\lambda_n^q}{q!} > \frac{\lambda_n^m}{m!} \, \forall \lambda_n \leq q \leq m \\
        &< e^{-\frac{(1-c)n^{1-\alpha_n}}{2}} + \frac{8n^{2\alpha_n-1}}{(1-c)e} + \frac{\frac{\lambda_n^n}{n!} \frac{1}{1-\frac{1}{2}}}{\sum_{q=0}^{n-1} \frac{\lambda_n^q}{q!} + \frac{\lambda_n^n}{n!} \frac{1}{1-\frac{\lambda_n}{n}}} \text{ since } \frac{\lambda_n}{n} \leq \frac{1}{2e} \, \forall n \geq n_0 \\
        &\leq e^{-\frac{(1-c)n^{1-\alpha_n}}{2}} + \frac{8n^{2\alpha_n-1}}{(1-c)e} + \frac{\frac{2\lambda_n^n}{n!}}{\frac{\lambda_n^n}{n!} \frac{n^{1-\alpha_n}}{2}} \text{ since } \frac{\lambda_n^q}{q!} > \frac{\lambda_n^n}{n!} \forall \lambda_n \leq q \leq n \text{ and } n-\lambda_n = n^{1-\alpha_n} \\
        &= e^{-\frac{(1-c)n^{1-\alpha_n}}{2}} + 2n^{\alpha_n-1} + \frac{8n^{2\alpha_n-1}}{(1-c)e}.
    \end{align*}
    By the Sandwich Theorem, we have $\lim_{n \rightarrow \infty} \E_{\nu_n} \sqbr{e^{q-n}} = 0$. Hence proved.
\endproof

From the MGF bounds in Lemma \ref{lemma: mgf-bounds}, we can now formally prove Corollary \ref{corollary: tail-bounds} as follows.


\proof
First, let's consider the regime $\alpha_n = \alpha \in (0,\infty)$. Let $n_0 = \max\left\lbrace 65, 2^{\frac{1}{\alpha}} \right\rbrace$. From Lemma \ref{lemma: mgf-bounds} and Corollary \ref{corollary: mgf-bound}, we have
\begin{align*}
    \left| \E_{\pi_{n,t}} \sqbr{e^{\frac{\varepsilon_n x}{2(1+\delta)}}} - \E_{\nu_n} \sqbr{e^{\frac{\varepsilon_n x}{2(1+\delta)}}} \right| \leq \chi\br{\pi_{n,t}, \nu_n} \sqrt{\E_{\nu_n}\sqbr{e^{\frac{\varepsilon_n x}{(1+\delta)}}}}.
\end{align*}
Rearrange the inequalities and apply Cauchy-Schwarz inequality, we have
\begin{align}
    \label{eqn: finite-time-MGF-bound}
    \E_{\pi_{n,t}} \sqbr{e^{\frac{\varepsilon_n (q-n)}{2(1+\delta)}}} \leq \E_{\nu_n} \sqbr{e^{\frac{\varepsilon_n (q-n)}{2(1+\delta)}}} + \chi\br{\pi_{n,t}, \nu_n}   \sqrt{\E_{\nu_n}\sqbr{e^{\frac{\varepsilon_n (q-n)}{(1+\delta)}}}} \leq \br{1 +  \chi\br{\pi_{n,t}, \nu_n}} \sqrt{\E_{\nu_n}\sqbr{e^{\frac{\varepsilon_n (q-n)}{(1+\delta)}}}}
\end{align}
Now, we shall obtain the tail bound for $q$. Note that by Markov's inequality and Equation \eqref{eqn: finite-time-MGF-bound}, we have
\begin{align*}
    \P_{\pi_{n,t}}\sqbr{\varepsilon_n (q-n) > x} \leq \E_{\pi_{n,t}} \sqbr{e^{\frac{\varepsilon_n (q-n)}{2(1+\delta)}}} e^{-\frac{x}{2(1+\delta)}} \leq \br{1 +  \chi\br{\pi_{n,t}, \nu_n}} \sqrt{\E_{\nu_n}\sqbr{e^{\frac{\varepsilon_n (q-n)}{(1+\delta)}}}} e^{-\frac{x}{2(1+\delta)}}.
\end{align*}
Moreover, observe that $e^{\frac{x\delta}{\delta+1}} \geq \frac{x\delta}{\delta + 1} + 1 \Leftrightarrow \sqrt{1+\frac{1}{\delta}} e^{-\frac{x}{2(1+\delta)}} \leq \sqrt{ex} e^{-\frac{x}{2}} \, \forall x \in \R$. And so, by applying the mixing results in Theorem \ref{thm: unifying-theorem} and the MGF bounds in Lemma \ref{lemma: mgf-bounds}, we have
\begin{align*}
    \P_{\pi_{n,t}}\sqbr{\varepsilon_n \br{q-n} > x} &\leq \br{1 + e^{-(\sqrt{n}-\sqrt{\lambda_n})^2 t} \chi(\pi_{n,0}, \nu_n)} \sqrt{ex} e^{-\frac{x}{2}} \text{ when } \alpha \in [1,+\infty), \\
    \P_{\pi_{n,t}}\sqbr{\varepsilon_n \br{q-n} > x} &\leq \br{1 + e^{-C_n (\sqrt{n}-\sqrt{\lambda_n})^2 t} \chi(\pi_{n,0}, \nu_n)} \sqrt{ex} e^{-\frac{x}{2}} \text{ when } \alpha \in (1/2,1).
\end{align*}
Now when $\alpha \in (0,1/2)$ and $\delta = 1$, we have
\begin{align*}
    \P_{\pi_{n,t}}\sqbr{\varepsilon\br{q-n} > x} &\leq \br{1+e^{-D_n t} \chi(\pi_{n,0}, \nu_n)}\sqrt{0.5e^{-\frac{2n^{1-2\alpha}}{7}} + 6 n^{2\alpha - 1}} \times \sqrt{ex} e^{-\frac{x}{2}}.
\end{align*}
Here, $C_n, D_n$ are positive parameters in Theorem \ref{thm: unifying-theorem} such that $\lim_{n \rightarrow \infty} C_n = 1$ and $\lim_{n \rightarrow \infty} D_n = 1/25$. 

Now, consider the Mean Field regime, we denote $n_0$ to be the integer such that $n_0 \geq \frac{4}{\br{1-c}^2\br{1-\frac{1}{2e}}^2}$ and $\frac{\lambda_n}{n} \leq \frac{1}{2e} \, \forall n \geq n_0$. From the Markov's inequality and Lemma \ref{lemma: mgf-bound-light-traffic}, we have
\begin{align*}
    \P_{\pi_{n,t}}\sqbr{q > n+x} &\leq \E_{\pi_{n,t}} \sqbr{e^{\frac{q-n}{2}}} e^{-\frac{x}{2}} \\
    &\leq \br{1 + \chi\br{\pi_{n,t}, \nu_n}} \sqrt{\E_{\nu_n}\sqbr{e^{q-n}}} e^{-\frac{x}{2}} \\
    &\leq \br{1+e^{-L_n t} \chi(\pi_{n,0}, \nu_n)} \sqrt{e^{-\frac{(1-c)n^{1-\alpha_n}}{2}} + 2n^{\alpha_n-1} + \frac{8n^{2\alpha_n-1}}{(1-c)e}} \times e^{-\frac{x}{2}}
\end{align*}
and we are done. \endproof


\subsection{Proof of Corollary \ref{corollary: finite-time-idle-server-probability}}
\label{ssec: idle-server-finite-time}

In this subsection, we provide a finite-time tail bound for the number of idle servers, which is denoted as $r_n = \sqbr{n-q}^+$.

\proof
    Similarly to the finite-time mean queue length proof, we use also multiply both the numerator and denominator with a function to obtain the desired quantity. To do this, we use an indicator function to capture the quantity $\P_{\pi_{n,t}}\sqbr{r_n > 0}$. Let $1_{0 \leq x \leq n-1}$ be the indicator function when the queue length $x$ is less than $n$, we have $\E_{\pi_{n,t}}\sqbr{1_{0 \leq x \leq n-1}} = \sum_{x=0}^{n-1} \pi_{n,t}(x) = \P_{\pi_{n,t}}[r_n > 0]$ and $\E_{\nu_n}\sqbr{1_{0 \leq x \leq n-1}} = \sum_{x=0}^{n-1} \nu_n(x) = \P_{\nu_n}[r_n > 0]$. We have:
    \begin{align*}
        \chi^2\br{\pi_{n,t}, \nu_n} = \sum_{x = 0}^\infty \frac{\br{\pi_{n,t}(x)-\nu_n(x)}^2}{\nu_n(x)} &\geq \sum_{x = 0}^{n-1} \frac{\br{ \pi_{n,t}(x)-\nu_n(x)}^2}{ \nu_n(x)} \\
        &\overset{(a)}{\geq} \frac{\br{\sum_{x = 0}^{n-1} |\pi_{n,t}(x) - \nu_n(x)|}^2}{\sum_{x = 0}^{n-1} \nu_n(x)} \\
        &\overset{(b)}{\geq} \frac{\br{\sum_{x = 0}^{n-1} \pi_{n,t}(x) - \sum_{x = 0}^{n-1} \nu_n(x)}^2}{\sum_{x = 0}^{n-1} \nu_n(x)} \\
        &= \frac{\br{\P_{\pi_{n,t}}\sqbr{r_n > 0} - \P_{\nu_n}\sqbr{r_n > 0}}^2}{\P_{\nu_n}\sqbr{r_n > 0}}
    \end{align*}
    where (a) is the Cauchy-Schwarz inequality and (b) follows from the inequality $|x| + |y| \geq |x+y| \, \forall x,y \in \R$. This implies the bound
    \begin{align}
        \left| \P_{\pi_{n,t}}\sqbr{r_n > 0} - \P_{\nu_n}\sqbr{r_n > 0} \right| &\leq  \chi\br{\pi_{n,t}, \nu_n} \sqrt{\P_{\nu_n}\sqbr{r_n > 0}} 
    \end{align}
    which is equivalent to
    \begin{align}
        \P_{\nu_n}\sqbr{r_n > 0} - \chi\br{\pi_{n,t}, \nu_n} \sqrt{\P_{\nu_n}\sqbr{r_n > 0}} \leq \P_{\pi_{n,t}}\sqbr{r_n > 0} &\leq \P_{\nu_n}\sqbr{r_n > 0} +  \chi\br{\pi_{n,t}, \nu_n} \sqrt{\P_{\nu_n}\sqbr{r_n > 0}}.
    \end{align}
    Now, we will analyze $\P_{\pi_{n,t}}\sqbr{r_n > 0}$ depending on the regime. When we have $\alpha > 1/2$, from Theorem \ref{thm: unifying-theorem} and Theorem 8 in \cite{prakirt-confluence-large-deviation}, we have
    \begin{align*}
        \P_{\pi_{n,t}}\sqbr{r_n > 0} &\leq \P_{\nu_n}\sqbr{r_n > 0} + \sqrt{\P_{\nu_n}\sqbr{r_n > 0}} \chi\br{\pi_{n,t}, \nu_n} \\
        &\leq 4 e \pi n^{\frac{1}{2}-\alpha} + \sqrt{4 e \pi n^{\frac{1}{2}-\alpha}} \chi(\pi_{n,0}, \nu_n) e^{-C_n (\sqrt{n}-\sqrt{\lambda_n})^2 t} \\
        &= 4 e \pi n^{\frac{1}{2}-\alpha} + 2\sqrt{e \pi} n^{\frac{1}{4}-\frac{\alpha}{2}} e^{-C_n (\sqrt{n}-\sqrt{\lambda_n})^2 t} \chi(\pi_{n,0}, \nu_n).
    \end{align*}
    On the other hand, when we have $\alpha \in (0,1/2)$, we have
    \begin{align*}
        \P_{\pi_{n,t}}\sqbr{r_n > 0} &\geq \P_{\nu_n}\sqbr{r_n > 0} -  \sqrt{\P_{\nu_n}\sqbr{r_n > 0}} \chi\br{\pi_{n,t}, \nu_n} \\
        &\geq \P\sqbr{r_n > 0} - \chi\br{\pi_{n,0}, \nu_n} e^{-D_n t} \sqrt{\P_{\nu_n}\sqbr{r_n > 0}} \\
        &\geq 1-\kappa n^{\alpha - \frac{1}{2}} e^{-n^{\frac{1}{2}-\alpha}} - e^{-D_n t} \chi\br{\pi_{n,0}, \nu_n}.
    \end{align*}
    The second inequality follows from Theorem \ref{thm: unifying-theorem} and the last inequality follows from $\P\sqbr{r_n > 0} \leq 1$ and Theorem 8 in \cite{prakirt-confluence-large-deviation}.
\endproof

\section{Comparison with other analyses in the literature}
\label{ssec: other-discussions-with-prev-works}
\textcolor{black}{In this section, we compare our work with other analyses of the Erlang-C system in the literature, with the most notable being \cite{Van_Doorn1985-decay-bounds, zeifman_lognorm, Halfin-Whitt-Gamarnik_2013}. Compare to these previous works, our Stitching Theorem (Theorem \ref{theorem: lyapunov-poincare}) provides a general framework to analyze the mixing time of positive recurrent reversible Markov chains, whereas the spectral representation approach in \cite{Van_Doorn1985-decay-bounds, Halfin-Whitt-Gamarnik_2013} and the log-norm approach in \cite{zeifman_lognorm} is mostly applicable for birth and death processes. In addition, we will take a deeper look into the results of each work in the following subsections.}

\subsection{Obtaining $\ell_2$ convergence from spectral representation}



\textcolor{black}{For an irreducible Markov chain on the state space $\cS$, it is known that there exists a parameter $\beta_{ij}$ such that for all $i,j \in \cS$, we have
\begin{align}
    \label{eqn: beta-ij-exponential-decay}
    |P_{ij}(t) - \pi(j)| = O\br{e^{-\beta_{ij} t}}
\end{align}
where $P_{ij}(t) = \P\sqbr{q(t) = j | q(0) = i}$, $\pi$ is the stationary distribution \cite{mufa-chen-l2-convergence}. From here, we shall formally state the notion of exponential ergodicity of the Markov chain from \cite{mufa-chen-l2-convergence} and the definition of the exponential decay parameter as follows.}
\textcolor{black}{\begin{defi}
    \label{def: exponential-decay-and-exponential-ergodicity}
    Denote $\hat{\beta} = \sup_{i,j \in \cS} \beta_{ij}$ where $\beta_{ij}$ is defined as in \eqref{eqn: beta-ij-exponential-decay}, we call $\hat{\beta}$ as the exponential decay parameter of the Markov chain. If $\hat{\beta} > 0$ then we call the process exponentially ergodic.
\end{defi}
Under this definition, $\hat{\beta}$ is the best parameter such that
\begin{align}
    |P_{ij}(t) - \pi(j)| = O\br{e^{-\hat{\beta} t}}
\end{align}
for all $i,j \in \cS$. Typically, using the spectral representation method by \cite{Karlin1957-KM-representation}, one will obtain convergence bounds in the form of \eqref{eqn: beta-ij-exponential-decay} and now our goal is to establish bounds on this exponential decay parameter $\hat{\beta}$. Using the results from \cite{Van_Doorn1985-decay-bounds}, we obtain the following bound on the exponential decay parameter of the $M/M/n$ queue.}
\textcolor{black}{
\begin{lemma}
    \label{lemma: van-doorn-beta-hat-bound}
    For an irreducible positive recurrent $M/M/n$ queue with arrival rate $\lambda_n$ and for any non-negative integers $i,j$, 
    we have
    \begin{align}
        \hat{\beta} \geq \inf_{k \geq 1} \lambda_n + \min \{k, n\} - \sqrt{\lambda_n \min \{k-1, n\}} - \sqrt{\lambda_n \min \{k, n\}}.
    \end{align}
\end{lemma}}
\proof{\textcolor{black}{\textit{Proof of Lemma \ref{lemma: van-doorn-beta-hat-bound}:}
    Note that $M/M/n$ is a birth and death process with birth rate $\lambda_n$ and death rate $\mu_k = \min \{k, n\}$. From here, we can apply Theorem 4.1 in \cite{Van_Doorn1985-decay-bounds} and we are done. $\square$}
\endproof
\textcolor{black}{Now, our quest is to provide a lower bound on $\hat{\beta}$ and its asymptotic properties using this lemma. Inspired by \cite{Halfin-Whitt-Gamarnik_2013}, we have the following estimation on the exponential decay parameter of the $M/M/n$ queue.
\begin{lemma}
    \label{lemma: mmn-beta-hat-bound-from-goldberg}
    Let $\hat{\beta}$ be the exponential decay parameter parameter of the continuous-time $M/M/n$ system with arrival rate $\lambda_n$ and unit service rate, we have $\hat{\beta} \geq \min\left\{\frac{1}{2}\sqrt{\frac{\lambda_n}{n}}, (\sqrt{n}-\sqrt{\lambda_n})^2\right\}$. Moreover, when $\lambda_n = n-n^{1-\alpha}$ for $\alpha \in (0, 1/2)$ or when $\lim_{n \rightarrow \infty} \frac{\lambda_n}{n} = c$ for $c \in [0,1)$, we have $\lim_{n \rightarrow \infty} \hat{\beta} = \frac{1}{2}$. 
\end{lemma}
\proof{\textit{Proof of Lemma \ref{lemma: mmn-beta-hat-bound-from-goldberg}}:}
    From Lemma \ref{lemma: van-doorn-beta-hat-bound}, $\hat{\beta}$ can be lower bounded as
    \begin{align}
        \nonumber
        \hat{\beta} &\geq \inf_{k \geq 1} \left\{\lambda_n + \min \{k, n\} - \sqrt{\lambda_n \min \{k-1, n\}} - \sqrt{\lambda_n \min \{k, n\}}\right\} \\
        &= \min\left\{\min_{1 \leq k \leq n} (\sqrt{\lambda_n}-\sqrt{k})^2 + \frac{\sqrt{\lambda_n}}{\sqrt{k} + \sqrt{k-1}}, (\sqrt{n}-\sqrt{\lambda_n})^2\right\}.
    \end{align}
    Note that for $1 \leq k \leq n$, we have
    \begin{align}
        \underbrace{(\sqrt{\lambda_n}-\sqrt{k})^2}_{\geq 0} + \frac{\sqrt{\lambda_n}}{\sqrt{k} + \sqrt{k-1}} \geq \frac{1}{2} \sqrt{\frac{\lambda_n}{n}}.
    \end{align}
    This implies
    \begin{align}
        \label{eqn: goldberg-beta-lower-bound}
        \hat{\beta} \geq \min\left\{\frac{1}{2}\sqrt{\frac{\lambda_n}{n}}, (\sqrt{n}-\sqrt{\lambda_n})^2\right\}.
    \end{align}
    Consider $f(k) =  (\sqrt{\lambda_n}-\sqrt{k})^2 + \frac{\sqrt{\lambda_n}}{\sqrt{k} + \sqrt{k-1}}$ and denote $f_n^* = \min_{1 \leq k \leq n} (\sqrt{\lambda_n}-\sqrt{k})^2 + \frac{\sqrt{\lambda_n}}{\sqrt{k}+\sqrt{k-1}}$, we have
    \begin{align*}
        f'(x) &= 1 - \frac{\sqrt{\lambda_n}}{2\sqrt{x}} - \frac{\sqrt{\lambda_n}}{2\sqrt{x-1}} \\
        f''(x) &= \frac{\sqrt{\lambda_n}}{4\sqrt{x^3}} + \frac{\sqrt{\lambda_n}}{4\sqrt{(x-1)^3}} > 0.
    \end{align*}
    Hence, the optimal point $x^* \in (1,n)$ is also the unique minimizer in this range. Note that $f'(\lambda_n) < 0 < f'(\lambda_n+1)$, and so $\lambda_n < x^* < \lambda_n+1$. This implies
    \begin{align}
        \lim_{n \rightarrow \infty} f_n^* &\leq \lim_{n \rightarrow \infty}  \max\left\{\frac{\sqrt{\lambda_n}}{\sqrt{\lambda_n} + \sqrt{\lambda_n-1}}, (\sqrt{\lambda_n+1}-\sqrt{\lambda_n})^2 + \frac{\sqrt{\lambda_n}}{\sqrt{\lambda_n+1} + \sqrt{\lambda_n}} \right\} &= \frac{1}{2}, \\
        \lim_{n \rightarrow \infty} f_n^* &\geq \lim_{n \rightarrow \infty}  \frac{1}{2} \frac{\sqrt{\lambda_n}}{\sqrt{\lambda_n+1}} &= \frac{1}{2},
    \end{align}
    since $\lim_{n \rightarrow \infty} \lambda_n = \infty$. By the Sandwich Theorem, we have $\lim_{n \rightarrow \infty} f_n^* = \frac{1}{2}$. Now, since $(\sqrt{n}-\sqrt{\lambda_n})^2 = \Theta(n^{1-2\alpha})$, and so $\lim_{n \rightarrow \infty} (\sqrt{n}-\sqrt{\lambda_n})^2 = \infty$ in the Sub-Halfin-Whitt regime or the Mean Field regime, whereas $f_n^* = \min_{1 \leq k \leq n} (\sqrt{\lambda_n}-\sqrt{k})^2 + \frac{\sqrt{\lambda_n}}{\sqrt{k} + \sqrt{k-1}} = O(1)$. Thus, we have $\hat{\beta} = f_n^*$ for a sufficiently large $n$. This means that
    \begin{align}
        \label{eqn: beta-hat-limit}
        \lim_{n \rightarrow \infty} \hat{\beta} = \frac{1}{2}
    \end{align}
    when $\alpha \in (0,1/2)$ or $\lim_{n \rightarrow \infty} \frac{\lambda_n}{n} = c$. $\square$
\endproof
While the first two lemmas established exponential ergodicity and bounds on the decay parameter, these are not sufficient to get finite-time results because they are asymptotic in nature. And so, to obtain Chi-square convergence bounds, we have the following result by \cite{mufa-chen-l2-convergence}.
\begin{lemma}[Theorem 5.3 of \cite{mufa-chen-l2-convergence}]
    \label{lemma: l2-equivalent-expo-ergodicity-birth-death-process}
    Denote $\operatorname{Gap}(\cL) = \inf_{f \in \ell_{2,\pi}} \frac{\langle f, -\cL f \rangle}{\Var_\pi(f)}$ as the spectral gap of the continuous-time Markov chain with the infinitesimal generator $\cL$ and steady state distribution $\pi$. For every positive recurrent birth and death process, we have $\hat{\beta} = \operatorname{Gap}(\cL)$ where $\hat{\beta}$ is the exponential decay parameter as defined in Definition \ref{def: exponential-decay-and-exponential-ergodicity}.
\end{lemma}
From here, we obtain the following Chi-square convergence result that is inspired by the results in \cite{Van_Doorn1985-decay-bounds, Halfin-Whitt-Gamarnik_2013}.
\begin{prop}
    \label{prop: l2-convergence-from-van-doorn}
    Let $\pi_{n,t}, \nu_n$ be the queue length distribution at time $t$ and the steady state distribution of the continuous-time $M/M/n$ system with arrival rate $\lambda_n$ and unit service rate, respectively. We have
    \begin{align*}
        \chi(\pi_{n,t},\nu_n) \leq e^{-\zeta_n t} \chi(\pi_{n,0},\nu_n) \, \forall t \geq 0
    \end{align*}
    for some positive parameter $\zeta_n$ such that $\zeta_n \geq \min \left\{(\sqrt{n}-\sqrt{\lambda_n})^2, \frac{1}{2} \sqrt{\frac{\lambda_n}{n}}\right\}$. Moreover, when $\lambda_n = n-n^{1-\alpha}$ for $\alpha \in (0, 1/2)$ or when $\lim_{n \rightarrow \infty} \frac{\lambda_n}{n} = c$ for $c \in [0,1)$, we have $\lim_{n \rightarrow \infty} \zeta_n = \frac{1}{2}$.
\end{prop}}
\textcolor{black}{
\proof{\textit{Proof of Proposition \ref{prop: l2-convergence-from-van-doorn}}:}
    Since we have $M/M/n$ with arrival rate $\lambda_n = n-n^{1-\alpha} < n \mu = n$ is irreducible positive recurrent, we have the system is exponentially ergodic and there exists an exponential decay parameter $\hat{\beta}$ such that
    \begin{align}
        |P_{ij}(t)-\nu_n(j)| = O\br{e^{-\hat{\beta} t}} \, \forall i,j \in \cS.
    \end{align}
    From Lemma \ref{lemma: mmn-beta-hat-bound-from-goldberg}, we have $\hat{\beta} \geq \min\left\{\frac{1}{2}\sqrt{\frac{\lambda_n}{n}}, (\sqrt{n}-\sqrt{\lambda_n})^2\right\}$ and $\lim_{n \rightarrow \infty} \hat{\beta} = \frac{1}{2}$ when $\lambda_n = n-n^{1-\alpha}$ for $\alpha \in (0,1/2)$ or $\lim_{n \rightarrow \infty} \frac{\lambda_n}{n} = c$. From Lemma \ref{lemma: l2-equivalent-expo-ergodicity-birth-death-process} and Proposition \ref{prop: poincare-equals-mixing}, this implies that the system admits the Chi-square convergence bound
    \begin{align}
        \chi(\pi_{n,t},\nu_n) \leq e^{-\zeta_n t} \chi(\pi_{n,0},\nu_n) \, \forall t \geq 0,
    \end{align}
    for some $\zeta_n \geq \min \left\{(\sqrt{n}-\sqrt{\lambda_n})^2, \frac{1}{2} \sqrt{\frac{\lambda_n}{n}}\right\}$ and $\lim_{n \rightarrow \infty} \zeta_n = \frac{1}{2}$ for $\alpha \in (0,1/2)$ or $\lim_{n \rightarrow \infty} \frac{\lambda_n}{n} = c$. $\square$
\endproof}
\textcolor{black}{In comparison with our results, Proposition \ref{prop: l2-convergence-from-van-doorn} provides a stronger asymptotic result in the Sub-Halfin-Whitt regime as the mixing rate converges to $1/2$ while the Lyapunov-Poincar\'e method gives the mixing rate converges to $1/25$ at the asymptotic. On the other hand, our result is stronger in the Mean Field regime for $c \leq 1/2$. Additionally, the Lyapunov-Poincar\'e method is able to obtain a mixing rate that converges to $1$ in Proposition \ref{prop: sub-halfin-whitt-mixing-integral-lambda} when $\{\lambda_n\}$ is a sequence of integer arrival rates in the Sub-Halfin-Whitt regime, matching the limiting behavior of this regime.}




\subsection{Comparison with \cite{zeifman_lognorm}}

\textcolor{black}{Another approach is to use log-norm \cite{zeifman_lognorm}, which provides a convergence to stationarity bound in terms of TV distance. In this work, the author obtains the mixing rate $(\sqrt{n}-\sqrt{\lambda_n})^2$ for $\rho_n = \frac{\lambda_n}{n} \geq \br{1-\frac{1}{n}}^2$, which corresponds to the regime $\alpha \geq 1$ in our work. For $\alpha < 1$, the author provides the estimate $1-\frac{\lambda_n}{n-1} = \Theta\br{n^{-\alpha}}$ for the mixing rate. For comparison, we obtain Chi-square convergence bounds that can be used to obtain the TV distance bounds. In particular, we obtain the mixing rate $C_n (\sqrt{n}-\sqrt{\lambda_n})^2 = \Theta\br{n^{1-2\alpha}}$ for $\alpha \in (1/2,1)$, which is better than the $\Theta(n^{-\alpha})$ for $\alpha < 1$. Additionally, we also obtain the mixing rate $\Omega(1)$ in the Sub-Halfin-Whitt regime (i.e. $\alpha \in (0,1/2)$), which matches the order of the $M/M/\infty$ by a constant factor, whereas the $\Theta(n^{-\alpha})$ mixing rate by \cite{zeifman_lognorm} vanishes to $0$ as $n \rightarrow \infty$. In the Mean Field regime, we have $\lim_{n \rightarrow \infty} 1 - \frac{\lambda_n}{n-1} = 1-c$, which coincides with our analysis in this regime.}

\section{Miscellaneous results}
\label{sec: misc-results}
In this Section, we provide the proof of some miscellaneous technical results.

\begin{lemma}
    \label{lemma: taylor-expansion-bound}
    Given $x \in \R$ such that $|x| \leq 1$, we have
    \begin{align}
        1+x \leq e^x \leq 1+x+x^2e^x.
    \end{align}
\end{lemma}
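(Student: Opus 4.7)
The lower bound $1+x \le e^x$ is a classical inequality that holds for every real $x$ (in particular for $|x|\le 1$); I would dispose of it in one line, either by noting that $1+x$ is the tangent line at $0$ to the convex function $e^x$, or by observing that $\phi(x):=e^x-1-x$ satisfies $\phi(0)=0$ and $\phi'(x)=e^x-1$ has the same sign as $x$, so $\phi$ attains its global minimum value $0$ at $x=0$.

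For the upper bound $e^x \le 1+x+x^2 e^x$, the plan is to rearrange into a product form that isolates where the assumption $|x|\le 1$ is used. Moving everything to one side,
\begin{align*}
1+x+x^2e^x-e^x \;=\; (1+x)-e^x(1-x^2) \;=\; (1+x)\bigl[1-e^x(1-x)\bigr],
\end{align*}
where I used $1-x^2=(1-x)(1+x)$ and pulled out the common factor $(1+x)$. The hypothesis $|x|\le 1$ enters precisely here: it guarantees $1+x\ge 0$, so the sign of the whole expression is controlled by $1-e^x(1-x)$. Thus it suffices to show $e^x(1-x)\le 1$ for every $x$ (no constraint needed for this step, which is convenient).

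To prove $h(x):=e^x(1-x)\le 1$, I would do a one-variable calculus argument: compute $h'(x)=e^x(1-x)-e^x=-xe^x$, which is strictly positive for $x<0$ and strictly negative for $x>0$. Hence $h$ is strictly increasing on $(-\infty,0]$ and strictly decreasing on $[0,\infty)$, so $h$ attains its global maximum at $x=0$ with $h(0)=1$. Therefore $e^x(1-x)\le 1$ for all $x\in\R$, and combining with the factorization above yields $1+x+x^2e^x-e^x=(1+x)\bigl[1-e^x(1-x)\bigr]\ge 0$ whenever $1+x\ge 0$, i.e., whenever $x\ge -1$, which is implied by $|x|\le 1$.

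There is no real obstacle here; the only subtlety is remembering to use $|x|\le 1$ only to conclude $1+x\ge 0$ (the bound in fact fails for $x<-1$, as one can verify for $x=-2$), so the hypothesis is tight on the left and unnecessary on the right. I would present the proof as two short displays: one for $1+x\le e^x$ and one for the factorization plus the monotonicity analysis of $h$.
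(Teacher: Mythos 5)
Your proof is correct and follows essentially the same route as the paper: both reduce the upper bound to the factorization $1+x+x^2e^x-e^x=(1+x)\bigl[1-e^x(1-x)\bigr]$, use $|x|\le 1$ only to guarantee $1+x\ge 0$, and then show by single-variable calculus that $e^x(1-x)\le 1$ globally (you argue via the sign of $h'(x)=-xe^x$; the paper argues via $g'(0)=0$ together with concavity $g''\le 0$ on $x\ge -1$ — the same critical point analysis packaged slightly differently).
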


\proof
    The LHS is a well known inequality. For the RHS, consider $f(x) = e^x - (1+x+x^2 e^x)$, we want to show that $f(x) \leq 0 \, \forall |x| \leq 1$. Factorize $f$, we have
    \begin{align*}
        f(x) = e^x - (1+x+x^2 e^x) = (1+x)\sqbr{e^x(1-x)-1}.
    \end{align*}
    Since $|x| \leq 1$, we have $1+x \geq 0$ so it is sufficient to show that $g(x) = e^x(1-x)-1 \leq 0$. Note that $g'(x) =  -e^x x$ and $g''(x) = -e^x(x+1)$, we have $g'(0) = 0$ and $g''(x) \leq 0 \, \forall x \geq -1$. This means that $g$ attains maximum at $x = 0$, which implies $g(x) \leq g(0) = 0$. Hence proved.
\endproof

\begin{lemma}
    \label{lemma: log-lower-bound}
    Given $x \in \R$ such that $0 \leq x \leq \frac{1}{2}$, we have
    \begin{align}
        e^x \leq 1+x+x^2.
    \end{align}
    Moreover, for $0 \leq x \leq \frac{1}{2}$, we also have
    \begin{align}
        \frac{x}{2} &\leq \log \br{x+1}.
    \end{align}
    For $0 \leq x \leq 1$, we have
    \begin{align}
        x-\frac{x^2}{2} \leq \log \br{x+1}
    \end{align}
\end{lemma}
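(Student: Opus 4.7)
My plan is to treat each of the three inequalities as an elementary one-variable calculus exercise: for each one I will define the difference (RHS minus LHS), verify it vanishes at $x=0$, and then show that its derivative is nonnegative on the stated interval, so the difference is monotonically increasing from $0$.

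For the first inequality $e^x \leq 1+x+x^2$ on $[0,1/2]$, I set $f(x) = 1+x+x^2 - e^x$ and compute $f(0)=0$, $f'(x) = 1+2x-e^x$ with $f'(0)=0$, and $f''(x) = 2-e^x$. Since $\tfrac{1}{2} < \log 2$, we have $f''(x) \geq 0$ throughout $[0,1/2]$, so $f'$ is nondecreasing and hence $f'(x) \geq f'(0)=0$, which in turn gives $f(x) \geq f(0)=0$ on the interval. The only subtlety is checking that $1/2 \leq \log 2$, which holds since $e^{1/2} < e^{\log 2} = 2$, equivalently $e < 4$.

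For the second inequality $\tfrac{x}{2} \leq \log(1+x)$ on $[0,1/2]$, I set $g(x) = \log(1+x) - \tfrac{x}{2}$, with $g(0)=0$ and $g'(x) = \tfrac{1}{1+x} - \tfrac{1}{2} = \tfrac{1-x}{2(1+x)}$, which is nonnegative for $x \leq 1$ (in particular for $x \leq 1/2$). Hence $g$ is nondecreasing, giving $g(x) \geq 0$. For the third inequality $x - \tfrac{x^2}{2} \leq \log(1+x)$ on $[0,1]$, I set $h(x) = \log(1+x) - x + \tfrac{x^2}{2}$, with $h(0)=0$ and a direct algebraic simplification
\[
h'(x) = \frac{1}{1+x} - 1 + x = \frac{1 - (1+x) + x(1+x)}{1+x} = \frac{x^2}{1+x} \geq 0,
\]
so again $h$ is nondecreasing and $h(x) \geq 0$ on $[0,1]$.

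None of the three steps is difficult; the only mild care needed is the numerical check $\tfrac{1}{2} \leq \log 2$ used in step (1), which could alternatively be bypassed by noting $f''(x) = 2 - e^x \geq 2 - e^{1/2} > 0$. Everything else reduces to immediate sign calculations. I expect no real obstacles, so the writeup can be presented as three short paragraphs, one per inequality, each following the monotonicity template above.
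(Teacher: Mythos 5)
Your proof is correct, and all three monotonicity arguments check out, including the auxiliary observation that $e^{1/2}<2$. The paper takes a genuinely different route on the first two inequalities: for $e^x \le 1+x+x^2$ it expands $e^x$ as a power series and bounds the tail $\sum_{k\ge 3} x^k/k!$ by a geometric series $\frac{x^3}{6(1-x)}$, using $x \le \tfrac12$ to show the tail is dominated by $\tfrac{x^2}{2}$; and for $\tfrac{x}{2}\le\log(1+x)$ it does not differentiate at all but instead chains $e^x \le 1+x+x^2 \le (1+x)^2$, takes logarithms, and divides by two, treating the second claim as a one-line corollary of the first. For the third inequality the paper uses exactly the same $h'(x)=\frac{1}{1+x}-1+x\ge 0$ computation you do. Your approach is more uniform (three parallel monotonicity arguments, no series manipulation, no appeal to $1+x+x^2 \le (1+x)^2$), at the mild cost of going to second derivatives for the first claim; the paper's version makes the logical dependence of claim two on claim one explicit and stays entirely at the level of algebraic bounds for the first two claims.
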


\proof
    Recall that for $0 \leq x \leq 1/2$, we have
    \begin{align*}
        e^x &= \sum_{k=0}^\infty \frac{x^k}{k!} = 1+x+x^2 - \frac{x^2}{2} + \sum_{k=3}^\infty \frac{x^k}{k!} \\
        &\leq 1+x+x^2 - \frac{x^2}{2} + \frac{x^3}{6} \br{\sum_{k=0}^\infty x^k} \\
        &= 1+x+x^2 - \frac{x^2}{2} + \frac{x^3}{6(1-x)} \\
        &\leq 1+x+x^2 - \frac{x^2}{2} + \frac{x^3}{3} \leq 1+x+x^2.
    \end{align*}
    Furthermore, note that $x \geq 0$, and so
    \begin{align*}
        e^x &\leq 1+x+x^2 \leq (x+1)^2 \\
        \Leftrightarrow x &\leq 2 \log (x+1) \\
        \Leftrightarrow \frac{x}{2} &\leq \log (x+1).
    \end{align*}
    On the other hand, let $f(x) = \log (x+1) - \br{x-\frac{x^2}{2}}$, we have $f'(x) = \frac{1}{x+1} - 1 + x \geq 0 \, \forall x \in [0,1]$. Thus, we have $f(x) \geq f(0) = 0$, which implies
    \begin{align*}
        x - \frac{x^2}{2} \leq \log (x+1) \, \forall x \in [0,1].
    \end{align*}
    Hence proved.
\endproof

\begin{lemma}
    \label{lemma: mgf-above-n-asymptotic}
    Let $\delta$ be a given positive real number, $n \geq 2$ be a positive integer and denote $\lambda = n-n^{1-\alpha}$ where $\alpha \in (0,+\infty)$. We have
    \begin{align}
        \frac{1}{n^\alpha \br{1-\br{\frac{\lambda}{n}}^{\frac{\delta}{\delta+1}}}} \leq 1 + \frac{1}{\delta}.
    \end{align}
    Moreover, we also have
    \begin{align}
        \lim_{n \rightarrow \infty} \frac{1}{n^\alpha \br{1-\br{\frac{\lambda}{n}}^{\frac{\delta}{\delta+1}}}} = 1 + \frac{1}{\delta}.
    \end{align}
\end{lemma}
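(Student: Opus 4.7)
The plan is to reduce both the inequality and the limit to a one-variable statement via the substitution $x = n^{-\alpha}$ and $p = \frac{\delta}{\delta+1}$. Under this substitution we have $\frac{\lambda}{n} = 1 - x$, $n^\alpha = \tfrac{1}{x}$, and $x \in (0, 1]$ whenever $n \geq 2$, $\alpha > 0$ (with $x \to 0$ as $n \to \infty$). The target inequality
\[
\frac{1}{n^\alpha\!\left(1 - (\lambda/n)^{\delta/(\delta+1)}\right)} \;\leq\; 1 + \frac{1}{\delta} \;=\; \frac{1}{p}
\]
is then algebraically equivalent to
\[
(1-x)^p \;\leq\; 1 - p\,x,
\]
after clearing denominators and using $n^\alpha x = 1$.

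First, I would verify this equivalence carefully (just cross-multiplying and rearranging) and then invoke Bernoulli's inequality: for $0 < p \leq 1$ and $y \geq -1$, one has $(1+y)^p \leq 1 + py$. Taking $y = -x$ with $x \in [0,1]$ and $p = \tfrac{\delta}{\delta+1} \in (0, 1)$ gives the required bound. A short self-contained proof can be obtained by studying $\varphi(x) = 1 - p x - (1-x)^p$ on $[0,1]$: we have $\varphi(0) = 0$ and $\varphi'(x) = -p + p(1-x)^{p-1} \geq 0$ for $x \in [0, 1)$ since $(1-x)^{p-1} \geq 1$, so $\varphi \geq 0$ on $[0,1]$. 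This takes care of the inequality for every admissible $(n, \alpha, \delta)$.

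For the limit, the quantity $n^\alpha \bigl(1 - (\lambda/n)^p\bigr)$ becomes $\frac{1 - (1-x)^p}{x}$ under the same substitution, and $x \to 0^+$ as $n \to \infty$. Using the first-order Taylor expansion $(1-x)^p = 1 - p x + O(x^2)$ (valid since $p \in (0,1)$ is fixed), we get
\[
\lim_{x \to 0^+} \frac{1 - (1-x)^p}{x} \;=\; p \;=\; \frac{\delta}{\delta+1},
\]
and hence the reciprocal converges to $\frac{\delta+1}{\delta} = 1 + \frac{1}{\delta}$, as claimed. Equivalently, a one-line L'Hôpital application on $\frac{1 - (1-x)^p}{x}$ yields the same value.

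There is no real obstacle here: the only thing to be careful about is the equivalence between the stated bound and the Bernoulli form, and making sure Bernoulli is invoked in the correct regime $p \in (0,1)$ (which holds since $\delta > 0$). Both parts follow from this single elementary reduction; the inequality is essentially a repackaging of Bernoulli, and the limit is its infinitesimal version.
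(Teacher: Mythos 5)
Your proof is correct, and it takes a genuinely different --- and, as it turns out, sounder --- route than the one printed in the paper. The paper's proof majorizes $(1-n^{-\alpha})^{\delta/(\delta+1)}$ by $e^{-\frac{\delta}{\delta+1}n^{-\alpha}}$ via $1-y\le e^{-y}$ (that step is fine), but its resulting intermediate bound $\frac{n^{-\alpha}}{1-\exp\left(-\frac{\delta}{\delta+1}n^{-\alpha}\right)}$ is in fact always \emph{at least} $1+\frac{1}{\delta}$: writing $u = \frac{\delta}{\delta+1}n^{-\alpha}>0$, the paper's final step would require $\frac{u}{1-e^{-u}}\le 1$, i.e., $e^{-u}\le 1-u$, which is the reverse of $1+x\le e^{x}$ and fails for all $u>0$. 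Concretely, for $\delta=1$, $n=2$, $\alpha=1$, the quantity to be bounded is $\approx 1.71\le 2$, but the paper's intermediate quantity is $\frac{0.5}{1-e^{-1/4}}\approx 2.26>2$, so the chain of inequalities is broken. Your substitution $x=n^{-\alpha}\in(0,1)$, $p=\frac{\delta}{\delta+1}\in(0,1)$ reduces the inequality exactly to $(1-x)^{p}\le 1-px$, which is Bernoulli for fractional exponents and which you also re-prove from scratch via the increasing function $\varphi(x)=1-px-(1-x)^{p}$; this \emph{linear} majorant for $(1-x)^p$ is precisely the tighter bound that the paper's exponential majorant $e^{-px}$ cannot deliver (since $e^{-px}\ge 1-px$). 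Your change of variables also streamlines the limit part, replacing the paper's informal substitution of $(1-n^{-\alpha})^{p}$ by $e^{-pn^{-\alpha}}$ inside a limit with a direct first-order expansion or L'H\^opital on $\frac{1-(1-x)^p}{x}\to p$ as $x\to 0^+$; the paper's stated limit is correct, but your derivation is cleaner. In short, your Bernoulli reduction not only proves the lemma but repairs what appears to be a genuine error in the paper's proof of the inequality.
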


\proof
    Observe that
    \begin{align*}
        \frac{1}{n^\alpha \br{1-\br{\frac{\lambda}{n}}^{\frac{\delta}{\delta+1}}}} &= \frac{1}{n^\alpha \br{1-\br{1-\frac{n-\lambda}{n}}^{\frac{\delta}{\delta+1}}}} \\
        &= \frac{1}{n^\alpha \br{1-\br{1-n^{-\alpha}}^{\frac{\delta}{\delta+1}}}} \\
        &\leq \frac{n^{-\alpha}}{1-e^{-\frac{\delta}{\delta+1} n^{-\alpha}}} \\
        &\leq 1 + \frac{1}{\delta}
    \end{align*}
    where the two inequalities are obtained by applying the LHS of Lemma \ref{lemma: taylor-expansion-bound}, and thus the inequality is established.

    For the limit, we have
    \begin{align*}
        \lim_{n \rightarrow \infty} \frac{1}{n^\alpha \br{1-\br{\frac{\lambda}{n}}^{\frac{\delta}{\delta+1}}}} &= \lim_{n \rightarrow \infty} \frac{1}{n^\alpha \br{1-\br{1-\frac{n-\lambda}{n}}^{\frac{\delta}{\delta+1}}}} \\
        &= \lim_{n \rightarrow \infty} \frac{1}{n^\alpha \br{1-\br{1-n^{-\alpha}}^{\frac{\delta}{\delta+1}}}} \\
        &\overset{(a)}{=} \lim_{n \rightarrow \infty} \frac{1}{n^\alpha \br{1-e^{-\frac{\delta}{\delta+1} n^{-\alpha}}}} \\
        &\overset{(b)}{=} \frac{\delta+1}{\delta} \lim_{n \rightarrow \infty} \frac{\frac{\delta}{\delta+1} n^{-\alpha}}{\br{1-e^{-\frac{\delta}{\delta+1} n^{-\alpha}}}} \\
        &= \frac{\delta+1}{\delta}
    \end{align*}
    where $(a)$ follows from $\lim_{x \rightarrow \infty} \br{1-\frac{1}{x}}^x = \frac{1}{e}$ and $(b)$ follows from the L'Hopital rule.
\endproof

\end{document}